\pdfoutput=1



\documentclass{amsart}

\usepackage[mathscr]{eucal}
\usepackage{amssymb}
\usepackage[usenames,dvipsnames]{xcolor} 
\usepackage[normalem]{ulem}
\usepackage{wasysym}
\usepackage{amsthm}
\usepackage{bbold}
\usepackage{comment}
\usepackage{enumitem}
\usepackage{amsmath}
\usepackage{tikz-cd}
\usepackage{stmaryrd} 
\usepackage{etoolbox} 
\usepackage{microtype}
\usepackage{adjustbox}

\usepackage[unicode]{hyperref} 

\definecolor{dark-red}{rgb}{0.5,0.15,0.15}
\definecolor{dark-blue}{rgb}{0.15,0.15,0.6}
\definecolor{dark-green}{rgb}{0.15,0.6,0.15}

\hypersetup{
    colorlinks, linkcolor=OliveGreen,
    citecolor=OliveGreen, urlcolor=OliveGreen
}

\usepackage[nameinlink,capitalise,noabbrev]{cleveref}



\numberwithin{equation}{section}
\setcounter{tocdepth}{1}


\usepackage[all]{xy}
\xyoption{line}
\usepackage{graphicx}
\usepackage{mathtools}
\usepackage{quiver}

\usepackage{caption}


\newtheorem{thmx}{Theorem}


\newtheorem{Thm}[equation]{Theorem}
\newtheorem*{Thm*}{Theorem}
\newtheorem*{MainThm*}{Main Theorem}
\newtheorem{Prop}[equation]{Proposition}
\newtheorem{Lem}[equation]{Lemma}
\newtheorem{Cor}[equation]{Corollary}

\newtheorem*{Que*}{Question}

\theoremstyle{remark}
\newtheorem{Def}[equation]{Definition}
\newtheorem{Ter}[equation]{Terminology}
\newtheorem{Not}[equation]{Notation}
\newtheorem{Exa}[equation]{Example}

\newtheorem{Cons}[equation]{Construction}

\newtheorem{Rec}[equation]{Recollection}
\newtheorem{War}[equation]{Warning}

\newtheorem{Rem}[equation]{Remark}


\tikzset{
    labelrotatebelow/.style={anchor=north, rotate=90, inner sep=1.0mm}
}
\tikzset{
    labelrotateabove/.style={anchor=south, rotate=90, inner sep=1.0mm}
}
\SelectTips{cm}{10}

\newcommand{\nc}{\newcommand}
\nc{\dmo}{\DeclareMathOperator}

\renewcommand{\emptyset}{\varnothing}

\nc{\Drew}[1]{{\color{Orange}#1}}
\nc{\Tobi}[1]{{\color{Green}#1}}
\nc{\Natalia}[1]{{\color{Yellow}#1}}
\nc{\Dout}[1]{\Drew{\sout{#1}}}
\nc{\Tout}[1]{\Tobi{\sout{#1}}}
\nc{\Nout}[1]{\Natalia{\sout{#1}}}

\usepackage{todonotes}

\nc{\overbar}[1]{\mkern 1.5mu\overline{\mkern-1.5mu#1\mkern-1.5mu}\mkern 1.5mu}

\nc{\kappaaux}{g}
\nc{\kappaCh}{{\kappaaux(\cat C_h)}}
\nc{\kappam}{{\kappaaux({\mathfrak m})}}
\nc{\kappaP}{\Gamma_{\cat P}\unit}
\nc{\kappaQ}{{\kappaaux(\cat Q)}}
\nc{\kappaCP}{{\kappaaux_{\cat C}(\cat P)}}
\nc{\kappaDP}{{\kappaaux_{\cat D}(\cat P)}}
\nc{\kappaCQ}{{\kappaaux_{\cat C}(\cat Q)}}
\nc{\kappaDQ}{{\kappaaux_{\cat D}(\cat Q)}}
\nc{\kappaphiB}{{\kappaaux(\phi(\cat B))}}
\nc{\kappaphiQ}{{\kappaaux(\varphi(\cat Q))}}
\newcommand{\noloc}{\;\mathord{:}\,}
\dmo{\Sub}{Sub}
\dmo{\Proj}{Proj}
\dmo{\LMod}{LMod}
\dmo{\cell}{cell}
\nc{\SpEn}{\cat S_{E(n)}}
\nc{\SpEnf}{\cat S_n}
\nc{\Lcomp}{L^{\mathrm{com}}} 
\nc{\Ucomp}{U^{\mathrm{com}}}
\nc{\Loco}[1]{\Loc_{\otimes}\hspace{-0.3ex}\langle #1 \rangle}
\nc{\bbullet}{{\scriptscriptstyle\hspace{-1pt}\bullet}}
\nc{\bullett}{{\scriptscriptstyle\bullet}\hspace{-1pt}}
\nc{\LF}{L\hspace{-0.2ex}F}
\nc{\SpG}{\Sp^G}
\nc{\EG}{\bbE_G}
\nc{\DEG}{\Der(\EG)}
\nc{\DE}{\Der(\bbE)}
\nc{\Prst}{{\cat P}\mathrm{r^{st}}}
\nc{\Mack}[2]{\mathrm{Mack}_{#1}(#2)}
\nc{\SC}{S\cat C}
\dmo{\fin}{{fin}}
\dmo{\DM}{DM}
\dmo{\fp}{fp}
\nc{\DMQ}{\DM_Q}
\dmo{\DerKal}{DMack}
\dmo{\Der}{D}
\dmo{\DMot}{DMot}
\dmo{\rmH}{H}
\dmo{\piu}{\underline{\pi}}
\dmo{\Sphere}{\mathbb{S}}
\dmo{\Alg}{Alg}
\dmo{\CAlg}{CAlg}
\nc{\HA}{{\rmH \hspace{-0.2em}\bbA}}
\nc{\HZ}{{\rmH \hspace{-0.2em}\bbZ}}
\nc{\HZbar}{{\rmH \hspace{-0.2em}\underline{\bbZ}}}
\nc{\Fp}{{\bbF_{\hspace{-0.1em}p}}}
\nc{\HFp}{{\rmH \hspace{-0.15em}\bbF_{\hspace{-0.1em}p}}}
\nc{\DHZG}{\Der(\HZ_G)}
\nc{\DHZH}{\Der(\HZ_H)}
\nc{\DHZK}{\Der(\HZ_K)}
\nc{\DHZGN}{\Der(\HZ_{G/N})}
\nc{\DHZGG}{\Der(\HZ_{G/G})}
\nc{\DHZCp}{\Der(\HZ_{C_p})}
\nc{\DHZGprime}{\Der(\HZ_{G'})}
\nc{\DHZ}{\Der(\HZ)}
\nc{\mathfrakp}{\mathfrak{p}}
\nc{\mathfrakq}{\mathfrak{q}}
\nc{\mathfrakS}{\mathfrak{S}}
\nc{\mathfrakT}{\mathfrak{T}}
\nc{\Z}{\mathbb{Z}}
\nc{\SSG}{\text{sSet}_*^G}
\nc{\sSet}{\text{sSet}}

\dmo{\csupp}{csupp}
\dmo{\Con}{Conj}
\dmo{\Id}{Id}
\dmo{\Loc}{Loc}
\dmo{\rmK}{\textrm{\rm K}}
\dmo{\Spc}{Spc}
\dmo{\thick}{thick}
\nc{\thickt}[1]{\thick_\otimes\langle #1 \rangle}
\dmo{\cone}{cone}
\dmo{\End}{End}
\dmo{\Derperf}{D_{perf}}
\dmo{\Mor}{Mor}
\dmo{\Hom}{Hom}
\dmo{\id}{id}
\dmo{\incl}{incl}
\dmo{\Img}{Im}
\dmo{\im}{im}
\dmo{\Ker}{Ker}
\dmo{\ind}{ind}
\dmo{\Ind}{Ind}
\dmo{\CoInd}{coind}
\dmo{\res}{res}
\dmo{\infl}{infl}
\dmo{\Derqc}{D_{qc}}
\dmo{\triv}{triv}
\dmo{\Tel}{Tel} 
\dmo{\grMod}{grMod}%
\dmo{\Mod}{Mod}%
\dmo{\opname}{op}
\dmo{\SH}{SH}
\dmo{\smallb}{b}
\dmo{\Spec}{Spec}
\dmo{\supp}{supp}
\dmo{\Supp}{Supp}
\dmo{\cosupp}{cosupp}
\dmo{\Cosupp}{Cosupp}
\nc{\SHc}{{\SH^c}}
\nc{\SHp}{{\SH_{(p)}}}
\nc{\SHcp}{{\SH^c_{(p)}}}
\nc{\SHG}{\SH(G)}
\nc{\SHGp}{\SH(G)_{(p)}}
\nc{\SHGc}{\SHG^c}
\nc{\SHGcp}{\SHG^c_{(p)}}
\nc{\quadtext}[1]{\quad\textrm{#1}\quad}
\nc{\qquadtext}[1]{\qquad\textrm{#1}\qquad}
\nc{\adj}{\dashv}
\nc{\adjto}{\rightleftarrows}
\nc{\bbL}{\mathbb{L}}
\nc{\bbA}{\mathbb{A}}
\nc{\bbE}{\mathbb{E}}
\nc{\bbN}{\mathbb{N}}
\nc{\bbQ}{\mathbb{Q}}
\nc{\bbZ}{\mathbb{Z}}
\nc{\bbF}{\mathbb{F}}
\nc{\cat}[1]{\mathscr{#1}}
\nc{\ie}{{\sl i.e.}, }
\nc{\into}{\mathop{\rightarrowtail}}
\nc{\inv}{^{-1}}
\nc{\isoto}{\mathop{\overset{\sim}\to}}
\nc{\isotoo}{\mathop{\overset{\sim}\too}}
\nc{\onto}{\mathop{\twoheadrightarrow}}
\nc{\too}{\mathop{\longrightarrow}\limits}
\nc{\mapstoo}{\longmapsto}
\nc{\adh}[1]{\overline{#1}}
\nc{\adhpt}[1]{\adh{\{#1\}}}
\nc{\aka}{{a.\,k.\,a.}\ }
\nc{\calF}{\mathcal{F}}
\nc{\eg}{{\sl e.\,g.}}
\nc{\Homcat}[1]{\Hom_{\cat #1}}
\nc{\hook}{\hookrightarrow}
\nc{\ideal}[1]{\langle #1\rangle}
\nc{\ihom}{{\underline{\hom}}}
\nc{\iHom}{\mathcal{H}\mathrm{om}}
\nc{\Mid}{\,\big|\,}
\nc{\MMod}{\,\text{-}\Mod}%
\nc{\GrMMod}{\,\text{-}\grMod}%
\nc{\op}{^{\opname}}
\nc{\oto}[1]{\overset{#1}\to}
\nc{\otoo}[1]{\overset{#1}{\,\too\,}}
\nc{\sminus}{\!\smallsetminus\!}
\nc{\poplus}[1]{^{\oplus #1}}%
\nc{\potimes}[1]{^{\otimes #1}}
\nc{\sbull}{{\scriptscriptstyle\bullet}}
\nc{\SET}[2]{\big\{\,#1\Mid#2\,\big\}}
\nc{\SpcK}{\Spc(\cat K)}
\nc{\then}{\Rightarrow}
\nc{\unit}{\mathbb{1}}
\nc{\xra}{\xrightarrow}
\nc{\phigeom}[1]{\widetilde{\Phi}^{#1}}
\dmo{\Oname}{O}
\dmo{\proper}{proper}
\dmo{\lenormal}{\unlhd}
\dmo{\lnormal}{\lhd}
\nc{\normal}{\trianglelefteq}
\nc{\Op}{\Oname^p}
\nc{\Oq}{\Oname^q}
\dmo{\Sp}{Sp}
\dmo{\Ho}{Ho}
\dmo{\Fin}{Fin}
\dmo{\add}{add}
\dmo{\Fun}{Fun}
\dmo{\Ext}{Ext}

\dmo{\CMon}{CMon}
\dmo{\CC}{\cat C}
\dmo{\DD}{\cat D}
\dmo{\OO}{\mathcal{O}}
\dmo{\Map}{Map}
\dmo{\Span}{Span}
\dmo{\N}{N}
\dmo{\Cat}{Cat}
\dmo{\colim}{colim}
\dmo{\hocolim}{hocolim}
\dmo{\Ch}{Ch}
\dmo{\A}{\mathbb{A}^{eff}}
\nc{\AGeff}{\mathbb{A}_G^{\mathrm{eff}}}
\nc{\BGeff}{\mathcal{B}_G^{\mathrm{eff}}}
\nc{\BG}{{\mathcal{B}_G}}
\nc{\NBGeff}{{\N}{\BGeff}}
\dmo{\Ab}{Ab}
\dmo{\Set}{Set}
\dmo{\ev}{ev}
\dmo{\Spcl}{Spcl}
\nc{\Funadd}{\Fun_{\add}}
\dmo{\proj}{proj}
\dmo{\cof}{cof}

\dmo{\Coideal}{Coideal}
\dmo{\gen}{gen}
\dmo{\Coloc}{Coloc}
\nc{\Coloco}[1]{\Coloc^{\iHom}\hspace{-0.3ex}\langle #1 \rangle}

\dmo{\dual}{dual}

\nc{\LOCO}{\mathcal{L}\mathrm{oc}_{\otimes}}
\nc{\COLOCO}{\mathcal{C}\mathrm{oloc}^{\iHom}}

\nc{\Perff}[1]{\mathrm{Perf}_{#1}}
\nc{\Modd}[1]{\mathrm{Mod}_{#1}}
\dmo{\Perf}{Perf}
\dmo{\tel}{tel}
\nc{\Lococat}[2]{\Loc^{#1}_{\otimes}\hspace{-0.3ex}\langle #2 \rangle}
\dmo{\rk}{rk}
\dmo{\StMod}{StMod}

\nc{\QW}{W^{\text{Q}}}

\nc{\mT}{\kern-0.5em\mod\kern-0.1em\text{-}\cat{T}^c}
\nc{\mTc}{\kern-0.5em\mod\kern-0.1em\text{-}\cat{T}^c}
\nc{\MTc}{\Mod\kern-0.1em\text{-}\cat{T}^c}
\nc{\MT}{\Mod\kern-0.1em\text{-}\cat{T}}
\newcounter{enum-resume-hack}
\Crefname{Thm}{Theorem}{Theorems}
\Crefname{Prop}{Proposition}{Propositions}
\Crefname{Lem}{Lemma}{Lemmas}
\Crefname{thmx}{Theorem}{Theorems}

\begin{document}

\title[Quillen stratification in equivariant homotopy theory]{Quillen stratification in \\ equivariant homotopy theory}

\author[Barthel]{Tobias Barthel}
\author[Castellana]{Nat{\`a}lia Castellana}
\author[Heard]{Drew Heard}
\author[Naumann]{Niko Naumann}
\author[Pol]{Luca Pol}
\date{\today}

\makeatletter
\patchcmd{\@setaddresses}{\indent}{\noindent}{}{}
\patchcmd{\@setaddresses}{\indent}{\noindent}{}{}
\patchcmd{\@setaddresses}{\indent}{\noindent}{}{}
\patchcmd{\@setaddresses}{\indent}{\noindent}{}{}
\makeatother

\address{Tobias Barthel, Max Planck Institute for Mathematics, Vivatsgasse 7, 53111 Bonn, Germany}
\email{tbarthel@mpim-bonn.mpg.de}
\urladdr{https://sites.google.com/view/tobiasbarthel/}

\address{Nat{\`a}lia Castellana, Departament de Matem\`atiques, Universitat Aut\`onoma de Barcelona, 08193 Bellaterra, Spain, and Centre de Recerca Matemàtica}
\email{Natalia.Castellana@uab.cat}
\urladdr{http://mat.uab.cat/$\sim$natalia}

\address{Drew Heard, Department of Mathematical Sciences, Norwegian University of Science and Technology, Trondheim}
\email{drew.k.heard@ntnu.no}
\urladdr{https://folk.ntnu.no/drewkh/}

\address{Niko Naumann, Fakult{\"a}t f{\"u}r Mathematik, Universit{\"a}t Regensburg, Universit{\"a}tsstraße 31, 93053 Regensburg, Germany}
\email{Niko.Naumann@mathematik.uni-regensburg.de}
\urladdr{https://homepages.uni-regensburg.de/$\sim$nan25776/}

\address{Luca Pol, Fakult{\"a}t f{\"u}r Mathematik, Universit{\"a}t Regensburg, Universit{\"a}tsstraße 31, 93053 Regensburg, Germany}
\email{luca.pol@mathematik.uni-regensburg.de}
\urladdr{https://sites.google.com/view/lucapol/}

\begin{abstract}
We prove a version of Quillen's stratification theorem in equivariant homotopy theory for a finite group $G$, generalizing the classical theorem in two directions. Firstly, we work with arbitrary commutative equivariant ring spectra as coefficients, and secondly, we categorify it to a result about equivariant modules. Our general stratification theorem is formulated in the language of equivariant tensor-triangular geometry, which we show to be tightly controlled by the non-equivariant tensor-triangular geometry of the geometric fixed points. 

We then apply our methods to the case of Borel-equivariant Lubin--Tate $E$-theory $\underline{E_n}$, for any finite height $n$ and any finite group $G$, where we obtain a sharper theorem in the form of cohomological stratification. In particular, this provides a computation of the Balmer spectrum as well as a cohomological parametrization of all localizing $\otimes$-ideals of the category of equivariant modules over $\underline{E_n}$, thereby establishing a finite height analogue of the work of Benson, Iyengar, and Krause in modular representation theory.
\vspace{-2em}
\end{abstract}

\subjclass[2020]{18F99, 55P42, 55P91, 55U35}

\maketitle

\begin{figure}[h]
    \centering{}
    \includegraphics[scale=0.40]{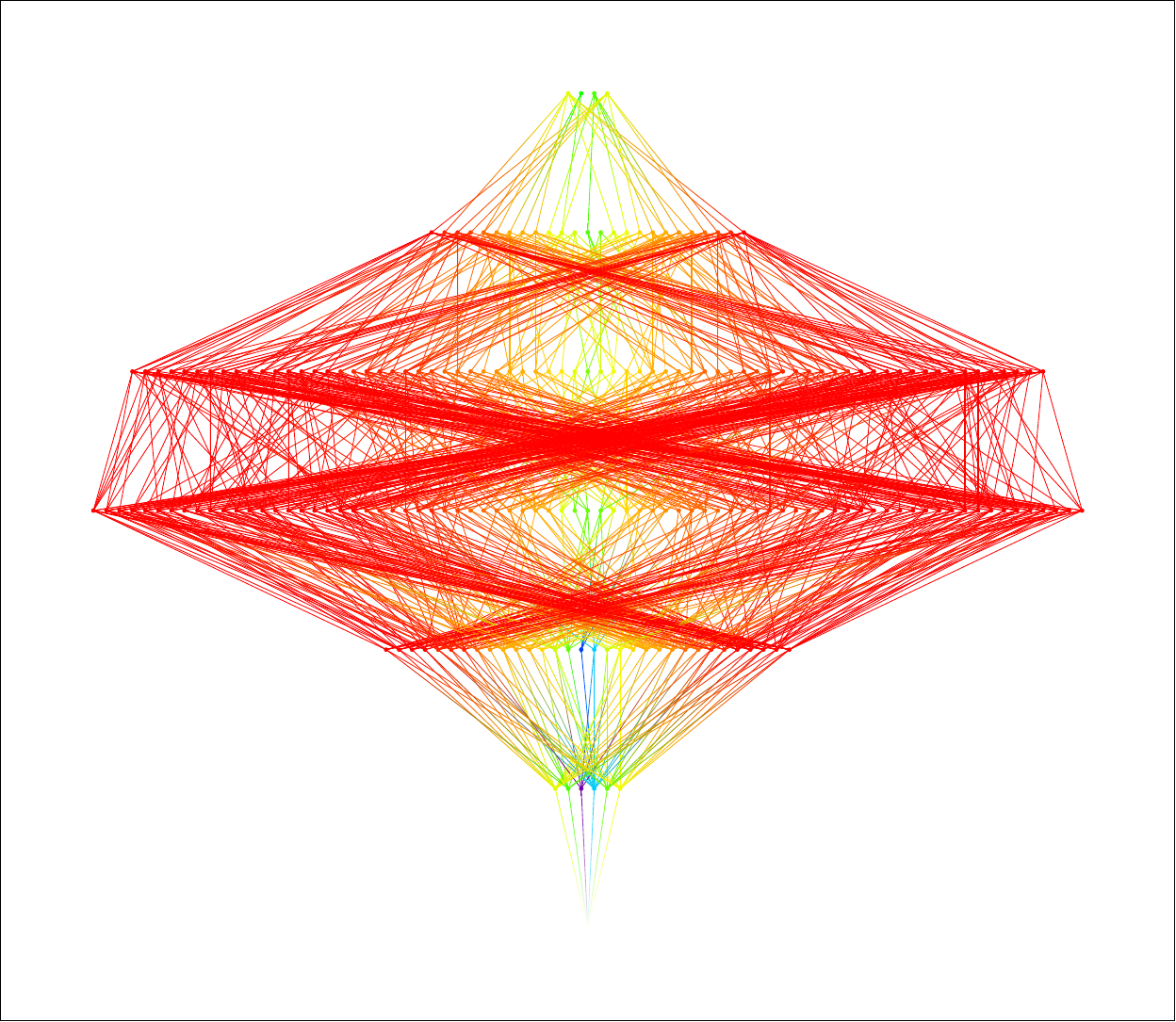}
\end{figure}
\vspace{-2em}

\setcounter{tocdepth}{1}
\tableofcontents
\vspace{-2em}
{\let\thefootnote\relax\footnotetext{The image is a visualization of Quillen's category for the symmetric group $S_{12}$ at the prime 2, made by Jared Warner. We are grateful to him for allowing us to include it here.}}

\newpage
\section{Introduction}\label{sec:introduction}

Quillen's celebrated stratification theorem \cite{Quillen71} provides a geometric description of the cohomology of any finite group $G$ with coefficients in a field $k$ in terms of a decomposition of its Zariski spectrum into locally closed subsets:
\begin{equation}\label{eq:classicalquillen}\tag{$\ast$}
\Spec H^{\bullet}(G,k) = \bigsqcup_{(E) \subseteq G} \mathcal{V}_{G,E}^+.
\end{equation}
Here, the disjoint union is indexed on the conjugacy classes of elementary abelian subgroups $E \subseteq G$ and the strata $\mathcal{V}_{G,E}^+$ are orbits of the Weyl group action on an open subset of the Zariski spectrum of the cohomology of $E$. Since the cohomology of elementary abelian subgroups is well-known, Quillen's work gives a formula to understand the cohomology ring of any finite group geometrically.

The first goal of the present paper is to develop and prove a far-reaching generalization of Quillen's theorem, in the following two directions:
    \begin{enumerate}
        \item We establish an analogue of Quillen stratification for an arbitrary commutative equivariant ring spectrum\footnote{Here and throughout, by commutative equivariant ring spectrum we mean an $\mathbb{E}_{\infty}$-monoid in $G$-spectra, without any additional norm structures.} $R$. Specializing to the Borel-equivariant theory $R = \underline{k}$ recovers a version of Quillen's theorem.
        \item We categorify the stratification of the group cohomology to a decomposition of the entire category of modules over $R$, for appropriate equivariant ring spectra $R$. The special case of $R = \underline{k}$ is contained in the seminal work of Benson, Iyengar, and Krause \cite{BensonIyengarKrause11a} on the stable module category of $k$-linear $G$-representations.
    \end{enumerate}
Our results take place in the context of equivariant homotopy theory and are formulated in the language of tensor-triangular geometry. This `equivariant tensor-triangular geometry' was initiated by Strickland (unpublished), Balmer \cite{Balmer16b}, and Balmer and Sanders in \cite{BalmerSanders17} and pursued further in \cite{BHNNNS19}, \cite{bgh_balmer}, \cite{PatchkoriaSandersWimmer20pp}, and \cite{bhs1}. The present paper conceptualizes these developments in the form of a unifying perspective encompassing equivariant tensor-triangular geometry, Quillen's stratification of group cohomology, and stratifications in modular representation theory. In particular, we establish a Quillen-type decomposition of the spectrum of any equivariant tensor-triangulated category and study the extent to which it is reflected in a stratification of the category defined over it; precise definitions are given in the next section. Our first main theorem is:

\begin{Thm*}[Informal version] The category of equivariant modules over any commutative $G$-equivariant ring spectrum $R$ is stratified in terms of the geometric fixed points $\Phi^HR$ equipped with their Weyl group actions for all subgroups $H$ of $G$.
\end{Thm*}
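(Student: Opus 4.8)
The plan is to reduce the stratification of $\Mod_R$ over $G$-spectra to the non-equivariant stratification of the module categories over the geometric fixed point ring spectra $\Phi^H R$, using the ``isotropy separation'' or ``geometric fixed point'' decomposition of $\SpG$ and an inductive argument over the subgroup lattice of $G$. First I would recall the equivariant tensor-triangular geometry set-up: the Balmer spectrum $\Spc(\Mod_R^c)$ (or rather the support theory for the big category $\Mod_R$) admits a decomposition indexed on conjugacy classes of subgroups $H \le G$, where the stratum associated to $H$ is built from the geometric fixed points $\Phi^H R$, a module over the non-equivariant ring spectrum $\Phi^H R$ carrying a residual action of the Weyl group $W_G(H) = N_G(H)/H$. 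This is the content of the results the paper attributes to Balmer--Sanders and its successors, which I would invoke: the geometric fixed point functors $\Phi^H \colon \SpG \to \Sp$ are jointly conservative and symmetric monoidal, and assemble into a ``stratification of the ambient category'' in the precise sense defined in the next section of the paper.

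The key steps, in order, would be: (1) Establish a local-to-global principle for $\Mod_R$ — show that a $\otimes$-ideal in $\Mod_R$ is determined by its images under all $\Phi^H$, i.e. that $\{\Phi^H\}_{H \le G}$ detect vanishing of the relevant supports. This follows from joint conservativity of geometric fixed points together with the compatibility of $\Phi^H$ with the $R$-module structure, giving $\Phi^H \colon \Mod_R \to \Mod_{\Phi^H R}$. (2) For each $H$, invoke (or hypothesize, if $R$ is assumed to have the relevant property) the non-equivariant stratification of $\Mod_{\Phi^H R}$ by $\Spc((\Mod_{\Phi^H R})^c)$; then descend along the Weyl group action to get a ``$W_G(H)$-equivariant'' refinement — here one uses that stratification is preserved under taking homotopy fixed points / the relevant Galois-type descent for the finite group $W_G(H)$, so that the $W_G(H)$-equivariant modules over $\Phi^H R$ are stratified over the quotient of $\Spc$ by the Weyl action. (3) Glue the strata: assemble the pieces indexed by $(H)$ into a stratification of the whole $\Mod_R$ by showing the support data matches up along the isotropy filtration, using the cofiber sequences $\widetilde{E\mathcal{F}[H]}$ relating successive stages of the family filtration. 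An induction on $|G|$ (or on the length of the subgroup lattice) organizes this, with the base case $G = e$ being the non-equivariant input.

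The main obstacle I expect is step (2)–(3): controlling the Weyl group action and proving that the gluing of strata is clean, i.e. that the locally closed pieces $\mathcal{V}_{G,H}^+$ really are disjoint and cover, with no ``hidden'' localizing ideals that fail to be detected by geometric fixed points at the level of the big category rather than the compact objects. Concretely, the delicate point is that stratification for big categories requires more than a spectrum computation — one needs the minimality/local-to-global axioms, and verifying that descent along the finite Weyl group $W_G(H)$ preserves \emph{stratification} (not merely the Balmer spectrum) is where the technical heart lies; this is presumably why the sharper ``cohomological stratification'' conclusion is only claimed for the specific example $R = \underline{E_n}$, where one has an explicit, well-behaved cohomology ring and Noetherianity to fall back on. For the general informal statement, though, the argument above — joint conservativity of $\Phi^H$, non-equivariant stratification of each $\Mod_{\Phi^H R}$, and isotropy-separation gluing — is the skeleton I would flesh out.
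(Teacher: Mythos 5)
Your step (1) and the overall inductive skeleton match the paper: joint conservativity of the $\Phi^H$ (the nilpotence theorem, \cref{thm:eqnilpotence}) gives the surjection $\bigsqcup_H \Spc(\Perf(\Phi^HR)) \twoheadrightarrow \Spc(\Perff{G}(R))$ of \cref{cor:eqspc_surjectivity}, and the stratification argument is indeed an induction on $|G|$ with the non-equivariant stratification of each $\Mod(\Phi^HR)$ (plus Noetherianity of its spectrum) as the hypothesis. But your step (2) contains a genuine gap, and it is a gap precisely at the place you flag as "the technical heart". You propose to descend stratification along the Weyl group action by a "Galois-type descent", i.e.\ to show that $W_G(H)$-equivariant (homotopy-fixed) modules over $\Phi^HR$ are stratified over $\Spc(\Perf(\Phi^HR))/W_G(H)$. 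No such general descent result is available, and the strata of $\Modd{G}(R)$ are never identified with categories of $W_G(H)$-equivariant $\Phi^HR$-modules in the paper; indeed passing stratification through homotopy fixed points of a finite group action is exactly the kind of statement that fails without further input (the spectrum of an "equivariant" category is not in general the quotient of the spectrum — that identification is itself the content of the strong Quillen decomposition and only concerns images in $\Spc(\Perff{G}(R))$). In the paper the Weyl groups play no role whatsoever in the stratification statement: they enter only in the description of the spectrum, via Balmer's coequalizer for the separable algebra $A_{\cat S}$ together with the Balmer--Sanders computation of $\Spc(\Sp_G^c)$, which forces the strata ${\mathcal V}_G^+(R,H)$ to be disjoint and identifies each with ${\mathcal V}^+(R,H)/W_G(H)$ by a saturation argument (\cref{thm:quillen_decomposition}).

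The mechanism that replaces your step (2)--(3) is the following, and it is where your proposal should be repaired. Given a prime $\cat P \in \Spc(\Perff{G}(R))$, surjectivity places it in the image of some $\Spc(\Phi^H)$. If $H=G$, then $\Phi^G$ is a finite localization (\cref{lem:gfp_as_verdier}) and minimality of $\Gamma_{\cat P}$ descends by Zariski descent; if $H$ is proper, then $\cat P$ lies in the image of $\Spc(\res_{\subsetneq G})$, and $\res_{\subsetneq G}$ is finite \'etale with separable algebra $\prod_{H\subsetneq G}\mathbb{D}(G/H_+)\otimes R$ (\cref{lem:res_finiteetale}), so minimality descends along it by the \'etale descent of \cref{rec:abstractstratdescent}; the induction hypothesis applied to the proper subgroups finishes the argument (\cref{thm:eqstratdescent}). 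No explicit gluing along the isotropy filtration is needed either: once $\Spc(\Perff{G}(R))$ is Noetherian (which follows from the finite surjective cover by Noetherian spaces), the local-to-global principle is automatic, and stratification reduces to pointwise minimality. So your worry about "hidden localizing ideals" is resolved not by controlling the Weyl action on big categories, but by checking minimality prime by prime using \'etale and Zariski descent.
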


Of particular importance to us is the special case of Borel-equivariant Lubin--Tate $E$-theory. Their non-equivariant counterparts constitute outstanding examples of commutative rings of mixed chromatic characteristic (\cite{HopkinsSmith98}) and play a pivotal role in chromatic stable homotopy theory (see for example \cite{Morava1985,GHMR2005}). While they are known to afford a generalized character theory in the sense of Hopkins, Kuhn, and Ravenel \cite{HopkinsKuhnRavenel2000Generalized}, a classification of isomorphism classes of $E$-linear representation theory of finite groups seems out of reach. In light of this, our second main theorem provides a `coarse' parametrization of all (permutation) representations of a finite group $G$ over any Lubin--Tate $E$-theory in terms of the Zariski spectrum of the $E$-cohomology of $G$:

\begin{Thm*}[Informal version]
The category of $G$-equivariant modules over Borel-equivariant Lubin--Tate $E$-theory is stratified over the spectrum $\Spec(E^0(BG))$.
\end{Thm*}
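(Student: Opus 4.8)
The plan is to deduce this from our general equivariant stratification theorem by computing the geometric fixed points of $\underline{E_n}$ and then carrying out a Quillen-type reduction that identifies the resulting glued spectrum with $\Spec E_n^0(BG)$. First I would verify that $\underline{E_n}$ meets the hypotheses of the general theorem. The decisive point is that $\underline{E_n} \simeq F(EG_+, \infl E_n)$ is Borel-complete, which supplies the descent along the family of all subgroups that the general machine requires, together with the fact (established below) that its geometric fixed points have Noetherian homotopy. Granting this, the general theorem reduces the problem to understanding $\Mod_{\Phi^H\underline{E_n}}$ with its $W_G(H)$-action for every $H \le G$, and then reassembling these data over the orbit category.

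The technical heart is the identification of $\Phi^H\underline{E_n}$. Because $\underline{E_n}$ is Borel-complete, its restriction to $H$ is again a Borel completion, so $\Phi^H\underline{E_n} \simeq (E_n)^{tH}$, the Tate construction for the trivial $H$-action; in particular it vanishes unless $p$ divides $|H|$. Using Greenlees--Strickland's analysis of chromatic group cohomology rings, I expect that $\pi_\ast\Phi^H\underline{E_n}$ is even periodic and that $\pi_0\Phi^H\underline{E_n}$ is a (completed) localization of $E_n^0(BH)$ — for $H$ abelian of rank at most $n$ the localization inverting the Euler classes cutting out the top stratum, and the zero ring once the rank exceeds $n$. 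Since $E_n^0(BH)$ is Noetherian, so is each such localization, and, being even with Noetherian $\pi_0$, $\Mod_{\Phi^H\underline{E_n}}$ is then stratified over $\Spec\pi_0\Phi^H\underline{E_n}$ by the affine, non-equivariant case; this discharges the outstanding hypothesis.

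By the general theorem, $\Spc(\Mod_{\underline{E_n}}^c)$ is now glued over the orbit category from the spaces $\Spec(\pi_0\Phi^H\underline{E_n})$ modulo their Weyl-group actions. Feeding in the previous paragraph and performing the Quillen-style reduction — discarding the subgroups whose strata lie in the closures of those of their abelian $p$-subgroups, exactly as with the classical $\mathcal{F}$-isomorphism — this matches, stratum by stratum, Greenlees--Strickland's decomposition of $\Spec E_n^0(BG)$ indexed on conjugacy classes of abelian $p$-subgroups. Since the unit of $\Mod_{\underline{E_n}}(\Sp^G)$ has $\pi_{-\ast}\End(\unit) \cong E_n^\ast(BG)$, the resulting homeomorphism is exactly the comparison map $\Spc(\Mod_{\underline{E_n}}^c) \to \Spec E_n^0(BG)$. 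Finally, as $\Spec E_n^0(BG)$ is Noetherian, the stratification provided by the general theorem upgrades to a cohomological one: combining this homeomorphism with the central $E_n^\ast(BG)$-action and the local-to-global and minimality criteria — whose local inputs are the affine statements of the previous paragraph — yields the Benson--Iyengar--Krause-type \cite{BensonIyengarKrause11a} parametrization of all localizing $\otimes$-ideals of $\Mod_{\underline{E_n}}(\Sp^G)$ by arbitrary subsets of $\Spec E_n^0(BG)$.

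I expect the second paragraph to be the main obstacle: pinning down $\Phi^H\underline{E_n}$ and, crucially, checking that the localization of $E_n^0(BH)$ one obtains is precisely the stratum appearing in Greenlees--Strickland, so that the bookkeeping in the third paragraph closes up. This is where the genuinely chromatic homotopy-theoretic content enters — the $K(n)$-local ambidexterity governing the Tate construction, the interaction of Borel completion with geometric fixed points, and the blueshift behaviour of $E_n^\ast(BH)$ — as opposed to the purely formal tensor-triangular input of the general theorem; a secondary difficulty is checking that the abstract orbit-category gluing, including the Weyl-group quotients, is compatible on the nose with the concrete Greenlees--Strickland decomposition and with character theory in the sense of \cite{HopkinsKuhnRavenel2000Generalized}.
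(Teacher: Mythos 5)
Your overall architecture matches the paper's: reduce via the general equivariant stratification theorem to the geometric fixed points $\Phi^A\underline{E_n}$ for abelian $p$-subgroups $A$ of rank at most $n$, prove an affine stratification statement there, and then identify the glued Balmer spectrum with $\Spec(E_n^0(BG))$ via the comparison map and a Quillen/Greenlees--Strickland style decomposition. However, there is a genuine gap at the step you call the "discharge" of the outstanding hypothesis: you claim that since $\pi_0\Phi^H\underline{E_n}$ is a Noetherian localization of $E_n^0(BH)$ and the homotopy is even periodic, $\Mod(\Phi^H\underline{E_n})$ is stratified over $\Spec(\pi_0\Phi^H\underline{E_n})$ "by the affine, non-equivariant case". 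No such theorem is available: the affine input (Dell'Ambrogio--Stanley \cite{DellAmbrogioStanley16}) requires the even periodic coefficient ring to be \emph{regular} Noetherian, not merely Noetherian, and stratification of module categories over ring spectra with even Noetherian homotopy is not known (and is expected to fail) in that generality. Establishing regularity of $\pi_0\Phi^A\underline{E}$ is precisely the key technical result of the paper (\cref{thm:regular}), and it is not formal: it is proved by showing that the Euler-class localization $\pi_0 E^{BA}\to\pi_0\Phi^A\underline{E}$ factors through the Drinfel'd ring classifying level $A$-structures on the universal deformation, and invoking the Drinfel'd--Strickland regularity theorem \cite{Drinfeld1974Elliptic,Strickl1997Finite}. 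Your proposal contains no substitute for this argument, and your closing paragraph locates the difficulty elsewhere (matching strata with Greenlees--Strickland), so the main obstruction is simply missing.

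Two secondary inaccuracies: the identification $\Phi^H\underline{E_n}\simeq E_n^{tH}$ is only correct when $H$ has prime order (geometric fixed points kill everything induced from \emph{all} proper subgroups, not just the trivial one), and the vanishing criterion is not "$p\nmid |H|$" but rather that $H$ fails to be an abelian $p$-group generated by at most $n$ elements. Your later, correct description --- $\pi_0\Phi^A\underline{E}$ as the localization of $E^0(BA)$ inverting Euler classes of nontrivial characters --- is the one actually used in the paper (it is a localization, with no further completion), so these slips do not derail the outline, but they should be fixed; the regularity gap, by contrast, is essential.
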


In other words, we obtain a chromatic analogue in all mixed intermediate characteristics of the aforementioned work of Benson, Iyengar, and Krause. This completes the stratification program for the higher representation theory of finite groups, i.e., representations in all characteristics over the sphere spectrum. 
\vskip 0.1in

\begin{center}
\begin{tabular}{ |p{2cm}||p{3cm}|p{3cm}|p{2.7cm}|  }
 \hline
 \multicolumn{4}{|c|}{Overview of cohomological stratification for representations of finite groups} \\
 \hline
 Coefficients & Chromatic height & Spectrum & Reference \\
 \hline
 $\bbQ$   & $0$    &  $\Spec(\bbQ)$  &  \cite{Maschke1899} \\
 $\Fp$ &  $(p,\infty)$  & $\Spec^h(H^*(G,\Fp))$  & \cite{BensonCarlsonRickard97,BensonIyengarKrause11a} \\
 $\bbZ_{p}$ & $0, (p,\infty)$ & $\Spec^h(H^*(G,\bbZ_{p}))$ &  \cite{Lau2021Balmer, Barthel2021pre} \\
 $E_n$    & $0,(p,1),\ldots, (p,n)$ & $\Spec(E_n^0(BG))$ &  [\cref{thm:etheorycohomstratification}] \\
 \hline 
\end{tabular} 
\end{center}
\vskip 0.1in

\subsection*{Main results}

Based on previous ideas by Balmer--Favi \cite{BalmerFavi11}, Benson--Iyengar--Krause \cite{BensonIyengarKrause11b}, as well as Stevenson \cite{Stevenson13} and extending their works, a subset of the present authors together with Sanders have developed a general framework of stratification in tensor-triangular geometry. This theory utilizes a universal support function to parametrize localizing ideals of a suitable compactly generated tensor-triangulated category $\cat T$ in terms of subsets of the Balmer spectrum of $\cat T^c$:
\[
\begin{tikzcd}[ampersand replacement=\&]
	\Supp\colon {\begin{Bmatrix}
	\text{Localizing $\otimes$-ideals} \\ 
	\text{of } \cat T
	\end{Bmatrix}} 
	\& 
	{\begin{Bmatrix}
	\text{Subsets} \\
	\text{of } \Spc(\cat T^c) 
	\end{Bmatrix}.}
	\arrow["", from=1-1, to=1-2]
\end{tikzcd}
\]
We then call the tt-category $\cat T$ {\em stratified} if $\Supp$ is a bijection, terminology that originates in \cite{BensonIyengarKrause11a} and is motivated by its topological counterpart for $\Spc(\cat T^c)$. This separates the problem of classifying localizing $\otimes$-ideals into two questions:
    \begin{itemize}
        \item Is $\cat T$ stratified ?
        \item What is $\Spc(\cat T^c)$ as a \emph{set}?
    \end{itemize}
In \cite{bhs1}, we provide various techniques for approaching both questions; in practice, this often turns out to be easier than a full computation of $\Spc(\cat T^c)$ which is equivalent to a parametrization of all thick $\otimes$-ideals of compact objects $\cat T^c$ in $\cat T$. The general paradigm goes as follows: First construct a suitable family of jointly conservative tt-functors 
\[
\begin{tikzcd}[ampersand replacement=\&]
{f_i\colon \cat T}
\&
{\cat S_i}
\arrow["", from=1-1, to=1-2]
\end{tikzcd}
\]
with all $\cat S_i$ stratified. Secondly, show that stratification descends along the family of functors $f_i$, using the above tt-methods established in \cite{bhs1} and \cite{Barthel2021pre}. Assuming for simplicity that the family is finite, it follows that there is a continuous surjection
\[
\begin{tikzcd}[ampersand replacement=\&]
{\bigsqcup_i \Spc(\cat S_i^c)}
\&
{\Spc(\cat T^c),}
\arrow["", from=1-1, to=1-2]
\end{tikzcd}
\]
see \cref{cor:surjective-map}. The third step relies on the geometry of the functors $f_i$ to determine the spectrum of $\cat T^c$ in terms of $\Spc(\cat S_i^c)$, potentially using additional structure available in the given context. 

Our first aim is to carry out this strategy in the context of stable equivariant homotopy theory. Consider a finite group $G$, fixed throughout, and let $\Sp_G$ be the category of genuine $G$-equivariant spectra. Let $R$ be a commutative  $G$-equivariant ring spectrum, i.e., a commutative algebra in the symmetric monoidal category $\Sp_G$, and let $\Modd{G}(R)$ be the tt-category of $G$-equivariant modules over $R$. Write $\Perff{G}(R)$ for the full subcategory of compact objects in $\Modd{G}(R)$. Equivariant homotopy theory supplies excellent candidates for the family $f_i$, namely the geometric fixed points
\[
\begin{tikzcd}[ampersand replacement=\&]
{\Phi^H\colon \Modd{G}(R)}
\&
{\Mod(\Phi^HR)}
\arrow["", from=1-1, to=1-2]
\end{tikzcd}
\]
for $H$ varying over the subgroups of $G$. There is a natural action of the Weyl group $W_G(H) = N_G(H)/H$ on $\Mod(\Phi^HR)$. With this preparation, the next result is then a precise statement of our first main theorem written above:

\begin{thmx}[\cref{thm:eqstratdescent,thm:quillen_decomposition}]\label{thmx:abstract}
Let $R$ be a commutative equivariant ring spectrum and write $\Modd{G}(R)$ for the category of $G$-equivariant modules over $R$. Then:
    \begin{enumerate}
        \item The spectrum of perfect $R$-modules admits a locally-closed decomposition
            \begin{equation}\label{eq:ttquillen}\tag{$\dagger$}
            \Spc(\Perff{G}(R)) \simeq \bigsqcup_{(H) \subseteq G} \Spc(\Perf(\Phi^HR))/W_G(H),
            \end{equation}
            with the set-theoretic disjoint union being indexed on conjugacy classes of subgroups of $G$;
        \item $\Modd{G}(R)$ is stratified if the categories $\Mod(\Phi^HR)$ are stratified with Noetherian spectrum for all subgroups $H$ in $G$.
    \end{enumerate}
In both (a) and (b) it suffices to index on a family $\mathcal{F}$ of subgroups $H \subseteq G$ such that $R$ is $\mathcal{F}$-nilpotent.
\end{thmx}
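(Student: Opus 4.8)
The plan is to follow the three-step strategy recalled above: (1) exhibit the geometric fixed point functors $\Phi^H$ as a jointly conservative family of geometric functors, (2) descend stratification along it, and (3) read off the Balmer spectrum from the geometry of the $\Phi^H$. Write $\cat T=\Modd{G}(R)$; since $\Sp_G$ is rigidly-compactly generated and $R$ is a commutative algebra, $\cat T$ and each $\cat S_H:=\Mod(\Phi^H R)$ are rigidly-compactly generated, and each $\Phi^H\colon \cat T\to\cat S_H$ is a geometric functor. Because conjugation by $N_G(H)$ is naturally isomorphic to the identity on $\cat T$ and intertwines $\Phi^H$ with its $W_G(H)$-twists, the induced map $\phi_H\colon\Spc(\Perf(\Phi^H R))\to\Spc(\Perff{G}(R))$ is $W_G(H)$-invariant, depends only on $(H)$, and descends to $\bar\phi_H\colon\Spc(\Perf(\Phi^H R))/W_G(H)\to\Spc(\Perff{G}(R))$. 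I would first dispose of the family remark: if $\widetilde{E\mathcal{F}}\wedge R\simeq 0$ then for $H\notin\mathcal{F}$ one has $\Phi^H(\widetilde{E\mathcal{F}})\simeq\unit$, so monoidality of $\Phi^H$ forces $\Phi^H R\simeq\Phi^H(\widetilde{E\mathcal{F}}\wedge R)\simeq 0$ and $\cat S_H\simeq 0$; those terms then disappear from \eqref{eq:ttquillen} and are vacuous in (b), and any $M\in\cat T$ killed by all $\Phi^H$ with $H\in\mathcal{F}$ is also killed by all $\Phi^H$ with $H\notin\mathcal{F}$ (being a $\Phi^H R\simeq 0$-module), hence $M\simeq 0$ since $\{\Phi^H\}_{H\le G}$ is jointly conservative. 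So $\{\Phi^H\}_{H\in\mathcal{F}}$ is already jointly conservative and one may work with any such $\mathcal{F}$, e.g. the family of all subgroups.

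For part (a) the key input I would use is the factorization $\Phi^H=\Phi^H_H\circ\res^G_H$, where $\Phi^H_H\colon\Modd{H}(\res^G_H R)\to\Mod(\Phi^H R)$ is $H$-geometric fixed points. Up to the standard equivalence identifying the localization of $\Modd{H}(\res^G_H R)$ inverting $\widetilde{E\mathcal{P}}$ (with $\mathcal{P}$ the proper subgroups of $H$) with $\Mod(\Phi^H R)$, the functor $\Phi^H_H$ is a finite localization with kernel $\thick\langle\Ind_L^H\unit:L\subsetneq H\rangle$, so $\Spc(\Perf(\Phi^H R))$ is canonically the open subset of $\Spc(\Perff{H}(\res^G_H R))$ of primes outside every $\supp(\Ind_L^H\unit)$, $L\subsetneq H$. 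Feeding this into the spectral map $\Spc(\res^G_H)$ of the finite extension $R\to\res^G_H R$, whose image is $\supp(\Ind_H^G\unit)$ and whose fibres are governed by the double-coset formula $\res^G_H\Ind_H^G\unit\simeq\bigoplus_{HgH}\Ind_{H\cap{}^{g}H}^{H}(\cdots)$, I would then compute (by pushing the double-coset formula through $\supp$) that $\Img(\phi_H)=\supp(\Ind_H^G\unit)\setminus\bigcup_{L\subsetneq H}\supp(\Ind_L^G\unit)=:U_{(H)}$, with the terms $g\notin N_G(H)$ accounting for exactly the removed closed set and the terms $g\in N_G(H)$ producing precisely the $W_G(H)$-action; this makes $\bar\phi_H$ a bijection onto the locally closed set $U_{(H)}$. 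The same double-coset computation should show that $U_{(H)}\cap U_{(K)}=\emptyset$ when $H,K$ are not subconjugate to one another, and the $U_{(H)}$ cover $\Spc(\Perff{G}(R))$ by \cref{cor:surjective-map}; this yields \eqref{eq:ttquillen}. I would also note that $U_{(H)}\cong\Spc(\Perf(\Phi^H R))/W_G(H)$ is the precise tt-analogue of Quillen's stratum $\mathcal{V}^+_{G,E}$ of \cite{Quillen71}, with genuine geometric fixed points automatically carrying out the removal of proper-subgroup contributions.

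For part (b), assuming each $\cat S_H$ is stratified with Noetherian spectrum: part (a) already shows $\Spc(\Perff{G}(R))$ is a finite union of continuous images of Noetherian spaces, hence Noetherian, so $\cat T$ satisfies the local-to-global principle of \cite{bhs1}. Stratification of $\cat T$ then reduces, in the usual way, to showing that the minimal localizing $\otimes$-ideal $\Gamma_{\mathfrak p}\cat T$ contains no proper nonzero localizing $\otimes$-ideal for every $\mathfrak p\in\Spc(\Perff{G}(R))$. Writing $\mathfrak p\in U_{(H)}$ and $\mathfrak p=\bar\phi_H(\mathfrak q)$ via part (a), I would again exploit $\Phi^H=\Phi^H_H\circ\res^G_H$: the finite (smashing) localization $\Phi^H_H$ matches up the $\Gamma$-idempotents on the relevant local part of its source, and minimality is transported along the finite extension $R\to\res^G_H R$ using the descent techniques of \cite{bhs1} and \cite{Barthel2021pre}. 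Chasing $\Gamma_{\mathfrak p}\cat T$ through both functors should identify it, up to the $W_G(H)$-action, with $\Gamma_{\mathfrak q}\cat S_H$, which is minimal because $\cat S_H$ is stratified.

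The hard part, in both halves, will be the bookkeeping attached to the double-coset formula: in (a), converting it into precise statements about supports in $\Spc(\Perff{G}(R))$ and cleanly separating the Weyl contribution from the proper-subgroup contribution (so as to get the strata exactly right, and in particular the injectivity of $\bar\phi_H$), and in (b), transporting minimality of the $\Gamma_{\mathfrak q}$-ideals along the finite extension $\res^G_H$. This is the equivariant-homotopy-theoretic incarnation of the combinatorial core of Quillen's original argument; by contrast the Noetherianity needed to even invoke the local-to-global principle comes for free from part (a).
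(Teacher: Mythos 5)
Your outline follows essentially the same route as the paper (geometric fixed points as a jointly conservative family, restriction as a finite \'etale extension, the double-coset formula, descent of minimality), but there is a genuine gap at the decisive step of part (a). Pushing the double-coset formula through $\supp$ controls only \emph{images}: granting Balmer's fact that $\Spc(\res_H)$ has image $\supp(R\otimes G/H_+)$, it does identify $\Img(\phi_H)$ with $\supp(R\otimes G/H_+)\setminus\bigcup_{L\subsetneq H}\supp(R\otimes G/L_+)$, and together with $\supp(a\otimes b)=\supp(a)\cap\supp(b)$ it gives disjointness of the strata; but it cannot give the injectivity of $\bar\phi_H$ or the quotient-topology statement, i.e.\ it does not show that two primes of $\Spc(\Perf(\Phi^HR))$ with the same image in $\Spc(\Perff{G}(R))$ differ by the $W_G(H)$-action. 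The input that actually computes the fibres is Balmer's descent theorem for separable algebras \cite[Theorem 3.14]{Balmer16b}: since $\res_H$ is finite \'etale with algebra $A=\mathbb{D}(G/H_+)\otimes R$ (\cref{lem:res_finiteetale}), one obtains a \emph{coequalizer} of topological spaces $\Spc(\Mod(A\otimes A))\rightrightarrows\Spc(\Mod(A))\to\supp(A)$; the double-coset formula then identifies the left-hand term, and a saturation argument on the open stratum $\mathcal{V}^+(R,H)$ shows that the only surviving identifications come from $g\in N_G(H)$, which is precisely the content of \cref{Thm:coequalizer_collection} and its use in the proof of \cref{thm:quillen_decomposition}. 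In your write-up this theorem is what hides behind ``the fibres are governed by the double-coset formula''; without it (or a substitute) the claim that $\bar\phi_H$ is a bijection onto $U_{(H)}$, and a homeomorphism after quotienting, is unsupported --- the step you call bookkeeping is the main theorem being invoked.

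In part (b) there is a second gap: descending minimality along the finite \'etale functor $\res_H$ requires minimality in the intermediate category $\Modd{H}(R)$ at the primes lying over $\mathfrak p$, and the available \'etale-descent statement (\cref{rec:abstractstratdescent}) is formulated under the hypothesis that the target $\Modd{H}(R)$ is stratified, which is not among your assumptions. The paper supplies this by induction on $|G|$: primes in the image of $\Phi^G$ are handled by Zariski descent along that finite localization, and all remaining primes factor through $\res_{\subsetneq G}$, whose target is stratified by the induction hypothesis; note that this argument (\cref{thm:eqstratdescent}) never uses part (a). Your direct transport from $\Mod(\Phi^HR)$ through $\res_H$ can probably be patched --- Zariski descent along $\Phi^H_H$ gives minimality at every prime of the Weyl orbit inside the open stratum of $\Spc(\Perff{H}(R))$, after which a pointwise \'etale-descent argument could run --- but identifying the fibre over $\mathfrak p$ with that Weyl orbit is again the missing coequalizer input from (a), so as written the argument rests on the unproved step. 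Two smaller points you take for granted are themselves citations: that $\Spc(\res_H)$ is a closed map with image $\supp(R\otimes G/H_+)$ is \cite[Theorem 3.4(b)]{Balmer16b}, and the $W_G(H)$-invariance of $\phi_H$ requires the naturality set up in \cref{con-fund-action-rings}.
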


To orient intuition and reconnect to Quillen's work on group cohomology, recall that any field $k$ may be viewed as a commutative equivariant ring spectrum $\underline{k}$ by passing to the Borel-completion of the associated Eilenberg--MacLane ring spectrum. The corresponding tt-category of compact equivariant modules identifies\footnote{This identification requires a result on generation by permutation modules due to Rouquier, Mathew, and Balmer--Gallauer, see \cref{rem:genpermmodules} for further details.} with the bounded derived category of finitely generated $kG$-modules:
\[
\Perff{G}(\underline{k}) \cong \mathrm{D}^b(\mathrm{mod}(kG)).
\]
Coupled with the computation of the spectrum in \cite{BensonCarlsonRickard97}, \cref{thmx:abstract}, $(a)$ applied to $R = \underline{k}$ then essentially recovers Quillen's stratification theorem\footnote{for the Zariski spectrum of homogeneous prime ideals as opposed to all prime ideals} from the beginning. For a more thorough discussion of the comparison, see \cref{ssec:ttmodpquillen}. Part $(b)$ reduces equivariant stratification to a non-equivariant question, one for each subgroup of $G$. In the case of $R = \underline{k}$, this reduction does not constitute a significant simplification, but it's surprisingly powerful in many other instances, as we will demonstrate below. 

Specializing in a different direction, \cref{thmx:abstract} is also interesting for equivariant ring spectra $R=\infl(R^e)$ with \emph{trivial} $G$-action, i.e., those that are inflated from a non-equivariant commutative ring spectrum $R^e$. The initial example is the equivariant sphere spectrum $S_G^0$, the monoidal unit of $\Sp_G$: in this case, \cref{thmx:abstract}, $(a)$ reduces to the computation of the underlying set of $\Spc(\Sp_G^c)$ obtained by Strickland and Balmer--Sanders \cite{BalmerSanders17}. More generally, for such $R$, the tt-category $\Modd{G}(R)$ is equivalent to the tt-category of spectral Mackey functors with coefficients in $\Mod(R^e)$, see \cite[Corollary 4.11]{PatchkoriaSandersWimmer20pp} (for $R^e=\bbZ$) and \cite[Proposition 14.3]{bhs1} (for the general case), all Weyl group actions on the relevant spectra are trivial by \cref{prop:infl-guys}, and \cref{thmx:abstract} recovers the results of \cite[Part 5]{bhs1}.

Under additional assumptions on the equivariant ring spectrum $R$, we can find a more economical model of the decomposition of \cref{thmx:abstract}:

\begin{thmx}[\cref{thm:globalquillenstrat}]\label{thmx:global}
    If $R$ arises as the restriction of a global equivariant ring spectrum $R^{gl}$, then the Weyl group $W_G(H)$ in \cref{thmx:abstract} may be replaced by the `global Weyl group' $W_G^{gl}(H) = N_G(H)/H\cdot C_G(H)$, i.e., the centralizer acts trivially in this case. In particular, if $H$ is abelian, then it suffices to consider the action of the `Quillen--Weyl group' $W_G^Q(H) = N_G(H)/C_G(H)$ on $\Spc(\Perf(\Phi^HR))$.
\end{thmx}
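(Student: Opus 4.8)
The plan is to deduce the statement from \cref{thmx:abstract}; the only new input is a group-theoretic sharpening of the Weyl-group action occurring there. Write $R \simeq U_G(R^{gl})$ where $R^{gl}$ is a global commutative ring spectrum and $U_G$ is the symmetric monoidal restriction functor to $G$-spectra. What I claim is that, under this hypothesis, for every subgroup $H \leq G$ the action of $W_G(H) = N_G(H)/H$ on $\Spc(\Perf(\Phi^H R))$ is trivial on the image of the centralizer $C_G(H)$. This image is the subgroup $C_G(H)H/H$, so once it acts trivially the action factors through
\[
W_G(H)/(C_G(H)H/H) = N_G(H)/(H\cdot C_G(H)) = W_G^{gl}(H),
\]
and consequently $\Spc(\Perf(\Phi^H R))/W_G(H) = \Spc(\Perf(\Phi^H R))/W_G^{gl}(H)$; feeding this into \cref{thmx:abstract}\,(a) gives the refined decomposition, while part (b) is unaffected since its hypothesis mentions only the categories $\Mod(\Phi^H R)$. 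The final clause then follows since for abelian $H$ one has $H \subseteq C_G(H)$, whence $H \cdot C_G(H) = C_G(H)$ and $W_G^{gl}(H) = N_G(H)/C_G(H) = W_G^Q(H)$.

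Everything thus reduces to the claimed triviality of the $C_G(H)$-action, which is where globality enters. First recall that for an arbitrary $G$-spectrum $X$ the $W_G(H)$-action on $\Phi^H X$ is the conjugation action of $N_G(H)$: an element $n$ acts via the canonical self-equivalence $X \simeq c_n^* X$ of $\Sp_G$ together with the identification $\Phi^H \circ c_n^* \simeq \Phi^H$, and $H$ acts trivially because one has passed to $H$-fixed points. For general $X$ this genuinely depends on $n$ and not merely on the induced automorphism of $H$, which is exactly why $C_G(H) = \ker(N_G(H) \to \mathrm{Aut}(H))$ need not act trivially. When $X$ is the restriction of a global ring spectrum, however, I would use the fact that $\Phi^H$ applied to such a restriction factors, compatibly with the $W_G(H)$-action, through a ``global geometric fixed point'' of $R^{gl}$ carrying a natural $\mathrm{Aut}(H)$-action, the $W_G(H)$-action being its pullback along $N_G(H) \to \mathrm{Aut}(H)$, $n \mapsto c_n|_H$. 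Since $c_c|_H = \mathrm{id}$ for $c \in C_G(H)$, such $c$ then acts trivially on $\Phi^H R$ — hence on $\Mod(\Phi^H R)$, on $\Perf(\Phi^H R)$, and on $\Spc(\Perf(\Phi^H R))$ — as claimed. (Combined with the a priori triviality of the $H$-action, the resulting action descends all the way to $\mathrm{Out}(H)$, which is what accounts for the factor $H \cdot C_G(H)$ rather than merely $C_G(H)$.)

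I expect the main obstacle to be making this factorization, and its compatibility with the Weyl actions, precise. On homotopy groups the relevant fact is classical — the homotopy of a global spectrum is a global functor, for which inner automorphisms act trivially on $M(H)$, endowing it with an $\mathrm{Out}(H)$-action — but here one needs the statement at the level of the tensor-triangulated category $\Mod(\Phi^H R)$, or at least of its Balmer spectrum, together with compatibility with the equivariant-to-global comparison functors in whichever model of global homotopy theory one works in. A useful simplification is that \cref{thmx:abstract} only requires, for each $c \in C_G(H)$, that the induced tt-self-equivalence of $\Perf(\Phi^H R)$ be isomorphic to the identity; so it suffices to verify the compatibility after passing to homotopy categories, or even all the way down to Balmer spectra, which affords some latitude in tracking coherences.
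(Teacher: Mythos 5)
Your overall reduction is the same as the paper's: \cref{thm:globalquillenstrat} follows from \cref{thm:quillen_decomposition} once one knows that the image $H\cdot C_G(H)/H$ of the centralizer in $W_G(H)$ acts trivially on $\mathcal{V}^+(R,H)\cong\Spc(\Perf(\Phi^H R))$, and the abelian case is then just the observation $H\subseteq C_G(H)$. So everything hinges on your argument for that triviality, and this is where there is a genuine gap: the statement you propose to ``use'' --- that for $R$ restricted from a global spectrum the $W_G(H)$-action on $\Phi^H R$ factors, compatibly with the action appearing in \cref{thmx:abstract}, through an $\mathrm{Aut}(H)$- (indeed $\mathrm{Out}(H)$-) action on a global geometric fixed point object --- is not an off-the-shelf fact one can quote; it is essentially the content of the theorem, and the compatibility clause is the nontrivial part. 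Note in particular that the action you must trivialize is not defined via conjugation self-equivalences of $\Sp_G$: in \cref{con-fund-action-rings} it is the $W_G(H)$-action through $\mathbb{D}(G/H_+)$ on $\mathbb{D}(G/H_+)\otimes R\otimes\widetilde{E}[\subset H]$, transported along the equivalence of \cref{lem:res_finiteetale-gfp}. So even granting a suitably equivariant model of ``global geometric fixed points'' (which at the structured, spectrum level is itself not a quotable statement), you would still have to identify this module-theoretic functoriality with the global one.

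The paper supplies exactly this missing identification. It first lifts $G\mapsto \Mod_G(R_G)$ to a functor on $\mathrm{Glo}^{\mathrm{op}}$ (\cref{lem:functor-mod}, resting on \cite{LNP2022}), so that conjugations become naturally isomorphic to the identity (\cref{rem:conj-id}); no global geometric fixed point object is ever needed. It then proves in \cref{lem:two-functors} that the functor $G/H\mapsto \Ho(\Perf_G(\mathbb{D}(G/H_+)\otimes R_G))$, through which the Weyl action is actually defined, is pseudonaturally equivalent to the globally-defined functor $G/H\mapsto\Ho(\Perf_H(R_H))$ and hence factors through the Quillen orbit category $\mathcal{O}^Q_{\mathcal{F}}(G)$, which is precisely the statement that $C_G(H)$ acts trivially up to natural isomorphism. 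That comparison is not formal: it uses \cref{prop:base-change-naturality}, the extension of natural transformations along separable algebras (\cref{prop:nat-tra}), and a thick-subcategory generation argument. Your closing remark that it suffices to verify everything on homotopy categories, or even on Balmer spectra, up to isomorphism is correct and is indeed the level at which the paper works, but it does not remove the need for this comparison; constructing it (or an equivalent $\mathrm{Out}(H)$-equivariant factorization together with its identification with the action of \cref{con-fund-action-rings}) is the step your proposal leaves open.
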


We provide an example that demonstrates that the conclusion of this theorem fails for arbitrary equivariant ring spectra (\Cref{ex:gl-hyp-needed}) and that, even if it holds, the resulting action is in general neither free nor trivial (\cref{rem:freeness} and \cref{rem:res-not-injective}). While \cref{thmx:global} seems formal, it becomes very useful in practice when we try to compute actions in explicit examples. The next theorem collects several new stratification
results for prominent equivariant ring spectra from \cref{sec:examples}; each of them follows in a straightforward way from the novel techniques discussed so far.

\begin{thmx}\label{thmx:examples}
The category $\Modd{G}(R)$ is stratified in each of the following cases:
    \begin{enumerate}
        \item (\Cref{ex:constantgreen}) The integral constant Green functor $R=H\underline{\bbZ}$ for any cyclic $p$-group $G$, with stratified Balmer spectrum described in \Cref{spc:constantgreen}.
        \item (\Cref{thm:kug-stratification}) Equivariant $K$-theory $R = KU_G$ for any finite group $G$. In this case,  $\Spc(\Perff{G}(KU_G)) \cong \Spec(\pi_0KU_G)$, where $\pi_0KU_G \cong R(G)$ is the complex representation ring of $G$.
        \item (\Cref{exa:kr-stratification}) Atiyah's $K$-theory with reality $R = K\mathbb{R}$ for $G=C_2$. In this case, $\Spc(\Perff{C_2}(K\mathbb{R})) \cong \Spec(\Z)$.
    \end{enumerate}
\end{thmx}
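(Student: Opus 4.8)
The plan is to deduce all three statements from the general machinery of \cref{thmx:abstract} and \cref{thmx:global}. In each case the argument for stratification has the same shape: choose a family $\mathcal{F}$ of subgroups for which $R$ is $\mathcal{F}$-nilpotent, compute the geometric fixed points $\Phi^H R$ for $H \in \mathcal{F}$, and verify that each non-equivariant tt-category $\Mod(\Phi^H R)$ is stratified with Noetherian Balmer spectrum; stratification of $\Modd{G}(R)$ is then immediate from \cref{thmx:abstract}(b). The Balmer spectrum is then read off from \cref{thmx:abstract}(a): \cref{thmx:global} applies to the first two cases (the constant Green functor and equivariant $K$-theory being restrictions of global ring spectra), and for $K\mathbb{R}$ we simplify the relevant Weyl action by hand; in every case one is then left with matching the resulting locally closed decomposition to the stated closed form.

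For (a), with $G = C_{p^n}$, the subgroups are the $C_{p^k}$ with $0 \le k \le n$, and since $\Phi^{C_{p^n}}(H\underline{\bbZ}) \neq 0$ no proper family is nilpotent, so I take $\mathcal{F}$ to consist of all subgroups (where $\mathcal{F}$-nilpotence is automatic). Here $\Phi^e(H\underline{\bbZ}) = H\bbZ$, whose module category is stratified with Balmer spectrum $\Spec(\bbZ)$, and for $k \ge 1$ the isotropy separation sequence --- equivalently, the known $RO(C_{p^k})$-graded homotopy of the constant Mackey functor --- identifies $\Phi^{C_{p^k}}(H\underline{\bbZ})$ as a ring spectrum with Noetherian homotopy concentrated in even degrees, whose module category is again stratified with Noetherian Balmer spectrum. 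Thus \cref{thmx:abstract}(b) gives stratification. Because $G$ is abelian every centralizer is all of $G$, so the global Weyl groups $W_G^{gl}(C_{p^k})$ are trivial and \cref{thmx:global} collapses \cref{thmx:abstract}(a) to $\Spc(\Perff{G}(H\underline{\bbZ})) \cong \bigsqcup_{0 \le k \le n} \Spc(\Perf(\Phi^{C_{p^k}}(H\underline{\bbZ})))$, which is the description recorded in \cref{spc:constantgreen}.

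For (b) I take $\mathcal{F}$ to be the family $\mathcal{F}_{\mathrm{cyc}}$ of cyclic subgroups. Inverting the Euler classes of all representations without trivial summand inside $R(H) = \pi_0 KU_H$ shows $\Phi^H KU_G = 0$ for every non-cyclic $H$; together with an Artin-type finiteness input this yields the $\mathcal{F}_{\mathrm{cyc}}$-nilpotence of $KU_G$. The same Euler-class computation identifies $\Phi^C KU_G = \Phi^C KU_C$, for $C$ cyclic, as an even periodic ring spectrum whose $\pi_0$ is the regular Noetherian ring $\bbZ[\zeta_{|C|}][1/|C|]$, so $\Mod(\Phi^C KU_G)$ is stratified with Balmer spectrum $\Spec(\bbZ[\zeta_{|C|}][1/|C|])$. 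Hence $\Modd{G}(KU_G)$ is stratified by \cref{thmx:abstract}(b), and since equivariant $K$-theory is global and cyclic subgroups are abelian, \cref{thmx:global} turns \cref{thmx:abstract}(a) into $\Spc(\Perff{G}(KU_G)) \cong \bigsqcup_{(C)} \Spec(\bbZ[\zeta_{|C|}][1/|C|])/W_G^Q(C)$. Comparing this with Segal's description of the prime ideal spectrum of the representation ring --- whose $C$-stratum is precisely $\Spec(\bbZ[\zeta_{|C|}][1/|C|])/W_G^Q(C)$ --- identifies the right-hand side with $\Spec(R(G)) = \Spec(\pi_0 KU_G)$.

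For (c), over $G = C_2$ the two subgroups are $e$ and $C_2$. We have $\Phi^e K\mathbb{R} = KU$, with stratified module category and Balmer spectrum $\Spec(\bbZ)$, while the equivalences $K\mathbb{R}^{C_2} \simeq KO \simeq KU^{hC_2}$ (the underlying $C_2$-action being complex conjugation) identify $\Phi^{C_2} K\mathbb{R}$ with the Tate spectrum $KU^{tC_2}$; a computation via the Tate spectral sequence shows this has Noetherian homotopy with $2$ nilpotent, so its module category is stratified with Noetherian Balmer spectrum lying over the prime $2$. Therefore \cref{thmx:abstract}(b) applies, and since complex conjugation fixes $\pi_0 KU = \bbZ$ it acts trivially on $\Spc(\Perf(KU)) = \Spec(\bbZ)$; \cref{thmx:abstract}(a) then presents $\Spc(\Perff{C_2}(K\mathbb{R}))$ as $\Spec(\bbZ)$ carrying a locally closed stratification whose open stratum comes from $\Phi^e$ and whose remaining stratum, coming from $\Phi^{C_2}$, sits over $2$, and these assemble back to $\Spec(\bbZ)$. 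The principal obstacle in all three cases is the geometric fixed point input --- the vanishing of $\Phi^H KU_G$ on non-cyclic $H$, the identification $\Phi^{C_2} K\mathbb{R} \simeq KU^{tC_2}$, and the explicit homotopy of the $\Phi^{C_{p^k}}(H\underline{\bbZ})$ --- together with, for (b) and (c), the care needed to assemble the decomposition of \cref{thmx:abstract}(a) into the clean closed forms; once the geometric fixed points are in hand, the stratification assertions are formal.
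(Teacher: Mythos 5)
Your case (c) contains a genuine error. You identify $\Phi^{C_2}K\mathbb{R}$ with the Tate construction $KU^{tC_2}$ (legitimate, since $K\mathbb{R}$ is cofree), but then assert that this is a nonzero ring spectrum with Noetherian homotopy concentrated over the prime $2$, giving an extra stratum sitting over $(2)$. In fact the key input of \cref{exa:kr-stratification} is that $\Phi^{C_2}K\mathbb{R}\simeq 0$, quoted from \cite[Proposition 6.1]{HeardStojanoska}; equivalently, for the conjugation action the norm map $KU_{hC_2}\to KU^{hC_2}\simeq KO$ is an equivalence, so $KU^{tC_2}\simeq 0$ and your Tate spectral sequence claim is false. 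Your version is also internally inconsistent: the decomposition of \cref{thmx:abstract}(a) is a partition of $\Spc(\Perff{C_2}(K\mathbb{R}))$ into disjoint locally closed subsets, so a nonempty stratum coming from $\Phi^{C_2}$ would contribute points in addition to the copy of $\Spec(\bbZ)$ coming from the trivial subgroup; the two strata cannot ``assemble back'' to $\Spec(\bbZ)$. Moreover, a ring with $2$ nilpotent is not regular, so even granting the computation, stratification of its module category would not follow from the even-regular criterion of \cite{DellAmbrogioStanley16} that you rely on elsewhere. The correct argument is precisely the vanishing of the $C_2$-geometric fixed points, which leaves only the $e$-stratum $\Spec^h(KU_*)/C_2\cong\Spec(\bbZ)$ with trivial Weyl action.

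For (a) and (b) your route to stratification is the paper's (family of subgroups, geometric fixed points, \cref{thmx:abstract}(b)), with two caveats. In (a) you must record that $\pi_*\Phi^{C_{p^k}}H\underline{\bbZ}\cong\bbZ/p[t]$ is \emph{regular}; evenness plus Noetherianity alone does not feed into \cite{DellAmbrogioStanley16}, and regularity is exactly what \cref{ex:constantgreen} checks. In (b) your identification of the spectrum differs from the paper: the paper shows Balmer's comparison map $\rho_0$ is a homeomorphism, first for cyclic groups (\cref{lem:kug_cyclic_balmer}) and then for general $G$ via the colimit formula of \cref{cor:quillen_stratification} and the Noetherianity criterion \cref{prop:bijectioniffhomeomorphism} (see \cref{lem:generalspc_ku_g}). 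Your plan of matching the strata of \cref{thmx:global} against the Segal--Bojanowska stratification of $\Spec(R(G))$ can be made to work (the paper itself remarks that their results yield the needed bijectivity), but as written it only exhibits two partitions with homeomorphic pieces: without a globally defined continuous map (such as $\rho_0$) compatible with both decompositions, together with the Noetherian input needed to upgrade a bijection to a homeomorphism, one does not obtain the claimed homeomorphism $\Spc(\Perff{G}(KU_G))\cong\Spec(R(G))$.
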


In summary, our equivariant stratification theorem expresses a substantial part of the equivariant tt-geometry of any equivariant ring spectrum $R$ in terms of the non-equivariant tt-geometry of their geometric fixed points. As such, the list in \cref{thmx:examples} is not exhaustive, but rather intended as a proof of concept. In order to give a complete description of the equivariant tt-geometry of $R$, it remains to determine how the strata of $\Spc(\Perff{G}(R))$ in \cref{thmx:abstract}, $(a)$ are glued together. In general, this is a problem that poses substantial difficulties, as witnessed by the case of $\Sp_G$ for which we can still only resolve this question for abelian groups \cite{BHNNNS19}. To illustrate this point further, note that \cref{thmx:examples}, $(a)$ gives a short and independent proof of a recent result of Balmer--Gallauer \cite{BalmerGallauer2022pre} for cyclic $p$-groups, the case relevant for the motivic tt-geometry of Artin motives over finite fields. However, it requires additional techniques to determine the remaining specialization relations in \Cref{spc:constantgreen}, and this corresponds precisely to the gluing data between the strata. While for cyclic groups, this could be done `by hand' and is part of recent work of Balmer and Gallauer \cite{BalmerGallauer_announcement}, a more systematic understanding for arbitrary equivariant ring spectra $R$ would be desirable. 

Turning attention to the main example of interest to us in this paper, let $E=E_n$ be a Lubin--Tate $E$-theory of height $n$ at the prime $p$ and consider its $G$-Borel completion $\underline{E}$. From the point of view of chromatic homotopy theory, $E$ is a commutative ring spectrum of mixed characteristic\footnote{Here, being of characteristic $(p,n)$ is tested against the Morava $K$-theories $K(p,n)$, which form the prime fields of the stable homotopy category.} $((0), (p,1),\ldots,(p,n))$ and it is arguably the most important example of such a spectrum. It plays the same fundamental role in higher algebra as the $p$-adic integers $\mathbb{Z}_p$ in ordinary algebra; note that, viewed chromatically, the latter is of characteristic $((0),(p,\infty))$. 

Unsurprisingly, understanding the $E$-cohomology of finite groups has a long history, starting with the work of Ravenel from the 1970s, but also stimulating much recent activity, such as \cite{Stapleton2013}, \cite{MathewNaumannNoel2019}, \cite{Lurie_elliptic3}, and \cite{CMNN2020} with connections to algebraic $K$-theory. A version of Quillen's stratification for $\Spec(E^0(BG))$ was previously found by Greenlees and Strickland \cite{greenleesstrickland_varieties}. It is therefore natural to wonder if the categorification of Quillen's classical stratification theorem in the work of Benson--Carlson--Rickard \cite{BensonCarlsonRickard97} and Benson--Iyengar--Krause \cite{BensonIyengarKrause11a} admits a chromatic counterpart. Our next theorem provides an affirmative answer:

\begin{thmx}[\cref{thm:etheorycohomstratification,cor:decomp-specEBG}]\label{thmx:etheory}
Let $\underline{E}=\underline{E_n}$ be a $G$-Borel-equivariant Lubin--Tate $E$-theory of height $n$ and at the prime $p$. The category $\Modd{G}(\underline{E})$ is cohomologically stratified, and there is a decomposition into locally-closed subsets\footnote{Note that the first map is homeomorphism, while the second map provides only a stratification.}
\[
\Spc(\Perff{G}(\underline{E})) \cong \Spec(E^0(BG)) \simeq \bigsqcup_A \Spec(\pi_0\Phi^A\underline{E})/W_G^Q(A),
\]
where the disjoint union is indexed on abelian $p$-subgroups $A$ of $G$ generated by at most $n$ elements. In particular, the generalized telescope conjecture holds for $\Modd{G}(\underline{E})$ and there are explicit bijections
\[
\begin{tikzcd}[ampersand replacement=\&]
	{\begin{Bmatrix}
	\text{Thick $\otimes$-ideals of} \\ \Perff{G}(\underline{E})
	\end{Bmatrix}} 
	\& 
	{\begin{Bmatrix}
	\text{Specialization closed} \\
	\text{subsets of } \Spec(E^0(BG)) 
	\end{Bmatrix}}
	\arrow["\sim", from=1-1, to=1-2]
\end{tikzcd}
\]
and
\[
\begin{tikzcd}[ampersand replacement=\&]
	{\begin{Bmatrix}
	\text{Localizing $\otimes$-ideals of} \\ \Modd{G}(\underline{E})
	\end{Bmatrix}} 
	\& 
	{\begin{Bmatrix}
	\text{Subsets of} \\
	\Spec(E^0(BG)) 
	\end{Bmatrix}.}
	\arrow["\sim", from=1-1, to=1-2]
\end{tikzcd}
\]
\end{thmx}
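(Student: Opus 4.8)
The plan is to deduce \cref{thmx:etheory} from the abstract results \cref{thmx:abstract} and \cref{thmx:global} together with an explicit analysis of the geometric fixed points of $\underline{E}$ and the arithmetic Quillen stratification of $\Spec(E^0(BG))$ due to Greenlees--Strickland \cite{greenleesstrickland_varieties}. The first and most substantial input is the computation of $\Phi^H\underline{E}$ for every subgroup $H\le G$. The key phenomenon is \emph{blueshift}: one shows that $\Phi^H\underline{E_n}$ vanishes unless $H$ is an abelian $p$-group of rank $r\le n$, in which case $\Phi^A\underline{E_n}$ is an even-periodic Landweber-exact $\bbE_\infty$-ring with
\[
\pi_0\Phi^A\underline{E_n}\;\cong\;E^0(BA)[\mathrm{eu}^{-1}],
\]
the localization of $E^0(BA)$ at the Euler classes of the faithful $A$-representations, which by Greenlees--Strickland is regular Noetherian of Krull dimension $n-r$. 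The vanishing for $H$ not an abelian $p$-group reduces, via the multiplicativity $\Phi^{H}=\Phi^{H/N}\circ\Phi^{N}$ of geometric fixed points, to an induction on $|H|$ and the rank-one case (using that inverting the Euler class of a representation with non-$p$-power kernel kills $\pi_0$, since the relevant $p$-series is a unit multiple of the coordinate).

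Granting this, $\underline{E}$ is $\mathcal{F}$-nilpotent for the family $\mathcal{F}$ of abelian $p$-subgroups of $G$ of rank at most $n$, by the standard characterization of nilpotence in terms of vanishing geometric fixed points. For each $A\in\mathcal{F}$ the category $\Mod(\Phi^A\underline{E})$ is the module category of an even-periodic Landweber-exact ring with regular Noetherian $\pi_0$, hence is (cohomologically) stratified with $\Spc(\Perf(\Phi^A\underline{E}))\cong\Spec(\pi_0\Phi^A\underline{E})$ Noetherian. Feeding this into \cref{thmx:abstract}(b) shows that $\Modd{G}(\underline{E})$ is stratified. Moreover $\underline{E}$ is the restriction of a global Lubin--Tate ring spectrum, so \cref{thmx:global} applies, and since $A\subseteq C_G(A)$ for abelian $A$ the global Weyl group of $A$ coincides with the Quillen--Weyl group $W_G^Q(A)=N_G(A)/C_G(A)$. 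Combining with \cref{thmx:abstract}(a) and the identification $\Spc(\Perf(\Phi^A\underline{E}))\cong\Spec(\pi_0\Phi^A\underline{E})$ produces the locally-closed decomposition
\[
\Spc(\Perff{G}(\underline{E}))\;\simeq\;\bigsqcup_{(A)}\Spec(\pi_0\Phi^A\underline{E})/W_G^Q(A),
\]
indexed on conjugacy classes of abelian $p$-subgroups of rank $\le n$.

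It then remains to show that the Balmer comparison map
\[
\rho\colon\Spc(\Perff{G}(\underline{E}))\longrightarrow\Spec\big(\pi_0\End_{\Modd{G}(\underline{E})}(\unit)\big)=\Spec(E^0(BG))
\]
is a homeomorphism; once this is in hand, the stratification of the previous step is by definition a cohomological stratification, and since $E^0(BG)$ is Noetherian the generalized telescope conjecture follows, as do the two displayed bijections --- the first being Balmer's classification of thick $\otimes$-ideals transported along $\rho$, the second the definition of stratification transported along $\rho$. The essential input here is the Quillen stratification of $\Spec(E^0(BG))$ of Greenlees--Strickland: the ring maps $E^0(BG)\to E^0(BA)\to\pi_0\Phi^A\underline{E}$ induce maps $\Spec(\pi_0\Phi^A\underline{E})\to\Spec(E^0(BG))$ that are jointly surjective with fibers the $W_G^Q(A)$-orbits, and this decomposition matches the one above stratum by stratum, both strata being the images of the functors $\Phi^A$ (by naturality of the comparison map). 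Hence $\rho$ restricts to a bijection on each stratum, so is a continuous bijection; as source and target are Noetherian spectral spaces and $\rho$ is compatible with the parametrizations of thick $\otimes$-ideals, respectively specialization-closed subsets, on the two sides, it is a homeomorphism.

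I expect this last step to be the main obstacle. \cref{thmx:abstract} only delivers the individual \emph{strata} of $\Spc(\Perff{G}(\underline{E}))$, each with its quotient topology; upgrading to a genuine homeomorphism with $\Spec(E^0(BG))$ requires controlling how those strata are glued, and it is precisely this gluing data that Greenlees--Strickland provide on the cohomological side. Making the comparison between the tensor-triangular decomposition and the arithmetic one precise --- equivalently, establishing cohomological stratification by descending the $E^0(BG)$-action along the jointly conservative family $\{\Phi^A\}_{A\in\mathcal{F}}$ --- is the technical heart of the proof; by contrast, the blueshift computation of the geometric fixed points is an essential but largely chromatic ingredient, and the stratification statement itself is a formal consequence of the machinery developed earlier in the paper.
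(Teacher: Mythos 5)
Your overall architecture is the paper's: compute/control the geometric fixed points, feed them into \cref{thmx:abstract} and \cref{thmx:global}, and then show Balmer's comparison map is a homeomorphism. However, there are two genuine gaps, both at the points the paper identifies as the technical heart. First, you cite Greenlees--Strickland for the regularity of $\pi_0\Phi^A\underline{E}\cong E^0(BA)[\mathrm{eu}^{-1}]$, but this statement is not in \cite{greenleesstrickland_varieties}: it is precisely \cref{thmx:regularity} (= \cref{thm:regular}), one of the new results of the paper. What is available from Drinfel'd and Strickland is the regularity of the Drinfel'd ring $D$ of level $A$-structures \cite[Theorem 7.3]{Strickl1997Finite}; the new content is that the universal homomorphism $A^*\to\mathbb{G}_E$ becomes a level structure after base change to $\pi_0\Phi^A\underline{E}$, so that the open immersion $X_A^\circ\hookrightarrow X_A$ of \cref{lem:geometric_fixed_points_as_open_subscheme} factors through the closed subscheme $\Spec(D)\subseteq X_A$ (\cref{prop:factor_geometric_and_drinfeld}, via the separability/divisibility criterion \cref{prop:divide_over_artin}). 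Regularity of an Euler-class localization of $E^0(BA)$ is not formal, and assuming it by citation removes the main new ingredient on which \cref{lem:gfp_balmer}, and hence everything downstream, depends.

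Second, your route to the homeomorphism $\rho_0^G\colon\Spc(\Perff{G}(\underline{E}))\to\Spec(E^0(BG))$ imports from \cite{greenleesstrickland_varieties} a strong Quillen stratification of $\Spec(E^0(BG))$ --- jointly surjective maps from the $\Spec(\pi_0\Phi^A\underline{E})$ with fibers exactly the $W_G^Q(A)$-orbits. Greenlees--Strickland prove a decomposition into irreducible subvarieties, with the strong (disjoint, Weyl-quotient) form only after pulling back to monochromatic strata; the disjoint decomposition you want to use is exactly \cref{cor:decomp-specEBG}, i.e.\ an output of the theorem you are proving, so the argument is circular as stated. The paper instead proves bijectivity of $\rho_0^A$ for abelian $A$ directly, from the set-theoretic decomposition of $\Spec(E^0(BA))$ into the open strata $X_{A'}^\circ$ of the closed subschemes $X_{A'}$ together with surjectivity of $\Spc(\Phi)$ (\cref{lem:Balmer-spectrum-E_A}), and then reduces general $G$ to the abelian case by matching the orbit-category colimit descriptions on both sides (\cref{cor:quillen_stratification}, using \cref{thm:quillen_decomposition}(a) on the tt-side and the colimit formula of Mathew--Naumann--Noel on the ring side, with the Noetherian/finiteness input of \cref{lem:e-theory-end-finite}). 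Relatedly, your final step ``continuous bijection of Noetherian spectral spaces compatible with the classifications, hence a homeomorphism'' is not a valid inference on its own; the upgrade from bijection to homeomorphism is done via the criterion of \cref{prop:bijectioniffhomeomorphism}, which requires exactly the End-finiteness hypotheses checked in \cref{lem:e-theory-end-finite}. The parts of your proposal that do go through (nilpotence for the family of abelian $p$-subgroups of rank at most $n$, descent of stratification via \cref{thmx:abstract}(b), and the replacement of $W_G(A)$ by $W_G^Q(A)$ via \cref{thmx:global}) coincide with the paper's argument.
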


We include a brief outline of the proof. Since $\underline{E}$ is Borel-complete and $E_n$ is complex orientable, in a first step we utilize work of Mathew, Naumann, and Noel \cite{MathewNaumannNoel2019} to reduce to the case of a finite abelian $p$-group $A$. From here on, our strategy diverges from the approach of Benson, Iyengar, and Krause at chromatic height $\infty$. The next step applies \cref{thmx:abstract} and \cref{thmx:global} to reduce further to the following two tasks:
    \begin{enumerate}
        \item Show that $\Mod(\Phi^A\underline{E})$ is stratified.
        \item Identify $\Spc(\Perff{A}(\underline{E}))$ with $\Spec(E^0(BA))$.
    \end{enumerate}
In order to solve (b), we use Balmer's comparison map \cite{Balmer10b} to compare the stratification of $\Spc(\Perff{A}(\underline{E}))$ to the analogous stratification of $\Spec(E^0(BA))$. In light of results proven by Dell'Ambrogio and Stanley \cite{DellAmbrogioStanley16}, both (a) and (b) are then consequences of the following key technical input to the proof of \cref{thmx:etheory}:

\begin{thmx}\label{thmx:regularity}
    The commutative ring $\pi_0\Phi^A\underline{E}$ is regular Noetherian for any finite abelian $p$-group $A$.
\end{thmx}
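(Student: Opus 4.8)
The plan is to pin down the commutative ring $\pi_0\Phi^A\underline{E}$ completely and then prove regularity of what comes out. Since $\underline{E}$ is Borel-complete, one has $\Phi^A\underline{E}=(\widetilde{E\mathcal P[A]}\wedge\underline E)^A$ for the family $\mathcal P[A]$ of proper subgroups of $A$, and writing $\widetilde{E\mathcal P[A]}$ as the colimit of the representation spheres $S^V$ with $V^A=0$ this becomes $\colim_{V^A=0}(\Sigma^V E)^{hA}$. As $E=E_n$ is complex-orientable and even periodic, the Thom isomorphism identifies $\pi_0(\Sigma^V E)^{hA}$ with the free rank-one $E^0(BA)$-module on the Euler class $e_V$, and evenness kills the relevant $\lim^1$. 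Hence $\Phi^A\underline E$ is an even-periodic $E$-algebra with
\[
\pi_0\Phi^A\underline E\;\cong\;E^0(BA)\bigl[\,e_\chi^{-1}:\chi\in A^\vee\smallsetminus\{1\}\,\bigr],
\]
the localization of $E^0(BA)$ at the Euler classes of the nontrivial characters $\chi$ of $A$ (here $A^\vee=\Hom(A,\mathbb T)$). Writing $\mathbb G$ for the formal group of $E$, so that $E^0(BA)=\mathcal O_{\Hom(A^\vee,\mathbb G)}$ and $e_\chi$ is, up to a unit, the coordinate of the tautological point $\phi(\chi)$, this realizes $\pi_0\Phi^A\underline E$ as the ring of functions on the open subscheme $X_A\subseteq\Hom(A^\vee,\mathbb G)$ where every $\phi(\chi)$, $\chi\neq1$, is invertible. (If the $p$-rank of $A$ exceeds $n$ then $X_A=\emptyset$ and the statement is vacuous, so we may assume $\rk_p A\le n$.)

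Noetherianity is immediate: $E^0=\mathbb W(\kappa)[[u_1,\dots,u_{n-1}]]$ is Noetherian and $E^0(BA)$ is module-finite over it (the Weierstrass series $[p^{k_i}](x_i)$ make it finite free over $E^0$), so the localization $\pi_0\Phi^A\underline E$ is Noetherian. For regularity I would identify $X_A$ with an open subscheme of the scheme $\operatorname{Level}(A^\vee,\mathbb G)$ of Drinfeld level structures of type $A^\vee$ on $\mathbb G$, and then invoke regularity of the latter. The point of the inclusion is that inverting the Euler classes of all nontrivial characters already forces the (closed) level-structure conditions: for $A=C_p$ this is the identity $E^0(BC_p)[x^{-1}]=(E^0[[x]]/x\tilde f(x))[x^{-1}]=E^0[[x]][x^{-1}]/(\tilde f(x))$ with $\tilde f(x)=[p]_F(x)/x$, which is exactly the $\mathbb Z/p$-level-structure ring $E^0[[x]]/(\tilde f(x))$ localized at $x$, and in general the level-structure relations reduce to identities already present in $E^0(BA)[e_\chi^{-1}]$. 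By Drinfeld's theorem — for formal groups in the form given by Katz–Mazur, and for Lubin–Tate theories as recorded by Ando–Hopkins–Strickland and by Strickland — the ring $\mathcal O_{\operatorname{Level}(A^\vee,\mathbb G)}$ is regular when $\rk_p A\le n$, and an open subscheme of a regular scheme is regular.

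Alternatively one can verify regularity by hand from the presentation, which is instructive for $A=C_p$: here $\pi_0\Phi^{C_p}\underline E=E^0[[x]][x^{-1}]/(\tilde f(x))$ is a hypersurface in a regular ring, hence Cohen–Macaulay; at a prime not containing $p$ the identity $[p]_F'(x)=p\cdot u(x)$ (with $u$ a unit, from $[p]^*\omega=p\,\omega$ for the normalized invariant differential) together with $[p]_F(x)=x\tilde f(x)$ gives $\tilde f(x)+x\tilde f'(x)=p\,u(x)$, so $d\tilde f\neq0$ there; at a prime containing $p$ one reduces mod $p$, where $[p]_F(x)\equiv v_1x^p+_Fv_2x^{p^2}+_F\cdots$ with $v_1=u_1$ a member of a regular system of parameters, and after inverting $x$ one gets $\pi_0\Phi^{C_p}\underline E/p\cong(E^0/p)[[x]][x^{-1}]/(u_1+x(\cdots))\cong\kappa[[u_2,\dots,u_{n-1},x]][x^{-1}]$, so (as $(p,\tilde f)$ has height $\ge2$ and $p$ is thus a nonzerodivisor) regularity follows. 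For general $A$ one iterates using $\Phi^A=\Phi^{A/C_p}\circ\Phi^{C_p}$, after identifying $\Phi^{C_p}\underline E$ Borel-equivariantly with the even-periodic complex-orientable ring spectrum $E^{tC_p}$, whose $\pi_0$ is regular by the base case, and applying the inductive hypothesis to $(E^{tC_p},A/C_p)$.

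The substantive difficulty is the regularity statement, and within it the behaviour in characteristic $p$: one must show that inverting the Euler classes of nontrivial characters excises precisely the degenerate locus of $\Hom(A^\vee,\mathbb G)$ and leaves a regular scheme. In the level-structure approach this means (i) matching $X_A$ with an open subscheme of $\operatorname{Level}(A^\vee,\mathbb G)$ — verifying that the Drinfeld conditions become automatic once the Euler classes are inverted — and (ii) having regularity of $\operatorname{Level}(A^\vee,\mathbb G)$ available for an arbitrary finite abelian $p$-group $A^\vee$ of $p$-rank at most $n$. In the direct approach the obstacle is the bookkeeping needed to run the Jacobian/elimination argument for general $A$, equivalently to control the formal groups of the iterated Tate constructions $E^{tC_p},(E^{tC_p})^{tC_p},\dots$ that appear — a manifestation of chromatic blueshift.
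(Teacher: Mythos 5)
Your overall route coincides with the paper's proof of \cref{thm:regular}: reduce to $A$ of $p$-rank at most $n$ via $\mathcal{F}$-nilpotence, identify $\pi_0\Phi^A\underline{E}$ with the localization of $E^0(BA)$ at the Euler classes of the nontrivial characters, so that $\Spec(\pi_0\Phi^A\underline{E})$ is the open stratum $X_A^\circ\subseteq X_A=\Spec(E^0(BA))$ (cf.\ \cref{lem:geometric_fixed_points_as_open_subscheme}), get the Noetherian property by localizing a Noetherian ring, and deduce regularity by exhibiting this open subscheme inside the Drinfel'd--Strickland level-structure scheme, whose regularity for an arbitrary finite abelian $p$-group of rank at most $n$ is Strickland's theorem. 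The genuine gap is your step (i), which you yourself flag as the ``substantive difficulty'': the claim that after inverting the Euler classes the Drinfel'd divisibility condition becomes automatic, i.e.\ that $E^0(BA)\to\pi_0\Phi^A\underline{E}$ factors through the Drinfel'd ring $D$. You verify this only for $A=C_p$, where it is a one-line manipulation of $[p](x)=x\cdot\langle p\rangle(x)$; for $A$ of higher rank the condition is that $P(X)=\prod_{a\in A^*,\,pa=0}(X-\alpha(a))$ divides the Weierstrass polynomial $Q$ of the $p$-series, and this does not follow by formally cancelling a factor in the localized ring. Asserting that ``the level-structure relations reduce to identities already present'' is precisely the content of \cref{prop:factor_geometric_and_drinfeld}, so as written the central point of the proof is missing.

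The paper closes this gap by a local argument (\cref{prop:divide_over_artin} together with the proof of \cref{prop:factor_geometric_and_drinfeld}): it suffices to prove $P\mid Q$ after localizing at an arbitrary prime $\mathfrak{p}$ of $\pi_0\Phi^A\underline{E}$. Over the residue field $K$ the composite $A^*\to\mathbb{G}_E(K)$ is injective---otherwise its classifying map would factor through the restriction $E^0(BA)\to E^0(BA')$ for a proper subgroup $A'\subsetneq A$ and hence annihilate an Euler class that has been inverted---so the image of $P$ in $K[X]$ is separable and divides $Q$ there; one then lifts divisibility from $K$ to the local ring root by root, using that differences of distinct roots of $P$ are units in the local ring. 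By initiality of $D$ this yields the factorization, whence $X_A^\circ\to\Spec(D)$ is an open immersion into a regular scheme and regularity follows because regularity is a local property. An argument of this kind (or a genuinely different one) is needed to complete your sketch. Your alternative route via iterating $\Phi^{C_p}$ and blueshift is not what the paper does and carries its own unresolved burden, which you also acknowledge: one would have to identify $\Phi^{C_p}\underline{E}_A$ with a Borel-complete $A/C_p$-equivariant ring spectrum whose coefficients remain even periodic, complex orientable and regular, and control its formal group at each stage; the level-structure argument avoids this entirely.
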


This regularity result is an elaboration on a theorem of Drinfel'd \cite{Drinfeld1974Elliptic} and its extension by Strickland \cite{Strickl1997Finite} on moduli of level structures, and its proof follows the analysis undertaken in \cite{BHNNNS19}. With that, we conclude our outline of the proof of \cref{thmx:etheory}.

\subsection*{Relation to previous work}

Quillen's work has found numerous extensions in various contexts; here, we briefly review the ones closest to the present work and indicate their relation. In the classical setting of a finite group $G$ and a field $k$ of characteristic $p$ dividing the order of $G$, Quillen first established a `weak version'\footnote{The adjective `weak' is used here to distinguish this version of Quillen's theorem from its `strong' form \eqref{eq:classicalquillen}, which gives more precise information about the spectrum. For instance, the latter readily identifies the underlying set  of $\Spec H^{\bullet}(G,k)$ as opposed to only providing a cover.} of the stratification theorem, in the form of a homeomorphism
\begin{equation}\label{eq:weakquillen}\tag{$\ddagger$}
{{\mathop{\colim}\limits_{G/E\in \mathcal{O}_{\mathcal{E}}(G)} \Spec H^{\bullet}(E,k) }} \cong \Spec H^{\bullet}(G,k),
\end{equation}
where the colimit is indexed on the orbit category of elementary abelian $p$-subgroups of $G$. On the one hand, Mathew--Naumann--Noel \cite{MathewNaumannNoel2019} produce a generalization of this formula for coefficients in an arbitrary commutative equivariant ring spectrum in place of $\underline{k}$. On the other hand, \eqref{eq:weakquillen} has found a tt-geometric incarnation for the spectrum of a $k$-linear tt-category $\cat K(G)$ which admits a tt-functor $\mathrm{D}^b(\Fp G) \to \cat K(G)$ in Balmer's \cite{Balmer16b}:
\[
{{\mathop{\colim}\limits_{G/E\in \mathcal{O}_{\mathcal{E}}(G)} \Spc(\cat K(E))}} \cong \Spc(\cat K(G)).
\]
The proof of \cref{thmx:abstract} uses parts of both of these results as input, and refines them tt-geometrically in three ways: firstly, \eqref{eq:ttquillen} establishes a `strong version' of Quillen's theorem as in \eqref{eq:classicalquillen}; in particular, the strata in our decomposition are given by \emph{non-equivariant} data via geometric fixed points. Secondly, we work in the generality of equivariant modules over an arbitrary equivariant ring spectrum. And thirdly, we also control the tt-geometry of not-necessarily compact objects in terms of stratification, akin to the passage from \cite{BensonCarlsonRickard97} to \cite{BensonIyengarKrause11a}. This last part is modelled on the approach taken in \cite{bhs1} for equivariant ring spectra with trivial action, and draws from the general theory of stratification developed therein.

The approach to probe the tt-geometry of equivariant categories through their geometric fixed points is based on the work of Strickland and Balmer--Sanders \cite{BalmerSanders17}. Our results have found inspiration and precursors in their work ($R = S_G^0$), \cite{bgh_balmer} ($R = S_G^0$, compact Lie groups), \cite{PatchkoriaSandersWimmer20pp} ($R = H\bbZ_G$), as well as \cite{bhs1} ($R = \infl(R^e)$); as such, the present paper unifies these different developments. 

A related notion of stratification also appears in the work of Ayala, Mazel-Gee, and Rozenblyum~\cite{AMGR2019}. There, the authors construct an adelic decomposition of the category of $G$-spectra in terms of geometric fixed points together with gluing data between these; a generalization to equivariant ring spectra with trivial action was subsequently given in \cite{AMGR2021}. However, in contrast to \cref{thmx:abstract}, this does not result in a classification of $\otimes$-ideals in these categories: from the point of view of tt-stratification as considered here, \cite{AMGR2019, AMGR2021} give an explicit manifestation of the local-to-global principle. 

An analogue of Quillen's stratification for the $E_n$-cohomology of finite group has also been studied by Greenlees and Strickland~\cite{greenleesstrickland_varieties}, extending earlier work of Hopkins--Kuhn--Ravenel \cite{HopkinsKuhnRavenel2000Generalized}. They prove a decomposition of the Zariski spectrum $\mathrm{Spec}(E_n^0(BG))$ into irreducible varieties indexed on abelian $p$-subgroups of $G$ of rank at most $n$. Pulled back to a pure (or monochromatic) stratum, their decomposition is into disjoint subsets, thereby establishing a version of the strong form \eqref{eq:classicalquillen} of Quillen stratification for these monochromatic strata.

\subsection*{Outline of the document}
The paper is organized as follows. We begin with preliminaries on tensor-triangular geometry in \Cref{sec:abstractstratification}, in particular reviewing some background material on stratification. In \Cref{sec:equivariant-homotopy} we introduce the equivariant stable homotopy category, and prove that stratification can be detected by the geometric fixed point functors (part $(b)$ of \Cref{thmx:abstract}). \Cref{section:strong-quillen,sec:strong-quillen-global} are dedicated to strong Quillen stratification in equivariant homotopy; in particular, they contain the proofs of part $(a)$ of \Cref{thmx:abstract} and of \Cref{thmx:global}. In \Cref{sec:cohomologicalstratification,sec:spectrum-borel-e} we establish stratification for Borel-equivariant Lubin--Tate $E$-theory and compute the Balmer spectrum, proving \Cref{thmx:etheory,thmx:regularity}, respectively. Finally,  \Cref{sec:examples} is dedicated to the proofs of the remaining examples in \Cref{thmx:examples}.

\subsection*{Notations and conventions} 
For a finite group $G$, we let $\Sp_G$ denote the $\infty$-category of genuine $G$-spectra. Given an algebra object $R\in\Alg(\Sp_G)$, we write $\Modd{G}(R)$ for the $\infty$-category of left $R$-modules in $G$-spectra, and $\Perff{G}(R)$ for its full subcategory of compact objects. We refer to a commutative algebra object $R\in\CAlg(\Sp_G)$ as a commutative equivariant ring spectrum, i.e., an $\mathbb{E}_{\infty}$-monoid in $\Sp_G$. We will often implicitly view a symmetric monoidal stable $\infty$-category as a tensor-triangulated category via the homotopy category functor $\Ho$. For example given an essentially small symmetric monoidal stable $\infty$-category $\cat C$, we will write $\Spc(\cat C)$ for the Balmer spectrum of the associated tensor-triangulated category $\Ho(\cat C)$. Similarly, we will use the homotopy category functor to extend common notions in tensor-triangulated geometry (such as that of a finite \'etale functor) to the world of $\infty$-categories, see~\cref{def:finite-etale}. Given a closed symmetric monoidal ($\infty$-)category $\cat C$, we denote the symmetric monoidal structure by $\otimes$, the internal hom by $\iHom$ and the unit object by $\unit$, and write $\mathbb{D}(-)=\iHom(-,\unit)$ for the duality functor.

\subsection*{Acknowledgements}

We are grateful to Scott Balchin, Paul Balmer, David Benson, Clover May, Beren Sanders, and Nat Stapleton for useful conversations about the subject matter of this paper, and we thank Akhil Mathew and Nat Stapleton for pointing out that something like \Cref{thm:regular} should be true. Special thanks to John Greenlees, Beren Sanders, and the anonymous referees for helpful comments on earlier versions of this manuscript.

TB is supported by the European Research Council (ERC) under Horizon Europe (grant No.~101042990) and would like to thank the Max Planck Institute for its hospitality. NC is partially supported by Spanish State Research Agency project PID2020-116481GB-I00, the Severo Ochoa and María de Maeztu Program for Centers and Units of Excellence in R$\&$D (CEX2020-001084-M), and the CERCA Programme/Generalitat de Catalunya. DH is supported by grant number TMS2020TMT02 from the Trond Mohn Foundation. NN and LP are supported by the SFB 1085 Higher Invariants in Regensburg. This material is partially based upon work supported by the Swedish Research Council under grant no.~2016-06596 while TB, NC, and LP were in residence at Institut Mittag-Leffler in Djursholm, Sweden during the semester Higher algebraic structures in algebra, topology and geometry. The authors would also like to thank the Hausdorff Research Institute for Mathematics for the hospitality in the context of the Trimester program Spectral Methods in Algebra, Geometry, and Topology, funded by the Deutsche Forschungsgemeinschaft under Germany’s Excellence Strategy – EXC-2047/1 – 390685813.
\newpage

\section{Tensor-triangular preliminaries}\label{sec:abstractstratification}
In this section we recall the key concepts from tensor-triangular geometry that we will use throughout the paper. With the exception of our abstract nilpotence theorem (\cref{thm:abstractnilpotence}), the material presented here is standard; we therefore invite the reader familiar with basic tt-geometry to skip this section and only refer to it for notation and terminology.

\subsection{Recollections on basic tt-geometry}

We begin by fixing some relevant terminology in tt-geometry, as developed by Balmer \cite{Balmer05a}.

\begin{Def}\label{def:tt-category}A \emph{tensor-triangulated category} (tt-category for short) is a triple $(\cat T,\otimes,\unit)$ consisting of a triangulated category $\cat T$ with symmetric monoidal product $\otimes$ which is exact in each variable, and tensor unit $\unit$. A \emph{tt-functor} $F \colon \cat T \to \cat S$ is an exact symmetric monoidal functor.
\end{Def}

\begin{Rec}\label{rec:thickstuff}
Let $\cat T$ be a tt-category. A triangulated subcategory $\cat J \subseteq \cat T$ is \emph{thick} if it is closed under retracts and it is called a \emph{$\otimes$-ideal} if $\cat T \otimes \cat J\subseteq \cat J$. 
For any class of objects $\cat E\subseteq \cat T$, we will write $\thick\langle\cat E\rangle$ for the smallest thick subcategory of $\cat T$ containing $\cat E$ and $\thickt{\cat E}$ for the smallest thick $\otimes$-ideal containing $\cat E$. If in addition $\cat T$ admits arbitrary (set-indexed) coproduts, we call a triangulated subcategory $\cat L \subseteq \cat T$ \emph{localizing} if it is closed under coproducts. For any class of objects $\cat E\subseteq \cat T$, we will write $\Loc\langle\cat E\rangle$ for the smallest localizing subcategory containing $\cat E$, and $\Loco{\cat E}$ for the smallest localizing $\otimes$-ideal containing $\cat E$. 

Observe that, for any exact functor $f\colon \cat T \to \cat S$ and any collection $\cat E$ of objects in $\cat T$, there is an inclusion $f(\thick\langle \cat E \rangle) \subseteq \thick\langle f(\cat E) \rangle$. This uses that the collection of $t \in \cat T$ with $f(t) \in \thick\langle f(\cat E) \rangle$ is thick and contains $\cat E$. If the exact functor $f$ additionally preserves coproducts, then the corresponding statement is true for localizing subcategories. The analogous results hold for tt-functors and ideals.
\end{Rec}

\begin{War}\label{war:meaningofthick}
When the tt-category $\cat T$ admits set-indexed coproducts, one may consider thick $\otimes$-ideals either in $\cat T$ or in the full subcategory $\cat T^c$ of $\cat T$ spanned by the compact objects. Both concepts will appear in this paper, and we will usually specify the ambient category in case it is not clear from context.
\end{War}

\begin{Rec}
Let $\cat K$ be an essentially small tt-category. Balmer~\cite{Balmer05a} constructed a topological space $\Spc(\cat K)$, called the \emph{spectrum} of $\cat K$, defined as follows: the points of $\Spc(\cat K)$ are the \emph{prime} $\otimes$-\emph{ideals} of $\cat K$, that is those thick $\otimes$-ideals $\cat J\subseteq \cat K$ which are proper and satisfy 
\[
\forall \; a,b\in\cat T \colon \; (a \otimes b \in \cat J \implies a \in \cat J \text{ or } b \in \cat J). 
\] The \emph{support} of an object $a \in \cat K$ is defined to be 
\[
\supp(a) = \SET{\cat P \in \Spc(\cat K)}{a \not \in \cat P},
\]
and the topology of $\Spc(\cat K)$ is the one having $\{\supp(a)\}_{a \in \cat K}$ as a basis of closed subsets.
\end{Rec}

\begin{Rem}
In \cite[Theorem 3.2]{Balmer05a} Balmer proves that the pair $(\Spc(\cat K),\supp)$ is the final support datum. Explicitly, this means that given a pair $(X,\sigma)$ consisting of a topological space $X$ and an assignment $\sigma$ which associates to any object $a \in \cat K$ a closed subset $\sigma(a) \subseteq X$ satisfying the conditions given in \cite[Definition 3.1]{Balmer05a}, then there exists a unique map $f \colon X \to \Spc(\cat K)$ such that $\sigma(a) = f^{-1}(\supp(a))$.

Moreover, by \cite[Theorem 4.10]{Balmer05a} there is an order-preserving bijection
\[
\begin{Bmatrix}
\text{Radical thick $\otimes$-ideals} \\
\text{of $\cat K$} 
\end{Bmatrix} 
\xymatrix@C=2pc{ \ar@<0.75ex>[r]^{\supp} &  \ar@<0.75ex>[l]^{\supp^{-1}}}
\begin{Bmatrix}
\text{Thomason subsets}\\
\text{of $\Spc(\cat K)$} \end{Bmatrix},
\]
where $\supp(\cat J) = \bigcup_{a\in \cat J}\supp(a)$, and $\supp^{-1}(\cat V) = \SET{a\in \cat K}{\supp(a) \subseteq \cat V}.$ Here a thick $\otimes$-ideal $\cat J$ is \emph{radical} if it satisfies $a^{\otimes n} \in \cat J \implies a \in \cat J$ and a subset $Y\subseteq \Spc(\cat K)$ is \emph{Thomason} if it is a union of closed subsets each of which has quasi-compact complement. Note that if $\cat K$ is rigid, i.e., every object has a dual (which it will be in all examples that we consider), then every thick $\otimes$-ideal is automatically radical \cite[Proposition 2.4]{Balmer07}. Moreover, if $\Spc(\cat K)$ is Noetherian, then Thomason subsets are exactly specialization closed subsets.
\end{Rem}

\begin{Rem}\label{rem:comparison_map}
In any essentially small tt-category $\cat K$ the graded endomorphism ring $R \coloneqq \End^*_{\cat K}(\unit)$ is graded commutative. In \cite[Corollary 5.6]{Balmer10b} Balmer defines a natural continuous \emph{comparison map}
\[
\begin{split}
\rho \colon \Spc(\cat K) &\to \Spec^h(R) \\
\cat P &\mapsto \{ f \mid \cone(f) \not \in \cat P \}.
\end{split}
\]
Let $R_0$ denote the commutative ring of degree zero elements in $R$. By combining $\rho$ with the continuous map $\Spec^h(R) \to \Spec(R_0)$ which sends a homogeneous prime ideal $\mathfrak p$ to $\mathfrak p^0 \coloneqq \mathfrak p \cap R_0$, we also obtain an \emph{ungraded comparison map}
\[
\rho_0 \colon \Spc(\cat K) \to \Spec(R_0).
\]
We recall that if $R$ is a 2-periodic evenly graded commutative ring, then 
\[
\xymatrix{\mathfrak q\mapsto \mathfrak q\cap R_0\colon \Spec^h(R)  \ar@<0.5ex>[r] &  \Spec(R_0) \ar@<0.5ex>[l] \noloc \mathfrak p\cdot R \mapsfrom \mathfrak p}
\] 
are inverse homeomorphisms. 

Suppose the graded endomorphism ring $R$ of the unit $\unit$ in $\cat K$ is 2-periodic. Then, if the comparison map $\rho \colon \Spc(\cat K) \to \Spec^h(R)$ is a homeomorphism, we deduce from the commutative diagram 
\[\begin{tikzcd}[ampersand replacement=\&]
	\Spc({\cat K}) \& {\Spec^h(R)} \\
	\& {\Spec(R_0)}
	\arrow["\rho", from=1-1, to=1-2]
	\arrow["\cong", from=1-2, to=2-2]
	\arrow["{\rho_0}"', from=1-1, to=2-2]
\end{tikzcd}\]
of \cite[Corollary 5.6]{Balmer10b} that the ungraded comparison map $\rho_0$ is also a homeomorphism. Under certain conditions $\rho$ is a homeomorphism if and only if it is  bijective, see \cref{prop:bijectioniffhomeomorphism} below.
\end{Rem}

\subsection{Stratification in tt-geometry}\label{ssec:ttstratification}

This subsection contains a rapid review of the theory of stratification of rigidly-compacty generated tt-categories $\cat T$ relative to the Balmer spectrum $\Spc(\cat T^c)$ as developed in \cite{bhs1}, as well as a comparison to the notion of cohomological stratification of Benson, Iyengar, and Krause \cite{BensonIyengarKrause11b}. 

\begin{Def}\label{def-big-tt}
A tt-category  $\cat T$ with arbitrary coproducts and a closed symmetric monoidal structure is \emph{rigidly-compactly generated} if it is compactly generated, the unit $\unit$ is compact, and every compact object is rigid, or dualizable (under these conditions, the compact and rigid objects of $\cat T$ coincide, see \cite[Theorem 2.1.3(d)]{HoveyPalmieriStrickland97}). We write $\cat K = \cat T^c$ for the subcategory of rigid and compact objects of $\cat T$. Note that $\cat T^c$ is an essentially small tt-category. 
\end{Def}
\begin{Exa}
Here are two examples, studied in \cite[Section  9]{HoveyPalmieriStrickland97}. 
\begin{enumerate}
\item For any commutative ring $R$, the derived category $\cat T = \cat D(R)$ of unbounded chain complexes of $R$-modules is a tt-category, with $\cat K$ the subcategory of complexes quasi-isomorphic to a bounded complex of finitely generated projective $R$-modules. 
\item For any finite group $G$, the homotopy category of $G$-spectra (studied in more detail in \Cref{sec:equivariant-homotopy}) is a rigidly-compactly generated tt-category. 
\end{enumerate}
\end{Exa}

\begin{Def}\label{def:noeth-tt-cat}
    A rigidly-compactly generated tt-category $\cat T$ is called \emph{Noetherian} if the endomorphism ring $\End_{\cat T}^*(\unit)$ is graded Noetherian and the module $\End_{\cat T}^*(C)$ is finitely generated over $\End_{\cat T}^*(\unit)$ for any compact object $C \in \cat T$.
\end{Def}

\begin{Prop}[{\cite[Corollary  2.8]{Lau2021Balmer}}]\label{prop:bijectioniffhomeomorphism}
If $\cat T$ is Noetherian, then the comparison map $\rho$ is a homeomorphism if and only if it is bijective. 
\end{Prop}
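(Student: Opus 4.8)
The ``only if'' direction is immediate, so the plan is to show, assuming $\cat T$ is Noetherian and that $\rho$ is bijective, that $\rho$ is a homeomorphism. Write $R\coloneqq\End^*_{\cat T}(\unit)$; it is graded Noetherian by hypothesis, so $Y\coloneqq\Spec^h(R)$ is a Noetherian spectral space, while $X\coloneqq\Spc(\cat T^c)$ is a spectral space. Since a continuous bijection is a homeomorphism precisely when it is a closed (equivalently, open) map, I would reduce the statement to proving that $\rho$ carries every closed subset of $X$ to a closed subset of $Y$.

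The crux will be the identity
\[
\supp(a)\;=\;\rho^{-1}\big(\Supp_R\End^*_{\cat T}(a)\big)
\qquad\text{for every }a\in\cat T^c,
\]
where $\Supp_R(-)$ denotes the homogeneous support of an $R$-module inside $Y$. Since $\cat T$ is Noetherian, $\End^*_{\cat T}(a)$ is a finitely generated graded $R$-module, so $\Supp_R\End^*_{\cat T}(a)$ is a closed subset of $Y$, namely the vanishing locus of its annihilator ideal. Granting the identity, the reduction is immediate: the sets $\supp(a)$ form a basis of closed subsets of $X$, so any closed subset $Z\subseteq X$ is an intersection $\bigcap_i\supp(a_i)=\rho^{-1}(C)$ with $C\coloneqq\bigcap_i\Supp_R\End^*_{\cat T}(a_i)$ a closed subset of $Y$; since $\rho$ is surjective, $\rho(Z)=\rho\big(\rho^{-1}(C)\big)=C$ is closed, as wanted.

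Thus everything hinges on the identity, and I expect its proof to be the main obstacle. One inclusion is elementary: if $a\notin\cat P$ but $\End^*_{\cat T}(a)_{\rho(\cat P)}=0$, finite generation produces a homogeneous $f\in R$ with $f\notin\rho(\cat P)$ and $f\cdot\id_a=0$; rotating the triangle $\unit\xrightarrow{f}\Sigma^{|f|}\unit\to\cone(f)$ and tensoring with $a$ exhibits $a$ as a retract of $\cone(f)\otimes a$, which lies in the $\otimes$-ideal $\cat P$ because $f\notin\rho(\cat P)$ means precisely that $\cone(f)\in\cat P$ — contradicting $a\notin\cat P$; hence $\supp(a)\subseteq\rho^{-1}(\Supp_R\End^*_{\cat T}(a))$. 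The reverse inclusion, equivalently the implication $a\in\cat P\Rightarrow\End^*_{\cat T}(a)_{\rho(\cat P)}=0$, is the delicate point: for $a\in\cat P$ one must produce a single homogeneous $f$ with $\cone(f)\in\cat P$ that annihilates the finitely generated module $\End^*_{\cat T}(a)$. My approach would be to pass to a Balmer--Favi-type localization of $\cat T$ attached to $\cat P$ and compatible with $\rho$; rigidity of the compact object $a$ then detects the vanishing of its image in the localized category via the vanishing of $\End^*_{\cat T}(a)$ after localizing at $\rho(\cat P)$, and Noetherianity of $R$ upgrades this to the existence of the required element $f$. Checking the compatibility of $\rho$ with such a localization, together with the bookkeeping of homogeneous localizations of graded-commutative rings, are the steps I would expect to demand the most care.
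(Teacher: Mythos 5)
Your topological reduction and your first inclusion are fine: the argument producing a homogeneous $f\notin\rho(\cat P)$ with $f\cdot\id_a=0$ and exhibiting $a$ as a retract of $\cone(f)\otimes a\in\cat P$ is correct, and granting the identity $\supp(a)=\rho^{-1}\bigl(\Supp_R\End^*_{\cat T}(a)\bigr)$, closedness of $\rho$ (hence the homeomorphism) follows as you say. Note that the paper itself offers no argument here but cites Lau; Lau's route proves the \emph{image} identity $\rho(\supp(a))=\Supp_R\End^*_{\cat T}(a)$ for all compact $a$ --- whose hard inclusion is a relative surjectivity statement established in the style of Balmer's \cite[Theorem 7.3]{Balmer10b}, by producing, for each homogeneous prime containing $\mathrm{Ann}_R\End^*_{\cat T}(a)$, a tt-prime not containing $a$ lying over it --- and only then uses injectivity of $\rho$, to commute images with the intersections defining a general closed set. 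Your preimage identity is equivalent to this under bijectivity, but it is strictly stronger without it, and that is exactly where your sketch breaks.

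The genuine gap is the proposed mechanism for the hard inclusion ``$a\in\cat P\Rightarrow\End^*_{\cat T}(a)_{\rho(\cat P)}=0$''. Vanishing of the image of $a$ in the local category at $\cat P$ is automatic when $a\in\cat P$ (that category is the quotient by $\Loc\langle\cat P\rangle$, so no rigidity is needed), but it does \emph{not} force the algebraic localization $\End^*_{\cat T}(a)_{\rho(\cat P)}$ to vanish: morphisms in the Verdier/Balmer--Favi localization are a calculus of fractions inverting all maps with cone in $\cat P$, not merely the central elements $f$ with $\cone(f)\in\cat P$, so the canonical map $\End^*_{\cat T}(a)_{\rho(\cat P)}\to\End^*(a)$ computed in the localized category need not be injective. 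Concretely, take $\cat T=\Derqc(\mathbb{P}^1_k)$: here $R=\End^*_{\cat T}(\unit)=k$, the category is Noetherian in the sense of \cref{def:noeth-tt-cat}, and for $a$ the Koszul complex of a section vanishing at a point $x$ and $\cat P=\cat P_y$ with $y\neq x$ one has $a\in\cat P$ while $\End^*_{\cat T}(a)_{\rho(\cat P)}=\End^*_{\cat T}(a)\neq 0$. So the statement your localization argument would establish is false under the stated hypotheses alone; any correct proof of your identity must use injectivity of $\rho$ in an essential way at precisely this step (e.g.\ via the image identity above), and your sketch never does.
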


\begin{Exa}\label{exa:weakly_affine}
If $\cat T$ is an affine weakly regular tensor-triangulated category in the sense of \cite[Theorem 1.1]{DellAmbrogioStanley16}, then $\cat T$ is Noetherian, see \cite{DellAmbrogioStanley16erratum}. 
\end{Exa}

\begin{Exa}\label{ex:comm_rings}
For $R$ a commutative ring, work of Hopkins and Neeman \cite{Hopkins87,Neeman92a} (for $R$ Noetherian) and Thomason \cite{Thomason97} (in general) imply that the comparison map
\[
\rho \colon \Spc(\cat D(R)^c) \xrightarrow{\cong} \Spec(R)
\]
is a homeomorphism, see \cite[Proposition 8.1]{Balmer10b} for details. Moreover, Neeman has shown in \cite[Theorem 2.8]{Neeman92a} that if $R$ is Noetherian, then in fact $\Spec(R) \cong \Spc(\cat D(R)^c)$ parameterizes all localizing $\otimes$-ideals of $\cat D(R)$: there is a bijection
\[
\begin{Bmatrix}
\text{localizing $\otimes$-ideals} \\
\text{of $\cat D(R)$} 
\end{Bmatrix} 
\xymatrix@C=2pc{ \ar[r]^{\sim} &  }
\begin{Bmatrix}
\text{Subsets of $\Spc(\cat D(R)^c)$}
\end{Bmatrix}.
\]
\end{Exa}

\begin{Rem}
Following \Cref{ex:comm_rings}, one would like to know for which rigidly-compactly generated tt-categories (\Cref{def-big-tt}) the Balmer spectrum of compact objects also parameterizes the localizing $\otimes$-ideals. The first step towards doing this is to extend the notion of support from compact objects to all objects. To that end, suppose that $\cat T$ is a rigidly-compactly generated tt-category whose Balmer spectrum of compact objects is Noetherian. Then, Balmer--Favi \cite{BalmerFavi11} and Stevenson \cite{Stevenson13} have extended the universal support function from $\cat T^c$ to all of $\cat T$. We denote this extension by $\Supp$; we thus obtain maps: 
\begin{equation}\label{eq:localizing_ideals}
\begin{Bmatrix}
\text{Localizing $\otimes$-ideals} \\
\text{of $\cat T$} 
\end{Bmatrix} 
\xymatrix@C=2pc{ \ar@<0.75ex>[r]^{\Supp} &  \ar@<0.75ex>[l]^{\Supp^{-1}}}
\begin{Bmatrix}
\text{Subsets of $\Spc(\cat T^c)$} \end{Bmatrix}.
\end{equation}
\end{Rem}
Following and extending work of Benson--Iyengar--Krause \cite{BensonIyengarKrause11b} and Stevenson \cite{Stevenson13}, Barthel--Heard--Sanders \cite{bhs1} introduced the following definition. 

\begin{Def}\label{def:stratification}
Let $\cat T$ be a rigidly-compactly generated tt-category with $\Spc(\cat T^c)$ Noetherian. If the maps \eqref{eq:localizing_ideals} are inverse bijections, then we say that $\cat T$ \emph{is stratified}. 
\end{Def}
\begin{Rem}
The theory developed in \cite{bhs1} allows for the more general case where $\Spc(\cat T^c)$ is only \emph{weakly Noetherian}, i.e., every point in $\Spc(\cat T^c)$ is the intersection of a Thomason subset and the complement of a Thomason subset. In this paper, we will only consider the case where $\Spc(\cat T^c)$ is Noetherian, in which case the local-to-global principle of \cite[Definition 3.8]{bhs1} holds automatically \cite[Theorem 3.21]{bhs1}.
\end{Rem}

\begin{Rem}\label{rem:stratification_conditions}
Let $\cat T$ be a rigidly-compactly generated tt-category with $\Spc(\cat T^c)$ Noetherian. To each prime $\cat P \in \Spc(\cat T^c)$ one can associate a tensor idempotent $\Gamma_{\cat P}\unit\in\cat T$.\footnote{Alternatively denoted $g(\cat P)$ in \cite{bhs1}.} By \cite[Theorem 3.21 and Theorem 4.1]{bhs1}, stratification for $\cat T$ is equivalent to the following condition:
\begin{enumerate}
    \item[$(\ast)$] For each $\cat P \in \Spc(\cat T^c)$, $\Gamma_{\cat P}\cat T \coloneqq \Loco{\Gamma_{\cat P}\unit}$ is a minimal localizing $\otimes$-ideal of $\cat T$, or equivalently, for all $t\in \cat T$
    \[
    \Loco{t} = \Loco{\Gamma_{\cat P}\unit \mid \cat P \in \Supp(t) }.
    \]
\end{enumerate}
We say that $\cat T$ {\em has minimality at $\cat P$} if $\Gamma_{\cat P}\cat T$ is a minimal localizing $\otimes$-ideal of $\cat T$.
\end{Rem}
\begin{Rem}\label{rem:stratification-implies-telconj}
If $\cat T$ is a rigidly-compactly generated tt-category which is stratified and has a Noetherian spectrum $\Spc(\cat T^c)$, then we can deduce further consequences beyond a classification of the localizing $\otimes$-ideals of $\cat T$. For example:
    \begin{enumerate}
        \item the telescope conjecture holds \cite[Theorem 9.11]{bhs1}, i.e., the kernel of every smashing (i.e., coproduct-preserving) localization is generated by compact objects; 
        \item the Balmer--Favi support satisfies the tensor product property, i.e., there is an equality $\Supp(t_1 \otimes t_2) = \Supp(t_1) \cap \Supp(t_2)$ for all $t_1,t_2 \in \cat T$, by \cite[Theorem 8.2]{bhs1};
        \item  and the Bousfield lattice is isomorphic to the lattice of subsets of $\Spc(\cat T^c)$ \cite[Theorem 8.8]{bhs1}.
    \end{enumerate} 
\end{Rem}

\begin{Rem}\label{rem:first_bik_strat}
The theory of stratification is based on previous work of Benson--Iyengar--Krause \cite{BensonIyengarKrause11b}. Rather than working with the Balmer spectrum, they consider an action of a graded commutative Noetherian ring $R$ on $\cat T$, and parametrize localizing $\otimes$-ideals of $\cat T$ in terms of subsets of the homogeneous spectrum $\Spec^h(R)$ via a support theory $\Supp^{R \circlearrowright \cat T}$ that depends on the action. 
\end{Rem}

\begin{Rem}\label{rem:cohomstrat}
For a Noetherian tt-category $\cat T$ the following conditions are equivalent:
    \begin{enumerate}
        \item $\cat T$ is stratified and Balmer's comparison map $\rho$ is a homeomorphism.
        \item $\cat T$ is stratified by the action of $R=\End_{\cat T}^*(\unit)$ in the sense of Benson, Iyengar, and Krause \cite{BensonIyengarKrause11b}, i.e., the analogue of \cref{def:stratification} holds for the support theory $\Supp^{R \circlearrowright \cat T}$ of \cref{rem:first_bik_strat}.
    \end{enumerate}
The implication (b) implies (a) is proven in \cite[Corollary 7.14]{bhs1}, while the converse has been observed by Changhan Zou \cite{zou}. Indeed, assume (a) holds, then stratification and the homeomorphism $\rho$ give a bijection
\[\begin{tikzcd}[ampersand replacement=\&]
	{\begin{Bmatrix}\text{Localizing $\otimes$-ideals} \\ \text{of $\cat T$}\end{Bmatrix}} \& {\begin{Bmatrix}\text{Subsets of $\Spec^h(R)$} \end{Bmatrix}}.
	\arrow["\sim", from=1-1, to=1-2]
\end{tikzcd}\]
By \cite[Theorem 4.2]{BensonIyengarKrause11b} it suffices to show that this bijection is given by the Benson--Iyengar--Krause theory of support. This follows from \cite[Proposition 9.4]{Stevenson13}. A more thorough discussion can be found in \cite[Section 9]{zou}.
\end{Rem}

\begin{Def}\label{not:cohomstrat}
A rigidly-compactly generated Noetherian tt-category $\cat T$ is said to be \emph{cohomologically stratified} if the equivalent conditions of \cref{rem:cohomstrat} hold for $\cat T$ equipped with the action of $\End_{\cat T}^*(\unit)$.
\end{Def}

\begin{Exa}
If $R$ is a commutative Noetherian ring, then Neeman's theorem (\Cref{ex:comm_rings}) can be reinterpreted as the statement that $\cat D(R)$ is cohomologically stratified. More generally, if $X$ is a Noetherian scheme and $\Derqc(X)$ denotes the derived category of complexes of $\mathcal{O}_X$-modules with quasi-coherent cohomology, then $\Spc(\Derqc(X)^c) \cong |X|$, the underlying topological space of $X$, and $\Derqc(X)$ is stratified, see \cite[Corollary 8.13]{Stevenson13} and \cite[Corollary 5.10]{bhs1}. Note that the comparison map $\rho$ will not be a homeomorphism in general in this case \cite[Remark 8.2]{Balmer10b}, so that $\Derqc(X)$ is not cohomologically stratified. 
\end{Exa}

\begin{Rem}
The approach of stratification via the Balmer spectrum developed systematically in \cite{bhs1} separates the classification of localizing ideals into two parts: one is to determine the set underlying the spectrum $\Spc(\cat T^c)$, and the other is to show that it stratifies. This is the approach that we will take in this paper. In particular, note that stratification in this sense only relies on an understanding of the underlying \emph{set} of $\Spc(\cat T^c)$, rather than its \emph{topology}. 

In a second step, one can then try to determine the topology on $\Spc(\cat T^c)$, which by Balmer's work \cite{Balmer05a} is tantamount to the classification of thick tensor ideals of the compact objects in $\cat T$. This is in contrast to the approach of \cite{BensonIyengarKrause11b}, which solves both classification problems simultaneously via the auxiliary action of a Noetherian commutative ring on $\cat T$. In practice, this separation is very useful, as there are examples where we can show stratification without a full understanding of the topology on $\Spc(\cat T^c)$, see \cite[Remark 15.12]{bhs1} as well as the results of this paper. 
\end{Rem}

\subsection{A tt-nilpotence theorem}

The next result constitutes a tt-theoretic generalization of the equivariant nilpotence theorem of \cite[Theorem 3.19]{bgh_balmer}, which in turn was inspired by the abstract nilpotence theorem of \cite[Theorem 5.1.2]{HoveyPalmieriStrickland97}. Ultimately, the idea of the proof of \cref{thm:abstractnilpotence} is due to Devinatz, Hopkins, and Smith \cite{DevinatzHopkinsSmith88,HopkinsSmith98}.

\begin{Def}\label{def:jointlynilfaithful}
We say that a collection $(F_i\colon \cat T \to \cat S_i)_{i\in I}$ of exact symmetric monoidal functors between tt-categories is \emph{jointly conservative} if an object $X \in \cat T$ is zero if and only if $F_i(X) = 0$ for all $i \in I$. The collection $(F_i)_{i\in I}$ is said to \emph{detect $\otimes$-nilpotence of morphisms with dualizable source} if the following condition holds: a morphism $\alpha\colon C \to X$ in $\cat T$ with $C$ dualizable satisfies $\alpha^{\otimes m}=0$ for some $m \ge 0$ if and only if for all $i\in I$ there exists $m_i$ such that $F_i(\alpha)^{\otimes m_i} = 0$. 
\end{Def}

Note that in the definition of detection of $\otimes$-nilpotence we do not assume that the $m_i$ are uniformly bounded.

\begin{Thm}\label{thm:abstractnilpotence}
Let $(\Phi_i\colon \cat T \to \cat S_i)_{i\in I}$ be a collection of coproduct-preserving tt-functors between rigidly-compactly generated tt-categories, indexed on a set $I$. If the functors $\Phi_i$ are jointly conservative on $\cat T$, then they detect $\otimes$-nilpotence of morphisms with dualizable source.
\end{Thm}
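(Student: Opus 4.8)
The plan is to run the Devinatz--Hopkins--Smith argument \cite{DevinatzHopkinsSmith88,HopkinsSmith98} in the tt-theoretic packaging of \cite[Theorem 5.1.2]{HoveyPalmieriStrickland97} and \cite[Theorem 3.19]{bgh_balmer}: reduce to a morphism out of the unit, express $\otimes$-nilpotence as the vanishing of a mapping telescope, and transport that vanishing across the family $(\Phi_i)_{i\in I}$ using joint conservativity. One implication is formal: each $\Phi_i$ is symmetric monoidal, so $\Phi_i(\alpha)^{\otimes m} = \Phi_i(\alpha^{\otimes m})$, whence $\alpha^{\otimes m}=0$ forces $\Phi_i(\alpha)^{\otimes m}=0$ for all $i$. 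For the substantive direction, let $\alpha\colon C \to X$ with $C$ dualizable. Via the adjunction isomorphism $\Homcat{T}(C,X)\cong\Homcat{T}(\unit,\mathbb{D}C\otimes X)$, let $f\colon\unit\to Y$ be the transpose of $\alpha$, with $Y=\mathbb{D}C\otimes X$. Since evaluation and coevaluation are compatible with $\otimes$, the map $f^{\otimes m}$ agrees, up to the canonical symmetry isomorphism $(\mathbb{D}C)^{\otimes m}\otimes X^{\otimes m}\cong(\mathbb{D}C\otimes X)^{\otimes m}$, with the transpose of $\alpha^{\otimes m}$; hence $\alpha^{\otimes m}=0$ iff $f^{\otimes m}=0$. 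Because $\Phi_i$ is a tt-functor it preserves duality data, so it sends $f$ to the transpose of $\Phi_i(\alpha)$, and the hypothesis becomes: for each $i$ there is $m_i$ with $\Phi_i(f)^{\otimes m_i}=0$, and we must produce $m$ with $f^{\otimes m}=0$.

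Next I would isolate the telescope criterion. Fix any rigidly-compactly generated tt-category $\cat U$ and a morphism $f\colon\unit\to Y$, and form the mapping telescope
\[
\tel(f):=\hocolim\big(\unit \xrightarrow{\,f\,} Y \xrightarrow{\,f\otimes\id_Y\,} Y^{\otimes 2} \xrightarrow{\,f\otimes\id\,} \cdots\big),
\]
which exists since $\cat U$ has coproducts. The composite of the first $m$ structure maps is, up to unitors, $f^{\otimes m}\colon\unit\to Y^{\otimes m}$; hence if $f^{\otimes m}=0$ then, after restricting to the cofinal subsequence indexed by multiples of $m$, every transition map vanishes, so $\tel(f)=0$. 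Conversely, if $\tel(f)=0$ then, since $\unit$ is compact, $\Homcat{U}(\unit,-)$ commutes with sequential homotopy colimits, so the image of $\id_{\unit}$ in $\colim_n\Homcat{U}(\unit,Y^{\otimes n})=\Homcat{U}(\unit,\tel(f))$ is zero; therefore its image in $\Homcat{U}(\unit,Y^{\otimes m})$ vanishes for some $m$, and that image is exactly $f^{\otimes m}$. Thus $f^{\otimes m}=0$ for some $m$ if and only if $\tel(f)=0$.

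For the descent step: being exact and coproduct-preserving, each $\Phi_i$ preserves the mapping telescope — it is the cone of an endomorphism of the countable coproduct $\bigoplus_n Y^{\otimes n}$ — and being symmetric monoidal it intertwines $f$ with $\Phi_i(f)$ and $Y^{\otimes n}$ with $\Phi_i(Y)^{\otimes n}$, so $\Phi_i(\tel(f))\cong\tel(\Phi_i(f))$. Applying the telescope criterion in $\cat S_i$ to $\Phi_i(f)^{\otimes m_i}=0$ gives $\tel(\Phi_i(f))=0$, i.e.\ $\Phi_i(\tel(f))=0$ for every $i\in I$. By joint conservativity, $\tel(f)=0$ in $\cat T$, and the telescope criterion applied in $\cat T$ yields $f^{\otimes m}=0$ for some $m$, hence $\alpha^{\otimes m}=0$. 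Note that the argument never needs the $m_i$ to be uniformly bounded, since each $\Phi_i$-telescope vanishes on its own.

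I expect the only delicate point to be the telescope criterion itself: verifying the coherence bookkeeping that identifies the composite of the first $m$ structure maps with $f^{\otimes m}$, and — crucially — using compactness of $\unit$ to detect $\otimes$-nilpotence from the vanishing of $\tel(f)$. Compactness of the unit is precisely where the rigidly-compactly generated hypothesis enters; everything else is formal manipulation with exact symmetric monoidal coproduct-preserving functors.
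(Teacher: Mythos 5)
Your proposal is correct and follows essentially the same route as the paper's proof: dualize $\alpha$ to a map $\unit\to\mathbb{D}C\otimes X$, characterize $\otimes$-nilpotence by vanishing of the mapping telescope using compactness of the unit (the paper cites \cite[Lemma 2.4]{HopkinsSmith98} for exactly this step), and then use that each $\Phi_i$ preserves telescopes and tensor powers together with joint conservativity. The only difference is that you spell out the telescope criterion and the coherence bookkeeping that the paper delegates to references.
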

\begin{proof}
Consider a morphism $\alpha \colon C \to X$ in $\cat T$ with $C$ dualizable. Let $Y=\iHom(C,X)$, and write $y\colon \unit \to Y$ for the adjoint of $\alpha$. The dualizability of $C$ implies that $\iHom(C^{\otimes k},X^{\otimes k}) \simeq \mathbb{D}(C)^{\otimes k}\otimes X^{\otimes k} \simeq \iHom(C,X)^{\otimes k}$ for all $k\ge 0$. Under these identifications, the adjoint of $\alpha^{\otimes k}$ is the composition $y^{(k)}\colon \unit \xrightarrow{y} Y \xrightarrow{y} Y^{\otimes 2} \xrightarrow{y} \ldots \xrightarrow{y} Y^{\otimes k}$, as one easily checks by observing that $y^{(k)}$ identifies with $y^{\otimes k}$ (see also \cite[p.17]{HopkinsSmith98}). Therefore, $\alpha$ is $\otimes$-nilpotent if and only if $y$ is nilpotent, i.e., $y^{(m)}\colon \unit \xrightarrow{y} Y \xrightarrow{y} Y^{\otimes 2} \xrightarrow{y} \ldots \xrightarrow{y} Y^{\otimes m}$ is zero for some $m$. Now, denote the homotopy colimit of the infinite composite sequence by
\[
T(y)=\hocolim(\unit \xrightarrow{y} Y \xrightarrow{y} Y^{\otimes 2} \xrightarrow{y} \ldots).
\]
Since the unit in $\cat T$ is compact, the argument in  \cite[Lemma 2.4]{HopkinsSmith98} shows that $y$ is nilpotent if and only if $T(y)=0$. 

With this preparation, we can now argue as follows. The map $\alpha$ is $\otimes$-nilpotent if and only if $T(y)=0$. Since the collection $(\Phi_i)_{i\in I}$ is jointly conservative, $T(y)=0$ if and only if $\Phi_i(T(y))= 0$ for all $i \in I$. As $\Phi_i$ preserves colimits and tensor products, $\Phi_i(T(y))= T(\Phi_i(y))$ for any $i\in I$. Then, in turn, $\alpha$ is $\otimes$-nilpotent if and only if $\Phi_i(y)$ is nilpotent for all $i \in I$, i.e., that $\Phi_i(\alpha)$ is $\otimes$-nilpotent for all $i \in I$. The last statement uses that $\Phi_i(C)$ is dualizable, following the argument in the first paragraph of the proof.
\end{proof}

\begin{Cor}\label{cor:surjective-map}
Let $\Phi \colon \cat T \to \cat S$ be a coproduct-preserving tt-functor between rigidly-compactly generated tt-categories. If the functor $\Phi$ is conservative, then $\Phi$ induces a surjective map 
\[
\xymatrix{\Spc(\Phi^c)\colon \Spc(\cat S^c) \ar[r] & \Spc(\cat T^c)}
\]
on Balmer spectra.
\end{Cor}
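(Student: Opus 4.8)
The plan is to deduce this from the abstract nilpotence theorem \cref{thm:abstractnilpotence} applied to the singleton family $\{\Phi\}$, together with Balmer's general criterion for surjectivity of the induced map on spectra. First I would recall that a tt-functor $F\colon \cat K \to \cat L$ between essentially small tt-categories induces a continuous map $\Spc(F)\colon \Spc(\cat L) \to \Spc(\cat K)$, $\cat Q \mapsto F^{-1}(\cat Q)$, and that by \cite[Theorem 1.3]{Balmer18} (or the original criterion in \cite{Balmer16b}) this map is surjective if and only if $F$ detects $\otimes$-nilpotence of morphisms: for every morphism $\alpha\colon a \to b$ in $\cat K$ with $a$ dualizable (equivalently, between compact objects, since in a rigid tt-category all objects are dualizable), if $F(\alpha)^{\otimes m}=0$ for some $m\geq 0$ then $\alpha^{\otimes n}=0$ for some $n\geq 0$. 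Here I would apply this with $\cat K = \cat T^c$, $\cat L = \cat S^c$ and $F = \Phi^c$ the restriction of $\Phi$ to compact objects, which is a well-defined tt-functor since coproduct-preserving tt-functors between rigidly-compactly generated tt-categories preserve compact (= dualizable) objects.

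The second step is to verify the nilpotence-detection hypothesis. Since $\Phi$ is conservative, the singleton family $(\Phi\colon \cat T \to \cat S)$ is jointly conservative, so \cref{thm:abstractnilpotence} applies and tells us that $\Phi$ detects $\otimes$-nilpotence of morphisms with dualizable source in the big categories $\cat T$ and $\cat S$. Concretely, given $\alpha\colon C \to X$ in $\cat T$ with $C$ dualizable such that $\Phi(\alpha)^{\otimes m}=0$ for some $m$, the theorem yields $\alpha^{\otimes n}=0$ for some $n$. In particular this holds for any morphism $\alpha$ between compact objects of $\cat T$ regarded inside $\cat T$, and $\alpha^{\otimes n}=0$ computed in $\cat T$ is the same statement as $\alpha^{\otimes n}=0$ computed in the full subcategory $\cat T^c$; likewise $\Phi(\alpha)^{\otimes m} = \Phi^c(\alpha)^{\otimes m}$ and vanishing in $\cat S$ is the same as vanishing in $\cat S^c$. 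Hence $\Phi^c\colon \cat T^c \to \cat S^c$ detects $\otimes$-nilpotence of morphisms, which is exactly Balmer's criterion.

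Combining the two steps, $\Spc(\Phi^c)\colon \Spc(\cat S^c)\to \Spc(\cat T^c)$ is surjective, as claimed. I do not expect a genuine obstacle here: the content is entirely in \cref{thm:abstractnilpotence}, which has already been proved, and in the (by now standard) translation between nilpotence detection and surjectivity of the comparison map on Balmer spectra. The only points requiring a line of care are that $\Phi$ restricts to a tt-functor on compact objects (using that it preserves coproducts and that compact $=$ dualizable in a rigidly-compactly generated tt-category, as recorded in \cref{def-big-tt}), and that $\otimes$-nilpotence of a morphism between compact objects is insensitive to whether it is tested in $\cat T^c$ or in $\cat T$ — both of which are immediate.
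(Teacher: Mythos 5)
Your proposal is correct and follows the same route as the paper: apply \cref{thm:abstractnilpotence} to the single conservative functor $\Phi$ to get detection of $\otimes$-nilpotence on morphisms between compact (= dualizable) objects, then invoke \cite[Theorem 1.3]{Balmer18} to conclude surjectivity of $\Spc(\Phi^c)$. The extra care you take in checking that $\Phi$ restricts to a tt-functor on compacts and that nilpotence is insensitive to being tested in $\cat T^c$ versus $\cat T$ is fine but is exactly what the paper leaves implicit.
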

\begin{proof}
By \cref{thm:abstractnilpotence}, the functor $\Phi$ detects $\otimes$-nilpotence between compact objects in $\cat T$. It then follows from \cite[Theorem 1.3]{Balmer18} that $\Spc(\Phi^c)$ is surjective. 
\end{proof}

\begin{Rem}
This result is a `big' variant of Balmer's \cite[Theorem 1.2]{Balmer18}, with a stronger hypothesis and a stronger conclusion.
\end{Rem}

\subsection{Base-change, separable algebras, and \'etale morphisms}
In this section we collect some facts about module categories internal to a tt-category (or symmetric monoidal stable $\infty$-category), base-change, and recall the notions of separable algebras and
\'etale morphisms for tt-categories. The main references for this material are \cite{BalmerDellAmbrogioSanders15} and \cite{Balmer16b}.

\begin{Rec}\label{rec:module_categories}
Let $\cat C$ be a rigidly-compactly generated symmetric monoidal stable $\infty$-category and consider a commutative algebra $A \in \CAlg(\cat C)$ in $\cat C$. There is then an associated symmetric monoidal $\infty$-category of \emph{modules over $A$ internal to $\cat C$}, denoted $\Mod_{\cat C}(A)$, along with an \emph{extension of scalars functor} $F_A\colon \cat C \to \Mod_{\cat C}(A)$. If $\mathcal{G}$ is a set of compact generators for $\cat C$, then $\Mod_{\cat C}(A)$ is rigidly-compactly generated by the set $A\otimes \mathcal{G} \coloneqq \{ F_A(G) \mid G\in \mathcal{G}\}$. Indeed, the claim about compact generation follows because the forgetful functor $U_A\colon \Mod_{\cat C}(A) \to \cat C$ is conservative and preserves colimits \cite[Corollary 4.2.3.5]{HALurie}, and so its left adjoint $F_A \colon \cat C \to \Mod_{\cat C}(A)$ preserves generating sets and compact objects \cite[Proposition 5.5.7.2]{HTTLurie}. Combining \cite[Theorem 4.5.2.1 and Theorem 4.5.3.1]{HALurie} we see moreover that $\Mod_{\cat C}(A)$ inherits a symmetric monoidal structure from $\cat C$ and that $F_A$ is symmetric monoidal. It follows that $F_A$ preserves dualizable objects, and hence that $\Mod_{\cat C}(A)$ is rigidly-compactly generated. 
\end{Rec}

\begin{Rec}\label{rec:basechange_functoriality}
Let $\theta \colon \cat C \to \cat D$ be a lax monoidal functor between presentable symmetric-monoidal $\infty$-categories whose tensor product commutes with all colimits. For any commutative algebra $A \in \CAlg(\cat C)$, there is a diagram 
\[\begin{tikzcd}[ampersand replacement=\&]
	{\cat C} \& {\cat D} \\
	{\Mod_{\cat C}(A)} \& {\Mod_{\cat D}(\theta(A)),}
	\arrow["{U_A}"', shift right=1, from=2-1, to=1-1]
	\arrow["\theta"', from=2-1, to=2-2]
	\arrow["\theta", from=1-1, to=1-2]
	\arrow["{U_{\theta(A)}}"', shift right=1, from=2-2, to=1-2]
	\arrow["{F_A}"', shift right=1, from=1-1, to=2-1]
	\arrow["{F_{\theta(A)}}"', shift right=1, from=1-2, to=2-2]
\end{tikzcd}\]
where $F$ and $U$ denote extension and restriction of scalars along the units of $A$ and $\theta(A)$, respectively. The functor $\theta$ is compatible with restriction of scalars, in the sense that $\theta \circ U_A \simeq  U_{\theta(A)} \circ \theta$. If $\theta$ is \emph{symmetric} monoidal and preserves geometric realizations of simplicial objects (for example, if $\theta$ preserves colimits), then we additionally have $\theta \circ F_A \simeq F_{\theta(A)} \circ \theta$. Both of these statements follow from the discussion in \cite[Remark 1.1.11]{ergus2022hopf}. We also note that under these stronger assumptions, the induced functor $\theta \colon \Mod_{\cat C}(A) \to \Mod_{\cat D}(\theta(A))$ is symmetric monoidal \cite[Proposition 3.18]{Robalo}, and that if $\theta \colon \cat C \to \cat D$ preserves colimits, then so does $\theta \colon \Mod_{\cat C}(A) \to \Mod_{\cat D}(\theta(A))$ as one can check using that $U_{\theta(A)}$ is conservative and preserves colimits.
\end{Rec}

\begin{Rem}
In general, one does not expect the category of modules over a commutative algebra object $A$ in a tt-category to inherit the structure of a tt-category. However, under suitable conditions on $A$, it does. One such notion is that of a separable algebra, first introduced in the tt-context by Balmer in \cite{Balmer11}, which turns out to be especially well-behaved. In particular, it allows us to prove an extension result for natural transformations, see \cref{prop:nat-tra} below. 
\end{Rem}

\begin{Def}\label{def:separablealgebra}
Let $\cat T$ be a tt-category. A ring object $A \in \cat T$ is \emph{separable} if the multiplication map $\mu \colon A \otimes A \to A$ admits a section as a map of $(A,A)$-bimodules. Explicitly, there exists $\sigma \colon A \to A \otimes A$ such that $\mu \sigma = \text{id}_A$ and $(\unit \otimes \mu) \circ (\sigma \otimes \unit) = \sigma \mu = (\mu \otimes \unit) \circ (\unit \otimes \sigma) \colon A \otimes A \to A \otimes A$. 
\end{Def}

\begin{Rem}\label{rem:ttdegree}
   Under some mild conditions on the tt-category, any separable ring object has a notion of tt-degree by work of Balmer~\cite{Balmer14}. The tt-degree can either be a non-negative integer or be equal to infinity. In practice, the additional assumption of having finite tt-degree is harmless, see for instance~\cite[Section 4]{Balmer14}.
\end{Rem}

\begin{Def}\label{def:finite-etale}
    Let $\cat T$ and $\cat S$ be rigidly-compactly generated tt-categories and let $F\colon \cat T \to \cat S$ be a coproduct-preserving tt-functor with right adjoint $G$. Following \cite{Balmer16b}, we say that $F$ is \emph{finite \'{e}tale} if there is a compact commutative separable algebra~$A$ of finite tt-degree in~$\cat T$ and an equivalence of tt-categories $\cat S\cong \Mod_{\cat T}(A)$ under which the adjunction $F\colon\cat T \leftrightarrows \cat S \colon G$ is identified with the extension-of-scalars/restriction adjunction $F_A\colon \cat T \leftrightarrows  \Mod_{\cat T}(A) \colon U_A$.  A colimit-preserving symmetric monoidal functor between rigidly-compactly generated symmetric monoidal stable $\infty$-categories $F\colon \cat C \to \cat D$ is \emph{finite \'etale} if $\Ho(F)\colon \Ho(\cat C) \to \Ho(\cat D)$ is finite \'etale.
\end{Def}

\begin{Rem}\label{rem:hofinite-etale}
Let $\cat C$ be a rigidly-compactly generated symmetric monoidal stable $\infty$-category and consider $A \in \CAlg(\cat C)$ such that $A \in \Ho(\cat C)$ is a separable algebra of finite degree. Combining the results of \cite{DellAmbrogioSanders18} and \cite[Proposition 3.8]{Sanders21pp} (for the symmetric monoidal structure), there is then a tt-equivalence
\[
\Ho(\Mod_{\cat C}(A)) \simeq \Mod_{\Ho(\cat C)}(A).
\]
In particular, it follows that the base-change functor $\cat C \to \Mod_{\cat C}(A)$ is finite \'etale in the sense of \cref{def:finite-etale}. In other words, this provides the compatibility between enhanced notions of \'etale and their effect on homotopy categories. 
\end{Rem}

We will need the next result later on.

\begin{Prop}\label{prop:nat-tra}
    Let $\cat T$ and $\cat S$ be tt-categories and let $A\in \CAlg(\cat T)$ be separable. Let $F_A\colon \cat T \leftrightarrows \cat \Mod_{\cat T}(A) \colon U_A$ denote the free-forgetful adjunction. Suppose we are given two tt-functors $H_0,H_1 \colon \Mod_{\cat T}(A) \to \cat S$ and a natural transformation $\eta \colon H_0\circ F_A \Rightarrow H_1\circ F_A$. Then there exists a unique natural transformation $\widetilde{\eta}\colon H_0 \Rightarrow H_1$ which extends $\eta$.
\end{Prop}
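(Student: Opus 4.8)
The plan is to use the fact that $A$ is separable, which forces every $A$-module to be a retract of a free (i.e.\ extended) module in a canonical, functorial way. Concretely, for any $M \in \Mod_{\cat T}(A)$, the multiplication $U_A M \otimes A \to U_A M$ exhibits $F_A U_A M$ as an $A$-module with a canonical map $F_A U_A M \to M$ (the counit $\epsilon_M$ of the adjunction), and separability of $A$ produces a natural section $\sigma_M \colon M \to F_A U_A M$ of $\epsilon_M$ as $A$-modules, built from the bimodule splitting $\sigma$ of $\mu$. Thus every object $M$ sits in a split situation $M \xrightarrow{\sigma_M} F_A U_A M \xrightarrow{\epsilon_M} M$ with $\epsilon_M \sigma_M = \id_M$, and moreover (by the simplicial/bar resolution) $M$ is the splitting of an idempotent on $F_A U_A M$ that is itself built out of the comparison maps in the cosimplicial object $F_A U_A^{\bullet +1} M$. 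Both $H_0$ and $H_1$, being tt-functors (hence additive and idempotent-complete on a triangulated target), will carry this splitting data to an analogous splitting in $\cat S$.

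The construction of $\widetilde\eta$ then goes as follows. For $M \in \Mod_{\cat T}(A)$, I would define $\widetilde\eta_M$ as the composite
\[
H_0(M) \xrightarrow{H_0(\sigma_M)} H_0(F_A U_A M) \xrightarrow{\eta_{U_A M}} H_1(F_A U_A M) \xrightarrow{H_1(\epsilon_M)} H_1(M),
\]
using that $H_i F_A$ is precisely the functor on which $\eta$ is defined (here $\eta_{U_A M}$ means the component of $\eta$ at the object $U_A M \in \cat T$). First I would check this is natural in $M$: naturality of $\sigma_{(-)}$ and $\epsilon_{(-)}$ in the category of $A$-modules, together with naturality of $\eta$, gives a diagram that commutes. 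Then I would check $\widetilde\eta$ extends $\eta$, i.e.\ that $\widetilde\eta_{F_A X} = \eta_X$ for $X \in \cat T$: here one uses the triangle identities together with the fact that on free modules the splitting $\sigma_{F_A X}$ and counit $\epsilon_{F_A X}$ compose appropriately with the unit $X \to U_A F_A X$; a short diagram chase with the unit/counit of the adjunction and naturality of $\eta$ along the map $X \to U_A F_A X$ does it.

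For uniqueness, suppose $\widetilde\eta'$ is any natural transformation $H_0 \Rightarrow H_1$ restricting to $\eta$ along $F_A$. Naturality of $\widetilde\eta'$ applied to the two maps $\sigma_M$ and $\epsilon_M$, combined with $\epsilon_M \sigma_M = \id_M$, forces
\[
\widetilde\eta'_M = H_1(\epsilon_M) \circ \widetilde\eta'_{F_A U_A M} \circ H_0(\sigma_M) = H_1(\epsilon_M) \circ \eta_{U_A M} \circ H_0(\sigma_M) = \widetilde\eta_M,
\]
where the middle equality is the hypothesis that $\widetilde\eta'$ restricts to $\eta$. So both existence and uniqueness come down to exploiting the split (co)unit coming from separability.

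The main obstacle I anticipate is verifying naturality of the section $\sigma_{(-)}\colon \Id \Rightarrow F_A U_A$ in $M$ — this is the one place where separability is genuinely used rather than formal adjunction nonsense, and one must be careful that the splitting built from the bimodule section $\sigma\colon A \to A\otimes A$ of $\mu$ really does assemble into a natural transformation of functors on $\Mod_{\cat T}(A)$ (this is classical, going back to Balmer's work on separable extensions, and also appears in the treatment of separable monads, but the bookkeeping with the module structures must be done carefully). A secondary, more minor point is that a tt-functor lands in a triangulated category which need not be idempotent-complete a priori; however, we only ever split idempotents of the form $\sigma_M \epsilon_M$ which already split in $\Mod_{\cat T}(A)$, so their images under $H_i$ split in $\cat S$ automatically, and no extra hypothesis is needed. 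Once naturality of $\sigma_{(-)}$ is in hand, the rest is a sequence of diagram chases of the kind indicated above.
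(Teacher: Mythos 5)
You follow the paper's proof essentially verbatim: the same formula $\widetilde{\eta}_M = H_1(\epsilon_M)\circ\eta_{U_AM}\circ H_0(\sigma_M)$, the same naturality check (which indeed only needs naturality of $\sigma$, $\epsilon$ and of $\eta$ against morphisms $U_A(f)$), and the same uniqueness argument via $\epsilon_M\sigma_M=\id_M$. The place where your write-up is not airtight is exactly the extension step, i.e. the verification that $\widetilde{\eta}_{F_AX}=\eta_X$, and your proposed fix does not work as described. Naturality of $\eta$ along the adjunction unit $u_X\colon X\to U_AF_AX$ together with the triangle identity gives $\eta_X = H_1(\epsilon_{F_AX})\circ\eta_{U_AF_AX}\circ H_0(F_A(u_X))$; to match this with $\widetilde{\eta}_{F_AX}= H_1(\epsilon_{F_AX})\circ\eta_{U_AF_AX}\circ H_0(\sigma_{F_AX})$ you would need $\sigma_{F_AX}=F_A(u_X)$, which is false in general (already for $A=\unit\times\unit$ the separability section on a free module is $\sigma\otimes X$, not $\id_A\otimes u\otimes X$). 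The underlying issue is that $\eta$ is only natural with respect to morphisms in the image of $F_A$, while $\sigma_{F_AX}$ and $\epsilon_{F_AX}$ are morphisms between free modules that do not come from $\cat T$; the contrary to your closing remark, naturality of $\sigma$ is the harmless part, and this compatibility is the genuinely delicate one.

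Moreover, this is not a gap one can close by a cleverer chase at the stated level of generality: take $\cat T=\cat D(k)$, $A=k\times k$, so that $\Mod_{\cat T}(A)\simeq \cat D(k)\times\cat D(k)$, let $H_0,H_1$ be the two projections (extension of scalars along the two factor maps $A\to \unit$, hence tt-functors), and let $\eta$ be the canonical identification $H_0F_A\cong\Id\cong H_1F_A$. Every natural transformation $H_0\Rightarrow H_1$ vanishes (test naturality against $(\id,0)\colon(M_1,0)\to(M_1,M_2)$), so $\eta$ admits no extension at all; correspondingly the formula produces $\widetilde{\eta}=0$, whose restriction to free modules is not $\eta$. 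So the identity $\widetilde{\eta}_{F_AX}=\eta_X$ cannot be deduced from naturality of $\eta$ alone — some additional compatibility of $\eta$ with the $A$-action is required. To be fair, the paper's own proof is terse at precisely this point (the middle square of its displayed diagram is asserted, not derived, and it is this square that fails in the example above), so you have reproduced the paper's argument including its weak spot; but your specific claim that "naturality of $\eta$ along $X\to U_AF_AX$ does it" is incorrect, and as written the extension step remains unproved.
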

\begin{proof}
    Since $A$ is separable the counit of the adjunction $\epsilon\colon F_AU_A \Rightarrow 1$ admits a section $\sigma \colon 1 \Rightarrow F_A U_A$~\cite[Proposition 3.11]{Balmer11}. For any $M\in\Mod_{\cat T}(A)$, we define $\widetilde{\eta}_M$ to be given by the composite
    \begin{equation}\label{nat-transf}
        H_0M \xrightarrow{H_0\sigma_M} H_0F_AU_AM \xrightarrow{\eta_{U_A M}} H_1F_AU_A M \xrightarrow{H_1\epsilon_M} H_1 M.
    \end{equation}
    This is natural in $M$ since $\eta_{U_AM},\epsilon_M$ and $\sigma_M$ are so. For any $X\in \cat T$, we have a commutative diagram
    \[
      \begin{tikzcd}[column sep=1.5cm]
          & H_0F_A X \arrow[r,"\eta_X"] & H_1F_A X \arrow[rd, "1"] \\
        H_0F_A X \arrow[ur, "1"]\arrow[rrr, bend right,"\widetilde{\eta}_{F_AX}"] \arrow[r,"H_0\sigma_{F_A X}"'] & H_0F_AU_AF_A X \arrow[u,"H_0\epsilon_{F_AX}"'] \arrow[r,"\eta_{U_AF_A X}"'] & H_1 F_A U_AF_A X \arrow[u,"H_1\epsilon_{F_A X}"] \arrow[r,"H_1\epsilon_{F_AX}"']& H_1 F_A X
      \end{tikzcd}
    \]
    showing that $\widetilde{\eta}$ does extend $\eta$. Finally,  we note that any natural transformation extending $\eta$ must be compatible with $\epsilon$ and $\sigma$ and hence must be given by the formula \eqref{nat-transf}.
\end{proof}

\section{Stratifications in stable equivariant homotopy theory}\label{sec:equivariant-homotopy}
In this section we begin our study of the tt-geometry of modules internal to the category of $G$-equivariant spectra, for $G$ a finite group. We establish a nilpotence theorem for any commutative equivariant ring spectrum $R$ (\cref{thm:eqnilpotence}), generalizing the one for the equivariant sphere due to Balmer and Sanders \cite[Theorem 4.15]{BalmerSanders17}, show that the Balmer spectrum of perfect equivariant $R$-modules is covered by the spectra of perfect modules over the geometric fixed points (\cref{cor:eqspc_surjectivity}), and finally prove that stratification of $\Mod_G(R)$ can be detected by geometric fixed point functors (\cref{thm:eqstratdescent}). 

\subsection{Recollections on equivariant homotopy}
Here we introduce some relevant notions from equivariant stable homotopy theory in the context of equivariant module categories over a commutative equivariant $G$-ring spectrum, for $G$ a finite group. In particular, with a view towards our tt-theoretc applications, we establish the naturality of the various constructions with respect to change of groups. This material is standard, albeit not readily available in the literature as far as we are aware; our exposition in this subsection and the next one loosely follows and generalizes the approach taken in \cite{PatchkoriaSandersWimmer20pp}.

\begin{Rec}\label{rec:infl-res}
 Let $G$ be a finite group. We let $\Sp_G$ denote the stable $\infty$-category of $G$-spectra as constructed in \cite[Definition C.1]{GepnerMeier} with the 
 symmetric monoidal structure arising from~\cite[Corollary C.7]{GepnerMeier}. By~\cite[Proposition C.9]{GepnerMeier} this is symmetric monoidally equivalent to the underlying $\infty$-category associated to the category of orthogonal $G$-spectra with the stable model structure. We denote the symmetric monoidal structure on $\Sp_G$ by $\otimes$ and the unit object by $S_G^0$. The $\infty$-category $\Sp_G$ admits a set of compact and dualizable generators $G/H_+$ for $H \subseteq  G$ (we omit writing the $G$-suspension spectrum functor). This statement can be checked in the homotopy category (see~\cite[Remark 1.4.4.3]{HALurie} for the compact generation), where it is shown in, for example, \cite[Theorem 9.4.3]{HoveyPalmieriStrickland97}. The homotopy category $\Ho(\Sp_G)$ is then a rigidly-compactly generated tt-category in the sense of \cref{def-big-tt}.
 
Let $\cat S_*^{G}$ be the symmetric monoidal $\infty$-category of pointed $G$-spaces. For any group homomorphism $\alpha \colon G' \to G$ we get an essentially unique symmetric monoidal left adjoint $\alpha^* \colon \Sp_G \to \Sp_{G'}$ (see \cite[Remark 3.13]{PatchkoriaSandersWimmer20pp}) making the diagram
\[\begin{tikzcd}[ampersand replacement=\&]
	{\cat S_*^{G'}} \& {\cat S^G_*} \\
	{\Sp_{G'}} \& {\Sp_G}
	\arrow["{\alpha^*}", from=1-1, to=1-2]
	\arrow["{\Sigma^{\infty}}"', from=1-1, to=2-1]
	\arrow["{\alpha^*}"', from=2-1, to=2-2]
	\arrow["{\Sigma^{\infty}}", from=1-2, to=2-2]
\end{tikzcd}\]
commute. At the level of orthogonal $G$-spectra, this functor is constructed in \cite[Construction 3.1.15]{Schwede18_global}, see also the remark at the top of page 353 of \cite{Schwede18_global}. If $\alpha \colon H \hookrightarrow G$ is the inclusion of a subgroup, then we call the resulting functor \emph{restriction}, denoted $\res_H \colon \Sp_G \to \Sp_H$. If $\alpha \colon G \twoheadrightarrow G/N$ is the quotient by a normal subgroup, then the resulting functor will be denoted $\infl_{G/N}\colon \Sp_{G/N} \to \Sp_{G}$ and will be referred to as \emph{inflation}.
\end{Rec}

\begin{Not}
For a finite group $G$ and $R \in \CAlg(\Sp_G)$ we let $\Modd{G}(R)$ denote the $\infty$-category of $R$-modules internal to $\Sp_G$, and write $\Perf_G(R)$ for its full subcategory of compact (or perfect) modules. 
\end{Not}

We now collect some basic facts about the structure of these equivariant module categories and base-change functors between them. 

\begin{Lem}\label{lem:eqmodulegenerators}
If $R\in \mathrm{CAlg}(\Sp_G)$, then the $\infty$-category of modules $\Modd{G}(R)$ is a rigidly-compactly generated symmetric monoidal stable $\infty$-category, with a set of compact generators given by $\SET{R \otimes G/H_+}{H \subseteq  G}$. 
\end{Lem}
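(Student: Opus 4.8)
The plan is to deduce everything from the general machinery recalled in \cref{rec:module_categories}, applied to the rigidly-compactly generated symmetric monoidal stable $\infty$-category $\cat C = \Sp_G$ and the commutative algebra $A = R \in \CAlg(\Sp_G)$. First I would recall from \cref{rec:infl-res} that $\Sp_G$ is indeed rigidly-compactly generated in the sense of \cref{def-big-tt}, with the set $\mathcal{G} = \{ G/H_+ \mid H \subseteq G \}$ of compact, dualizable generators (the compactness and dualizability of the orbits is precisely what is stated there). Then \cref{rec:module_categories} applies verbatim: it produces the symmetric monoidal $\infty$-category $\Modd{G}(R) = \Mod_{\Sp_G}(R)$ together with the symmetric monoidal extension-of-scalars functor $F_R \colon \Sp_G \to \Modd{G}(R)$, and asserts that $\Modd{G}(R)$ is rigidly-compactly generated by the set $R \otimes \mathcal{G} = \{ F_R(G/H_+) \mid H \subseteq G\} = \{ R \otimes G/H_+ \mid H \subseteq G\}$.

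Concretely, the key steps are: (1) observe that the forgetful functor $U_R \colon \Modd{G}(R) \to \Sp_G$ is conservative and preserves colimits by \cite[Corollary 4.2.3.5]{HALurie}, so its left adjoint $F_R$ preserves compact objects and carries a generating set to a generating set by \cite[Proposition 5.5.7.2]{HTTLurie}; hence $\{R \otimes G/H_+\}_{H \subseteq G}$ is a set of compact generators. (2) Invoke \cite[Theorem 4.5.2.1 and Theorem 4.5.3.1]{HALurie} to see that $\Modd{G}(R)$ is symmetric monoidal with $F_R$ symmetric monoidal; since symmetric monoidal functors preserve dualizable objects, each generator $R \otimes G/H_+$ is dualizable, and thus $\Modd{G}(R)$ is rigidly-compactly generated. (3) Note that $\Modd{G}(R)$ is stable and admits all coproducts, and has a closed symmetric monoidal structure (the internal hom is inherited from $\Sp_G$ via the standard formula), so it satisfies all the hypotheses of \cref{def-big-tt}. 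This is really just an instance of \cref{rec:module_categories} for $\cat C = \Sp_G$.

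I do not anticipate a genuine obstacle here — the statement is essentially a specialization of the general recollection, and the proof amounts to citing the relevant pieces of \cite{HALurie} and \cite{HTTLurie} in the right order. If there is any subtlety at all, it is in being careful that the conditions in \cref{def-big-tt} (compactness of the unit, coincidence of compact and rigid objects) are all verified rather than just asserted: the unit of $\Modd{G}(R)$ is $R = F_R(S_G^0)$, which is compact since $S_G^0$ is compact in $\Sp_G$ and $F_R$ preserves compactness, and then \cite[Theorem 2.1.3(d)]{HoveyPalmieriStrickland97} (quoted in \cref{def-big-tt}) gives that compact and rigid objects coincide once we know the compact generators are dualizable. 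So the proof will simply assemble these observations, referring back to \cref{rec:module_categories}.

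\begin{proof}
This is an instance of the general discussion in \cref{rec:module_categories}, applied to the rigidly-compactly generated symmetric monoidal stable $\infty$-category $\cat C = \Sp_G$ (see \cref{rec:infl-res}) and the commutative algebra $A = R \in \CAlg(\Sp_G)$. The category $\Sp_G$ admits the set $\mathcal G = \SET{G/H_+}{H \subseteq G}$ of compact, dualizable generators, so \cref{rec:module_categories} shows that $\Modd{G}(R) = \Mod_{\Sp_G}(R)$ is a rigidly-compactly generated symmetric monoidal stable $\infty$-category with set of compact generators
\[
R \otimes \mathcal G = \SET{F_R(G/H_+)}{H \subseteq G} = \SET{R \otimes G/H_+}{H \subseteq G},
\]
where $F_R \colon \Sp_G \to \Modd{G}(R)$ denotes extension of scalars. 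For the reader's convenience we recall the argument. The forgetful functor $U_R \colon \Modd{G}(R) \to \Sp_G$ is conservative and preserves colimits \cite[Corollary 4.2.3.5]{HALurie}, so its left adjoint $F_R$ preserves compact objects and sends a set of compact generators to a set of compact generators \cite[Proposition 5.5.7.2]{HTTLurie}; hence $\SET{R \otimes G/H_+}{H \subseteq G}$ is a set of compact generators of $\Modd{G}(R)$. By \cite[Theorem 4.5.2.1 and Theorem 4.5.3.1]{HALurie}, $\Modd{G}(R)$ inherits a closed symmetric monoidal structure from $\Sp_G$ and $F_R$ is symmetric monoidal, hence preserves dualizable objects; in particular each generator $R \otimes G/H_+$ is dualizable, and the unit $R = F_R(S_G^0)$ is compact. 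It follows that $\Modd{G}(R)$ is rigidly-compactly generated in the sense of \cref{def-big-tt}.
\end{proof}
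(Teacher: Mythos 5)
Your proof is correct and takes exactly the same route as the paper, which simply cites \cref{rec:module_categories} together with \cref{rec:infl-res} for $\cat C = \Sp_G$; your write-up merely unpacks the details of that recollection. No gaps.
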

\begin{proof}
This is \Cref{rec:module_categories} and \cref{rec:infl-res} for $\cat C = \Sp_G$.
\end{proof}

\begin{Lem}\label{lem:abstract_end_finite}
 Let $R \in \CAlg(\Sp_G)$ with $\pi^G_*(R)$ graded Noetherian. Suppose that for all subgroups $H \subseteq  G$,  $\pi_*^H(R)$ is a finitely generated $\pi_*^G(R)$-module via restriction, then $\Mod_G(R)$ is Noetherian (\Cref{def:noeth-tt-cat}).
\end{Lem}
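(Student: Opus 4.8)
The plan is to verify the two conditions in \cref{def:noeth-tt-cat} for the rigidly-compactly generated tt-category $\cat T = \Modd{G}(R)$ (which is rigidly-compactly generated by \cref{lem:eqmodulegenerators}). The first condition is immediate: the unit of $\Modd{G}(R)$ is $R$, and the free--forgetful adjunction $F_R\dashv U_R$ of \cref{rec:module_categories} and \cref{rec:infl-res} identifies the graded endomorphism ring $\End^*_{\Modd{G}(R)}(R)$ with $\pi^G_*(R)$, which is graded Noetherian by hypothesis. So the real content is the second condition: that $\End^*_{\Modd{G}(R)}(C)$ is a finitely generated $\pi^G_*(R)$-module for every compact $C$.

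To handle this I would localize the finiteness into a thick subcategory. Let $\cat F\subseteq\Modd{G}(R)$ be the full subcategory of objects $X$ such that $\pi^G_*(X)\cong\Hom^*_{\Modd{G}(R)}(R,X)$ is finitely generated over $\pi^G_*(R)$. First I would check that $\cat F$ is thick: applying $\Hom^*_{\Modd{G}(R)}(R,-)$ to a cofiber sequence gives a long exact sequence of $\pi^G_*(R)$-modules, and using that $\pi^G_*(R)$ is Noetherian (so submodules of finitely generated modules are finitely generated), one extracts closure under cofibers, (de)suspensions, and retracts. Next I would show that the compact generators $R\otimes G/H_+$ of \cref{lem:eqmodulegenerators} lie in $\cat F$: by the projection formula (equivalently, the self-duality of finite $G$-sets together with the Wirthm\"uller isomorphism) there is an isomorphism $\pi^G_*(R\otimes G/H_+)\cong\pi^H_*(R)$ under which the $\pi^G_*(R)$-action corresponds to the module structure on $\pi^H_*(R)$ obtained via restriction $\res^G_H$, and this module is finitely generated by hypothesis. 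Since the subcategory of compact objects of $\Modd{G}(R)$ is $\thick\langle R\otimes G/H_+ : H\subseteq G\rangle$, it follows that every compact object of $\Modd{G}(R)$ lies in $\cat F$.

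To finish, given a compact $C$, I would use that $\Modd{G}(R)$ is rigidly-compactly generated, so $C$ is dualizable; hence $\mathbb{D}C$, and then $\mathbb{D}C\otimes C$, are again compact. Thus $\mathbb{D}C\otimes C\in\cat F$, and the natural isomorphism $\End^*_{\Modd{G}(R)}(C)\cong\pi^G_*(\mathbb{D}C\otimes C)$ (up to regrading) exhibits $\End^*_{\Modd{G}(R)}(C)$ as finitely generated over $\pi^G_*(R)$, establishing the second condition.

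I expect the only delicate point to be the bookkeeping of module structures: one must confirm that all of the identifications in sight are genuinely $\pi^G_*(R)$-linear, and in particular that the action of $\pi^G_*(R)$ on $\pi^G_*(R\otimes G/H_+)\cong\pi^H_*(R)$ really is restriction of scalars, so that the hypothesis on $\pi^H_*(R)$ can be invoked. Everything else in the argument is formal.
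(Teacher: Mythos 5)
Your proposal is correct and follows essentially the same route as the paper: reduce the finiteness of $\End^*_{\Modd{G}(R)}(C)$ via dualizability to the finiteness of $\Hom^*_{\Modd{G}(R)}(R,-)$ on compacts, then run a thick subcategory argument over the generators $R\otimes G/H_+$, identifying $\Hom^*_{\Modd{G}(R)}(R,R\otimes G/H_+)\cong\pi^H_*(R)$ with its $\pi^G_*(R)$-module structure given by restriction. The only difference is the order of the two reductions (the paper applies duality first and the thick subcategory argument second), which is immaterial.
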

\begin{proof}
Firstly, we note that $\End^*_{\Mod_G(R)}(R)=\pi_*^G(R)$ which is graded Noetherian by our assumption. Let $X \in \Perff{G}(R)$ and write $\mathbb{D}(X)$ for the internal dual of $X$ in $\Modd{G}(R)$. We are required to show that $\End^*_{\Modd{G}(R)}(X)$ is a finitely generated $\pi_*^G(R)$-module. By adjunction and dualizability of $X$ we have
\[
\End_{\Modd{G}(R)}(X) \simeq \Hom_{\Modd{G}(R)}(R,\mathbb{D}(X) \otimes_R X).
\]
Since $\mathbb{D}(X) \otimes_R X$ is still in $\Perff{G}(R)$, we are reduced to showing that
 \[
\Hom^*_{{\Modd{G}(R)}}(R, X)
 \]
 is a finitely generated $\pi_*^G(R)$-module for each $X \in \Perff{G}(R)$. 
 Using a thick subcategory argument and the fact that $\{ G/H_+ \otimes R \}_{H \subseteq  G}$ is a set of compact generators for $\Modd{G}(R)$, we then reduce to showing that 
 \[
 \Hom^*_{{\Modd{G}(R)}}(R, G/H_+ \otimes R)
 \]
is a finitely generated $\pi_*^G(R)$-module for all $H \subseteq  G$. This now follows from our assumptions since 
  \[
 \Hom^*_{{\Modd{G}(R)}}(R, G/H_+ \otimes R) \cong \Hom^*_{\Sp_G}(S^0, G/H_+ \otimes R) \cong \pi_*^H(R)
 \]
is finitely generated over $\pi_*^G(R)$.
\end{proof}

\begin{Rem}\label{rem:group_functoriality}
Let $\alpha \colon G' \to G$ be a group homomorphism and $R\in\CAlg(\Sp_G)$. By the discussion in \cref{rec:basechange_functoriality}, there is a commutative diagram
\[\begin{tikzcd}[ampersand replacement=\&]
	{\Sp_G} \& {\Sp_{G'}} \\
	{\Modd{G}(R)} \& {\Modd{G'}(\alpha^*(R)).}
	\arrow["{\alpha^*}", from=1-1, to=1-2]
	\arrow["{F_R}"', shift right=1, from=1-1, to=2-1]
	\arrow["{\alpha^*}"', from=2-1, to=2-2]
	\arrow["{F_{\alpha^*(R)}}"', shift right=1, from=1-2, to=2-2]
	\arrow["{U_R}"', shift right=1, from=2-1, to=1-1]
	\arrow["{U_{\alpha^*(R)}}"', shift right=1, from=2-2, to=1-2]
\end{tikzcd}\]
If $\alpha \colon H \hookrightarrow G$ is the inclusion of a subgroup, then we refer to the induced symmetric monoidal functor $\res_H \colon \Mod_G(R) \to \Mod_H(\res_HR)$ as \emph{restriction}.
\end{Rem}

\begin{Rem}
From now on, we will usually omit writing the restriction functor on an object, i.e., we simply write
\[
\res_H \colon \Mod_G(R) \to \Mod_H(R)
\]
for brevity. 
\end{Rem}

\begin{Rem}\label{rem:coind-cons}
 The restriction functors $\Sp_G \to \Sp_H$ and $\Mod_G(R)\to \Mod_H(R)$ are symmetric monoidal and preserve colimits, so they admit  lax monoidal right adjoints, both denoted $\CoInd_H$ and called \emph{coinduction}. These make the following diagram commute (by \Cref{rem:group_functoriality})
 \[
 \begin{tikzcd}
  \Sp_H \arrow[r,"\CoInd_H"] & \Sp_G  \\
  \Mod_H(R) \arrow[u, "U_R"] \arrow[r, "\CoInd_H"'] & \Mod_G(R) \arrow[u,"U_R"'].
 \end{tikzcd}
 \]
 The top horizontal functor is conservative by the proof of~\cite[Theorem 5.32]{MathewNaumannNoel17} and the vertical functors are conservative by~\cite[Corollary 4.2.3.2]{HALurie}. It then follows that the bottom horizontal functor is also conservative. 
\end{Rem}

\begin{Lem}\label{lem:res_finiteetale}
For any subgroup $H \subseteq  G$ and $R\in \CAlg(\Sp_G)$, the restriction functor
\[
\xymatrix{\res_H\colon \Modd{G}(R) \ar[r] & \Modd{H}(R)}
\]
is a finite \'etale tt-functor with corresponding separable commutative algebra given by $\mathbb{D}(G/H_+)\otimes R$.
\end{Lem}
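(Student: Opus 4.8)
The plan is to deduce this by base change along $F_R\colon \Sp_G\to\Modd{G}(R)$ from the corresponding statement over the sphere. First I would recall the input from \cite{BalmerDellAmbrogioSanders15}: the $G$-spectrum $\mathbb{D}(G/H_+)\simeq G/H_+$ (the suspension spectrum of the finite $G$-set $G/H$ is self-dual) carries the structure of a compact, separable commutative algebra of finite tt-degree in $\Sp_G$, and restriction along $H\hookrightarrow G$ is finite \'etale; equivalently, there is a symmetric monoidal equivalence $\Sp_H\simeq \Mod_{\Sp_G}(\mathbb{D}(G/H_+))$ under which $\res_H\colon \Sp_G\to\Sp_H$ is identified with the extension-of-scalars functor $F_{\mathbb{D}(G/H_+)}$. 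The passage from the triangulated statement of \cite{BalmerDellAmbrogioSanders15} to the $\infty$-categorical one is handled by \cref{rem:hofinite-etale}.

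Next I would transport this structure along $F_R$. Set $A:=\mathbb{D}(G/H_+)\otimes R = F_R(\mathbb{D}(G/H_+))\in\CAlg(\Modd{G}(R))$, which is a commutative algebra because $F_R$ is symmetric monoidal. I would then verify that $A$ is again compact (since $F_R$ preserves compact objects by \cref{rec:module_categories}), separable (transport a section of the multiplication of $\mathbb{D}(G/H_+)$ through the symmetric monoidal functor $F_R$), and of finite tt-degree (not increased by a symmetric monoidal functor, cf.\ \cite{Balmer14}).

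Then I would identify the target. By transitivity of module categories (iterated base change, cf.\ \cite{HALurie}), writing $A=F_R(\mathbb{D}(G/H_+))$ and using $\Sp_H\simeq\Mod_{\Sp_G}(\mathbb{D}(G/H_+))$, one obtains symmetric monoidal equivalences $\Mod_{\Modd{G}(R)}(A)\simeq \Mod_{\Sp_G}(R\otimes\mathbb{D}(G/H_+))\simeq \Mod_{\Mod_{\Sp_G}(\mathbb{D}(G/H_+))}(\res_H R)\simeq \Modd{H}(\res_H R)$. Applying the base-change compatibility of \cref{rec:basechange_functoriality} to the symmetric monoidal colimit-preserving functor $\res_H\colon \Sp_G\to\Sp_H$ and the algebra $R\in\CAlg(\Sp_G)$ — which is precisely the commutative square of \cref{rem:group_functoriality} for the inclusion $H\hookrightarrow G$ — shows that, under these equivalences, the functor $\res_H\colon \Modd{G}(R)\to\Modd{H}(R)$ is identified with extension of scalars $F_A\colon \Modd{G}(R)\to\Mod_{\Modd{G}(R)}(A)$. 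Combined with the previous step, this verifies the defining conditions of \cref{def:finite-etale} (using \cref{rem:hofinite-etale} to move between the $\infty$-categorical and triangulated pictures), with classifying algebra $A=\mathbb{D}(G/H_+)\otimes R$.

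The main obstacle is the compatibility in the last step: one must check that the chain of transitivity equivalences genuinely intertwines $\res_H$ with $F_A$, rather than merely that source and target agree abstractly. This is a coherence argument bootstrapped from the functoriality statements in \cref{rec:basechange_functoriality}, and some care is needed that all the transitivity equivalences are the canonical ones so that the relevant squares of symmetric monoidal functors commute coherently; everything else is a routine transport of structure along $F_R$. As a more hands-on alternative that sidesteps abstract transitivity, one can instead compute $\CoInd_H(\res_H R)\simeq \mathbb{D}(G/H_+)\otimes R$ as commutative algebras via the projection formula and check that the adjunction $\res_H\dashv\CoInd_H$ satisfies the hypotheses of Lurie's Barr--Beck theorem, so that $\Modd{H}(R)\simeq \Mod_{\Modd{G}(R)}(\CoInd_H(\res_H R))$; I expect the base-change route above to be the cleaner of the two.
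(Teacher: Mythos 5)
Your overall strategy agrees with the paper's: both start from the fact that $\res_H\colon\Ho(\Sp_G)\to\Ho(\Sp_H)$ is finite \'etale with separable algebra $\mathbb{D}(G/H_+)$ of finite tt-degree (via \cite{BalmerDellAmbrogioSanders15} and \cite{Balmer14}), and then transport this along base change to $R$-modules. In fact your ``hands-on alternative'' at the end is essentially the paper's actual proof: the paper applies \cite[Lemma 5.5]{Sanders21pp} to the commutative square of \cref{rem:group_functoriality}, verifying its hypotheses using conservativity of coinduction on module categories (\cref{rem:coind-cons}) and \cite[Equation 5.15]{MathewNaumannNoel17}, and then identifies the classifying algebra by showing $\CoInd_H\res_H R\simeq\mathbb{D}(G/H_+)\otimes R$ as commutative algebras via a map built from \cite[Construction 5.30]{MathewNaumannNoel17} and the projection formula \cite[Theorem 1.3]{BalmerDellAmbrogioSanders16}. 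So the route you relegate to a fallback is the cleaner one here, precisely because the transport of the adjunction is packaged once and for all in Sanders' lemma rather than re-proved by hand.

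The one genuine weak point is the step you yourself flag in your preferred route: identifying $\res_H\colon\Modd{G}(R)\to\Modd{H}(R)$ with $F_A$ under the chain of transitivity equivalences. The commutative square $\res_H\circ F_R\simeq F_{\res_H R}\circ\res_H$ of \cref{rem:group_functoriality} only tells you that the two functors out of $\Modd{G}(R)$ agree after precomposition with $F_R$, and two functors agreeing on the image of $F_R$ need not be equivalent; some additional input is required, e.g. monadicity of $\CoInd_H$ (conservative, colimit-preserving right adjoint plus projection formula), or the separable-extension trick of \cref{prop:nat-tra} combined with the fact that $\Modd{G}(\mathbb{D}(G/H_+)\otimes R)$ is generated by objects in the image of extension of scalars (\cref{lem:eqmodulegenerators}) --- this is exactly the pattern the paper uses later in the proof of \cref{lem:two-functors}. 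As written, your appeal to the base-change square alone does not close this step, so either carry out one of these arguments or simply run your alternative route, which coincides with the paper's proof. The remaining points (self-duality of $G/H_+$, separability and compactness of $\mathbb{D}(G/H_+)\otimes R$ transported along the symmetric monoidal $F_R$, non-increase of tt-degree under a tt-functor) are fine.
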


\begin{proof}
 There is a commutative diagram of $\infty$-categories (\Cref{rem:group_functoriality})
 \[
 \begin{tikzcd}
  \Sp_G \arrow[r,"\res_H"] \arrow[d,"F_R"'] & \Sp_H \arrow[d,"F_R"] \\
  \Mod_G(R) \arrow[r,"\res_H"'] & \Mod_H(R),
 \end{tikzcd}
 \]
 where the vertical arrows are the extension of scalars functors. Passing to homotopy categories, we obtain a commutative diagram of rigidly-compactly generated tt-categories in which the top horizontal arrow is finite \'etale with corresponding separable commutative algebra $A=\mathbb{D}(G/H_+)$. Indeed, $A$ is separable by~\cite[Theorem 1.1]{BalmerDellAmbrogioSanders15}. We can check that $\mathbb{D}(G/H_+)$ has finite tt-degree by testing on all geometric fixed points~\cite[Theorem 3.7(b)]{Balmer14} and applying~\cite[Corollary 4.8]{Balmer14}. These results together with the second part of~~\cite[Theorem 1.1]{BalmerDellAmbrogioSanders15} show that the restriction functor $\res_H \colon \Ho(\Sp_G) \to \Ho(\Sp_H)$ is finite \'etale with $A = \mathbb{D}(G/H_+)$.
 
 Using \Cref{rem:coind-cons} and~\cite[Equation 5.15]{MathewNaumannNoel17} we can apply \cite[Lemma 5.5]{Sanders21pp} to see that the bottom horizontal map in the diagram is finite \'etale. By construction, the separable algebra associated to $\res_H$ must be $\CoInd_H \res_H R$. We claim that $\CoInd_H \res_H R\simeq \mathbb{D}(G/H_+)\otimes R$ as commutative algebras in $\Sp_G$. Indeed, by \cite[Construction 5.30]{MathewNaumannNoel17} there is a map $\res_H \mathbb{D}(G/H_+) \to S^0_H$ in $\CAlg(\Sp_H)$ which by base-change and symmetric monoidality of $\res_H $ gives rise to a map $\res_H (\mathbb{D}(G/H_+) \otimes R) \to \res_H R$ in  $\CAlg(\Mod_H(\res_H R))$. By adjunction we get a map $\mathbb{D}(G/H_+) \otimes R \to \CoInd_H \res_H R$ in $\CAlg(\Mod_G(R))$. This map is an equivalence by the projection formula for the adjunction $(\res_H, \CoInd_H)$, see for instance \cite[Theorem 1.3]{BalmerDellAmbrogioSanders16}.
\end{proof}

\begin{Prop}\label{prop:base-change-naturality}
 Let $R\in \CAlg(\Sp_G)$ and a subgroup $H \subseteq  G$ be given. There is a symmetric monoidal equivalence
 \[
 L_H\colon \Mod_G(\mathbb{D}(G/H_+)\otimes R) \rightarrow \Mod_H(R)
 \]
 which makes the following diagram commute up to isomorphism
 \[
 \begin{tikzcd}
 \Mod_G(R)\arrow[d,"\res_H"'] \arrow[dr,"F_{\mathbb{D}(G/H_+)\otimes R}"]& \\
 \Mod_H(R)  & \Mod_G(\mathbb{D}(G/H_+)\otimes R), \arrow[l,"\sim"',"L_H"]
 \end{tikzcd}
 \]
 where $F_{\mathbb{D}(G/H_+)\otimes R}$ denotes the extension of scalars functor. Furthermore, the equivalence $L_H$ is natural with respect to the extension of scalars functors along morphisms in $\CAlg(\Sp_G)$.
\end{Prop}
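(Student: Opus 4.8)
The plan is to obtain $L_H$ as the inverse of a composite of two formal $\infty$-categorical equivalences: a Barr--Beck--Lurie description of $\Mod_H(R)$ as modules over a monad on $\Mod_G(R)$, followed by the associativity of the module-category construction. Throughout write $A\coloneqq\mathbb{D}(G/H_+)\in\CAlg(\Sp_G)$, which is $\mathbb{E}_\infty$ because it identifies with $\CoInd_H S_H^0$ (\cref{rem:coind-cons}) and $\CoInd_H$ is lax symmetric monoidal. First I would record that $\res_H\colon\Mod_G(R)\to\Mod_H(R)$ is symmetric monoidal and colimit-preserving (\cref{rem:group_functoriality}) and preserves compact objects, since it carries each generator $R\otimes G/K_+$ to a finite coproduct of generators $R\otimes H/L_+$ (as $\res_H(G/K_+)$ is a finite $H$-set). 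Its right adjoint $\CoInd_H$ is conservative (\cref{rem:coind-cons}), and since $\res_H$ preserves compacts, $\CoInd_H$ preserves filtered colimits and hence, being exact, all colimits (equivalently, $\CoInd_H\simeq\Ind_H$ by Wirthm\"uller duality). Barr--Beck--Lurie therefore supplies a symmetric monoidal equivalence $\Mod_H(R)\simeq\Mod_{\Mod_G(R)}(\mathbb{T}_R)$ over $\Mod_G(R)$, where $\mathbb{T}_R\coloneqq\CoInd_H\res_H$ is the associated monad.

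Next I would identify this monad. The proof of \cref{lem:res_finiteetale} already establishes $\CoInd_H\res_H R\simeq A\otimes R$ as commutative algebras in $\Sp_G$; refining this along the projection formula for $(\res_H,\CoInd_H)$ (\cref{rem:coind-cons}, \cite[Theorem 1.3]{BalmerDellAmbrogioSanders16}) identifies $\mathbb{T}_R$ with $B\otimes_R(-)$, compatibly with monad and algebra structures, where $B\coloneqq F_R(A)\in\CAlg(\Mod_G(R))$. Hence $\Mod_H(R)\simeq\Mod_{\Mod_G(R)}(B)$ symmetric monoidally. Since $U_R B\simeq A\otimes R$ as $\mathbb{E}_\infty$-algebras in $\Sp_G$, the associativity of the module construction \cite{HALurie} gives a symmetric monoidal equivalence $\Mod_{\Mod_G(R)}(B)\simeq\Mod_{\Sp_G}(A\otimes R)=\Mod_G(A\otimes R)$. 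Composing these and inverting yields the desired symmetric monoidal functor $L_H\colon\Mod_G(A\otimes R)\xrightarrow{\sim}\Mod_H(R)$.

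For the triangle I would track the free functor $F_B\colon\Mod_G(R)\to\Mod_{\Mod_G(R)}(B)$ through both equivalences. On one side, transitivity of extension of scalars identifies $F_B$ with $F_{A\otimes R}\colon\Mod_G(R)\to\Mod_G(A\otimes R)$. On the other side, under the monad identification $F_B$ matches the free $\mathbb{T}_R$-module functor, and the monadic comparison equivalence of the first step identifies the latter with the left adjoint $\res_H\colon\Mod_G(R)\to\Mod_H(R)$ (the comparison intertwines the two adjunctions with monad $\mathbb{T}_R$). Hence $L_H\circ F_{A\otimes R}\simeq\res_H$, which is the commutativity of the triangle. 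For naturality in $R$, I would observe that each ingredient --- the adjunction $(\res_H,\CoInd_H)$ on module categories (\cref{rem:group_functoriality}), the algebra equivalence $\CoInd_H\res_H R\simeq A\otimes R$, the projection formula, and the associativity equivalence --- is natural in $R\in\CAlg(\Sp_G)$ and compatible with extension of scalars along morphisms of algebras; consequently so is $L_H$.

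The step I expect to be the main obstacle is making all of this genuinely symmetric monoidal and coherent at the $\infty$-categorical level, rather than merely at the level of underlying $\infty$-categories or homotopy categories. Concretely, one must check that the projection-formula comparison refines to an equivalence of $\mathbb{E}_\infty$-monads --- so that the monadic description of $\Mod_H(R)$ promotes to the symmetric monoidal equivalence with $\Mod_G(A\otimes R)$ --- and that the hypotheses of Barr--Beck--Lurie hold: conservativity of $\CoInd_H$ is \cref{rem:coind-cons}, while its preservation of colimits is exactly where equivariant finiteness (Wirthm\"uller duality, equivalently preservation of compact objects by $\res_H$) enters. Granting these coherence checks, the remaining assertions are formal consequences of Lurie's theory of modules.
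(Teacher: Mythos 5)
Your proposal is correct in outline, and its engine is in fact the same result the paper invokes, but the organization is genuinely different. The paper's proof is essentially a citation chain: the case $R=S_G^0$ is \cite[Theorem 5.32]{MathewNaumannNoel17}, the general case follows by combining \cite[Lemma 3.9]{BehrensShah2020equivariant} with \cite[Proposition 5.29]{MathewNaumannNoel17}, and the triangle and the naturality in $R$ come from the explicit description $L_H \simeq \bigl(-\otimes_{\mathbb{D}(G/H_+)\otimes R} R\bigr)\circ \res_H$ of \cite[Construction 5.23]{MathewNaumannNoel17}. What you do instead is re-derive the key input --- symmetric monoidal monadicity in the presence of the projection formula, which is precisely the content of \cite[Proposition 5.29]{MathewNaumannNoel17} --- directly for arbitrary $R$, via Barr--Beck--Lurie applied to $\res_H\dashv \CoInd_H$ on $R$-modules, the identification $\CoInd_H\res_H R\simeq \mathbb{D}(G/H_+)\otimes R$, and the associativity $\Mod_{\Mod_G(R)}(B)\simeq \Mod_G(\mathbb{D}(G/H_+)\otimes R)$ from \cite{HALurie}; your verification of the hypotheses (conservativity of $\CoInd_H$, preservation of colimits via compactness of orbits or the Wirthm\"uller isomorphism) is correct, and your identification of the triangle by tracking the free module functor through the comparison is also right. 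What your route buys is independence from the reduction to the sphere and from Behrens--Shah; what it costs is that the coherence burden you flag is real: Barr--Beck--Lurie alone gives a plain equivalence, and promoting it to a symmetric monoidal one requires knowing that the lax structure maps of the comparison functor $N\mapsto \CoInd_H N$ are equivalences (again the projection formula) and that the endofunctor identification refines to one of monads --- exactly the content outsourced to MNN in the paper. The thinnest point of your sketch is the final naturality-in-$R$ claim: inverting a monadic comparison functor and making that inverse functorial in $R$ needs an argument, whereas the paper sidesteps this because its $L_H$ is given by an explicit composite (restriction followed by base change along $\mathbb{D}(G/H_+)\otimes R\to R$), each constituent of which is visibly natural; you could close this by checking that your abstract inverse agrees with that explicit composite.
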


\begin{proof}
    The equivalence for $R=S_G^0$ is proved in~\cite[Theorem 5.32]{MathewNaumannNoel17} as an application of~\cite[Proposition 5.29]{MathewNaumannNoel17}. The commutativity of the diagram follows from the diagram in~\cite[Construction 5.23]{MathewNaumannNoel17} by passing to left adjoints. The general case follows from this by combining~\cite[Lemma 3.9]{BehrensShah2020equivariant} and~\cite[Proposition 5.29]{MathewNaumannNoel17}. Again, as in the previous lemma, we are using the fact that $\CoInd_H \res_H R\simeq \mathbb{D}(G/H_+)\otimes R$ as commutative algebras in $\Sp_G$. Finally, the equivalence $L_H$ is explicitly given by the composite 
    \[
     \Mod_G(\mathbb{D}(G/H_+)\otimes R) \xrightarrow{\res_H} \Mod_H(\mathbb{D}(G/H_+)\otimes R)\xrightarrow{-\otimes_{\mathbb{D}(G/H_+)\otimes R}R}\Mod_H(R)
    \]
    see~\cite[Construction 5.23]{MathewNaumannNoel17}, and this is clearly natural in $R$.
\end{proof}

The following corollary establishes the compatibility between our $\infty$-categorical constructions and the tensor-triangular context; in particular, it serves as a gateway for importing results from Balmer's work to our setting. 

\begin{Cor}\label{cor:homotopy-category}
    Let $G$ be a finite group, $H \subseteq G$ a subgroup, and $R$ a commutative $G$-equivariant ring spectrum. Then there is a tt-equivalence
    \[
    \Ho(\Mod_G(\mathbb{D}(G/H_+)\otimes R))\simeq \Mod_{\Ho(\Mod_G(R))}(\mathbb{D}(G/H_+)\otimes R).
    \]
\end{Cor}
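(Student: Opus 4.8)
The plan is to deduce this corollary directly from \cref{rem:hofinite-etale}, which asserts that for a rigidly-compactly generated symmetric monoidal stable $\infty$-category $\cat C$ and a commutative algebra $A \in \CAlg(\cat C)$ whose image in $\Ho(\cat C)$ is a separable algebra of finite degree, there is a tt-equivalence $\Ho(\Mod_{\cat C}(A)) \simeq \Mod_{\Ho(\cat C)}(A)$. So the strategy is simply to apply this with $\cat C = \Mod_G(R)$ and $A = \mathbb{D}(G/H_+) \otimes R$, viewed as a commutative algebra in $\Mod_G(R)$ via the extension-of-scalars functor $F_R \colon \Sp_G \to \Mod_G(R)$ applied to the commutative algebra $\mathbb{D}(G/H_+)$ in $\Sp_G$.

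First I would record that $\Mod_G(R)$ is a rigidly-compactly generated symmetric monoidal stable $\infty$-category; this is \cref{lem:eqmodulegenerators}. Second, I would verify the two hypotheses of \cref{rem:hofinite-etale} for the algebra $A = \mathbb{D}(G/H_+)\otimes R \in \CAlg(\Mod_G(R))$: that it is separable of finite tt-degree in $\Ho(\Mod_G(R))$. Both facts are already contained in the proof of \cref{lem:res_finiteetale}: there it is shown that $\res_H \colon \Modd{G}(R) \to \Modd{H}(R)$ is a finite \'etale tt-functor with corresponding separable commutative algebra $\mathbb{D}(G/H_+)\otimes R$, and separability and finiteness of the tt-degree are precisely what is established along the way (separability via base-change from the separable algebra $\mathbb{D}(G/H_+)$ in $\Sp_G$ together with \cite[Lemma 5.5]{Sanders21pp}, and finiteness of the tt-degree by testing on geometric fixed points as in \cite[Theorem 3.7(b), Corollary 4.8]{Balmer14}). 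Alternatively, one can observe that separability and finite degree are preserved under the symmetric monoidal functor $F_R$, so these properties descend from $\mathbb{D}(G/H_+)$ in $\Ho(\Sp_G)$.

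With these hypotheses in hand, \cref{rem:hofinite-etale} applied to $\cat C = \Mod_G(R)$ and $A = \mathbb{D}(G/H_+)\otimes R$ yields the desired tt-equivalence
\[
\Ho(\Mod_G(\mathbb{D}(G/H_+)\otimes R)) \simeq \Mod_{\Ho(\Mod_G(R))}(\mathbb{D}(G/H_+)\otimes R).
\]
I do not anticipate a serious obstacle here; the statement is essentially a bookkeeping consequence of \cref{rem:hofinite-etale} and \cref{lem:res_finiteetale}. The only point requiring a modicum of care is making sure the separable algebra structure used on $\mathbb{D}(G/H_+)\otimes R$ in $\Ho(\Mod_G(R))$ is the same one (namely the image under $F_R$ of the structure on $\mathbb{D}(G/H_+)$, equivalently $\CoInd_H\res_H R$ via the equivalence of algebras established in \cref{lem:res_finiteetale}) so that the cited results genuinely apply; this is where I would be most careful, but it is routine given the earlier material.
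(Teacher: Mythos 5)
Your argument is correct and is essentially the paper's own proof: the paper likewise observes that the corollary is a special case of \cref{rem:hofinite-etale} applied to $\cat C=\Mod_G(R)$ and $A=\mathbb{D}(G/H_+)\otimes R$, whose separability and finite tt-degree are supplied by \cref{lem:res_finiteetale} (the paper then also records a direct alternative combining \cref{lem:res_finiteetale} with \cref{prop:base-change-naturality}). The only point left tacit in your write-up, and equally tacit in the paper, is the standard identification $\Mod_{\Mod_G(R)}(\mathbb{D}(G/H_+)\otimes R)\simeq \Mod_G(\mathbb{D}(G/H_+)\otimes R)$ needed to match the left-hand side of \cref{rem:hofinite-etale} with the one in the statement.
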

\begin{proof}
This is a special case of \cref{rem:hofinite-etale}, but we can also give a direct argument: On the one hand, \cref{lem:res_finiteetale} shows that $\res_H$ is an \'etale tt-functor, which by definition (\cref{def:finite-etale}) means that $\Ho(\res_H)$ is \'etale, with corresponding separable algebra given by $R \otimes \mathbb{D}(G/H_+)$. It follows that there is a tt-equivalence
\[
\Mod_{\Ho(\Mod_G(R))}(\mathbb{D}(G/H_+)\otimes R) \simeq \Ho(\Mod_H(R)).
\]
On the other hand, \cref{prop:base-change-naturality} provides an equivalence
\[
\Ho(L_H)\colon \Ho(\Mod_G(\mathbb{D}(G/H_+)\otimes R)) \xrightarrow{\sim} \Ho(\Mod_H(R))
\]
of tt-categories. Combining these two equivalences yields the claim.
\end{proof}

\subsection{$R$-linear geometric fixed points} 
In this subsection, we discuss geometric fixed point functors and their $R$-linear version for $R\in\CAlg(\Sp_G)$.

\begin{Rec}\label{rec:gfp}
Fix a finite group $G$. We recall the construction of the \emph{geometric fixed point functors} $\Phi^H=\Phi_G^H \colon \Sp_G \to \Sp$ for each subgroup $H \subseteq  G$, with the subscript $G$ usually omitted from notation. First, for $H=G$, we define $\Phi^G$ to be the finite localization of $\Sp_G$ away from $\SET{G/K_+}{K \subsetneq G}$; it is a theorem of Lewis--May--Steinberger--McClure that the target category of this finite localization is equivalent to $\Sp$, see \cite[Corollary  II.9.6]{LewisMaySteinbergerMcClure86} at the level of homotopy categories, or \cite[Theorem 6.11]{MathewNaumannNoel17} for a lift to the level of $\infty$-categories. Moreover, $\Phi^G$ splits the inflation functor $\infl_e\colon \Sp \to \Sp_G$, i.e., $\Phi^G\circ\infl_e$ is naturally isomorphic to the identity functor.

For arbitrary $H \subseteq  G$, we define $\Phi^H$ as the composite functor
\[
\xymatrix{\Phi^H = \Phi_G^H\colon \Sp_G \ar[r]^-{\res_H} & \Sp_H \ar[r]^-{\Phi^H} & \Sp.}
\]
Note that, by \cref{lem:res_finiteetale} and the definition, this is the composition of a finite \'etale extension and a finite localization, and that $\Phi^H$ is symmetric monoidal. 
\end{Rec}

\begin{Def}\label{def:gfp_functor}
Fix a finite group $G$ and let $R\in \CAlg(\Sp_G)$. The functor $\Phi^G \colon \Sp_G \to \Sp$ induces a symmetric monoidal functor on module categories
\[
\Phi^G \colon \Modd{G}(R) \to \Modd{}(\Phi^G R)
\]
which we refer to as the \emph{($R$-linear) $G$-geometric fixed point functor}.
For an arbitrary subgroup $H \subseteq  G$ we refer to the composite
\[
\xymatrix{\Phi^H=\Phi_G^H \colon \Modd{G}(R) \ar[r]^-{\res_H} & \Modd{H}(R) \ar[r]^-{\Phi^H} & \Mod(\Phi^HR)}
\]
as the \emph{($R$-linear)} $H$-\emph{geometric fixed point functor}.
\end{Def}

\begin{Lem}\label{lem:gfp-conjugacy}
Consider an element $g\in G$, a subgroup $H\subseteq G$ and $R\in\CAlg(\Sp_G)$. Then for all $M\in\Mod_G(R)$ we have $\Phi^H M\simeq 0$ if and only if $\Phi^{H^g}M \simeq 0$. 
\end{Lem}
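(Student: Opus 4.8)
The statement asserts that for conjugate subgroups $H$ and $H^g = g^{-1}Hg$ of $G$, the $H$-geometric fixed points of an $R$-module $M$ vanish if and only if its $H^g$-geometric fixed points do; in fact one expects the stronger conclusion that $\Phi^H M$ and $\Phi^{H^g} M$ are abstractly equivalent (up to transporting along the conjugation isomorphism $\Phi^H R \simeq \Phi^{H^g} R$), which immediately gives the claimed biconditional. The plan is to exploit the fact that conjugation by $g$ induces an equivalence of categories that is compatible with all the structure in sight. First I would recall that conjugation $c_g \colon H^g \xrightarrow{\sim} H$ is a group isomorphism, and hence by \cref{rec:infl-res} induces a symmetric monoidal equivalence $c_g^* \colon \Sp_H \xrightarrow{\sim} \Sp_{H^g}$ (with inverse $c_{g^{-1}}^*$); moreover this equivalence is compatible with restriction from $G$, in the sense that $c_g^* \circ \res_H \simeq \res_{H^g}$ as functors $\Sp_G \to \Sp_{H^g}$, since the relevant group homomorphisms into $G$ agree (the inclusion $H^g \hookrightarrow G$ factors as $H^g \xrightarrow{c_g} H \hookrightarrow G$ composed with an inner automorphism of $G$, and inner automorphisms act trivially on $\Sp_G$ up to natural isomorphism).

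Next I would invoke the defining property of the absolute geometric fixed point functor: $\Phi^{H^g} \colon \Sp_{H^g} \to \Sp$ is the finite localization away from $\{(H^g)/K_+ : K \subsetneq H^g\}$, and $c_g^*$ carries the set $\{H/L_+ : L \subsetneq H\}$ to $\{(H^g)/K_+ : K \subsetneq H^g\}$ since $c_g$ gives a bijection between proper subgroups of $H$ and of $H^g$. By uniqueness of finite localizations, there is therefore a natural isomorphism $\Phi^{H^g} \circ c_g^* \simeq \Phi^H$ of symmetric monoidal functors $\Sp_H \to \Sp$. Combining these two compatibilities yields a natural isomorphism $\Phi^{H^g}_G \simeq \Phi^H_G$ of functors $\Sp_G \to \Sp$ (both being $\Phi^H \circ \res_H$ after identification). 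Passing to $R$-modules via \cref{rec:basechange_functoriality} — using that $c_g^*$ and the localization functors all preserve colimits and symmetric monoidal structures — this upgrades to a natural equivalence fitting into a commuting square relating $\Phi^H \colon \Mod_G(R) \to \Mod(\Phi^H R)$ and $\Phi^{H^g} \colon \Mod_G(R) \to \Mod(\Phi^{H^g} R)$, compatibly with the induced equivalence $\Mod(\Phi^H R) \simeq \Mod(\Phi^{H^g} R)$ on the targets (the latter being an equivalence because $\Phi^H R \simeq \Phi^{H^g} R$ as commutative ring spectra, by applying the same natural isomorphism to the object $R$).

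With this equivalence in hand, the lemma is immediate: $\Phi^H M \simeq 0$ iff its image under the target equivalence is $0$ iff $\Phi^{H^g} M \simeq 0$. The main obstacle is bookkeeping rather than anything deep: one must be careful that "inner automorphisms act trivially on $\Sp_G$" is set up at the level of the $\infty$-categorical functors $\alpha^*$ from \cref{rec:infl-res}, so that the chain of natural isomorphisms genuinely composes, and that the identification $\Phi^{H^g} \circ c_g^* \simeq \Phi^H$ is compatible with the symmetric monoidal structure so that it descends to module categories as in \cref{rec:basechange_functoriality}. Since all functors involved are colimit-preserving and symmetric monoidal, these compatibilities are formal, and no uniform bounds or finiteness subtleties arise. (Alternatively, if one only wants the stated biconditional and not the equivalence, it suffices to track the underlying conservative functors and observe that $c_g^*$ is an equivalence, hence detects and preserves zero objects — but the equivalence-level statement is cleaner and will be reused.)
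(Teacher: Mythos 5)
Your argument is correct, but it takes a different route from the paper. The paper's proof is a two-line reduction: it observes the claim can be checked in homotopy categories, quotes the already-known conjugation-invariance of geometric fixed points for the sphere ($R=S^0_G$, i.e.\ \cite[Section 2, (L)]{BalmerSanders17}), and then transfers to $R$-modules using the commuting square $U_{\Phi^H R}\circ\Phi^H_R \simeq \Phi^H\circ U_R$ from \cref{rem:group_functoriality} together with conservativity of the forgetful functors, so that vanishing of $\Phi^H_R M$ is detected on underlying spectra. You instead reprove the underlying equivariant fact from scratch --- identifying $\Phi^{H^g}\circ c_g^*\simeq \Phi^H$ via the finite-localization description and the (standard) triviality of inner automorphisms on $\Sp_G$ --- and then lift the whole natural equivalence to module categories, which yields the stronger conclusion that $\Phi^H M$ and $\Phi^{H^g}M$ correspond under an equivalence $\Mod(\Phi^H R)\simeq\Mod(\Phi^{H^g}R)$. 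What each buys: the paper's argument is minimal, leans on the literature, and needs no coherence checks; yours is self-contained and gives an equivalence rather than just a vanishing criterion (in the spirit of \cref{rem:conj-id}), at the cost of the $\infty$-categorical bookkeeping you flag (inner automorphisms acting trivially at the level of the symmetric monoidal functors $\alpha^*$ of \cref{rec:infl-res}, uniqueness of finite localizations, and descent of the natural equivalence to module categories via \cref{rec:basechange_functoriality}), all of which is formal but more than the stated lemma requires. Note also that your parenthetical shortcut essentially collapses into the paper's proof: once you have the statement for underlying $G$-spectra, conservativity of $U_{\Phi^H R}$ finishes the job without any module-level upgrade.
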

\begin{proof}
  We observe that the claim can be checked in the homotopy category. The case $R=S^0_G$ follows for example from~\cite[Section 2, (L)]{BalmerSanders17}. The general case follows from this one by considering the commutative diagrams
\[
\begin{tikzcd}
    \Sp_G \arrow[r,"\Phi^H"] & \Sp & &  \Sp_G \arrow[r,"\Phi^{H^g}"] & \Sp \\
    \Mod_G(R) \arrow[u,"U_R"] \arrow[r,"\Phi^H"] & \Mod(\Phi^H R) \arrow[u,"U_{\Phi^H R}"'] & & \Mod_G(R) \arrow[u,"U_R"] \arrow[r,"\Phi^{H^g}"] & \Mod(\Phi^{H^g} R) \arrow[u,"U_{\Phi^{H^g} R}"']
\end{tikzcd}
\]
from \cref{rem:group_functoriality} and using the fact that the forgetful functors are conservative.
\end{proof}

\begin{Lem}\label{lem:gfp_as_verdier}
For any $R \in \CAlg(\Sp_G)$ the geometric fixed point functor 
\[
 \Phi^G \colon \Modd{G}(R) \to \Mod(\Phi^GR)
 \]
 is the finite localization with respect to $\SET{R \otimes G/K_+}{K \subsetneq G}$. More generally, for every subgroup $H\subseteq G$, 
 \[
\Phi^H \colon \Mod_G(R) \to \Mod(\Phi^HR) 
 \]
 is the composite of a finite \'etale extension and a finite localization.
\end{Lem}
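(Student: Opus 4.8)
The plan is to reduce the general statement to the case $H = G$ and then invoke \cref{lem:res_finiteetale} together with the definitions. For the case $H = G$: by \cref{rec:gfp}, the non-equivariant geometric fixed point functor $\Phi^G \colon \Sp_G \to \Sp$ is the finite localization of $\Sp_G$ away from $\SET{G/K_+}{K \subsetneq G}$. I would like to transport this along the base-change adjunction $F_R \colon \Sp_G \leftrightarrows \Modd{G}(R) \colon U_R$. Concretely, by \cref{rec:basechange_functoriality} there is a natural isomorphism $\Phi^G \circ U_R \simeq U_{\Phi^G R} \circ \Phi^G$ (restriction-of-scalars is compatible with $\Phi^G$, which is symmetric monoidal and colimit-preserving), so the $R$-linear functor $\Phi^G \colon \Modd{G}(R) \to \Mod(\Phi^G R)$ is a colimit-preserving symmetric monoidal functor whose composite with the conservative $U_{\Phi^G R}$ kills exactly the modules $M$ with $\res_K M \otimes_R \cdots$ — more precisely, whose kernel consists of those $M$ with $\Phi^G U_R M = 0$. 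The key point is that the kernel of $\Phi^G$ on $\Modd{G}(R)$ is the localizing $\otimes$-ideal generated by $\SET{R \otimes G/K_+}{K \subsetneq G}$: indeed $\Phi^G(R \otimes G/K_+) \simeq \Phi^G R \otimes \Phi^G(G/K_+) = 0$ since $\Phi^G(G/K_+) = 0$ for $K \subsetneq G$, and conversely if $\Phi^G M = 0$ then $\Phi^G U_R M = U_{\Phi^G R}\Phi^G M = 0$, so $U_R M$ lies in the localizing subcategory generated by $\SET{G/K_+}{K \subsetneq G}$ in $\Sp_G$; applying $F_R$ (which preserves colimits and sends $G/K_+$ to $R \otimes G/K_+$) and using that $F_R U_R M \simeq R \otimes M$ receives a split surjection from — actually one argues that $M$ is a retract of $R \otimes M = F_R U_R M$ as an $R$-module via the module structure map, hence $M$ lies in the localizing $\otimes$-ideal generated by the $R \otimes G/K_+$. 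Since $\Phi^G$ is moreover a colimit-preserving symmetric monoidal functor to a compactly generated category, this identifies it with the finite localization at $\SET{R \otimes G/K_+}{K \subsetneq G}$ (the universal property of finite localization: a colimit-preserving functor inverting a set of compact objects and with kernel exactly the ideal they generate factors through the localization by an equivalence).

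For the general case of a subgroup $H \subseteq G$: by \cref{def:gfp_functor}, $\Phi^H$ on $\Modd{G}(R)$ is by definition the composite
\[
\Modd{G}(R) \xrightarrow{\res_H} \Modd{H}(R) \xrightarrow{\Phi^H} \Mod(\Phi^H R).
\]
By \cref{lem:res_finiteetale}, $\res_H \colon \Modd{G}(R) \to \Modd{H}(R)$ is a finite \'etale tt-functor, and by the case $H=G$ just established (applied with $G$ replaced by $H$ and $R$ replaced by $\res_H R$), the functor $\Phi^H \colon \Modd{H}(R) \to \Mod(\Phi^H R)$ is a finite localization. Composing gives the asserted factorization as a finite \'etale extension followed by a finite localization.

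\textbf{Main obstacle.} The routine parts are the compatibility isomorphisms of \cref{rec:basechange_functoriality} and the reduction via \cref{lem:res_finiteetale}. The one step requiring genuine care is the identification of $\Phi^G$ on $\Modd{G}(R)$ as a \emph{finite} localization with the precisely-specified kernel: one must check that the kernel is generated as a localizing $\otimes$-ideal by the compact objects $R \otimes G/K_+$, $K \subsetneq G$ — this uses that $M$ is an $R$-module retract of $R \otimes M$ and that $U_R$ detects and reflects membership in the relevant localizing subcategory (via conservativity and colimit-preservation of $U_R$, plus the fact that $U_R$ sends the generators $R \otimes G/K_+$ into $\thick$/$\Loc$ of the $G/K_+$). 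Equivalently, one can phrase it as: the finite localization of $\Modd{G}(R)$ at $\SET{R\otimes G/K_+}{K \subsetneq G}$ is computed by applying $\Mod_{(-)}(R)$ to the finite localization $\Sp_G \to \Sp$, which is a base-change statement for Bousfield localizations of module categories. This is where I would spend the bulk of the argument.
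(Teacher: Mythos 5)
Your overall architecture matches the paper's: the general case is reduced, exactly as in the paper, to the case $H=G$ by composing with the finite \'etale functor $\res_H$ of \cref{lem:res_finiteetale}. The gap sits in the case $H=G$, at precisely the step you flag as the crux. The assertion that $M$ is a retract of $F_RU_RM\simeq R\otimes M$ \emph{as an $R$-module} is false in general: the counit (action) map is $R$-linear, but the unit map $\unit\otimes M\to R\otimes M$ is not a map of $R$-modules, and no $R$-linear section need exist. Already in algebra, for $R=\bbZ[x]$ and $M=\bbZ[x]/x$ one has $\Hom_{\Der(\bbZ[x])}(M,R\otimes_{\bbZ}M)=\Hom_{\Der(\bbZ[x])}(\bbZ[x]/x,\bbZ[x])=0$, so $M$ is not a retract of the induced module; a retract statement of this kind would say that every module is relatively projective over the unit map, which fails. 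The correct repair uses that the localization $\Phi^G$ of $\Sp_G$ is smashing: tensoring the idempotent triangle $E\mathcal{P}_{G+}\to S^0_G\to\widetilde{E}\mathcal{P}_G$ with $M$ inside $\Modd{G}(R)$ and using joint conservativity of geometric fixed points, $\Phi^G M=0$ forces $M\otimes\widetilde{E}\mathcal{P}_G=0$, hence $M\simeq M\otimes E\mathcal{P}_{G+}$, which lies in $\Loco{R\otimes G/K_+\mid K\subsetneq G}$ since $E\mathcal{P}_{G+}\in\Loc\langle G/K_+\mid K\subsetneq G\rangle$ and $M\otimes G/K_+\simeq M\otimes_R(R\otimes G/K_+)$; alternatively one can resolve $M$ by the bar construction on free modules. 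This base-change statement for finite localizations of module categories is exactly what the paper invokes, citing \cite[Proposition 3.18]{PatchkoriaSandersWimmer20pp}, rather than reproving it.

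A second, smaller imprecision: even with the kernel identified, the universal property you appeal to ("colimit-preserving, inverts the compacts, kernel exactly the ideal they generate, hence an equivalence onto the localization") is not a correct statement. For instance, base change along $k\to k\times k$ is colimit-preserving, symmetric monoidal, and has zero kernel, yet is not an equivalence; having the right kernel only makes the induced functor on the Verdier quotient conservative, not an equivalence. One must additionally identify the target $\Mod(\Phi^GR)$ with the quotient, e.g.\ via the idempotent algebra $R\otimes\widetilde{E}\mathcal{P}_G$ and a Morita/compact-generation argument as in \cref{rem:explicitgeomfixedpoints} and \cref{lem:res_finiteetale-gfp}; this identification is again part of the cited proposition.
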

\begin{proof}
Using \Cref{rec:gfp} the first claim follows from
\cite[Proposition 3.18]{PatchkoriaSandersWimmer20pp}. The second claim follows from the first and \Cref{lem:res_finiteetale}.  
\end{proof}

\begin{Rec}\label{rec:subgroupfamilies}
We recall that a \emph{family} of subgroups of a finite group $G$ is a non-empty collection of subgroups of $G$ that is closed under conjugation and passage to subgroups. A pivotal example arises from a given subgroup $H \subseteq G$ by taking $[\subset H]$ to be the family of subgroups of $G$ which are $G$-conjugate to a proper subgroup of $H$. We note that $[\subset H] \cap H=\mathcal{P}_H$ is the family of proper subgroups of $H$.

Let $\mathcal{F}$ be a family of subgroups of $G$. There is a pointed $G$-CW-complex $\widetilde{E}\mathcal{F}$ which is characterized, up to homotopy equivalence, by the property 
\begin{equation}\label{def-tildeEP}
(\widetilde{E}\mathcal{F})^K\simeq\begin{cases}
    S^0 & \mathrm{if}\;K\not \in \mathcal{F}; \\
    \ast & \mathrm{if}\; K \in \mathcal{F}.
\end{cases}
\end{equation}
By abuse of notation, we also write $\widetilde{E}\mathcal{F}\in \Sp_G$ for the resulting suspension spectrum. 
\end{Rec}

\begin{Rem}\label{rem:explicitgeomfixedpoints}
Specializing \cref{rec:subgroupfamilies} to $\mathcal{F} = [\subset H]$ for a subgroup $H$ in $G$, we see that \cref{lem:gfp_as_verdier} implies that $\Phi^H R \simeq(R \otimes \widetilde{E}[\subset H])^H$ as objects in $\CAlg(\Sp)$. Moreover, there is an equivalence
\[
\Phi^H M \simeq (M \otimes \widetilde{E}[\subset H])^H\in\Mod(\Phi^H R)
\]
for all $M\in\Mod_G(R)$. When $H=G$, we recover the formula $\Phi^G M \simeq (M \otimes \widetilde{E}\mathcal{P}_G)^G$ (see \cite[Corollary II.9.6]{LewisMaySteinbergerMcClure86}). 
We observe that $\widetilde{E}[\subset H]\in \Sp_G$ is naturally an idempotent commutative algebra object.
\end{Rem}

We now prove a variant of \cref{prop:base-change-naturality}.

\begin{Lem}\label{lem:res_finiteetale-gfp}
  Let $R\in \CAlg(\Sp_G)$ and a subgroup $H \subseteq  G$ be given. There is a symmetric monoidal equivalence
 \[
  \Psi_H\colon \Mod_G(\mathbb{D}(G/H_+)\otimes R \otimes \widetilde{E}[\subset H])\xrightarrow{\sim} \Mod(\Phi^H R)
 \]
 which makes the following diagram commute up to isomorphism
 \[
 \begin{tikzcd}
     \Mod_G(R) \arrow[d,"\Phi^H"'] \arrow[dr,"F_{\mathbb{D}(G/H_+)\otimes R\otimes \widetilde{E}[\subset H]}"] &\\
     \Mod(\Phi^H R)  & \Mod_G(\mathbb{D}(G/H_+)\otimes R \otimes \widetilde{E}[\subset H]) \arrow[l,"\Psi_H"',"\sim"].
 \end{tikzcd}
 \]
\end{Lem}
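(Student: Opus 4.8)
The plan is to assemble $\Psi_H$ from the two equivalences already in hand: the base-change equivalence $L_H$ of \cref{prop:base-change-naturality} and the finite-localization description of geometric fixed points from \cref{lem:gfp_as_verdier}. First I would recall that, by \cref{prop:base-change-naturality}, there is a symmetric monoidal equivalence
\[
L_H\colon \Mod_G(\mathbb{D}(G/H_+)\otimes R)\xrightarrow{\sim}\Mod_H(R)
\]
sitting in a triangle with $\res_H$ and the extension-of-scalars functor $F_{\mathbb{D}(G/H_+)\otimes R}$, and that this equivalence is natural in $R$. Next, by \cref{lem:gfp_as_verdier} (or \cref{rec:gfp}), the $H$-geometric fixed point functor $\Phi^H\colon\Mod_H(R)\to\Mod(\Phi^H R)$ is the finite localization of $\Mod_H(R)$ away from $\{R\otimes H/K_+\mid K\subsetneq H\}$, which by \cref{rem:explicitgeomfixedpoints} is smashing against the idempotent algebra $\res_H\widetilde{E}\mathcal{P}_H$. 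Transporting this idempotent along $L_H^{-1}$ identifies it with (the image of) $\mathbb{D}(G/H_+)\otimes R\otimes\widetilde{E}[\subset H]$, using that $\widetilde{E}[\subset H]$ restricted to $H$ is $\widetilde{E}\mathcal{P}_H$ and that $L_H$ is symmetric monoidal; hence the localization of $\Mod_G(\mathbb{D}(G/H_+)\otimes R)$ at this idempotent is $\Mod_G(\mathbb{D}(G/H_+)\otimes R\otimes\widetilde{E}[\subset H])$ by the standard fact that localizing a module category at a smashing idempotent algebra $A$ yields modules over $A$. Composing gives the desired equivalence $\Psi_H$, and the commutativity of the stated triangle follows by chasing the two triangles (the $L_H$-triangle and the localization triangle) together with the factorization $\Phi^H=\Phi^H\circ\res_H$ and the identity $F_{\mathbb{D}(G/H_+)\otimes R\otimes\widetilde{E}[\subset H]}\simeq(-\otimes\widetilde{E}[\subset H])\circ F_{\mathbb{D}(G/H_+)\otimes R}$.

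Concretely, I would define $\Psi_H$ as the composite
\[
\Mod_G(\mathbb{D}(G/H_+)\otimes R\otimes\widetilde{E}[\subset H])\xrightarrow{\ \res_H\ }\Mod_H(R\otimes\widetilde{E}\mathcal{P}_H)\simeq\Mod_H(R)[\text{loc.}]\xrightarrow{\Phi^H}\Mod(\Phi^H R),
\]
mirroring the explicit formula for $L_H$ given at the end of the proof of \cref{prop:base-change-naturality}; here the middle identification is that $\res_H(\mathbb{D}(G/H_+)\otimes R\otimes\widetilde{E}[\subset H])\simeq(\text{trivial algebra})\otimes R\otimes\widetilde{E}\mathcal{P}_H$ after applying $L_H$, so one lands in $R\otimes\widetilde{E}\mathcal{P}_H$-modules, which is precisely the finite localization of $\Mod_H(R)$ computing $\Phi^H$. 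Each arrow is symmetric monoidal (restriction, inverting an idempotent, and $\Phi^H$ all are), so $\Psi_H$ is symmetric monoidal. That $\Psi_H$ is an equivalence follows because $\res_H$ becomes an equivalence after the identification with $L_H$, and the localized category of $\Mod_H(R)$ at the $\mathcal{P}_H$-idempotent is equivalent to $\Mod(\Phi^H R)$ by \cref{lem:gfp_as_verdier}.

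The main obstacle — really the only non-formal point — is the bookkeeping needed to identify, under $L_H$, the finite-localization idempotent of $\Mod_H(R)$ (namely $\res_H\widetilde{E}\mathcal{P}_H$, equivalently killing the $G/K_+\otimes R$ for $K\subsetneq H$) with the idempotent algebra $\mathbb{D}(G/H_+)\otimes R\otimes\widetilde{E}[\subset H]$ on the $\Mod_G$ side. This requires knowing that $L_H$ carries the generators $\{(\mathbb{D}(G/H_+)\otimes R)\otimes G/K_+\mid K\subsetneq H\}$ (or rather the relevant $G$-orbits appearing after base change) to $\{R\otimes H/K_+\mid K\subsetneq H\}$, which is exactly the content of the double-coset/Mackey-type compatibility built into \cref{prop:base-change-naturality} together with $\res_H\widetilde{E}[\subset H]\simeq\widetilde{E}\mathcal{P}_H$ from \cref{rem:explicitgeomfixedpoints} (using $[\subset H]\cap H=\mathcal{P}_H$ from \cref{rec:subgroupfamilies}). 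Once this identification of idempotents is in place, everything else is a formal diagram chase, and the commutativity of the displayed triangle drops out.
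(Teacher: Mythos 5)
Your proposal is correct and is essentially the paper's own proof: both arguments apply the base-change equivalence of \cref{prop:base-change-naturality} to $R$ and to $R\otimes\widetilde{E}[\subset H]$, use its naturality along $R\to R\otimes\widetilde{E}[\subset H]$ together with $\res_H\widetilde{E}[\subset H]\simeq\widetilde{E}\mathcal{P}_H$, and then identify $\Mod_H(R\otimes\widetilde{E}\mathcal{P}_H)$ with $\Mod(\Phi^H R)$. The only cosmetic differences are that the paper obtains this last equivalence by Morita theory (the category is compactly generated by its unit, with the equivalence given by $(-)^H$) rather than by quoting the finite-localization description of $\Phi^H$ from \cref{lem:gfp_as_verdier}, and that the ``double-coset/Mackey-type compatibility'' you invoke is not really needed---the required input is just the stated naturality of $L_H$ in $R$, which you also cite.
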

\begin{proof}
 Consider the following diagram
\[\resizebox{\columnwidth}{!}{$\displaystyle
\begin{tikzcd}[ampersand replacement=\&]
\Mod_G(R) \arrow[r,"\res_H"] \arrow[dr,"F_{\mathbb{D}(G/H_+)\otimes R}"'] \& \Mod_H(R) \arrow[r,"F_{\widetilde{E}\mathcal{P}_H}"] \& \Mod_H(R \otimes \widetilde{E} \mathcal{P}_H) \arrow[r,"(-)^H","\sim"']\& \Mod(\Phi^H R)\\
 \& \Mod_G(\mathbb{D}(G/H_+)\otimes R) \arrow[u,"L_H","\sim"'] \arrow[r,"F_{\widetilde{E}[\subset H]}"] \& \Mod_G(\mathbb{D}(G/H_+) \otimes R \otimes \widetilde{E}[\subset H]), \arrow[u,"\widetilde{L}_H"',"\sim"] \& 	
\end{tikzcd}
$}
\]
where the vertical symmetric monoidal equivalences are obtained by applying \cref{prop:base-change-naturality} to $R$ and $R \otimes \widetilde{E}[\subset H]$. The same result also shows that the above diagram commutes up to isomorphism: the left triangle commutes by definition of $L_H$ and the middle square commutes by naturality.
Finally we note that the top horizontal composite is equivalent to $\Phi^H$ by \cref{rem:explicitgeomfixedpoints}. One checks via \cref{lem:eqmodulegenerators} that $\Mod_H(R \otimes \widetilde{E}\mathcal{P}_H)$ is compactly generated by $R \otimes \widetilde{E}\mathcal{P}_H$, so Morita theory (\cite[Theorem 7.1.2.1]{HALurie}) implies that $\Mod_H(R \otimes \widetilde{E}\mathcal{P}_H)\simeq \Mod(\Phi^H R)$ as symmetric monoidal $\infty$-categories. The functor inducing the equivalence can be identified  with the $H$-fixed points functor. Now set $\Psi_H:=(-)^H \circ \widetilde{L}_H$.
\end{proof}

\subsection{Nilpotence and surjectivity}
We recall the concept of nilpotent algebra objects in $\Sp_G$ from \cite{MathewNaumannNoel17} and explain how this interacts with the geometric fixed point functor. As a consequence, we show that, for any $R \in \CAlg(\Sp_G)$, the geometric fixed point functors provide a cover of the Balmer spectrum of $\Perff{G}(R)$ by non-equivariant spectra.

The following definition was given in \cite[Definition 6.36]{MathewNaumannNoel17}, up to identifying $\mathbb{D}(G/H_+)$ with $G/H_+$, which follows because $G$ is finite:

\begin{Def}\label{def:A_F}
Given a finite group $G$ and a family $\mathcal{F}$ of subgroups of $G$, we consider the commutative algebra object 
\[
A_{\mathcal{F}} \coloneqq \prod_{H \in \mathcal{F}}\mathbb{D}(G/H_+) \in \CAlg(\Sp_G).
\]
We say that $M \in\Sp_G$ is  $\mathcal{F}$-\emph{nilpotent} if $M$ is in the thick $\otimes$-ideal of $\Sp_G$ generated by $A_{\mathcal{F}}$. Note that here we allow tensoring with arbitrary objects of $\Sp_G$.
\end{Def}

\begin{Rem}
As noted in \cite[Definition 1.4]{MathewNaumannNoel2019} there is always a minimal family such that $M$ is $\mathcal{F}$-nilpotent. It is called the \emph{derived defect base} of $M$. 
\end{Rem}
\begin{Not}\label{ex:borel_g_spectra}
Given $R \in \CAlg(\Sp)$, we let $\underline{R}_G$ denote the \emph{Borel completion} of $R$ in $G$. By definition, $\underline{R}_G$ is the image of $R$ under the composite of functors
\[
\Sp \xrightarrow{\infl_e} \Sp_G \xrightarrow{B} (\Sp_G)_{\mathrm{Borel}}\subseteq\Sp_G,
\]
where:
\begin{itemize}
\item $\infl_e$ is the inflation functor from \cref{rec:infl-res};
\item $B$ is the Borel completion functor constructed as the Bousfield localization of $\Sp_G$ with respect to $G_+$, given by $\iHom(EG_+,-)$;
 \item $(\Sp_G)_{\mathrm{Borel}}$ is the full subcategory of $\Sp_G$ spanned by the Borel-equivariant $G$-spectra, i.e., the essential image of $B$, see~\cite[Definition 6.14]{MathewNaumannNoel17}.
\end{itemize}
The functor $B$ is symmetric monoidal by general facts about localizations, see the discussion around \cite[Remark 2.20]{MathewNaumannNoel17}; alternatively, this can be constructed directly by using the diagonal of $EG$. The composite of $B$ with the right adjoint inclusion $(\Sp_G)_{\mathrm{Borel}} \subseteq \Sp_G$ is then lax symmetric monoidal. Therefore, the composite functor $\Sp \to \Sp_G$ is lax symmetric monoidal and hence preserves commutative algebra objects. It follows that $\underline{R}_G\in\CAlg(\Sp_G)$. We will also refer to $\underline{R}_G$ as \emph{Borel-equivariant} $R$-\emph{theory}. Oftentimes, when the group is clear from context, we will omit the subscript and simply write $\underline{R}$ for $\underline{R}_G$.
 
We note that $\res_H \underline{R}_G \simeq \underline{R}_H$ as an easy consequence of \cite[Proposition 6.16]{MathewNaumannNoel17}. We will use this as an identification throughout the document. 
\end{Not}
\begin{Exa}\label{ex:E-nilpotent}
If $M = S_G^0$ is the $G$-sphere spectrum, then $M$ has derived defect base the family of all subgroups \cite[Proposition 4.22]{MathewNaumannNoel2019}. If $M = \underline{E} = \underline{E}_G$ is Borel-equivariant Lubin--Tate $E$-theory of height $n$ at a prime $p$, then $M$ has derived defect base the family of abelian $p$-subgroups which can be generated by $n$ elements \cite[Proposition 5.26]{MathewNaumannNoel2019}. 
\end{Exa}

\begin{Rem}\label{rem:sep-alg}
The commutative algebra $R \otimes A_{\mathcal{F}}\in \CAlg(\Modd{G}(R))$ is separable of finite degree. This follows from \Cref{lem:res_finiteetale}. Note that this holds more generally if $\mathcal{F}$ is just a collection of subgroups of $G$ rather than a family.
\end{Rem}

The following result, which is due to Mathew, Naumann, and Noel,  relates the concept of $\mathcal{F}$-nilpotence applied to ring spectra with the vanishing of geometric fixed points:

\begin{Lem}\label{lem:gfp_vanishing}
Let $R \in \CAlg(\Sp_G)$, then $R$ is $\mathcal{F}$-nilpotent if and only if $\Phi^HR = 0$ for all $H \not \in \mathcal{F}$. Moreover, if $R$ is $\mathcal{F}$-nilpotent and $M \in \Modd{G}(R)$, then $\Phi^HM = 0$ if $H \not \in \mathcal{F}$. 
\end{Lem}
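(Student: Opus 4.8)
The plan is to prove both directions by reducing to the non-equivariant localization property of the geometric fixed point functors and using the definition of $\mathcal{F}$-nilpotence as membership in the thick $\otimes$-ideal generated by $A_{\mathcal{F}}$.

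First I would handle the ``only if'' direction together with the last sentence, since both rest on the same observation: if $H \notin \mathcal{F}$, then $\Phi^H A_{\mathcal{F}} = 0$. Indeed, $A_{\mathcal{F}} = \prod_{K \in \mathcal{F}} \mathbb{D}(G/K_+)$, and since $G$ is finite this product is a finite coproduct, so $\Phi^H A_{\mathcal{F}} \simeq \bigoplus_{K \in \mathcal{F}} \Phi^H(\mathbb{D}(G/K_+))$; each summand vanishes because $\Phi^H$ is (the restriction to $H$ followed by) a finite localization away from $\{H/L_+ : L \subsetneq H\}$, and $\mathbb{D}(G/K_+) \simeq G/K_+$ restricts to a sum of cells $H/L_+$ with $L$ subconjugate to $K \in \mathcal{F}$, hence $L \in \mathcal{F}$ and in particular $L \neq H$. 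Now if $R$ is $\mathcal{F}$-nilpotent, then $R$ lies in $\thick_{\otimes}\langle A_{\mathcal{F}}\rangle$ in $\Sp_G$, and applying the coproduct-preserving tt-functor $\Phi^H$ we get $\Phi^H R \in \thick_{\otimes}\langle \Phi^H A_{\mathcal{F}}\rangle = \thick_{\otimes}\langle 0 \rangle = 0$ by \cref{rec:thickstuff}. The same argument applied to $M \in \Modd{G}(R)$: writing the $R$-linear $\Phi^H$ as in \cref{def:gfp_functor}, and noting that $\Phi^H M$ is an $\Phi^H R$-module, when $\Phi^H R = 0$ we automatically get $\Phi^H M = 0$; alternatively, $M$ lies in the localizing $\otimes$-ideal of $\Modd{G}(R)$ generated by the $R \otimes G/K_+$ for all $K$, and one applies $\Phi^H$ as before. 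This gives the ``if $R$ is $\mathcal{F}$-nilpotent and $H \notin \mathcal{F}$ then $\Phi^H M = 0$'' claim.

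For the converse (``if $\Phi^H R = 0$ for all $H \notin \mathcal{F}$, then $R$ is $\mathcal{F}$-nilpotent''), the key input is the equivariant nilpotence/Balmer--Sanders style detection: the collection of geometric fixed point functors $\{\Phi^H\}_{H \subseteq G}$ is jointly conservative on $\Sp_G$, and by the classification of $\mathcal{F}$-nilpotence in \cite{MathewNaumannNoel17} (cf. \cite[Theorem 6.41 / Proposition 6.42]{MathewNaumannNoel17}), an object is $\mathcal{F}$-nilpotent if and only if its geometric fixed points vanish outside $\mathcal{F}$. Concretely: the idempotent algebra $\widetilde{E}\mathcal{F}$ has the property that $R$ is $\mathcal{F}$-nilpotent iff $R \otimes \widetilde{E}\mathcal{F} = 0$ iff $\Phi^K(R \otimes \widetilde{E}\mathcal{F}) = 0$ for all $K$; and since $\Phi^K(\widetilde{E}\mathcal{F}) \simeq S^0$ for $K \notin \mathcal{F}$ while $\Phi^K(\widetilde{E}\mathcal{F}) \simeq 0$ for $K \in \mathcal{F}$ (by the defining property \eqref{def-tildeEP} and the fact that geometric fixed points commute with the relevant colimit), this reduces exactly to the vanishing of $\Phi^H R$ for $H \notin \mathcal{F}$. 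So I would cite the relevant statement from \cite{MathewNaumannNoel17} rather than reprove it.

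The main obstacle is making the converse direction airtight: it is genuinely the content of the theorem of Mathew--Naumann--Noel, so the ``proof'' here is really a matter of correctly invoking \cite[Theorem 6.41]{MathewNaumannNoel17} (characterizing $\mathcal{F}$-nilpotence of a ring via $\Phi^H(-)$) and checking that the hypotheses match — in particular that $A_{\mathcal{F}}$ and $\widetilde{E}\mathcal{F}$ generate the same thick $\otimes$-ideal, or equivalently that $R$ being in $\thick_{\otimes}\langle A_{\mathcal{F}} \rangle$ is equivalent to $R \otimes \widetilde{E}\mathcal{F} \simeq 0$. The forward directions, by contrast, are the short and formal part, relying only on $\Phi^H A_{\mathcal{F}} = 0$ for $H \notin \mathcal{F}$ and the elementary stability of (localizing) thick $\otimes$-ideals under coproduct-preserving tt-functors recorded in \cref{rec:thickstuff}.
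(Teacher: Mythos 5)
Your proposal is correct and follows essentially the same route as the paper: the paper's proof simply cites \cite[Theorem 6.41]{MathewNaumannNoel17} for the equivalence between $\mathcal{F}$-nilpotence of $R$ and the vanishing of $\Phi^H R$ for $H \notin \mathcal{F}$, and disposes of the module statement exactly as you do, by noting that $\Phi^H M$ is a module over $\Phi^H R = 0$. Your extra direct argument for the forward direction (via $\Phi^H A_{\mathcal{F}} = 0$ and stability of thick $\otimes$-ideals under tt-functors) is correct but redundant given the citation, which already contains both directions.
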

\begin{proof}
The statement about $R$ is \cite[Theorem 6.41]{MathewNaumannNoel17}. The second part follows because $\Phi^H M$ is a module over $\Phi^H R = 0$. 
\end{proof}

\begin{Rem}
    The next result is a generalization of the classical fact that both the categorical fixed points and the geometric fixed points detect equivalences in $\Sp_G$. This is usually proven via isotropy separation, and it is implicitly used in the proof of the proposition below.
\end{Rem}

\begin{Prop}\label{prop:equivalent-R-nilpotent}
Let $G$ be a finite group, $R\in \CAlg(\Sp_G)$ and $\mathcal{F}$ a family of subgroups of $G$. Then the following are equivalent:
\begin{itemize}
    \item[(a)] $\Modd{G}(R)$ is compactly generated by $R \otimes A_{\mathcal{F}}$;
    \item[(b)] $R$ is $\mathcal{F}$-nilpotent;
    \item[(c)] the geometric fixed point functor
    \[
    \Phi^{\mathcal{F}}=(\Phi^H)_{H\in\mathcal{F}}\colon \Modd{G}(R) \to \prod_{H\in\mathcal{F}}\Mod(\Phi^H R)
    \]
    is conservative;
    \item[(d)] the fixed point functor
    \[
    (-)^{\mathcal{F}}=((-)^H)_{ H\in\mathcal{F}}\colon \Modd{G}(R) \to \prod_{H\in\mathcal{F}}\Mod(R^H)
    \]
    is conservative;
    \item[(e)] the restriction functor 
    \[
    \res_{\mathcal{F}}=(\res_H)_{H\in\mathcal{F}}\colon \Modd{G}(R) \to \prod_{H\in\mathcal{F}} \Modd{H}(R)
    \]
    is conservative.
\end{itemize}
\end{Prop}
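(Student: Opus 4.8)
The plan is to prove a cycle of implications among (a)--(e), using $\mathcal{F}$-nilpotence as the hub. The easiest observations are that (a) $\Leftrightarrow$ (b) is essentially a restatement of \cref{def:A_F} once one knows that $\Loco{R \otimes A_{\mathcal F}} = \Modd{G}(R)$ iff $R$ lies in the thick $\otimes$-ideal generated by $A_{\mathcal F}$ --- here one uses that $R \otimes A_{\mathcal F}$ is a compact object (it is a finite sum of $R \otimes \mathbb D(G/H_+)$, each compact by \cref{lem:eqmodulegenerators}) and the standard fact that a compactly generated tt-category is generated by a compact object $C$ precisely when $\unit \in \thick\langle C\rangle$. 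For (a) $\Rightarrow$ (c): if $\Modd{G}(R)$ is generated by $R \otimes A_{\mathcal F} = \bigoplus_{H \in \mathcal F} R \otimes \mathbb D(G/H_+)$, then a module $M$ is zero iff $\Map(R \otimes \mathbb D(G/H_+), M) = 0$ for all $H \in \mathcal F$, i.e.\ iff $\res_H M \simeq 0$ for all $H \in \mathcal F$; combined with the observation that $\res_H M \simeq 0$ can in turn be detected by the geometric fixed points $\Phi^K$ for $K \subseteq H$ (the classical isotropy-separation fact that geometric fixed points are jointly conservative on $\Sp_H$, hence on $\Mod_H(R)$ after applying the conservative forgetful functor), one gets conservativity of $\Phi^{\mathcal F}$ since $\mathcal F$ is closed under subgroups.

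**The implications through restriction and fixed points.** For (c) $\Rightarrow$ (e) and (e) $\Rightarrow$ (c): the functor $\Phi^H$ factors as $\res_H$ followed by $\Phi^H \colon \Mod_H(R) \to \Mod(\Phi^H R)$, so conservativity of $\res_{\mathcal F}$ together with the joint conservativity of $\{\Phi^K\}_{K \subseteq H}$ on $\Mod_H(R)$ gives (e) $\Rightarrow$ (c); conversely, since every $H$ is in $\mathcal F$ whenever $\mathcal F$ contains $H$ and is subgroup-closed, $\Phi^{\mathcal F}$ conservative forces each $\res_H$ (hence $\res_{\mathcal F}$) to be conservative --- if $\res_H M \simeq 0$ then $\Phi^K M \simeq \Phi^K(\res_H M) = 0$ for all $K \subseteq H$, but this does not immediately give $M = 0$ unless... actually the cleanest route is (b) $\Rightarrow$ (e): by \cref{lem:gfp_vanishing}, if $R$ is $\mathcal F$-nilpotent then $\Phi^H M = 0$ for $H \notin \mathcal F$, so $M$ is determined by $\res_{\mathcal F} M$ via isotropy separation. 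The analogous argument handles (d): fixed points $(-)^H$ and geometric fixed points $\Phi^H$ together detect equivalences on $\Sp_H$, and one propagates this to $R$-modules through the conservative forgetful functor $U_R$ as in \cref{rem:group_functoriality}; so (e) (or (b)) $\Rightarrow$ (d), and (d) $\Rightarrow$ (c) because $\Phi^H M \simeq (M \otimes \widetilde E[\subset H])^H$ can be expressed in terms of the categorical fixed points of the $\widetilde E[\subset H]$-module $M \otimes \widetilde E[\subset H]$ (\cref{rem:explicitgeomfixedpoints}). Finally (c) $\Rightarrow$ (b) is the content of \cref{lem:gfp_vanishing}: if $\Phi^{\mathcal F}$ is conservative but $R$ is not $\mathcal F$-nilpotent, there is some $H \notin \mathcal F$ with $\Phi^H R \neq 0$; but then $\widetilde E[\subset H] \otimes R \ne 0$ is a nonzero module killed by every $\Phi^K$ with $K \in \mathcal F$ (those $K$ are proper in $H$ or incomparable, and by construction $\Phi^K \widetilde E[\subset H] = 0$ for $K \in [\subset H]$, in particular for all $K \subsetneq H$; one needs to observe that $K \in \mathcal F \Rightarrow K \in [\subset H]$, which holds because $H \notin \mathcal F$ and $\mathcal F$ is subgroup-closed), contradicting conservativity. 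Closing the loop, one assembles (b) $\Leftrightarrow$ (a), (b) $\Rightarrow$ (e) $\Rightarrow$ (c) $\Rightarrow$ (b), and (b) $\Rightarrow$ (d) $\Rightarrow$ (c), so all five are equivalent.

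**Main obstacle.** The one genuinely load-bearing input is the classical isotropy-separation principle that the family $\{\Phi^H\}_{H \subseteq G}$ (equivalently $\{(-)^H\}_{H \subseteq G}$) is jointly conservative on $\Sp_G$, and more precisely the refinement that detecting vanishing of $\res_H M$ requires only $\Phi^K$ for $K \subseteq H$. Transporting this from $\Sp_G$ to $\Modd{G}(R)$ is routine --- the forgetful functors $U_R$ are conservative and compatible with $\res$, $(-)^H$, $\Phi^H$ by \cref{rem:group_functoriality} --- but one must be careful that the geometric fixed points in $\Modd{G}(R)$ as defined in \cref{def:gfp_functor} really do commute with $U_R$, which is exactly the content of the commuting squares used in \cref{lem:gfp-conjugacy}. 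The bookkeeping with the family $\mathcal F$ (closure under subgroups and conjugacy) is what makes the subgroup-restricted statements suffice, and I expect the only place requiring care is verifying that in the implication (c) $\Rightarrow$ (b) the witness module $\widetilde E[\subset H] \otimes R$ is indeed annihilated by all $\Phi^K$ with $K \in \mathcal F$ --- this is where one invokes $\Phi^K \widetilde E[\subset H] = 0$ for $K \in [\subset H]$ from \cref{rec:subgroupfamilies}/\cref{rem:explicitgeomfixedpoints} together with the defining property \eqref{def-tildeEP}.
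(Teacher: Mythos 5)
Your overall architecture (reduce everything to isotropy separation on $\Sp_H$, transported through the conservative forgetful functors of \cref{rem:group_functoriality}) is the same as the paper's, and several of your arrows are fine and essentially match the paper's cycle $(a)\Rightarrow(b)\Rightarrow(c)\Rightarrow(d)\Rightarrow(e)\Rightarrow(a)$: your $(a)\Rightarrow(b)$, $(b)\Rightarrow(e)$, $(e)\Rightarrow(c)$ and $(b)/(e)\Rightarrow(d)$ are all correct in outline. However, the two arrows that close your loop are broken. First, your direct proof of $(c)\Rightarrow(b)$ uses the witness $\widetilde{E}[\subset H]\otimes R$ and rests on the claim that $K\in\mathcal F$ forces $K\in[\subset H]$ ``because $H\notin\mathcal F$ and $\mathcal F$ is subgroup-closed.'' That implication is false: subgroup-closure says subgroups of members of $\mathcal F$ lie in $\mathcal F$, it does not make every member of $\mathcal F$ subconjugate to a given non-member $H$. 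Concretely, for $G=C_p\times C_p$, $\mathcal F=\{e,K_1\}$ with $K_1$ of order $p$, and $H=K_2$ a different order-$p$ subgroup, one has $[\subset H]=\{e\}$, so $K_1\in\mathcal F\setminus[\subset H]$ and $\Phi^{K_1}(\widetilde{E}[\subset H]\otimes R)\simeq\Phi^{K_1}R$, which is typically nonzero; your witness is then not killed by all $\Phi^K$ with $K\in\mathcal F$. The repair is to take $\widetilde{E}\mathcal F\otimes R$ instead: by \eqref{def-tildeEP} its $\Phi^K$ vanish exactly for $K\in\mathcal F$, while $\Phi^H(\widetilde{E}\mathcal F\otimes R)\simeq\Phi^HR\neq 0$ for the $H\notin\mathcal F$ provided by \cref{lem:gfp_vanishing}, so it is a genuine nonzero module contradicting $(c)$.

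Second, your $(d)\Rightarrow(c)$ is not an argument: the identity $\Phi^HM\simeq(M\otimes\widetilde{E}[\subset H])^H$ of \cref{rem:explicitgeomfixedpoints} supports the \emph{opposite} implication $(c)\Rightarrow(d)$ (vanishing of $M^K$ for all $K$ in the subgroup-closed $\mathcal F$ forces $\res_HM\simeq 0$ and hence $\Phi^HM\simeq 0$, which is how the paper uses it), whereas knowing $\Phi^KM=0$ for $K\in\mathcal F$ gives no handle on the categorical fixed points of $M$ itself. Since in your assembly $(d)$ only feeds back into the loop through this arrow, as written $(d)$ is left dangling; the easy fix is the paper's $(d)\Rightarrow(e)$ (restriction vanishing trivially implies fixed-point vanishing), routed onward through your $(e)\Rightarrow(c)$. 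Finally, your $(b)\Rightarrow(a)$ is justified by the ``standard fact'' that a compact $C$ generates iff $\unit\in\thick\langle C\rangle$, which is false in general (take $C=\unit=S^0_G$ in $\Sp_G$); here it can be salvaged, but only by showing $\Loc\langle R\otimes A_{\mathcal F}\rangle$ is a $\otimes$-ideal via the double-coset decomposition of $A_{\mathcal F}\otimes G/K_+$ and closure of $\mathcal F$ under subgroups and conjugation, together with the retract trick $R\le R\otimes R$ to pass between the thick ideal statements in $\Modd{G}(R)$ and in $\Sp_G$. Alternatively, you can avoid $(b)\Rightarrow(a)$ altogether by proving $(e)\Rightarrow(a)$ as the paper does, from $\Hom_{\Modd{G}(R)}(R\otimes A_{\mathcal F},M)\simeq\bigoplus_{H\in\mathcal F}(U_RM)^H$.
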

\begin{proof}
To avoid confusion, throughout this proof we will write $\Phi^H_R$ and $(-)^H_R$ for the $R$-linear version of these functors. We now prove all implications in turn.

$(a) \implies (b) \colon $ Assuming part $(a)$ we know that $R \in \Loc\langle R \otimes A_{\mathcal{F}}\rangle\subseteq \Modd{G}(R)$. Since $R$ and $R \otimes A_{\mathcal{F}}$ are compact objects of $\Modd{G}(R)$, we can apply~\cite[Theorem 2.1]{Neeman96} and deduce that \begin{equation}\label{eq:inclusion_of_r}
 R \in \Loc\langle R \otimes A_{\mathcal{F}}\rangle\cap \Perff{G}(R)=\thick\langle R \otimes A_{\mathcal{F}}\rangle\subseteq \Modd{G}(R).   
\end{equation}
Applying the forgetful functor $\Modd{G}(R) \to \Sp_G$ to \eqref{eq:inclusion_of_r} and keeping in mind \cref{rec:thickstuff}, we see that there are inclusions (computed in $\Sp_G$)
\[
R\in\thick\langle R\otimes A_{\mathcal{F}}\rangle\subseteq \thickt{A_{\mathcal{F}}},
\]
so $(b)$ holds.

$(b) \implies (c) \colon$ 
Suppose now that $(b)$ holds and let us prove $(c)$. When $R = S_G^0$, then $\mathcal{F}$ is the family of all subgroups, and the claim is classical, see, for example, \cite[Proposition 6.13]{MathewNaumannNoel17}. The general case follows from this case and the observation that in the commutative square (\Cref{rem:group_functoriality})
\begin{equation}\label{diagram-fixed-points}
\begin{tikzcd}
 \Sp_G \arrow[r,"\Phi^H"] & \Sp  \\
 \Modd{G}(R) \arrow[u, "U_R"] \arrow[r,"\Phi^H_R"'] & \Mod(\Phi^HR) \arrow[u,"U_{\Phi^HR}"']
\end{tikzcd}
\end{equation}
the forgetful functors are conservative \cite[Corollary 4.2.3.2.]{HALurie}. 
Indeed, suppose $M \in \Modd{G}(R)$ has $\Phi^{\calF}(M) = 0$. We want to show that $M\simeq 0$. By the conservativity of $U_R$ and the previous case, it suffices to show that $\Phi^H(U_RM) = 0$ for all $H \subseteq  G$. By commutativity of the diagram above, we have $\Phi^H(U_RM)=U_{\Phi^H R}\Phi^H_R(M)$. If $H \in \mathcal{F}$, this is zero by assumption, and if $H \not \in \mathcal{F}$ then this is zero by \Cref{lem:gfp_vanishing}. 

$(c) \implies (d)\colon$ Suppose that $(c)$ holds and consider $M\in\Modd{G}(R)$ such that $ 0\simeq M^H \in \Mod(R^H)$ for all $H\in \mathcal{F}$. By \Cref{rem:group_functoriality} there is a commutative square 
\[
\begin{tikzcd}
 \Sp_G \arrow[r,"(-)^H"] & \Sp \\
 \Modd{G}(R) \arrow[u,"U_R"] \arrow[r,"(-)^H_R"'] & \Modd{}(R^H) \arrow[u,"U_{R^H}"'].
\end{tikzcd}
\]
It follows that 
\[
0\simeq U_{R^H}((M)_R^H) \simeq(U_RM)^H
\]
for all $H\in \mathcal{F}$. Since $\mathcal{F}$ is closed under subgroups, we also have $(U_RM)^K\simeq 0$ for all subgroups $K\subseteq H$. Therefore  $\res_H(U_RM)\simeq 0$ for all $H\in \mathcal{F}$. We deduce that $\Phi^H U_RM \simeq 0$ for all $H\in\mathcal{F}$. The commutativity of the diagram \eqref{diagram-fixed-points} tells us that 
\[
0\simeq \Phi^H U_RM \simeq U_{\Phi^HR} \Phi_R^H M
\]
for all $H\in\mathcal{F}$. Since $(c)$ holds by assumption, conservativity of $U_{\Phi^H R}$ then implies that $M\simeq 0$, which proves $(d)$. 

$(d) \implies (e) \colon $ Similarly $(d)$ implies $(e)$ since $\res_{\mathcal{F}}(M)\simeq 0$ in particular implies that $0\simeq M^H \in \Mod(R^H)$ for all $H\in\mathcal{F}$. 

$(e) \implies (a) \colon$ Consider $M\in\Modd{G}(R)$ such that $\Hom_{\Modd{G}(R)}(R \otimes A_{\mathcal{F}}, M)\simeq 0$; we are required to show that $M\simeq 0$, compare with~\cite[Lemma 2.2.1]{SchwedeShipley03}. By adjunction, there are equivalences
\[
0\simeq \Hom_{\Modd{G}(R)}(R \otimes A_{\mathcal{F}}, M)\simeq \Hom_{\Sp_G}(A_{\mathcal{F}}, U_RM)\simeq \bigoplus_{H\in \mathcal{F}} (U_RM)^H.
\]
Thus $(U_RM)^H\simeq 0$ for all $H\in \mathcal{F}$.
Since $\mathcal{F}$ is closed under subgroups, then $\res_{H} U_RM\simeq 0$ for all $H\in \mathcal{F}$. 
Therefore, the commutativity of the following diagram (\Cref{rem:group_functoriality}) 
\[
\begin{tikzcd}
\Modd{G}(R) \arrow[r,"\res_H"] \arrow[d,"U_R"'] & \Modd{H}(R) \arrow[d,"U_R"] \\
\Sp_G \arrow[r,"\res_H"'] &  \Sp_H
\end{tikzcd}
\]
and conservativity of the forgetful functor $U_R$, implies that $0\simeq \res_H M \in \Modd{H}(R)$. Hence 
by part $(e)$, we conclude $M\simeq 0$ as required.
\end{proof}

\begin{Thm}\label{thm:eqnilpotence}
Let $G$ be a finite group, $R \in \CAlg(\Sp_G)$, and $\mathcal{F}$ a family of subgroups of $G$ such that $R$ is $\mathcal{F}$-nilpotent. Then the geometric fixed point functors $(\Phi^H)_{H\in\mathcal{F}}$ detect $\otimes$-nilpotence of morphisms with dualizable source in $\Mod_G(R)$, in the sense of \cref{def:jointlynilfaithful}. In fact, it suffices to choose one representative $(H)$ for each $G$-conjugacy class of subgroup in $\mathcal{F}$ and use the collection $(\Phi^H)_{(H)}$.
\end{Thm}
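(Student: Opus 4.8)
The statement is essentially a repackaging of the abstract nilpotence theorem \cref{thm:abstractnilpotence} together with the characterisation of $\mathcal{F}$-nilpotence in \cref{prop:equivalent-R-nilpotent}, so the plan is: verify the structural hypotheses of \cref{thm:abstractnilpotence} for the collection $(\Phi^H)_{H\in\mathcal{F}}$, use \cref{prop:equivalent-R-nilpotent} to get joint conservativity, and finally cut the indexing set down to one representative per conjugacy class.

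For the structural hypotheses: for each $H\subseteq G$ the target $\Mod(\Phi^H R)$ is rigidly-compactly generated, being a category of modules internal to $\Sp$ over the commutative ring spectrum $\Phi^H R$ (\cref{rec:module_categories}), and $\Phi^H\colon \Modd{G}(R)\to\Mod(\Phi^H R)$ is a coproduct-preserving tt-functor: by \cref{lem:gfp_as_verdier} it factors as a finite \'etale extension of scalars followed by a finite localization, each of which is symmetric monoidal, exact, and coproduct-preserving (the first as a left adjoint, the second by construction). Since $R$ is $\mathcal{F}$-nilpotent, the equivalence (b)$\Leftrightarrow$(c) of \cref{prop:equivalent-R-nilpotent} says precisely that $(\Phi^H)_{H\in\mathcal{F}}$ is jointly conservative on $\Modd{G}(R)$; applying \cref{thm:abstractnilpotence} then proves the first assertion.

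For the refinement, I would note that the sub-collection $(\Phi^H)_{(H)}$ indexed by one representative per $G$-conjugacy class is still jointly conservative: if $M\in\Modd{G}(R)$ has $\Phi^H M\simeq 0$ for every chosen representative $H$, then \cref{lem:gfp-conjugacy} forces $\Phi^{H^g}M\simeq 0$ for all $g\in G$, and since $\mathcal{F}$ is conjugation-closed every $K\in\mathcal{F}$ arises this way, so $\Phi^K M\simeq 0$ for all $K\in\mathcal{F}$ and hence $M\simeq 0$ again by \cref{prop:equivalent-R-nilpotent}. Applying \cref{thm:abstractnilpotence} to $(\Phi^H)_{(H)}$ --- still coproduct-preserving tt-functors between rigidly-compactly generated tt-categories --- yields the refined statement. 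I do not expect any genuine obstacle here; the only points requiring (minor) care are the verifications of coproduct-preservation and rigid-compact generation needed to legitimately invoke \cref{thm:abstractnilpotence}, and the conjugation bookkeeping in the last step, both of which are immediate from the results already established.
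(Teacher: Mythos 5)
Your proposal is correct and follows essentially the same route as the paper: joint conservativity of $(\Phi^H)_{H\in\mathcal{F}}$ from \cref{prop:equivalent-R-nilpotent}, reduction to conjugacy-class representatives via \cref{lem:gfp-conjugacy}, and then \cref{thm:abstractnilpotence}. The only cosmetic difference is that the paper packages the collection as a single functor into the finite product of module categories, whereas you verify the hypotheses functor-by-functor (citing \cref{lem:gfp_as_verdier} for coproduct-preservation), which amounts to the same thing.
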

\begin{proof}
The functor $\Phi^\mathcal{F} = (\Phi^H)_{H\in\mathcal{F}}$ is a coproduct-preserving tt-functor as it is a finite product of such functors. It is also conservative by~\Cref{prop:equivalent-R-nilpotent}(c); in fact, \cref{lem:gfp-conjugacy} implies that it is enough to take one representative from each $G$-conjugacy class of subgroups in $\mathcal{F}$. Therefore, the claim follows from \cref{thm:abstractnilpotence}. 
\end{proof}

\begin{Cor}\label{cor:eqspc_surjectivity}
With notation as in \cref{thm:eqnilpotence}, the geometric fixed point functor
\[
\xymatrix{\Phi^\mathcal{F} \colon \Modd{G}(R) \ar[r] & \prod\limits_{(H) \in \mathcal{F}}\Mod(\Phi^HR)}
\]
induces a surjective map of topological spaces:
\[
\xymatrix{\Spc(\Phi_R^\mathcal{F})\colon \bigsqcup\limits_{(H) \in \mathcal{F}}\Spc(\Perf(\Phi^HR)) \ar@{->>}[r] & \Spc(\Perff{G}(R)).}
\]
Here, the product is taken over a set of representatives of conjugacy classes of subgroups contained in $\mathcal{F}$. 
\end{Cor}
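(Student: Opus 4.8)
The plan is to deduce this directly from \cref{thm:eqnilpotence} and Balmer's surjectivity criterion, after repackaging the target as a single rigidly-compactly generated tt-category. First I would observe that, since $G$ is finite, $\mathcal{F}$ contains only finitely many conjugacy classes of subgroups, so $\cat S \coloneqq \prod_{(H) \in \mathcal{F}} \Mod(\Phi^H R)$ is a \emph{finite} product of rigidly-compactly generated tt-categories and is therefore itself rigidly-compactly generated; its compact objects form the componentwise product $\cat S^c \simeq \prod_{(H) \in \mathcal{F}} \Perf(\Phi^H R)$. An elementary computation with Balmer spectra (a finite product of tt-categories goes to the disjoint union of the spectra) then gives a homeomorphism $\Spc(\cat S^c) \cong \bigsqcup_{(H) \in \mathcal{F}} \Spc(\Perf(\Phi^H R))$, compatible with the canonical projections.

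Next I would note that $\Phi^{\mathcal{F}} = (\Phi^H)_{(H) \in \mathcal{F}} \colon \Modd{G}(R) \to \cat S$ is a coproduct-preserving tt-functor, being assembled from the finitely many coproduct-preserving tt-functors $\Phi^H$, and that it is conservative by \cref{prop:equivalent-R-nilpotent}(c) --- with the passage from all $H \in \mathcal{F}$ to one representative per conjugacy class handled by \cref{lem:gfp-conjugacy}. With this in hand I would apply \cref{cor:surjective-map} to $\Phi^{\mathcal{F}}$; its proof amounts precisely to the detection of $\otimes$-nilpotence of morphisms with dualizable source --- in the equivariant setting this is exactly \cref{thm:eqnilpotence} --- combined with \cite[Theorem 1.3]{Balmer18}. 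This yields a surjection $\Spc((\Phi^{\mathcal{F}})^c) \colon \Spc(\cat S^c) \twoheadrightarrow \Spc(\Perff{G}(R))$. Finally I would transport this across the homeomorphism of the first step, under which $\Spc((\Phi^{\mathcal{F}})^c)$ restricts on the $(H)$-summand to $\Spc((\Phi^H)^c)$, obtaining the claimed surjectivity of $\Spc(\Phi_R^{\mathcal{F}})$.

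I do not expect a real obstacle here: all of the genuine content has been placed in \cref{thm:eqnilpotence} (via \cref{thm:abstractnilpotence}) and in \cref{cor:surjective-map}. The only points needing care are bookkeeping: that $\cat S^c$ is genuinely the componentwise product of the $\Perf(\Phi^H R)$, and that the Balmer spectrum sends this finite product to the disjoint union compatibly with the projection functors. Both hold because $\mathcal{F}$ has only finitely many conjugacy classes of subgroups, which in turn is because $G$ is finite.
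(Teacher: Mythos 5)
Your proposal is correct and follows essentially the same route as the paper: the paper's proof simply invokes \cref{cor:surjective-map}, noting it applies by the same argument as in the proof of \cref{thm:eqnilpotence} (i.e., $\Phi^{\mathcal F}$ is a coproduct-preserving tt-functor, conservative by \cref{prop:equivalent-R-nilpotent} and \cref{lem:gfp-conjugacy}). Your extra bookkeeping --- that the finite product of rigidly-compactly generated categories has componentwise compacts and that its Balmer spectrum is the disjoint union --- is exactly what the paper leaves implicit, and it is correct.
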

\begin{proof}
The result is a direct consequence of \cref{cor:surjective-map}, which applies by the same argument as in the proof of \cref{thm:eqnilpotence}.
\end{proof}

\begin{Rem}\label{rem:primecollasion}
For a general $R\in\CAlg(\Sp_G)$, the map $\Spc(\Phi_R^{\mathcal{F}})$ will not be bijective, see~\cref{ex:gl-hyp-needed} or~\cref{rem:res-not-injective} together with \cref{lem:ttveplus} for some counterexamples. However, in some favourable situations the map is a bijection. This is the case if $R$ is an equivariant ring spectrum with trivial $G$-action by~\cite[Theorem~13.11]{bhs1} or by \cref{prop:infl-guys} below. We will prove later in \cref{lem:generalspc} that the map is a homeomorphism for $R=\underline{E}_G$, the Borel-completion of a Lubin--Tate $E$-theory of any height at the prime $p$ and any finite abelian $p$-group $G$.
\end{Rem}

\begin{Cor}\label{cor:noetherian-spectrum}
Let $G$ be a finite group and let $R\in \CAlg(\Sp_G)$. Let $\mathcal F$ be a family of subgroups of $G$ for which $R$ is $\mathcal F$-nilpotent. Suppose that for each $H \in \mathcal{F}$ the spectrum $\Spc(\Perf(\Phi^HR))$ is a Noetherian space. Then so is $\Spc(\Perf_G(R))$.  
\end{Cor}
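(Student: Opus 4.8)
The plan is to deduce this immediately from the surjectivity statement already established in \cref{cor:eqspc_surjectivity}, together with two elementary facts about Noetherian topological spaces. First I would invoke \cref{cor:eqspc_surjectivity}: under the standing hypotheses (that $R$ is $\mathcal F$-nilpotent), the $R$-linear geometric fixed point functors assemble into a \emph{continuous surjection}
\[
\Spc(\Phi_R^{\mathcal F}) \colon \bigsqcup_{(H)\in\mathcal F}\Spc(\Perf(\Phi^HR)) \twoheadrightarrow \Spc(\Perff{G}(R)),
\]
where the disjoint union runs over a set of representatives of the $G$-conjugacy classes of subgroups contained in $\mathcal F$.

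Next I would observe that the source is a \emph{finite} disjoint union: since $G$ is a finite group it has only finitely many subgroups, hence only finitely many conjugacy classes of subgroups, so the indexing set $\{(H) : H\in\mathcal F\}$ is finite. By hypothesis each summand $\Spc(\Perf(\Phi^HR))$ is Noetherian, and a finite coproduct of Noetherian spaces is again Noetherian (a descending chain of closed subsets of the coproduct restricts to a descending chain in each of the finitely many summands, each of which stabilizes, hence the original chain stabilizes). Therefore $\bigsqcup_{(H)}\Spc(\Perf(\Phi^HR))$ is Noetherian.

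Finally I would use the standard fact that the image of a Noetherian space under a continuous surjection is Noetherian: given a descending chain $Z_1\supseteq Z_2\supseteq\cdots$ of closed subsets of $\Spc(\Perff{G}(R))$, pulling back along $\Spc(\Phi_R^{\mathcal F})$ yields a descending chain of closed subsets of the (Noetherian) source, which stabilizes; since the map is surjective, applying $\Spc(\Phi_R^{\mathcal F})$ to the stabilized stage recovers the $Z_i$, so the original chain stabilizes as well. Hence $\Spc(\Perff{G}(R))$ is Noetherian, as claimed. There is no real obstacle here—the content is entirely contained in \cref{cor:eqspc_surjectivity}, and the remaining steps are formalities about Noetherian spaces; the only point to be careful about is recording that finiteness of $G$ is what makes the disjoint union finite.
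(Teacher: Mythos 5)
Your proposal is correct and follows essentially the same route as the paper: the paper's proof likewise deduces the claim from \cref{cor:eqspc_surjectivity}, noting that $\Spc(\Perff{G}(R))$ is covered by the images of finitely many continuous maps with Noetherian source and is therefore Noetherian. Your spelling out of the two elementary topological facts (finite disjoint unions and continuous surjective images of Noetherian spaces) is just a more explicit version of the same argument.
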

\begin{proof}
   \Cref{cor:eqspc_surjectivity} implies that $\Spc(\Perff{G}(R))$ is covered by the images of finitely many continuous maps with Noetherian source. Therefore, $\Spc(\Perff{G}(R))$ is itself Noetherian.
\end{proof}

\subsection{Stratification for equivariant ring spectra}
In this section we show how to descend stratification for equivariant module categories along geometric fixed point functors. This relies on a version of \'etale descent which we now recall. 
\begin{Rec}\label{rec:abstractstratdescent}
Let $f^*\colon \cat S \to \cat T$ be a finite \'etale functor between rigidly-compactly generated tt-categories with Noetherian spectrum. If $\cat T$ is stratified, then the localizing ideals $\Gamma_{\cat P}\cat S$ are minimal for each $\cat P \in \Img(\Spc(f^*))$. This follows from the discussion of \cite[\S2.2.2]{Barthel2021pre}, in particular the proof of Lemma 2.21. 
\end{Rec}

We continue to use the notation of the previous subsection. 

\begin{Thm}\label{thm:eqstratdescent}
Let $G$ be a finite group and let $R\in \CAlg(\Sp_G)$. Let $\mathcal F$ be a family of subgroups $G$ for which $R$ is $\mathcal F$-nilpotent. Suppose that the following conditions hold for all subgroups $H \in \mathcal F$:
    \begin{enumerate}
        \item $\Spc(\Perf(\Phi^HR))$ is a Noetherian space;
        \item $\Mod(\Phi^HR)$ is stratified.
    \end{enumerate}
Then $\Modd{G}(R)$ is stratified with Noetherian spectrum $\Spc(\Perff{G}(R))$.
\end{Thm}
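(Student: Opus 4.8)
The plan is to argue by induction on $|G|$. The base case, $G$ trivial, is tautological: then $\Phi^e$ is the identity, and the hypotheses say precisely that $\Modd{G}(R) = \Mod(R)$ is stratified with Noetherian spectrum. For the inductive step, Noetherianity of $\Spc(\Perff{G}(R))$ is immediate from \cref{cor:noetherian-spectrum}, so the work is to prove stratification. By \cref{rem:stratification_conditions} it suffices to show that $\Modd{G}(R)$ has minimality at every prime $\cat P \in \Spc(\Perff{G}(R))$, i.e.\ that $\Gamma_{\cat P}\Modd{G}(R) = \Loco{\Gamma_{\cat P}\unit}$ is a minimal localizing $\otimes$-ideal. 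Fix such a $\cat P$. By the surjectivity in \cref{cor:eqspc_surjectivity} there are a subgroup $H \in \mathcal F$ and a point $\cat Q \in \Spc(\Perf(\Phi^H R))$ lying over $\cat P$ under $\Spc(\Phi^H)$; I would then distinguish the cases $H \subsetneq G$ and $H = G$.

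Suppose first that $H \subsetneq G$. Since, by \cref{def:gfp_functor}, $\Phi^H$ factors through $\res_H\colon \Modd{G}(R) \to \Modd{H}(R)$, the prime $\cat P$ lies in the image of $\Spc(\res_H)$. To apply the inductive hypothesis to the group $H$ and the spectrum $\res_H R \in \CAlg(\Sp_H)$, I would first check that $\res_H R$ is $(\mathcal F \cap H)$-nilpotent, where $\mathcal F \cap H \coloneqq \{K \subseteq H : K \in \mathcal F\}$ is a family of subgroups of $H$: this follows from \cref{lem:gfp_vanishing} together with the transitivity relation $\Phi^K_H \circ \res_H \simeq \Phi^K_G$ for $K \subseteq H$. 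The same identification gives $\Phi^K_H(\res_H R) \simeq \Phi^K_G(R)$, so hypotheses (a) and (b) for the triple $(H, \res_H R, \mathcal F \cap H)$ are instances of (a) and (b) for $(G, R, \mathcal F)$. By induction, $\Modd{H}(R)$ is stratified with Noetherian spectrum; since $\res_H$ is finite étale (\cref{lem:res_finiteetale}) and $\cat P$ lies in the image of $\Spc(\res_H)$, \cref{rec:abstractstratdescent} shows that $\Gamma_{\cat P}\Modd{G}(R)$ is minimal.

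Suppose instead that $H = G$, so that $G \in \mathcal F$ and, by hypothesis (b), $\Mod(\Phi^G R)$ is stratified and hence has minimality at every prime. By \cref{lem:gfp_as_verdier}, $\Phi^G\colon \Modd{G}(R) \to \Mod(\Phi^G R)$ is the finite localization away from $\{R \otimes G/K_+ : K \subsetneq G\}$; thus $\Spc(\Perf(\Phi^G R))$ is homeomorphic to an open subset $U \subseteq \Spc(\Perff{G}(R))$ containing $\cat P$. I would then invoke the standard compatibility of the tensor-idempotents $\Gamma_{\cat P}\unit$ with smashing (in particular, finite) localizations — as developed in \cite{bhs1} following \cite{Stevenson13} — which implies that $\Modd{G}(R)$ has minimality at a prime of $U$ if and only if $\Mod(\Phi^G R)$ does. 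Applying this at $\cat P \in U$ again yields that $\Gamma_{\cat P}\Modd{G}(R)$ is minimal. In both cases $\Modd{G}(R)$ has minimality at $\cat P$; as $\cat P$ was arbitrary, \cref{rem:stratification_conditions} gives that $\Modd{G}(R)$ is stratified, completing the induction.

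The proof is thus an assembly of tools already in place — the nilpotence and surjectivity results of \cref{thm:eqnilpotence,cor:eqspc_surjectivity}, étale descent of minimality, and compatibility of $\Gamma_{\cat P}\unit$ with localizations — organised via the induction on $|G|$. The reason an induction seems necessary, rather than a single descent argument, is that \cref{rec:abstractstratdescent} applied to the finite étale functor $\res_H$ requires its target $\Modd{H}(R)$ to be \emph{stratified}, and $\Modd{H}(R)$ is not itself of the form $\Mod(\Phi^K R)$ covered by the hypotheses; the bookkeeping that makes the induction go through is the observation that $\res_H R$ inherits $(\mathcal F\cap H)$-nilpotence together with the Noetherianity and stratification hypotheses. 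The one input not isolated in the excerpt is the behaviour of minimality under a finite localization used in the case $H = G$, which should be cited, or given a short proof using that such a localization is smashing and restricts to an equivalence on the subcategory of local objects.
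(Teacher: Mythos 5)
Your proposal is correct and takes essentially the same route as the paper: induction on $|G|$, with minimality at a prime coming from $\Phi^G$ descended along that finite localization (the input you flag at the end is exactly what the paper cites, \cite[Remark 5.4]{bhs1}), and minimality at a prime coming from a proper subgroup obtained from \cref{rec:abstractstratdescent} applied to the finite \'etale restriction plus the inductive hypothesis. The only cosmetic difference is that the paper first enlarges $\mathcal F$ to the family of all subgroups and applies \'etale descent to the single product functor $\res_{\subsetneq G}\colon \Modd{G}(R)\to\prod_{H\subsetneq G}\Modd{H}(R)$, whereas you keep $\mathcal F$, verify $(\mathcal F\cap H)$-nilpotence of $\res_H R$, and descend along an individual $\res_H$; both versions work.
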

\begin{proof}
For a given finite group $G$, first observe that it suffices to prove the statement for the family of all subgroups of $G$: Indeed, if $R$ is $\mathcal F$-nilpotent for some family $\mathcal F$ of subgroups of $G$, then both $(a)$ and $(b)$ hold trivially for all subgroups of $G$ not in $\mathcal F$. Therefore, we might as well take $\mathcal F$ to be the family of all subgroups of $G$.

The statement that $\Spc(\Perff{G}(R))$ is Noetherian is \Cref{cor:noetherian-spectrum}. By \Cref{rem:stratification_conditions} it therefore remains to verify the minimality of $\Gamma_{\cat P}\Modd{G}(R)$ at all prime ideals $\cat P \in \Spc(\Perff{G}(R))$. We will argue by induction on the order of the group $G$. The base of the induction, i.e., the case that $G=e$, holds by Hypothesis $(b)$. For the induction step, we introduce some auxiliary notation. Let $\varphi = \Spc(\prod_{H \subseteq G}\Phi_R^H)$ and write $\varphi^H$ for its $H$-component. Moreover, we assemble the restriction maps for all proper subgroups of $G$ into a tt-functor
\[
\xymatrix{\res_{\subsetneq G}\colon \Modd{G}(R) \ar[r] & \prod_{H \subsetneq G}\Modd{H}(R),}
\]
and denote the induced map on spectra by $\psi$. 

Consider a prime ideal $\cat P \in \Spc(\Perff{G}(R))$. We need to show that $\Gamma_{\cat P}\Modd{G}(R)$ is a minimal localizing ideal in $\Modd{G}(R)$. By \cref{cor:eqspc_surjectivity}, $\cat P$ is in the image of $\varphi$. We distinguish two cases. First, suppose that $\cat P$ is in the image of $\varphi^G$. In this case, $\Phi^G$ is a finite localization, so that Zariski descent in the form of \cite[Remark 5.4]{bhs1}, that is descent of minimality along a finite localization functor, implies our minimality claim at $\cat P$. 

Now, suppose that $\cat P$ is not in the image of $\varphi^G$; in other words, assume that $\cat P$ is in the image of $\varphi^H$ for some proper subgroup $H \subseteq G$. Since this map factors through $\psi$, we have $\cat P \in \im(\psi)$. By \cref{lem:res_finiteetale}, the functor $\res_{\subsetneq G}$ is finite \'etale. \Cref{rec:abstractstratdescent} then applies to show that our minimality claim holds at $\cat P$ if $\prod_{H \subsetneq G}\Modd{H}(R)$ is stratified. We can thus deduce the required minimality from our induction hypothesis, thereby finishing the proof.
\end{proof}

\begin{Rem}\label{rem:stmodbikstratification}
The prototypical example of an equivariant ring spectrum $R$ for which $\Modd{G}(R)$ is stratified is the Borel-completion of a field $k$ of characteristic $p$ dividing the order of $G$.  In this case, there is a  tt-equivalence $\Perff{G}(\underline{k})\simeq \mathrm{D}^b(kG)$, the bounded derived category of finitely generated $kG$-modules equipped with the symmetric monoidal structure coming from the coproduct of $kG$. This equivalence extends to a symmetric monoidal equivalence 
\begin{equation}\label{eq:heightinfty}
    \Modd{G}(\underline{k})\simeq \mathrm{K}(\mathrm{Inj}\,kG),
\end{equation}
where the right hand side denotes the homotopy category of unbounded chain complexes of injective $kG$-modules as in \cite{BensonKrause2008}. This category admits the \emph{stable module category} $\StMod(kG)$ of $kG$ as a finite localization away from the localizing subcategory generated by $kG$. Therefore, the equivalence of \eqref{eq:heightinfty} passes to the quotients:
\[
\Modd{G}(\underline{k})/\Loc\langle\underline{k}\otimes G_+\rangle \simeq \StMod(kG).
\]
Both of these equivalences rely on generation by permutation modules, see also \cref{rem:genpermmodules}. The corresponding stratification theorem is then due to Benson, Iyengar, and Krause \cite{BensonIyengarKrause11a}; in fact, their main theorem gives the cohomological stratification of $\Modd{G}(\underline{k})$ in the sense of \cref{rem:first_bik_strat}, from which they deduce the stratification of $\StMod(kG)$ over
\[
\Spc(\StMod(kG)^c) \cong \Proj H^{\bullet}(G,k).
\]
More generally, the analogous results hold with coefficients in any regular commutative ring in place of $k$, see \cite{Barthel2021pre,Barthel2022pre,BIKP2022pre}, so in particular over mixed characteristic discrete valuation rings. In \cref{thm:etheorycohomstratification}, we establish a chromatic analogue of their work. 
\end{Rem}

\section{Strong Quillen stratification for equivariant Balmer spectra} \label{section:strong-quillen}
The goal of this section is to obtain a decomposition of the Balmer spectrum of $\Perf_G(R)$ for $R \in \CAlg(\Sp_G)$ in terms of the Balmer spectra of the {\em non-equivariant} categories $\Perf(\Phi^H(R))$ for $H\in\mathcal F$ together with their Weyl-group actions, see \cref{thm:quillen_decomposition}.  

\subsection{The decomposition result}

The main result of this section will compare to the stratification theorem of Quillen \cite[Stratification Theorem 10.2]{Quillen71} for the spectrum of the mod $p$ cohomology of a finite group, so we follow his lead
in the notation.

\begin{Not}\label{not:quillen_strata}
 Let $H \subseteq  G$ be a subgroup and $R \in \CAlg(\Sp_G)$. 
\begin{itemize}
    \item[(a)]
    We define a functor $\mathcal{V}(R,-)\colon \mathcal{O}(G) \to \mathrm{Top}$  from the orbit category of $G$ to the category of topological spaces by the assignment
    \[
     G/H \mapsto \mathcal{V}(R,H) \coloneqq \Spc(\Perf_G(\mathbb{D}(G/H_+)\otimes R)).
    \]
     The map of $G$-sets $G/H \to G/G$ induces a map $\psi_H\colon \mathcal{V}(R,H)\to \mathcal{V}(R,G)$ on Balmer spectra. \Cref{prop:base-change-naturality} identifies $\mathcal{V}(R,H)$ with $\Spc(\Perf_H(R))$ and $\psi_H$ with $\Spc(\res_H)$.\footnote{While it is true that the functor $\Perf_G(\mathbb{D}((-)_+)\otimes R)$ refines to a category-valued Mackey functor using the bi-adjoint of restriction as transfer maps, since the latter functors are only lax symmetric monoidal, they do not induce maps on Balmer spectra and are therefore not used in this paper.} 
 \item[(b)] The map $S^0_G \to \widetilde{E}[\subset H]$ is the unit map of the finite localization $-\otimes \widetilde{E}[\subset H]$. Consequently, we can identify $\Spc(\Perf_G(\mathbb{D}(G/H_+)\otimes R \otimes \widetilde{E}[\subset H]))$ with an open subset of $\mathcal{V}(R,H)$, which we denote as $\mathcal{V}^+(R,H)\subseteq \mathcal{V}(R,H)$. Note that $\mathcal{V}^+(R,H)$ identifies with 
 $\Spc(\Perf(\Phi^H R))$ by~\cref{lem:res_finiteetale-gfp}.
 \item[(c)] We denote by ${\mathcal V}_G(R,H)$ the image of ${\mathcal V}(R,H)$ under the map $\psi_H \colon {\mathcal V}(R,H)\to {\mathcal V}(R,G)$, and we endow ${\mathcal V}_G(R,H)\subseteq {\mathcal V}(R,G)$ with the subspace topology. We observe that by~\cite[Theorem 3.4(b)]{Balmer16b} the map $\psi_H$ is closed (but not in general a closed immersion), and in particular that ${\mathcal V}_G(R,H)\subseteq {\mathcal V}(R,G)$ is closed (and equal to the support of $R\otimes G/H_+$). 
\item[(d)] We write ${\mathcal V}_G^+(R,H)$ for the image of $\mathcal{V}^+(R,H)$ in $\mathcal{V}(R,G)$ under $\psi_H$. We observe that $\mathcal{V}^+_G(R,H)\subseteq \mathcal{V}_G(R,H)$ is open as its complement can be identified with the support of $R \otimes A_{[\subset H]}$, for $A_{[\subset H]}$ as in \cref{def:A_F}.
 \end{itemize}
\end{Not}

\begin{Cons}\label{con-fund-action-rings}
 Given a subgroup $H \subseteq  G$, we let $W_G(H)=N_G(H)/H$ denote the Weyl group of $H \subseteq  G$, that is, the automorphism group of $G/H$ in $\mathrm{FinSet}_G\op$. Let $R \in \CAlg(\Sp_G)$ as before and consider $R \otimes \widetilde{E}[\subset H]\in \CAlg(\Sp_G)$. There is an action of $W_G(H)$ on $\mathbb{D}(G/H_+)\otimes R$ by functoriality. If we replace $R$ with $R \otimes \widetilde{E}[\subset H]$ in the previous construction, we obtain a $W_G(H)$-action on $\mathbb{D}(G/H_+)\otimes R \otimes \widetilde{E}[\subset H]$. We emphasize that in both cases the Weyl group acts through $\mathbb{D}(G/H_+)$. The canonical map $S^0_G \to \widetilde{E}[\subset H ]$ induces a ring map $\mathbb{D}(G/H_+)\otimes R \to \mathbb{D}(G/H_+) \otimes R \otimes \widetilde{E}[\subset H]$ which is $W_G(H)$-equivariant. By functoriality we then get induced $W_G(H)$-actions on the corresponding Balmer spectra which make $\mathcal{V}^+(R,H)\subseteq \mathcal{V}(R,H)$ into a $W_G(H)$-equivariant map. Throughout this section we will always consider the Balmer spectra $\mathcal{V}(R,H)$ and $\mathcal{V}^+(R,H)$ together with this Weyl group action. By the aforementioned functoriality of $\mathcal{V}(R,-)$, these Weyl group actions are compatible with the maps $\psi_H$.
\end{Cons}

Our main decomposition result for the Balmer spectrum is stated in the next theorem. It generalizes Balmer's tt-theoretic Quillen stratification theorem \cite[Theorem 1.6]{Balmer16a} from the $\Fp$-linear setting to arbitrary equivariant tt-categories and extends the `weak' decomposition (in the form of part $(a)$ below) to its `strong' form (parts $(b)$ and $(c)$). We remark that it's the latter that will be crucial for the proof of our stratification result (\cref{thm:etheorycohomstratification}) for Lubin--Tate theory.

\begin{Thm}\label{thm:quillen_decomposition}
Assume $G$ is a finite group, $R\in\CAlg(\Sp_G)$,
and $\mathcal F$ is a family of subgroups of $G$ such that $R$ is $\mathcal F$-nilpotent. Then:
\begin{itemize}
\item[(a)] The restriction maps induce a homeomorphism
\[ 
\xymatrix{{{\mathop{\colim}\limits_{G/H\in \mathcal{O}_{\mathcal{F}}(G)} {\mathcal V}(R,H) }} \ar[r]^-{\cong} &{\mathcal V}(R,G)},
\]
where ${\mathcal O}_{\mathcal F}(G)$ denotes the category of non-empty, transitive $G$-sets with isotropy in the family $\mathcal F$.
\item[(b)] There is a decomposition into locally closed disjoint subsets
\[ {\mathcal V}(R,G)=\bigsqcup_{(H)\in\mathcal F}{\mathcal V}_G^+(R,H),\]
where the index runs over a set of representatives of conjugacy classes of subgroups in $\mathcal F$.
\item[(c)] For every $H\in\mathcal F$, restriction induces a homeomorphism
\[ \xymatrix{{\mathcal V}^+(R,H)/W_G(H)  \ar[r]^-{\cong} & {\mathcal V}^+_G(R,H).}\]
\end{itemize}
\end{Thm}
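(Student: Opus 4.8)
The plan is to prove the three parts in the order (b), (c), (a). First some bookkeeping: unwinding \cref{not:quillen_strata}, $\mathcal V_G(R,H)=\supp(R\otimes G/H_+)$ is closed in $\mathcal V(R,G)$, and $\mathcal V_G^+(R,H)=\mathcal V_G(R,H)\smallsetminus\supp(R\otimes A_{[\subset H]})$ with $\supp(R\otimes A_{[\subset H]})=\bigcup_{K\subsetneq H}\mathcal V_G(R,K)$; moreover, by \cref{def:gfp_functor,lem:res_finiteetale-gfp}, the composite $\mathcal V^+(R,H)\hookrightarrow\mathcal V(R,H)\xrightarrow{\psi_H}\mathcal V(R,G)$ is the map $\varphi^H=\Spc(\Phi^H_R)$ of \cref{cor:eqspc_surjectivity}, and hence has image $\mathcal V_G^+(R,H)$. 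Since $\Phi^HR=0$ for $H\notin\mathcal F$ (\cref{lem:gfp_vanishing}), all strata indexed by such $H$ are empty, so in (b) and (c) one may take $\mathcal F$ to be the family of all subgroups. For part (b), local closedness of $\mathcal V_G^+(R,H)$ (open in the closed set $\mathcal V_G(R,H)$) is now immediate, and \cref{cor:eqspc_surjectivity} says exactly that the $\mathcal V_G^+(R,H)$ cover $\mathcal V(R,G)$; the content is disjointness. If $\cat P\in\mathcal V_G^+(R,H)\cap\mathcal V_G^+(R,K)$ with $(H)\neq(K)$, then by multiplicativity of Balmer's support and the $G$-set decomposition $G/H\times G/K\cong\coprod_{HgK}G/(H\cap{}^gK)$,
\[
\cat P\in\mathcal V_G(R,H)\cap\mathcal V_G(R,K)=\supp\bigl(R\otimes(G/H\times G/K)_+\bigr)=\bigcup_{HgK}\mathcal V_G(R,H\cap{}^gK).
\]
Picking a double coset with $\cat P\in\mathcal V_G(R,L)$, $L=H\cap{}^gK$: since $H\not\sim_GK$, either $L\subsetneq H$, so $\cat P\in\supp(R\otimes A_{[\subset H]})$; or $L=H$, forcing $H\subsetneq{}^gK$, hence ${}^{g^{-1}}H\subsetneq K$ and $\cat P\in\mathcal V_G(R,H)=\mathcal V_G(R,{}^{g^{-1}}H)\subseteq\supp(R\otimes A_{[\subset K]})$ — either alternative contradicts membership in the relevant $\mathcal V_G^+$.

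For part (c), the $W_G(H)$-invariance of $\psi_H$ (\cref{con-fund-action-rings}) gives a continuous surjection $\overline{\psi}_H\colon\mathcal V^+(R,H)/W_G(H)\onto\mathcal V_G^+(R,H)$, and since $\psi_H$ is closed (\cref{not:quillen_strata}(c)) it suffices to show $\overline{\psi}_H$ injective. Writing $S=R\otimes\widetilde E[\subset H]$ and $A=\mathbb D(G/H_+)\otimes S$, regarded as a commutative $S$-algebra, \cref{lem:gfp_as_verdier,lem:res_finiteetale-gfp} identify $\mathcal V^+(R,H)\to\mathcal V_G^+(R,H)$ with (the corestriction of) $\Spc(F_A)$ for the finite étale extension $F_A\colon\Mod_G(S)\to\Mod_G(A)$, on which $W_G(H)$ acts by functoriality through $\mathbb D(G/H_+)$. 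The crucial input is the splitting
\[
A\otimes_S A\;\simeq\;\bigoplus_{w\in W_G(H)}A,
\]
coming from the double-coset formula $\mathbb D(G/H_+)^{\otimes 2}\simeq\bigoplus_{HgH}\mathbb D(G/(H\cap{}^gH)_+)$: for $g\notin N_G(H)$ the group $H\cap{}^gH$ is a \emph{proper} subgroup of $H$, so the corresponding $G$-orbit has isotropy in $[\subset H]$ and is annihilated by $\widetilde E[\subset H]$, while the cosets with $g\in N_G(H)$ contribute one copy of $A$ per class in $W_G(H)$. This exhibits $A$ as $W_G(H)$-Galois over its support $\mathcal V_G^+(R,H)$, and tt-Galois descent for Balmer spectra — as in Balmer's analysis of the spectrum of a finite étale extension \cite{Balmer16b} and \cite[Theorem 1.6]{Balmer16a} — then forces $W_G(H)$ to act transitively on the fibres of $\Spc(F_A)$ over $\mathcal V_G^+(R,H)$, which is precisely the injectivity of $\overline{\psi}_H$.

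For part (a), the cocone $(\psi_H)_{G/H\in\mathcal O_{\mathcal F}(G)}$ induces a continuous map $\theta$ out of the colimit, closed because the finitely many closed maps $\psi_H$ assemble into a closed map $\coprod_{G/H}\mathcal V(R,H)\to\mathcal V(R,G)$; surjectivity of $\theta$ is part (b). For injectivity, given primes $\cat P\in\mathcal V(R,H)$ and $\cat Q\in\mathcal V(R,K)$ with $\psi_H(\cat P)=\psi_K(\cat Q)$, apply (b) to $H$ and to $K$ to place $\cat P$ in a stratum $\mathcal V_H^+(R,L)$ and $\cat Q$ in $\mathcal V_K^+(R,L')$; since the coinciding images then lie in $\mathcal V_G^+(R,L)$ and $\mathcal V_G^+(R,L')$, disjointness from (b) gives $L\sim_GL'$, and the morphisms of $\mathcal O_{\mathcal F}(G)$ (restriction into $\mathcal V(R,L)$, conjugation by $N_G(L)$) move $\cat P$ and $\cat Q$ into $\mathcal V^+(R,L)$ with a common image in $\mathcal V(R,G)$; part (c) identifies them up to $W_G(L)$, which is again a morphism in $\mathcal O_{\mathcal F}(G)$, so $\cat P$ and $\cat Q$ agree in the colimit. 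Thus $\theta$ is a closed continuous bijection, hence a homeomorphism. The main obstacle is the fibre analysis underpinning (c): promoting the formal splitting of $A\otimes_SA$ to genuine transitivity of the Weyl-group action on the fibres of $\Spc(F_A)$ — the tt-Galois descent step — with the remaining ingredients being support computations and bookkeeping with the orbit category.
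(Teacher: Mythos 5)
Your proposal is correct in substance but reorganizes the paper's argument in a genuinely different way, so let me compare. The paper proves (a) first, by applying \cref{Thm:coequalizer_collection} with $\cat S=\mathcal F$ and identifying the coequalizer with the orbit-category colimit; it then gets disjointness in (b) by mapping everything down to $\Spc(\Sp_G^c)$ and quoting the Balmer--Sanders computation \cite{BalmerSanders17}; and it proves (c) by taking $\cat S=\{H\}$ in \cref{Thm:coequalizer_collection} and showing $\mathcal V^+(R,H)$ is saturated for the resulting equivalence relation, which on the open stratum is exactly the $W_G(H)$-action. You instead prove disjointness in (b) directly from primality of Balmer supports together with the Mackey double-coset formula (\cref{rem:transitivegsets}) and the description of the complement of $\mathcal V_G^+$ from \cref{not:quillen_strata}; this is more self-contained, avoiding the Balmer--Sanders theorem entirely, and is a nice simplification. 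You then deduce (a) from (b) and (c) by a point-set argument (closed continuous bijection) rather than from the coequalizer; this works, but note you are implicitly using that $\psi_H(\mathcal V_H^+(R,L))=\mathcal V_G^+(R,L)$ for $L\subseteq H$ — which holds because $\Phi^L_G=\Phi^L_H\circ\res_H$ (\cref{def:gfp_functor}) and \cref{lem:res_finiteetale-gfp} applies with ambient group $H$ — and that such $L$ automatically lies in $\mathcal F$ since $\Phi^LR=0$ otherwise (\cref{lem:gfp_vanishing}); both should be said.

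The step that needs tightening is the one you yourself flag, part (c). First, the reduction to injectivity of $\overline\psi_H$: a closed map restricted to an open subset need not stay closed, so you must observe that $\mathcal V^+(R,H)=\psi_H^{-1}\bigl(\mathcal V(R,G)\smallsetminus\supp(R\otimes A_{[\subset H]})\bigr)$ is a \emph{full preimage}; then the corestriction to $\mathcal V_G^+(R,H)$ is closed and the quotient argument goes through (this is the same saturation point the paper makes). Second, your splitting $A\otimes_SA\simeq\bigoplus_{w\in W_G(H)}A$ over $S=R\otimes\widetilde E[\subset H]$ is correct — the double cosets with $g\notin N_G(H)$ have isotropy $H\cap H^g$ proper in $H$ and are killed by $\widetilde E[\subset H]$ — but "tt-Galois descent forces transitivity on fibres" does not follow from a coequalizer statement alone: one must also identify the two parallel maps, under the splitting, with the identity and the Weyl translations of \cref{con-fund-action-rings}. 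The cleanest fix, which completes your route with the paper's own tools, is to apply \cref{Thm:coequalizer_collection} with $R$ replaced by $S$ and $\cat S=\{H\}$: the components indexed by $g\notin N_G(H)$ vanish because $\mathbb D(G/(H\cap H^g)_+)\otimes S\simeq 0$, and the surviving maps $\alpha_1,\alpha_g$ are the identity and right translation by $g^{-1}$ on $G/H$, i.e.\ exactly the $W_G(H)$-action; the coequalizer is then literally the quotient $\mathcal V^+(R,H)/W_G(H)\cong\mathcal V_G^+(R,H)$. With that bookkeeping supplied, your argument is a valid (and in places tidier) repackaging of the paper's: the same key computation — non-normalizing double cosets die on the open stratum — performed after localization rather than via saturation.
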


We first deduce a corollary which shows that the condition of the comparison map of \Cref{rem:comparison_map} being a homeomorphism descends in the present situation.

\begin{Cor}\label{cor:quillen_stratification}
Under the hypothesis of \cref{thm:quillen_decomposition}, assume in addition that
\begin{enumerate}
    \item[(1)] for every $H\in\mathcal F$, the comparison map \[\xymatrix{\rho_0^H\colon \mathcal V(R,H)    \ar[r]^-{\cong}   &\Spec(\pi_0^G(\mathbb{D}(G/H_+)\otimes R))\cong \Spec(\pi_0^H(R))}\] is a homeomorphism;
    \item[(2)] the ring $\pi_0^G(R)$ is Noetherian, and the restriction morphism $\pi_0^G(R)\to \pi_0^H(R)$ makes $\pi_0^H(R)$ into a finitely generated $\pi_0^G(R)$-module for every $H\in{\mathcal F}$.
\end{enumerate}
Then:
\begin{itemize}
    \item[(a)] There is a commutative diagram of homeomorphisms
    \[\begin{tikzcd}[ampersand replacement=\&]
	{{\mathop{\colim}\limits_{G/H\in \mathcal{O}_{\mathcal{F}}(G)} {\mathcal V}(R,H) }}\& {\mathcal V}(R,G)\\
{{\mathop{\colim}\limits_{G/H\in \mathcal{O}_{\mathcal{F}}(G)} \Spec(\pi_0^H(R))}}\&  {\Spec(\pi_0^G(R)).}
	\arrow["{\rho_0^G}", "\cong"' ,from=1-2, to=2-2]
	\arrow[from=1-1, to=1-2, "\cong" ]
	\arrow["{\colim \rho_0^H}"', "\cong",from=1-1, to=2-1]
	\arrow[ "\cong", from=2-1, to=2-2]
\end{tikzcd}\]
\item[(b)] There is a decomposition into a disjoint union of locally closed subsets
\[ \Spec(\pi_0^G(R)) = \bigsqcup_{(H)\in\mathcal F} V_H/W_G(H),\]
\noindent
where $V_H\subseteq\Spec(\pi_0^H(R))$ denotes the open subset complementary to the vanishing locus of the ideal $\left\{ x\in\pi_0^H(R)\, \mid\, \mathrm{Res}^H_{K}(x)=0\, \forall\, K\subsetneq H\right\}\subseteq \pi_0^H(R)$.
\end{itemize}
\end{Cor}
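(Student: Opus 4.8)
The plan is to derive both parts by transporting the tt-geometric statement of \cref{thm:quillen_decomposition} across Balmer's comparison maps. The key structural fact I will use is that $\rho_0$ is natural with respect to tt-functors (\cref{rem:comparison_map}), so that — invoking \cref{prop:base-change-naturality} to identify the maps $\psi_H$ with $\Spc(\res_H)$ and the induced maps on endomorphism rings with the restrictions $\pi_0^G(R)\to\pi_0^H(R)$ — the family $(\rho_0^H)_{H}$ assembles into a natural transformation of $\mathrm{Top}$-valued functors on $\mathcal O_{\mathcal F}(G)$.

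For part (a): the top map is the homeomorphism of \cref{thm:quillen_decomposition}(a); by hypothesis (1) every $\rho_0^H$ is a homeomorphism, so $\colim\rho_0^H$ is one (the left map) and the square commutes, the bottom map being the colimit of the $\Spec$'s of the restriction homomorphisms. Thus I only need $\rho_0^G$ — equivalently the bottom map — to be a homeomorphism. Surjectivity I would get cheaply: if $x\in\pi_0^G(R)$ restricts to $0$ in every $\pi_0^H(R)$ with $H\in\mathcal F$, then, viewing $x$ as a self-map of the unit of $\Modd{G}(R)$, $\Phi^H(x)=0$ for all $H\in\mathcal F$, so by \cref{thm:eqnilpotence} ($R$ being dualizable and $\mathcal F$-nilpotent) $x$ is $\otimes$-nilpotent, hence nilpotent; therefore $\ker\big(\pi_0^G(R)\to\prod_{H\in\mathcal F}\pi_0^H(R)\big)$ is nil, and since each $\pi_0^H(R)$ is module-finite over $\pi_0^G(R)$ by (2), lying-over yields $\bigcup_{H}\Img\big(\Spec(\pi_0^H R)\to\Spec(\pi_0^G R)\big)=\Spec(\pi_0^G R)$, which with the commuting square makes $\rho_0^G$ onto. (En route, Hilbert's basis theorem shows each $\pi_0^H(R)$ is Noetherian, so all spectral spaces in sight are Noetherian.) For the remaining injectivity — whence the homeomorphism property, a bijective spectral map of spectral spaces being a homeomorphism — I would invoke the equivariant weak Quillen stratification for the commutative ring $\pi_0^G(R)$ attached to the $\mathcal F$-nilpotent spectrum $R$ \cite{MathewNaumannNoel2019}: this asserts precisely that the bottom map is a homeomorphism, hence so is $\rho_0^G$. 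Alternatively one can do injectivity by hand from parts (b) and (c) of \cref{thm:quillen_decomposition}, by checking that under $\rho_0^H$ the stratum $\mathcal V^+(R,H)$ goes to the open set $V_H$ (its closed complement $V(I_H)$ being the union over $K\subsetneq H$ of the images of $\Spec(\pi_0^K R)=\mathcal V(R,K)$, since $\supp(R\otimes H/K_+)$ is that image by \cref{not:quillen_strata}) and then reducing to the stratum-wise statement via the classical description of the fibres of $\Spec(\pi_0^H R)\to\Spec\big((\pi_0^H R)^{W_G(H)}\big)$ as Weyl orbits.

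Part (b) should then be formal: using $\rho_0^G$ to identify $\Spec(\pi_0^G R)\cong\mathcal V(R,G)$, parts (b) and (c) of \cref{thm:quillen_decomposition} give $\mathcal V(R,G)=\bigsqcup_{(H)}\mathcal V_G^+(R,H)$ into locally closed subsets with $\mathcal V_G^+(R,H)\cong\mathcal V^+(R,H)/W_G(H)$; and $\rho_0^H$ identifies $\mathcal V^+(R,H)$ with $V_H$ as computed above, $W_G(H)$-equivariantly since, by naturality of $\rho_0$ with respect to the Weyl action on $\mathbb D(G/H_+)\otimes R$, this action corresponds to conjugation on $\pi_0^H(R)$. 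Assembling the pieces gives $\Spec(\pi_0^G R)=\bigsqcup_{(H)\in\mathcal F}V_H/W_G(H)$.

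The hard part will be the homeomorphism property of $\rho_0^G$ in part (a), and within it the injectivity. Surjectivity is soft (the nilpotence theorem), but injectivity is a weak Quillen stratification statement for $\pi_0^G(R)$: it is immediate if one cites \cite{MathewNaumannNoel2019}, and otherwise requires the delicate verification that $\rho_0^G$ faithfully reflects the decomposition of \cref{thm:quillen_decomposition} and that $\pi_0^G(R)\to(\pi_0^H R)^{W_G(H)}$ is tightly controlled on the locus $V_H$ — which is the algebraic shadow of the tt-theoretic work already carried out in proving \cref{thm:quillen_decomposition}.
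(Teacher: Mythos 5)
Your proposal is correct and takes essentially the same route as the paper: part (a) via the commuting square whose bottom map is the weak Quillen theorem of Mathew--Naumann--Noel for $\mathcal F$-nilpotent $R$ (the paper cites \cite[Theorem 3.26]{MathewNaumannNoel17}, using exactly hypothesis (2)), and part (b) by transporting \cref{thm:quillen_decomposition}(b),(c) across the comparison maps after matching the complement of $\mathcal V^+(R,H)$ with the vanishing locus of $\ker\bigl(\prod_{K\subsetneq H}\mathrm{Res}^H_K\bigr)$. One minor caveat: your parenthetical claim that a bijective spectral map of spectral spaces is automatically a homeomorphism is false in general, but it carries no weight in your main argument, where the homeomorphism of the bottom map comes directly from the MNN citation.
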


\begin{proof}
\leavevmode
\begin{itemize}
    \item[$(a)$]  The diagram commutes by the naturality of $\rho_0^{(-)}$.  The upper
    horizontal map is a homeomorphism by \cref{thm:quillen_decomposition} (a). The left vertical map is a homeomorphism by our assumption and the lower horizontal map is a homeomorphism by
    \cite[Theorem 3.26]{MathewNaumannNoel17}. The final reference requires the Noetherian and finiteness assumptions in the hypothesis.
    \item[$(b)$] The statement follows from \Cref{thm:quillen_decomposition} $(b)$ and $(c)$ after observing that the homeomorphism $\rho^H_0$ identifies the open subset ${\mathcal V}^+(R,H)\subseteq {\mathcal V}(R,H)$ with the open subset $V_H\subseteq \Spec(\pi_0^H(R))$. In more detail, we will verify that the respective complements of ${\mathcal V}^+(R,H)$ and $V_H$ identify under $\rho_0^H$. To this end, our assumption combined with the naturality of $\rho_0^{(-)}$ supplies a commutative square
        \[
            \begin{tikzcd}[ampersand replacement=\&,column sep=1.8cm]
	{\bigsqcup_{K\subsetneq H}\mathcal{V}(R,K)}\& \mathcal{V}(R,H)\\
{\bigsqcup_{K\subsetneq H} \Spec(\pi_0^K(R))}\&  {\Spec(\pi_0^H(R)),}
	\arrow["{\rho_0^H}", "\cong"' ,from=1-2, to=2-2]
	\arrow[from=1-1, to=1-2, "(\psi_K)" ]
	\arrow["{\sqcup\rho^K_0}"', "\cong",from=1-1, to=2-1]
	\arrow[ "(\Spec(\mathrm{Res}^H_{K}))", from=2-1, to=2-2]
            \end{tikzcd}
        \]
    in which the horizontal maps are induced by the inclusions $K \subsetneq H$. Recall from \cref{not:quillen_strata} that the complement of ${\mathcal V}^+(R,H)$ in ${\mathcal V}(R,H)$ is given by $\supp(R \otimes \prod_{K \subsetneq H}\mathbb{D}(H/K_+))$, which in turns equals $\bigcup_{K \subsetneq H}\mathcal{V}_H(R,K) = \bigcup_{K \subsetneq H}\mathrm{im}(\psi_K)$. We thus obtain an identification 
        \begin{align*} 
           \rho_0^H({\mathcal V}(R,H)\setminus {\mathcal V}^+(R,H)) & =  \bigcup_{K \subsetneq H}\mathrm{im}(\Spec(\mathrm{Res}^H_{K}))\\
           & = \mathrm{im}(\Spec(\prod_{K \subsetneq H}\mathrm{Res}^H_{K})).
        \end{align*}
    Since these subsets are closed, the latter image identifies with the vanishing locus of $\ker(\prod_{K \subsetneq H}\mathrm{Res}^H_{K})$, as desired.\qedhere
\end{itemize}
\end{proof}

\begin{Rem}
If $R$ is Borel-complete and complex orientable, then one can hope to identify $V_H$ with
$\Spec(\pi_0(\Phi^HR))$,
see for instance~\eqref{eq:disjoint_union_set_spec}. 
On the other hand, for $R=H\underline{\mathbb Z}$
and $G=C_2$, the canonical ring map 
$\pi_0(R^G)\to \pi_0(\Phi^G(R))$ identifies with the unique map
$\mathbb Z\to\mathbb Z/2$ (compare \cref{ex:constantgreen}), while $V_G=\emptyset$ as the restriction map is the identity in this case.
\end{Rem}

The rest of this subsection is devoted to the proof of  \Cref{thm:quillen_decomposition}. An important ingredient is Balmer's descent
for separable algebras, which we now review.

\begin{Rem}\label{rem:transitivegsets}
For subgroups $H,K \subseteq G$, there is a decomposition of the product of transitive $G$-sets into transitive $G$-sets as follows:
\begin{equation}\label{eq:transitivegsets}
    \beta \coloneqq \bigsqcup \beta_g \colon \bigsqcup_{[g] \in H\backslash G/K}G/(H^g \cap K) \xrightarrow{\cong} G/H \times G/K  ,
\end{equation}
where the coproduct on the left side runs through a set of double coset representatives and $\beta_g$ sends $[x]$ to $([xg^{-1}],[x])$. 
\end{Rem}

\begin{Thm}\label{Thm:coequalizer_collection}
Let $\cat S$ be a set of subgroups of $G$, and set
$
A_{\cat S} \coloneqq \prod_{H \in \cat S} \mathbb{D}(G/H_+) \in \CAlg(\Sp_G)$. There is a coequalizer diagram of topological spaces
\[
\begin{tikzcd}[column sep=small]
	\bigsqcup\limits_{\substack{H,K \in \cat S \\ [g] \in H\backslash G/K}}\mathcal{V}(R,H^g \cap K) & {\bigsqcup\limits_{H \in \cat S} \mathcal{V}(R,H)} & \supp(R \otimes A_{\cat S})\subseteq \mathcal{V}(R,G),
	\arrow["{\psi_1}", shift left=1, from=1-1, to=1-2]
	\arrow["{\psi_2}"', shift right=1, from=1-1, to=1-2]
	\arrow["{\psi_{\cat S}}", from=1-2, to=1-3]
\end{tikzcd}
\]
where the map $\psi_{\cat S}$ is induced by the extension of scalars functor, and the maps $\psi_1$ and $\psi_2$ are induced by the maps of $G$-sets
\[
\alpha_g \colon G/(H^g \cap K) \xrightarrow{\beta_g} G/H \times G/K \xrightarrow{\pi_1} G/H
\]
and
\[
\alpha_1 \colon G/(H^g \cap K) \xrightarrow{\beta_g} G/H \times G/K \xrightarrow{\pi_2} G/K,
\]
see notation from \Cref{rem:transitivegsets}.
\end{Thm}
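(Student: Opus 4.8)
The plan is to deduce this from Balmer's theory of descent along separable algebras in tensor-triangular geometry \cite{Balmer16b}, applied to the commutative algebra $A \coloneqq R \otimes A_{\cat S}$, regarded as an object of $\CAlg(\Perf_G(R))$. By \cref{rem:sep-alg} (in the form stated there for arbitrary collections of subgroups, not merely families), $A$ is a separable algebra of finite tt-degree in $\Ho(\Mod_G(R))$, so separable descent supplies a coequalizer of topological spaces
\[
\Spc\big(\Mod_{\Perf_G(R)}(A \otimes_R A)\big) \rightrightarrows \Spc\big(\Mod_{\Perf_G(R)}(A)\big) \longrightarrow \supp(A) \subseteq \mathcal{V}(R,G),
\]
where the two left-hand maps are induced by extension of scalars along the two ring maps $\eta_1,\eta_2\colon A \rightrightarrows A\otimes_R A$, $a\mapsto a\otimes 1$ and $a\mapsto 1\otimes a$, and the right-hand map is $\Spc(F_A)$. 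All the remaining work is to recognise the three spaces and the two parallel maps as those in the statement.

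First I would identify the spaces. Since $A_{\cat S}=\prod_{H\in\cat S}\mathbb{D}(G/H_+)$ is a finite product ($G$ being finite), $A=\prod_{H\in\cat S}(R\otimes\mathbb{D}(G/H_+))$, and modules over a finite product of algebras split as the product of the module categories; combined with \cref{cor:homotopy-category} and the definition of $\mathcal{V}(R,H)$ this gives $\Spc(\Mod_{\Perf_G(R)}(A))\cong\bigsqcup_{H\in\cat S}\mathcal{V}(R,H)$. For the double, I would use that the Spanier--Whitehead dual carries products of finite $G$-sets to tensor products and disjoint unions to products, so $\mathbb{D}(G/H_+)\otimes\mathbb{D}(G/K_+)\simeq\mathbb{D}((G/H\times G/K)_+)$; feeding in the decomposition \eqref{eq:transitivegsets} of $G/H\times G/K$ into transitive $G$-sets yields $A_{\cat S}\otimes A_{\cat S}\simeq\prod_{H,K,[g]}\mathbb{D}(G/(H^g\cap K)_+)$, hence $A\otimes_R A\simeq\prod_{H,K,[g]}(R\otimes\mathbb{D}(G/(H^g\cap K)_+))$ and $\Spc(\Mod_{\Perf_G(R)}(A\otimes_R A))\cong\bigsqcup_{H,K,[g]}\mathcal{V}(R,H^g\cap K)$. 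Finally, the image of $\Spc(F_A)$ is $\supp(A)=\bigcup_{H\in\cat S}\supp(R\otimes\mathbb{D}(G/H_+))=\bigcup_{H\in\cat S}\supp(R\otimes G/H_+)$, using $\mathbb{D}(G/H_+)\simeq G/H_+$ as in \cref{def:A_F}, which is exactly the subset of $\mathcal{V}(R,G)$ named in the statement.

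Next I would match the two parallel maps. Tracing the projection $A_{\cat S}\otimes A_{\cat S}\to\mathbb{D}(G/H_+)\otimes\mathbb{D}(G/K_+)$ through \eqref{eq:transitivegsets}, the $(H,K,[g])$-component of $\eta_1$ is $\mathrm{id}_R\otimes\mathbb{D}(\alpha_g)\colon R\otimes\mathbb{D}(G/H_+)\to R\otimes\mathbb{D}(G/(H^g\cap K)_+)$: indeed $\mathrm{id}\otimes\mathrm{unit}$ is $\mathbb{D}$ of the first projection $G/H\times G/K\to G/H$, and precomposing with $\beta_g$ gives $\alpha_g=\pi_1\circ\beta_g$; symmetrically the $(H,K,[g])$-component of $\eta_2$ is $\mathrm{id}_R\otimes\mathbb{D}(\alpha_1)$ with $\alpha_1=\pi_2\circ\beta_g$. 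By the naturality clause of \cref{prop:base-change-naturality}, extension of scalars along $\mathrm{id}_R\otimes\mathbb{D}(\alpha_g)$ corresponds, under the equivalences $L_{(-)}$, to the functor on $\Mod_{(-)}(R)$ induced by the map of $G$-sets $\alpha_g$, so on Balmer spectra it induces the map $\mathcal{V}(R,H^g\cap K)\to\mathcal{V}(R,H)$ attached to $\alpha_g$ as in \cref{not:quillen_strata}(a), i.e. the $\psi_1$ of the statement; likewise $\eta_2$ induces $\psi_2$ via $\alpha_1$. Substituting these identifications into Balmer's coequalizer yields the theorem.

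The step I expect to be the main obstacle is precisely this last bookkeeping: one must keep the conjugation and double-coset conventions of \eqref{eq:transitivegsets} aligned with Balmer's two structure maps $\eta_1,\eta_2$, while simultaneously staying compatible across the three incarnations of the categories in play (genuine $G$-spectra, internal $R$-module categories, and their homotopy categories as tt-categories) — this is exactly what \cref{cor:homotopy-category} and the naturality in \cref{prop:base-change-naturality} are there to license. A secondary point worth spelling out is that the colimit produced by separable descent, a priori the realization of the full \v{C}ech nerve, genuinely collapses to the $1$-truncated coequalizer along the two maps into $\bigsqcup_{H}\mathcal{V}(R,H)$, carrying the quotient topology, so that the conclusion is literally a coequalizer diagram in $\mathrm{Top}$ with $\supp(R\otimes A_{\cat S})$ given its subspace topology from $\mathcal{V}(R,G)$.
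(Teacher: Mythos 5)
Your argument is correct and follows essentially the same route as the paper: both apply Balmer's coequalizer for separable algebras of finite degree (via \cref{rem:sep-alg} and \cref{cor:homotopy-category}) to $R\otimes A_{\cat S}$, decompose the one- and two-fold module categories using the Mackey double-coset formula \eqref{eq:transitivegsets}, and identify the two unit maps componentwise with $\mathbb{D}(\alpha_g)$ and $\mathbb{D}(\alpha_1)$. The only cosmetic difference is that the paper traces the unit maps directly in finite $G$-sets rather than invoking the naturality of \cref{prop:base-change-naturality}, and your worry about collapsing a \v{C}ech nerve is moot since Balmer's result is already stated as a coequalizer of topological spaces.
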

\begin{proof}
This is an adaptation of the proof of \cite[Theorem 4.10]{Balmer16b}. Set $\cat C = \Ho(\Perff{G}(R))$. We first note that the commutative algebra $R \otimes A_{\cat S}$ is separable of finite degree by \cref{rem:sep-alg}. Therefore by~\cite[Theorem 3.14]{Balmer16b}, there is a coequalizer diagram of topological spaces
\[\begin{tikzcd}
	{\Spc(\Mod_{\cat C}(R \otimes A_{\cat S}^{\otimes 2}))} & {\Spc(\Mod_{\cat C}(R \otimes A_{\cat S}))} & {\supp(R \otimes A_{\cat S}),}
	\arrow["{\phi_1}", shift left=1, from=1-1, to=1-2]
	\arrow["{\phi_2}"', shift right=1, from=1-1, to=1-2]
	\arrow["{\phi_{\cat S}}", from=1-2, to=1-3]
\end{tikzcd}\]
where the maps $\phi_1$ and $\phi_2$ are induced by left and right unit maps, and $\phi_{\cat S}$ is induced by the extension of scalars functor. Note that $\mathbb{D}(G/H) \otimes \mathbb{D}(G/K) \simeq \mathbb{D}(G/H \times G/K)$ and that the maps $\phi_1$ and $\phi_2$ are induced by the projection maps of $G$-sets 
\[
G/H \times G/K \xrightarrow{\pi_1} G/H \qquad \text{ and } \qquad G/H \times G/K \xrightarrow{\pi_2} G/K
\]
for $H,K \in \cat S$. We note that for any subgroup $H \subseteq G$, there is a homeomorphism 
\begin{equation}\label{spc}
\Spc(\Mod_{\cat C}(R \otimes \mathbb{D}(G/H_+)))\cong \mathcal{V}(R,H)
\end{equation}
by \cref{cor:homotopy-category} which allows us to commute passage to the homotopy category with forming the corresponding module category. Using this and the definition of $A_{\cat S}$, we get a decomposition 
\[
\Spc(\Mod_{\cat C}(R \otimes A_{\cat S}))\cong \bigsqcup_{H \in \cat S}\mathcal{V}(R,H),
\]
and under this identification $\phi_{\cat S}$ corresponds to the map $\psi_{\cat S}$ induced by the individual extension of scalars functors. Similarly, using the Mackey decomposition formula (\Cref{rem:transitivegsets}) and (\ref{spc}), we get a decomposition
\[
	{\Spc(\Mod_{\cat C}(R \otimes A_{\cat S}^{\otimes 2}))} \cong 	\bigsqcup\limits_{\substack{H,K \in \cat S \\ [g] \in H\backslash G/K}} \mathcal{V}(R,H \cap K^g).
\]
Therefore, we can rewrite the above coequalizer as follows
\[\begin{tikzcd}
	{\bigsqcup\limits_{\substack{H,K \in \cat S \\ [g] \in H\backslash G/K}}\mathcal{V}(R,H \cap K^g)} & {\bigsqcup\limits_{H \in \cat S} \mathcal{V}(R,H)} & {\supp(R \otimes A_{\cat S})}
	\arrow["{\psi_1}", shift left=1, from=1-1, to=1-2]
	\arrow["{\psi_2}"', shift right=1, from=1-1, to=1-2]
	\arrow["{\psi_{\cat S}}",from=1-2, to=1-3].
\end{tikzcd}\]
The maps $\psi_1$ and $\psi_2$ are obtained by composing the left/right unit maps $ R \otimes A_{\cat S} \rightrightarrows R \otimes A_{\cat S}^{\otimes 2}$, 
with the maps 
\[
R \otimes \mathbb{D}(G/H_+) \otimes \mathbb{D}(G/K_+)  \simeq R \otimes \mathbb{D}((G/H \times G/K)_+) \xrightarrow{R \otimes \mathbb{D}((\beta_g)_+)} R \otimes \mathbb{D}(G/H^g \cap K_+)
\]
on components, and then passing to spectra. Thus we can calculate these compositions directly in the category of finite $G$-sets. It follows that $\psi_1$ and $\psi_2$ are induced by the maps of $G$-sets
\[
\alpha_g\colon G/(H^g \cap K) \xrightarrow{\beta_g} G/H \times G/K \xrightarrow{\pi_1} G/H
\]
and
\[
\alpha_1 \colon G/(H^g \cap K) \xrightarrow{\beta_g} G/H \times G/K \xrightarrow{\pi_2} G/K.
\]
The argument is now complete.
\end{proof} 

\begin{proof}[Proof of \cref{thm:quillen_decomposition}]
\leavevmode
\begin{itemize}[leftmargin=*]
    \item[$(a)$] The colimit description follows by considering $\cat S = \mathcal F$ in \Cref{Thm:coequalizer_collection}, as follows. 
    Since $R$ is $\mathcal F$-nilpotent,  we have $\supp(R \otimes A_\mathcal{F})=\mathcal{V}(R,G)$, for instance by combining~\cite[Proposition 3.15(iii)]{Balmer16b} with~\cite[Proposition 2.32]{Mathew2018}. Therefore \Cref{Thm:coequalizer_collection} gives a coequalizer diagram of topological spaces
\[\begin{tikzcd}
	{\bigsqcup\limits_{\substack{H,K\in\mathcal{F} \\ [g]\in H\backslash G/K}}\mathcal{V}(R, H^g \cap K)} & {\bigsqcup\limits_{H\in \mathcal{F}}\mathcal{V}(R,H) }	& \mathcal{V}(R,G).\arrow["{\psi_1}", shift left=1, from=1-1, to=1-2]
	\arrow["{\psi_2}"', shift right=1, from=1-1, to=1-2]
	\arrow["{\psi_{\mathcal F}}",from=1-2, to=1-3]
\end{tikzcd}\]
It remains then to identify this coequalizer description with the colimit over the orbit category in the statement. The map $\psi_{\mathcal F}$ factors through the canonical quotient map
\[
\pi \colon {\bigsqcup\limits_{H\in \mathcal{F}}\mathcal{V}(R,H)} \to {\mathop{\colim}\limits_{H\in \mathcal{O}_{\mathcal{F}}(G)}} \mathcal{V}(R,H)
\]
from the coproduct to the colimit, inducing a continuous map 
\[
\overline {\varphi} \colon {\mathop{\colim}\limits_{H\in \mathcal{O}_{\mathcal{F}}(G)}} \mathcal{V}(R,H) \to \mathcal{V}(R,G),
\]
since $\mathcal{V}(R,-)$ is a functor
on the orbit category. 

On the other hand, the maps $\psi_1$ and $\psi_2$ are equalized by the canonical map $\pi$ to the colimit, because they are induced by maps of $G$-sets, i.e., morphisms in the orbit category. The universal property of the coequalizer then provides us with a continuous map 
\[\overline \pi \colon \mathcal{V}(R,G) \to  {\mathop{\colim}\limits_{H\in \mathcal{O}_{\mathcal{F}}(G)}} \mathcal{V}(R,H).\]
Since both maps to $\mathcal{V}(R,G)$ are topological quotient maps, and both $\overline \pi$ and $\overline \varphi$ factor the identity, $\overline \pi$ and $\overline \varphi$ are inverse homeomorphisms.
\item[$(b)$] We will show that ${\mathcal V}(R,G)$ is a disjoint union $\bigsqcup_{(H)\in\mathcal F}{\mathcal V}_G^+(R,H)$ of locally closed subsets. Recall that ${\mathcal V}_G^+(R,H)\subseteq {\mathcal V}_G(R,H)$ is open, and that
${\mathcal V}_G(R,H)\subseteq {\mathcal V}(R,G)$ is closed (\Cref{not:quillen_strata}). Hence every ${\mathcal V}_G^ +(R,H)\subseteq {\mathcal 
V}(R,G)$ is locally closed. Moreover, it follows from \Cref{cor:eqspc_surjectivity} that there is a union ${\mathcal V}(R,G)=\bigcup_{(H)\in\mathcal F}{\mathcal V}_G^+(R,H)$. It remains to argue that this union is {\em disjoint}. To this end, consider the commutative diagram of symmetric monoidal left-adjoint functors
\[
\begin{tikzcd}[column sep=small]
\Mod_G(R)\arrow[r,"F"]\arrow[rr, bend left,"(\Phi^H)_H"] & \prod_{(H)} \Mod_G(\mathbb{D}(G/H_+) \otimes R \otimes \widetilde{E}[\subset H]) \arrow[r,"\sim"',"\Psi_H"] & \prod_{(H)} \Mod(\Phi^H R) \\
\Sp_G \arrow[u,"F_R"] \arrow[rr,"(\Phi^H)_H"'] & & \prod_{(H)} \Sp \arrow[u,"(F_{\Phi^H R} )_H"'].
\end{tikzcd}
\]
Here $(H)$ runs through a set of representatives for subgroups of $G$ up to conjugacy, the upper part of the diagram comes from \cref{lem:res_finiteetale-gfp} and so it commutes, and the outer diagram commutes by \cref{rem:group_functoriality}.

On Balmer spectra, the previous diagram induces a commutative diagram 
\[
\xymatrix{{\mathcal V}(R,G) \ar[d] & \bigsqcup_{(H)} {\mathcal V}^+(R,H) \ar[l] \ar[d]\\
\Spc(\Sp_G^c) & \bigsqcup_{(H)} \Spc(\Sp^c), \ar[l]^-{\sim} } 
\]
where the bottom map is a bijection by a result of Balmer and Sanders, see~\cite[Theorem 4.9, Theorem 4.14]{BalmerSanders17}. The components of $\bigsqcup_{(H)\in\mathcal{F}}{\mathcal V}^+(R,H)$ have pairwise disjoint images in $\Spc(\Sp_G^c)$, as seen by considering the composition through the lower right corner. It follows that their images in ${\mathcal V}(R,G)$, which by definition are ${\mathcal V}^+_G(R,H)$, must be disjoint as well.
\item[$(c)$] The final claim is that for every subgroup $H \in \mathcal{F}$, the map ${\mathcal V}^+(R,H)\to{\mathcal V}^+_G(R,H)$ induced by restriction gives a homeomorphism
\[
{\mathcal V}^+(R,H)/W_G(H)\to{\mathcal V}^+_G(R,H).
\]
To see this, we  set $\cat S= \{H\}$ in \cref{Thm:coequalizer_collection}, and obtain the following coequalizer
\[
\xymatrix{\bigsqcup_{g \in H\backslash G/H} {\mathcal V}(R,H\cap H^g) \ar@<0.5ex>[r]^-{\psi_2} \ar@<-0.5ex>[r]_-{\psi_1} &  {\mathcal V}(R,H) \ar[r]^-{\psi_H} &{\mathcal V}_G(R,H).  }
\]
 The key observation now is that the open subset ${\mathcal V}^+(R,H)\subseteq {\mathcal V}(R,H)$ is saturated for the equivalence relation generated by the maps $\psi_1$ and $\psi_2$. Two elements $x,y\in{\mathcal V}(R,H)$ are equivalent exactly if there is a chain $x=x_1, x_2, \ldots ,x_n=y$ such that for each $i$ there is some $g\in G$ and some $z\in {\mathcal V}(R,H\cap H^g)$ such that $x_i=\psi_1(z)$ and $x_{i+1}=\psi_2(z)$, or the other way around. For our 
claim about being saturated, we can assume by induction that $n=2$. Since at least one of $x_1,x_2$ lies in ${\mathcal V}^+(R,H)$, the subgroup $H\cap H^g\subseteq H$ cannot be proper, i.e., $g\in N_G(H)$. So we are reduced to seeing that ${\mathcal V}^+(R,H)\subseteq{\mathcal V}(R,H)$
is $W_G(H)$-stable, which is clear. Observe that this argument also shows that the equivalence relation induced on ${\mathcal V}^+(R,H)$ is the one afforded by the action of the Weyl group. Saturation and a general fact from topology \cite[Theorem 22.1]{Munkres} now imply that the composition
\[ {\mathcal V}^+(R,H)\subseteq{\mathcal V}(R,H)\to {\mathcal V}(R,G)\]
restricted to its image is a topological quotient map.
Since this image is by definition ${\mathcal V}^+_G(R,H)$ and we identified the resulting equivalence relation
with the Weyl group action, the proof is complete. \qedhere
\end{itemize}
\end{proof}

We now apply \cref{thm:quillen_decomposition} to give an example where the map in~\cref{cor:eqspc_surjectivity} is in fact bijective. This gives an alternative proof of~\cite[Theorem~13.11]{bhs1}.

\begin{Prop}\label{prop:infl-guys}
    Let $R \in \CAlg(\Sp)$ and let $R_G \coloneqq \infl_e(R) \in \CAlg(\Sp_G)$ be the image of $R$ under the inflation functor.  Then the canonical map
   \[
     \xymatrix{\bigsqcup\limits_{(H)\subseteq G}\mathcal{V}^+(R_G,H) \ar[r]^-{\cong} & \mathcal{V}(R_G,G)}
    \]
    is bijective.
\end{Prop}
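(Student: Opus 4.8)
The plan is to reduce the claim to the case $R = S^0_G$, where it is the theorem of Balmer--Sanders already used in the proof of \cref{thm:quillen_decomposition}(b). First I would observe that for an inflated ring spectrum $R_G = \infl_e(R)$ the $G$-action on everything in sight is trivial: the geometric fixed points satisfy $\Phi^H R_G \simeq R$ for every subgroup $H \subseteq G$ (since $\Phi^H$ splits inflation, \cref{rec:gfp}, and $\res_H$ preserves inflated spectra), and more importantly the Weyl group $W_G(H)$ acts trivially on $\mathbb{D}(G/H_+)\otimes R_G$ — the action factors through $\mathbb{D}(G/H_+)$, and after passing to geometric fixed points the relevant $W_G(H)$-action on $\Phi^H R_G \simeq R$ is trivial because it is inflated. (This is exactly the content referenced in the introduction right before \cref{thmx:global}, attributed to \cref{prop:infl-guys} itself — so the point is that the $W_G(H)$-action on $\mathcal{V}^+(R_G,H) \cong \Spc(\Perf(\Phi^H R_G))$ is trivial.)

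Given that, the decomposition of \cref{thm:quillen_decomposition}(b)--(c) applied to $R_G$ (with $\mathcal{F}$ the family of all subgroups) reads
\[
\mathcal{V}(R_G,G) = \bigsqcup_{(H)\subseteq G} \mathcal{V}^+_G(R_G,H), \qquad \mathcal{V}^+(R_G,H)/W_G(H) \xrightarrow{\ \cong\ } \mathcal{V}^+_G(R_G,H).
\]
Since the Weyl action on $\mathcal{V}^+(R_G,H)$ is trivial, the quotient map $\mathcal{V}^+(R_G,H)\to \mathcal{V}^+_G(R_G,H)$ is a bijection, so the canonical map $\bigsqcup_{(H)}\mathcal{V}^+(R_G,H)\to\mathcal{V}(R_G,G)$ is a bijection. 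That already gives the statement, but it may be cleaner to extract triviality of the action directly: the action of $W_G(H)$ on $\Spc(\Perf(\Phi^H R_G))$ is induced by a $W_G(H)$-action on the \emph{ring spectrum} $\Phi^H R_G$, and because $R_G$ is pulled back along $G \to e$, this action is canonically identified with the one obtained from the trivial $W_G(H)$-action on $R$; hence it is trivial on Balmer spectra.

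The main obstacle — and really the only nontrivial point — is verifying carefully that the Weyl group action on $\mathbb{D}(G/H_+)\otimes R_G \otimes \widetilde{E}[\subset H]$ becomes trivial \emph{after} applying $\Psi_H$ to land in $\Mod(\Phi^H R_G)$; one must chase the equivalence $\Psi_H$ of \cref{lem:res_finiteetale-gfp} and check $W_G(H)$-equivariance there, using that $\res_H$, $(-)^H$, and base-change all interact well with inflation (via $\res_H\infl_{G/N} \simeq \infl_{?}$-type compatibilities and the fact that $\Phi^H\infl_e \simeq \id$). Alternatively, one can sidestep this by invoking \cref{thm:quillen_decomposition}(b) alone: that part already asserts $\mathcal{V}(R_G,G) = \bigsqcup_{(H)}\mathcal{V}^+_G(R_G,H)$ as sets, so it suffices to show each $\psi_H\colon \mathcal{V}^+(R_G,H)\to\mathcal{V}^+_G(R_G,H)$ is injective; by part (c) this injectivity is equivalent to the $W_G(H)$-action being trivial, which, as above, follows from $R_G$ being inflated. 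I would present the argument in that second form, as it isolates the one fact that needs checking.
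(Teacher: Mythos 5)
Your reduction is the right one in outline (triviality of the $W_G(H)$-action on $\mathcal{V}^+(R_G,H)$, fed into \cref{thm:quillen_decomposition}(b) and (c), gives the bijection), but the pivotal claim is exactly the part you leave unproved. The assertion that the $W_G(H)$-action on $\Spc(\Perf(\Phi^H R_G))$ is trivial ``because $R_G$ is inflated'' is not something you can quote: the action is defined in \cref{con-fund-action-rings} through the $\mathbb{D}(G/H_+)$-factor of the algebra $\mathbb{D}(G/H_+)\otimes R_G\otimes\widetilde{E}[\subset H]$, and to identify it with an action inflated from the trivial one on $R$ you would need a $W_G(H)$-equivariant refinement of the equivalence $\Psi_H$ of \cref{lem:res_finiteetale-gfp} together with an equivariant enhancement of $\Phi^H\circ\infl_e\simeq\id$ (geometric fixed points with their residual Weyl action applied to an inflated spectrum). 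You flag this as ``the main obstacle'' but do not carry it out, and no such equivariant comparison is set up anywhere in the paper (the global machinery of \cref{thm:globalquillenstrat} would in any case only kill the centralizer part of the Weyl group, not all of it). Since triviality of this action is essentially the content of \cref{prop:infl-guys} itself --- as you half-observe when you note the introduction attributes it to this very proposition --- asserting it as an input leaves the argument with a genuine gap, if not a circularity.

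The paper runs the logic in the opposite, and easier, direction. Because $\Phi^H R_G\simeq R$ and the composite $\Mod(R)\xrightarrow{\infl_e}\Modd{G}(R_G)\xrightarrow{\Phi^H}\Mod(\Phi^H R_G)\simeq\Mod(R)$ is naturally isomorphic to the identity as a plain tt-functor (no equivariance needed), passing to Balmer spectra exhibits $\Spc(\infl_e)$ as a retraction of $\Spc(\Phi^H)$; hence the map $\mathcal{V}^+(R_G,H)\to\mathcal{V}(R_G,G)$ is injective outright. Comparing this injectivity with the homeomorphism $\mathcal{V}^+(R_G,H)/W_G(H)\cong\mathcal{V}^+_G(R_G,H)$ of \cref{thm:quillen_decomposition}(c) then \emph{deduces} that the Weyl action on $\mathcal{V}^+(R_G,H)$ is trivial, rather than assuming it, and \cref{thm:quillen_decomposition}(b) finishes the proof. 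So the fix for your write-up is to use $\Phi^H\circ\infl_e\simeq\id$ not as one compatibility inside an equivariance chase, but directly as a section/retraction on spectra; that single observation replaces the entire unverified equivariance argument.
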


\begin{proof}
 We observe first that $\Phi^HR_G \simeq R$ for all $H \subseteq G$ (for example, \cite[Equation 13.5]{bhs1}). This implies that if $R$ is non-trivial then the derived defect base of $R_G$ is the family of all subgroups, see \cref{prop:equivalent-R-nilpotent}. Moreover, the composite 
    \[
    \Ho(\Mod(R)) \xrightarrow{\infl_e} \Ho(\Mod_G(R_G)) \xrightarrow{\Phi^H} \Ho(\Mod(\Phi^H R_G)) \simeq \Ho(\Mod(R))
    \]
    is naturally isomorphic to the identity functor~\cite[Remark 13.4 and Equation 13.6]{bhs1}. Passing to Balmer spectra, we see that the induced map $\mathcal{V}^+(R_G,H)\to \mathcal{V}(R_G,G)$ is injective. Comparing this with \cref{thm:quillen_decomposition}(c) shows that the Weyl group $W_G(H)$ must act trivially 
    on $\mathcal{V}^+(R_G,H)$. We can then apply \cref{thm:quillen_decomposition}$(b)$ to deduce that the required map is bijective. \qedhere
\end{proof}

\section{Strong Quillen stratification for global equivariant spectra}\label{sec:strong-quillen-global}
The goal of this section is to prove a refined version of~\cref{thm:quillen_decomposition} under the additional assumption that the commutative algebra $R \in \CAlg(\Sp_G)$ arises from a global homotopy type in the sense of \cite{Schwede18_global}. Before proving our result, let us recall some background on global stable homotopy theory following~\cite{LNP2022}.

\subsection{Recollections on global homotopy theory}

\begin{Def}\label{def:glo}
    Let $\mathrm{Glo}$ denote the \emph{global category} indexed on the family of finite groups, see~\cite[Definition 6.1]{LNP2022}. This is the $\infty$-category whose objects are all finite groups $G$, and whose morphism spaces are given by $\Hom(H,G)_{hG}$; the homotopy orbits of the conjugation $G$-action on the set of group homomorphisms. Composition in $\mathrm{Glo}$ is induced at the level of groupoids by composition of group homomorphisms and compatibility of homomorphisms with the conjugation action.
\end{Def}

\begin{Def}\label{def:orb}
    The global category contains a wide subcategory $\mathrm{Orb}\subseteq \mathrm{Glo}$ where we take the path connected components spanned by the injective group homomorphisms. 
\end{Def}

\begin{Def}\label{def:rep}
    Finally, we let $\mathrm{Rep}$ denote the homotopy category of $\mathrm{Glo}$; this is the category with the same objects of $\mathrm{Glo}$ but with morphisms given by conjugacy classes of group homomorphisms. To be more explicit, recall that two group homomorphisms $f,g\colon A \to B$ are conjugate if there exists $b\in B$ such that $c_b\circ f=g$ where $c_b(x)=bxb^{-1}$ for all $x\in B$.
\end{Def}

\begin{Rem}
By~\cite[Corollary 10.6]{LNP2022} there is a functor into the $\infty$-category of symmetric monoidal $\infty$-categories 
\begin{equation}\label{eq:functor}
\Sp_{\bullet}\colon \mathrm{Glo}^{\mathrm{op}} \to \Cat_\infty^\otimes
\end{equation}
which sends a finite group $G$ to $\Sp_G$, and a group homomorphism $\alpha \colon H \to G$ to the restriction-inflation functor $\alpha^* \colon \Sp_G \to \Sp_H$. Note that $\alpha^*$ is restriction if $\alpha$ is injective and inflation if $\alpha$ is surjective. In general, it will be the composite of two such functors. 
\end{Rem}

\begin{Def}
We define the $\infty$-\emph{category of global (equivariant) spectra} $\Sp_{gl}$ as the partially lax limit of $\Sp_\bullet$ marked at $\mathrm{Orb}$, see~\cite[Definition 4.11]{LNP2022}. Informally, an object $X\in \Sp_{gl}$ is determined by the following data:
\begin{itemize}
    \item a $G$-spectrum $X_G$ for all finite group $G$;
    \item compatible maps $f_{\alpha}\colon \alpha^* X_G \to X_H$ in $\Sp_H$ for all group homomorphisms $\alpha \colon H \to G$;
\end{itemize}
subject to the condition that $f_\alpha$ is an equivalence whenever $\alpha$ is injective.
\end{Def}

\begin{Rem}
    The $\infty$-category $\Sp_{gl}$ is symmetric monoidally equivalent to Schwede's model of global spectra based on orthogonal spectra, see~\cite[Theorem 11.10]{LNP2022}. As shown by Lenz \cite[Theorem A]{lenz2022globalalgebraicktheoryswan}, $\Sp_{gl}$ is also equivalent to an $\infty$-category of global spectral Mackey functors, globalizing the analogous statement for $G$-spectra due to Guillou--May \cite{GuillouMay2024} and Barwick \cite{Barwick17}.
\end{Rem}

\begin{Exa}\label{ex:borel-are-global}
    By the discussion in \cite[Example 4.5.19]{Schwede18_global}, any Borel-equivariant $G$-spectrum arises from a global homotopy type. Also, topological $G$-equivariant $K$-theory admits a global refinement by~\cite[Section 6.4]{Schwede18_global}. 
\end{Exa}

\begin{Not}
 Given a global spectrum $X\in \Sp_{gl}$, we will denote by $X_G$ its image under the canonical symmetric monoidal functor $\Sp_{gl}\to \Sp_G$, and call this {\em the underlying $G$-spectrum of $X$}.     
\end{Not}

\subsection{Quillen stratification for commutative global ring spectra}
Our goal in this section is to prove a refined version of~\cref{thm:quillen_decomposition} under the additional assumption that $R$ arises from a global homotopy type. The first step is to construct a more refined version of the functor 
\begin{equation}\label{functor-eq}
\Theta_0^R \colon \mathcal{O}_{\mathcal{F}}(G)\op\to \Cat, \quad G/H \mapsto \Ho(\Perf_G(R_G \otimes \mathbb{D}(G/H_+))).
\end{equation}
\begin{Lem}\label{lem:functor-mod}
    Let $R\in \CAlg(\Sp_{gl})$. Then there is a functor 
    \[
    \Mod_{\bullet}(R_\bullet)\colon  \mathrm{Glo}^{\mathrm{op}} \to \Cat_{\infty}^{\otimes}, \quad G \mapsto \Mod_G(R_G)
    \]
    extending (\ref{eq:functor}) to module categories. Moreover there is also an induced functor with values $G \mapsto \Perf_G(R_G)$.
\end{Lem}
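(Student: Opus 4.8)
The plan is to realise $\Mod_\bullet(R_\bullet)$ as the composite of $\Sp_\bullet$ from \eqref{eq:functor} with the $\infty$-categorical functoriality of module categories, using $R$ to supply a coherent system of coefficient algebras. Since each restriction--inflation functor $\alpha^*$ is a symmetric monoidal left adjoint (\cref{rec:infl-res}), I would first note that \eqref{eq:functor} refines to a functor $\Sp_\bullet\colon\mathrm{Glo}^{\mathrm{op}}\to\CAlg(\mathrm{Pr}^L)$. Next I would introduce the $\infty$-category $\mathrm{Pair}$ of pairs $(\cat C,A)$ with $\cat C\in\CAlg(\mathrm{Pr}^L)$ and $A\in\CAlg(\cat C)$, a morphism $(\cat C,A)\to(\cat D,B)$ being a colimit-preserving symmetric monoidal functor $F\colon\cat C\to\cat D$ together with a map $F(A)\to B$ in $\CAlg(\cat D)$; equivalently, $\mathrm{Pair}$ is the total space of the coCartesian fibration over $\CAlg(\mathrm{Pr}^L)$ classifying $\cat C\mapsto\CAlg(\cat C)$. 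Combining base change of module categories along colimit-preserving symmetric monoidal functors (\cref{rec:basechange_functoriality}, building on \cite{Robalo}) with extension of scalars along maps of commutative algebras (\cite[\S 4.8.3]{HALurie}) assembles a functor $\mathrm{Mod}(-)\colon\mathrm{Pair}\to\CAlg(\mathrm{Pr}^L)$ with $(\cat C,A)\mapsto\Mod_{\cat C}(A)$, carrying $(F,F(A)\to B)$ to the composite $\Mod_{\cat C}(A)\xrightarrow{F}\Mod_{\cat D}(F(A))\to\Mod_{\cat D}(B)$.

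It then remains to lift $\Sp_\bullet$ along $\mathrm{Pair}\to\CAlg(\mathrm{Pr}^L)$ to a functor $\mathrm{Glo}^{\mathrm{op}}\to\mathrm{Pair}$; composing such a lift with $\mathrm{Mod}(-)$ and forgetting to $\Cat_\infty^\otimes$ produces $\Mod_\bullet(R_\bullet)$. This is where $R$ enters: because $\Sp_{gl}$ is the partially lax limit of $\Sp_\bullet$ marked at $\mathrm{Orb}$, it carries canonical symmetric monoidal projections $u_G\colon\Sp_{gl}\to\Sp_G$ together with coherent comparison maps $\alpha^* u_G\Rightarrow u_H$ for $\alpha\colon H\to G$, invertible when $\alpha$ is injective. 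Evaluating these symmetric monoidal functors on $R$ yields $R_G\coloneqq u_G(R)\in\CAlg(\Sp_G)$ and structure maps $f_\alpha\colon\alpha^* R_G\to R_H$ in $\CAlg(\Sp_H)$, coherently in $\alpha$ and equivalences for injective $\alpha$ --- precisely a lift $\mathrm{Glo}^{\mathrm{op}}\to\mathrm{Pair}$ of $\Sp_\bullet$. Composing the lift instead with the section $\cat C\mapsto(\cat C,\unit)$ of $\mathrm{Pair}\to\CAlg(\mathrm{Pr}^L)$ exhibits a natural transformation $\Sp_\bullet\Rightarrow\Mod_\bullet(R_\bullet)$ that is the extension of scalars $F_{R_G}$ in each degree, making precise that $\Mod_\bullet(R_\bullet)$ extends \eqref{eq:functor}.

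For the perfect version, I would verify that every transition functor $\Mod_G(R_G)\to\Mod_H(R_H)$ preserves compact objects: each $\Mod_G(R_G)$ is rigidly-compactly generated (\cref{lem:eqmodulegenerators}), so its compact objects coincide with its dualizable ones; the base change functors are symmetric monoidal and hence preserve dualizability, while extension of scalars along $f_\alpha$ preserves compactness since its right adjoint, restriction of scalars, preserves colimits. Hence $\Mod_\bullet(R_\bullet)$ factors through the non-full subcategory $\CAlg(\mathrm{Pr}^L_\omega)\subseteq\CAlg(\mathrm{Pr}^L)$ of compactly generated symmetric monoidal categories with compact-object-preserving functors, and postcomposing with the functor $\cat C\mapsto\cat C^c$ --- symmetric monoidal since dualizable objects are closed under $\otimes$, cf.\ \cite{HTTLurie} --- yields $G\mapsto\Perf_G(R_G)$. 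I expect the main obstacle to be organisational rather than conceptual: assembling the two functorialities of module categories into the single functor $\mathrm{Mod}(-)$ on $\mathrm{Pair}$, and tracking the coherence data supplied by the partially lax limit presentation of $\Sp_{gl}$. Both are routine $\infty$-categorical manipulations, but making them precise is the technical content of the lemma.
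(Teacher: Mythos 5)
Your proposal is correct in outline, but it takes a more self-contained route than the paper: the paper's entire proof of the first statement is a citation of \cite[Theorem 5.10]{LNP2022}, which directly supplies the diagram $G\mapsto\Mod_G(R_G)$ for a commutative algebra in a partially lax limit, whereas you reconstruct this diagram by hand, composing a lift $\mathrm{Glo}^{\mathrm{op}}\to\mathrm{Pair}$ of $\Sp_\bullet$ with a global module functor $(\cat C,A)\mapsto\Mod_{\cat C}(A)$ defined on the fibration of pairs. What your approach buys is transparency about exactly where $R$ enters and independence from the precise statement of the cited theorem; what it costs is that the one genuinely nontrivial step---the claim that $R\in\CAlg(\Sp_{gl})$ determines a \emph{coherent} family $(R_G,\,f_\alpha\colon\alpha^*R_G\to R_H)$, i.e.\ an actual lift of $\Sp_\bullet$ through $\mathrm{Pair}\to\CAlg(\mathrm{Pr}^L)$ rather than just objectwise algebras with comparison maps---is precisely the assertion that $\CAlg(-)$ commutes with partially lax limits, and you assert it rather than prove it. That compatibility is true and is part of the machinery of \cite{LNP2022}, and it is exactly what the paper's citation of Theorem 5.10 encapsulates; so your argument closes once you either cite that input or supply the coherence argument, but as written this is the one place where ``routine $\infty$-categorical manipulation'' hides the actual content of the lemma. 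Your treatment of the second statement coincides with the paper's: the transition functors are symmetric monoidal, hence preserve dualizable objects, and dualizable and compact (perfect) objects agree in the rigidly-compactly generated categories $\Mod_G(R_G)$ of \cref{lem:eqmodulegenerators}, so restricting to compacts yields the functor $G\mapsto\Perf_G(R_G)$.
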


\begin{proof}
    The existence of the first functor follows from~\cite[Theorem 5.10]{LNP2022}. The existence of the second functor follows from the first one by observing that by construction, the functor $\alpha^*$ is symmetric monoidal for all group homomorphism $\alpha$, and that dualizable and perfect objects agree in our setting.
\end{proof}

\begin{Rem}\label{rem:conj-id}
    For any $g\in G$, the conjugation isomorphism $c_g\colon G \to G$ lies in the same connected path component of $\Map_{\mathrm{Glo}}(G,G)$ as the identity map. The existence of the functor in \cref{lem:functor-mod}, then provides a natural isomorphism $c_g^* \Rightarrow 1$ between endofunctors of $\Perf_G(R_G)$ or $\Mod_G(R_G)$.
\end{Rem}

We now introduce a variation of Quillen's category.

\begin{Def}
Let $\mathcal{F}$ be a family of subgroups of a finite group $G$. The \emph{Quillen category} $\cat A_{\mathcal{F}}(G)$ is the category whose objects are subgroups of $G$ in $\mathcal{F}$, and morphisms from $H$ to $K$ in $\cat A_{\mathcal{F}}(G)$ are group homomorphims of the form $c_{g}\colon H \to K$, $h\mapsto ghg^{-1}$ for some $g\in G$. Note that the automorphism group of $H\in\cat A_{\mathcal{F}}(G)$ is $N_G(H)/C_G(H)$.
\end{Def}

\begin{Exa}
    Let $\mathcal{E}$ be the family of elementary abelian $p$-subgroups of a finite group $G$. Then $\cat A_{\mathcal{E}}(G)$ is the category $\mathcal A(G)$ defined by Quillen in \cite[page 567]{Quillen71}. 
\end{Exa}

We now introduce the orbit category associated to the Quillen category.

\begin{Def}
 Let $\mathcal{F}$ be a family of subgroups of a finite group $G$. The \emph{Quillen orbit category} $\mathcal{O}_{\mathcal{F}}^Q(G)$ is the category whose objects are subgroups of $G$ in $\mathcal{F}$, and morphisms from $H$ to $K$ in $\mathcal{O}_{\mathcal{F}}^Q(G)$ are given by $K$-conjugacy classes of morphisms in $\Hom_{\cat A_{\mathcal{F}}(G)}(H,K)$. The automorphisms of $H\in\mathcal{O}_{\mathcal{F}}^Q(G)$ are then $N_G(H)/H\cdot C_G(H)$.
\end{Def}

\begin{Not}
    Let $H\subseteq G$ be a subgroup. We set
    \[
     W_G^{gl}(H):=N_G(H)/H \cdot C_G(H)
    \]
    and refer to it as the \emph{global Weyl group} of $H$ in $G$. If $H$ is abelian so that $H \subseteq C_G(H)$, we write $W_G^Q(H)$ instead of $W_G^{gl}(H)$ and refer to it as the \emph{Quillen--Weyl group}.
\end{Not}

\begin{Cons}
  Assume $G$ is a finite group, $R\in\CAlg(\Sp_{gl})$, and $\mathcal F$ is a family of subgroups of $G$ such that $R_G$ is $\mathcal F$-nilpotent.
  Consider the functor 
  \[
  \mathrm{Glo}^{\mathrm{op}} \to \Cat, \quad H\mapsto \Ho(\Perf_H(R_H)),
  \]
  which factors through $\mathrm{Rep}$ by construction. There is also a functor $J \colon \mathcal{O}_{\mathcal{F}}(G) \to \mathrm{Rep}$ which sends a coset $G/H$ to $H$. Given a $G$-equivariant map $f\colon G/H \to G/K$ satisfying $f(eH)=gK$ we set $J(f)=[c_{g^{-1}}]\in \mathrm{Rep}(H,K)$. One easily verifies that $J$ is well-defined. We note that the image of $J$ is precisely $\mathcal{O}_{\mathcal{F}}^Q(G)$. Therefore, by precomposing, we obtain a functor 
    \begin{equation}\label{functor-glo}
     \Theta^R_1 \colon \mathcal{O}_{\mathcal{F}}(G)^{\mathrm{op}} \to \Cat, \quad G/H \mapsto \Ho(\Perf_H(R_H)),
    \end{equation}
    which by construction factors through $\mathcal{O}_{\mathcal{F}}^Q(G)\op$.
\end{Cons}

\begin{Lem}\label{lem:two-functors}
    The functors $\Theta_0^R$ and $\Theta_1^R$ from (\ref{functor-eq}) and (\ref{functor-glo})
    are naturally pseudo-isomorphic and they factor through $\mathcal{O}_{\mathcal{F}}^Q(G)$.
\end{Lem}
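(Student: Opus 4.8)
The plan is to prove \Cref{lem:two-functors} by exhibiting, on each object $G/H$ of $\mathcal O_{\mathcal F}(G)$, a canonical equivalence between the two values $\Theta_0^R(G/H) = \Ho(\Perf_G(R_G\otimes\mathbb D(G/H_+)))$ and $\Theta_1^R(G/H) = \Ho(\Perf_H(R_H))$, and then checking that this equivalence is compatible with the action of morphisms in the orbit category. The pointwise equivalence is already available: \Cref{prop:base-change-naturality} (together with $\res_H R_G\simeq R_H$, which holds here since $R$ is global and hence in particular each $R_G$ restricts correctly, cf.\ the identification $\res_H\underline R_G\simeq\underline R_H$) provides a symmetric monoidal equivalence $L_H\colon \Mod_G(\mathbb D(G/H_+)\otimes R_G)\xrightarrow{\sim}\Mod_H(R_H)$, which restricts to perfect objects and descends to homotopy categories. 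So the content of the lemma is really about morphisms and about the factorization through $\mathcal O_{\mathcal F}^Q(G)$.

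First I would recall how a $G$-map $f\colon G/H\to G/K$ with $f(eH)=gK$ acts on the $\Theta_0^R$ side: via the induced map of transitive $G$-sets one gets a ring map $\mathbb D((G/K)_+)\otimes R_G\to \mathbb D((G/H)_+)\otimes R_G$ (using $\mathbb D$ contravariant and $f$ inducing $G/H\to G/K$), hence an extension-of-scalars functor on module categories. On the $\Theta_1^R$ side, the prescription is $f\mapsto [c_{g^{-1}}]\in\mathrm{Rep}(H,K)$, acting through the global functoriality of \Cref{lem:functor-mod}: one has $K\le G$ and $H = gKg^{-1}$... more precisely $f(eH)=gK$ forces $H\subseteq gKg^{-1}$ after a conjugation, and $c_{g^{-1}}\colon H\to K$ is the resulting group homomorphism; its image under $\Sp_\bullet$ is the composite of an inflation-free restriction with a conjugation, i.e.\ a genuine restriction functor $\res^K_{?}$ twisted by $c_{g^{-1}}$. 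The key step is then to produce a natural isomorphism, for each such $f$, between $L_K$ precomposed with the $\Theta_0^R$-transition functor and the $\Theta_1^R$-transition functor precomposed with $L_H$. This is a naturality statement for $L_{(-)}$ with respect to the orbit-category morphisms, and it should follow by combining the naturality of $L_H$ in $R$ asserted in \Cref{prop:base-change-naturality} with the Mackey-type decomposition of $G/H\times G/K$ from \Cref{rem:transitivegsets} (the maps $\alpha_g$ there are exactly the building blocks of a general $G$-map between orbits), tracking the effect of $\mathbb D((\beta_g)_+)$ through $L_{(-)}$.

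Once that naturality is in hand, the factorization through $\mathcal O_{\mathcal F}^Q(G)$ is the easy part: two $G$-maps $f,f'\colon G/H\to G/K$ that agree in $\mathcal O_{\mathcal F}^Q(G)$ differ by a $K$-conjugation $c_k$ and an $H$-translation, and by \Cref{rem:conj-id} the global functoriality makes $c_k^*$ naturally isomorphic to the identity on $\Perf_H(R_H)$; likewise the inner automorphism of $H$ acts trivially up to canonical isomorphism. So $\Theta_1^R$ sends conjugate-plus-translate pairs of maps to isomorphic functors, i.e.\ it genuinely factors through $\mathcal O_{\mathcal F}^Q(G)$; transporting along the pointwise equivalences $L_{(-)}$, the same holds for $\Theta_0^R$. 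I would phrase the final comparison as a pseudonatural equivalence of pseudofunctors $\mathcal O_{\mathcal F}(G)\op\to\Cat$ whose components are the $L_H$ and whose structure 2-cells are the naturality isomorphisms constructed above.

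The main obstacle I anticipate is bookkeeping: matching up precisely which group homomorphism $c_{g^{-1}}\colon H\to K$ corresponds to a given $G$-map $f$ with $f(eH)=gK$, getting the direction of $\mathbb D$ and of the induced maps of $G$-sets right on the $\Theta_0^R$ side, and verifying that the coherence isomorphisms assemble compatibly (so that we really get a pseudonatural transformation and not just a family of objectwise equivalences). The conceptual input — $L_{(-)}$ from \Cref{prop:base-change-naturality}, the global functoriality from \Cref{lem:functor-mod}, the triviality of conjugation up to isomorphism from \Cref{rem:conj-id}, and the Mackey decomposition from \Cref{rem:transitivegsets} — is all already assembled; what remains is to thread it together carefully. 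I would keep the argument at the level of identifying the transition functors and invoking these cited results, rather than writing out the coherence diagrams in full.
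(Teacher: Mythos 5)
Your overall skeleton matches the paper's: pointwise equivalences $L_H$ from \cref{prop:base-change-naturality}, a pseudonatural comparison of transition functors, and the observation that $\Theta_1^R$ factors through $\mathcal{O}^Q_{\mathcal F}(G)$ essentially by construction, so that the factorization of $\Theta_0^R$ comes for free once the equivalence is in place. But the mechanism you propose for the key step --- producing the natural isomorphism $L_H\circ F_f \cong c_{g^{-1}}^*\circ L_K$ for a $G$-map $f\colon G/H\to G/K$ with $f(eH)=gK$ --- has a genuine gap. The ``naturality'' in \cref{prop:base-change-naturality} is naturality in the coefficient ring $R\in\CAlg(\Sp_G)$ for a \emph{fixed} subgroup $H$; it says nothing about compatibility with maps of orbits, and in particular it cannot see the conjugation $c_{g^{-1}}$ appearing on the $\Theta_1^R$-side. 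Likewise the Mackey decomposition of \cref{rem:transitivegsets} is not the relevant tool: a $G$-map $G/H\to G/K$ is not built out of the maps $\alpha_g$ from that decomposition (which enter only in the coequalizer/descent statement), and tracking $\mathbb{D}((\beta_g)_+)$ through $L_{(-)}$ does not produce the required 2-cell. The input that actually makes the square commute is the global structure itself: the natural isomorphism $c_{g^{-1}}^*\Rightarrow \mathrm{id}$ on $\Ho(\Perf_G(R_G))$ of \cref{rem:conj-id}, used to compare the two composites \emph{after} precomposition with the extension-of-scalars functor $F_{\mathbb{D}(G/K_+)\otimes R_G}$ from $\Ho(\Perf_G(R_G))$, together with the identity $F_f\circ F_{\mathbb{D}(G/K_+)\otimes R_G}\simeq F_{\mathbb{D}(G/H_+)\otimes R_G}$ and $i_K\circ c_{g^{-1}}=c_{g^{-1}}\circ i_H$. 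You list \cref{rem:conj-id} among your inputs but only deploy it for the factorization through $\mathcal{O}^Q_{\mathcal F}(G)$, not where it is indispensable.

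Moreover, even once you have the comparison after precomposing with the free functor, you only have a natural isomorphism $\eta\colon L_H\circ F_f\circ F_{\mathbb{D}(G/K_+)\otimes R_G}\Rightarrow c_{g^{-1}}^*\circ L_K\circ F_{\mathbb{D}(G/K_+)\otimes R_G}$, i.e.\ a comparison on induced modules only; it is not automatic that this extends to a natural isomorphism of functors on all of $\Ho(\Perf_G(\mathbb{D}(G/K_+)\otimes R_G))$. The paper closes this gap in two further steps that your sketch omits: it applies \cref{prop:nat-tra}, using separability of $\mathbb{D}(G/K_+)\otimes R_G$ (via \cref{cor:homotopy-category}), to extend $\eta$ uniquely to a natural transformation $\widetilde\eta\colon L_H\circ F_f\Rightarrow c_{g^{-1}}^*\circ L_K$, and then runs a thick-subcategory argument, using that the orbits $F_{\mathbb{D}(G/K_+)\otimes R_G}$-induced from $\Perf_G(R_G)$ generate (\cref{lem:eqmodulegenerators}), to see that $\widetilde\eta$ is invertible. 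Without the separability extension and the generation argument, ``invoking the cited results rather than writing out the coherence diagrams'' leaves the central 2-cell of the pseudonatural equivalence unconstructed.
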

\begin{proof}
    For any subgroup $H\subseteq G$, we let $$L_H \colon \Ho(\Perf_G(R_G \otimes \mathbb{D}(G/H_+))) \stackrel{\sim}{\rightarrow} \Ho(\Perf_H(R_H))$$ denote the equivalence of \cref{prop:base-change-naturality}. We will show that the equivalences $L_H$ assemble to give a pseudonatural equivalence between $\Theta_0^R$ and $\Theta_1^R$. As the latter factors through $\mathcal{O}^Q_{\mathcal{F}}(G)$, so does $\Theta_0^R$. 
    
    Let $f\colon G/H\to G/K$ with $f(eH)=gK$ be given so that $J(f)=c_{g^{-1}}\colon H \to K$. We will show that the following diagram commutes up to natural isomorphism
    \[
    \begin{tikzcd}
        \Ho(\Perf_{K}(R_K)) \arrow[r,"c_{g^{-1}}^*"] & \Ho(\Perf_H(R_H)) \\
        \Ho(\Perf_G(R_G \otimes \mathbb{D}(G/ K_+))) \arrow[u, "\sim","L_K"'] \arrow[r,"F_f"]& \Ho(\Perf_G(R_G \otimes \mathbb{D}(G/H_+))),\arrow[u,"\sim","L_H"']
\end{tikzcd}
\]
where $F_f$ is a shorthand for the extension of scalars functor along $R \otimes \mathbb{D}(f_+)$. 

To this end, consider the following diagram
\[\resizebox{\columnwidth}{!}{$\displaystyle
\begin{tikzcd}[ampersand replacement=\&]
	\& {\mathrm{Ho}(\mathrm{Perf}_{K}(R_K))} \& {} \& {\mathrm{Ho}(\mathrm{Perf}_H(R_H))} \\
	\\
	\& {\mathrm{Ho}(\mathrm{Perf}_G(R_G \otimes \mathbb{D}(G/K_+)))} \& {} \& {\mathrm{Ho}(\mathrm{Perf}_G(R_G \otimes \mathbb{D}(G/H_+)))} \\
	\\
	{\mathrm{Ho}(\mathrm{Perf}_G(R_G))} \&\& {} \& {} \& {\mathrm{Ho}(\mathrm{Perf}_G(R_G))}
	\arrow["{F_{f}}"', from=3-2, to=3-4]
	\arrow["\sim"',"L_K", from=3-2, to=1-2]
	\arrow["L_H"',"\sim", from=3-4, to=1-4]
	\arrow["{c_{g^{-1}}^*}", from=1-2, to=1-4]
	\arrow["{\mathrm{res}_H}"', curve={height=30pt}, from=5-5, to=1-4]
	\arrow["{F_{\mathbb{D}(G/H_+)\otimes R_G}}", from=5-5, to=3-4]
	\arrow["{F_{\mathbb{D}(G/K_+)\otimes R_G}}"', from=5-1, to=3-2]
	\arrow["{c_{g^{-1}}^*}", from=5-5, to=5-1]
	\arrow["{\mathrm{res}_K}", curve={height=-30pt}, from=5-1, to=1-2]
\end{tikzcd}
$}
\]
in which the right and left triangles commute by \Cref{prop:base-change-naturality}. 
By \cref{rem:conj-id}, there is a natural isomorphism $ c_{g^{-1}}^* \Rightarrow 1$ between endofunctors of $\Ho(\Perf_G(R_G))$. This natural isomorphism makes the bottom square commute as one readily verifies that $F_f\circ F_{\mathbb{D}(G/K_+)\otimes R_G}\simeq F_{\mathbb{D}(G/H_+)\otimes R_G}$. Note that the outer diagram commutes as well, using that $i_K \circ c_{g^{-1}}=c_{g^{-1}} \circ i_H$, where $i_K \colon K \subseteq G$ and $i_H \colon H \subseteq G$ denote the inclusion of subgroups. This shows that there is a natural isomorphism $\eta \colon L_H\circ F_f\circ F_{\mathbb{D}(G/K_+)} \Rightarrow c_{g^{-1}}^*\circ L_K \circ F_{\mathbb{D}(G/K_+)}$. We then apply \cref{prop:nat-tra} (keeping in mind \cref{cor:homotopy-category}) to obtain a natural transformation $\widetilde{\eta}\colon L_H\circ F_f \Rightarrow c_{g^{-1}}^*\circ L_K$ extending $\eta$. A thick subcategory argument then shows that $\widetilde{\eta}$ is also an isomorphism.
This uses that ${\mathrm{Ho}(\mathrm{Perf}_G(R_G \otimes \mathbb{D}(G/K_+)))}$ admits a set of generators in the image of $F_{\mathbb{D}(G/K_+)\otimes R_G}$, namely the orbits, see \cref{lem:eqmodulegenerators}.
\end{proof}

We are finally ready to state the main result of this section. Recall the notation introduced in \cref{not:quillen_strata}.

\begin{Thm}\label{thm:globalquillenstrat}
Assume $G$ is a finite group, $R\in\CAlg(\Sp_{gl})$,
and $\mathcal F$ is a family of subgroups of $G$ such that $R_G$ is $\mathcal F$-nilpotent. Then:
\begin{itemize}
\item[(a)] The restriction maps induce a homeomorphism
\[ \xymatrix{{{\mathop{\colim}\limits_{H\in \mathcal{O}^Q_{\mathcal{F}}(G)} {\mathcal V}(R_G,H) }}\ar[r]^-{\cong} &{\mathcal V}(R_G,G).}\]
\item[(b)] There is a decomposition into locally closed disjoint subsets
\[ {\mathcal V}(R_G,G)=\bigsqcup_{(H)\in\mathcal F}{\mathcal V}_G^+(R_G,H).\]
The index runs over conjugacy classes of subgroups in $\mathcal F$.
\item[(c)] For every $H\in\mathcal F$, restriction induces a homeomorphism
\[ \xymatrix{{\mathcal V}^+(R_G,H)/W_G^{gl}(H) \ar[r]^-{\cong} &{\mathcal V}^+_G(R_G,H).}\]
\end{itemize}
\end{Thm}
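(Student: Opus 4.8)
The plan is to deduce each of (a), (b), (c) from the corresponding assertion of \cref{thm:quillen_decomposition}, applied to the underlying $G$-spectrum $R_G$ (which is $\mathcal F$-nilpotent by hypothesis), using the extra functoriality recorded in \cref{lem:two-functors} to cut down the indexing category and the Weyl-group action. Part (b) is then literally \cref{thm:quillen_decomposition}(b) for $R_G$, so nothing remains to be proved there; the content is in (a) and (c).

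\emph{Part (a).} First I would note that the functor $\mathcal V(R_G,-)\colon \mathcal O_{\mathcal F}(G)\to\mathrm{Top}$ of \cref{not:quillen_strata}(a) is $\Spc\circ\Theta_0^R$; since $\Spc$ sends naturally isomorphic tt-functors to the same continuous map, $\Spc$ turns the pseudofunctor $\Theta_0^R$ into an honest $\mathrm{Top}$-valued functor, and by \cref{lem:two-functors} it factors as $\mathcal O_{\mathcal F}(G)\xrightarrow{J}\mathcal O^Q_{\mathcal F}(G)\to\mathrm{Top}$, with $J(G/H)=H$. Then I would check that $J$ is a final functor: for each $H\in\mathcal F$ the comma category $H\downarrow J$ has $(G/H,\mathrm{id}_H)$ as a weakly initial object, because any object $(G/K,\psi)$ has structure map $\psi$ represented by a conjugation morphism $c_{g^{-1}}\colon H\to K$ with $g^{-1}Hg\subseteq K$, and the $G$-map $f\colon G/H\to G/K$, $eH\mapsto gK$, is then well defined with $J(f)=\psi$; in particular $H\downarrow J$ is nonempty and connected. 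By the cofinality theorem for colimits this gives $\colim_{\mathcal O_{\mathcal F}(G)}\mathcal V(R_G,-)\cong\colim_{\mathcal O^Q_{\mathcal F}(G)}\mathcal V(R_G,-)$, and the left-hand side is homeomorphic to $\mathcal V(R_G,G)$ via the restriction maps by \cref{thm:quillen_decomposition}(a) applied to $R_G$.

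\emph{Part (c).} The point is that the $W_G(H)$-action of \cref{con-fund-action-rings} on $\mathcal V^+(R_G,H)$ factors through the quotient $W_G(H)=N_G(H)/H\twoheadrightarrow N_G(H)/H\cdot C_G(H)=W_G^{gl}(H)$. Unwinding \cref{con-fund-action-rings}, that action is the restriction, along the $W_G(H)$-equivariant open inclusion $\mathcal V^+(R_G,H)\hookrightarrow\mathcal V(R_G,H)$, of the action on $\mathcal V(R_G,H)$ obtained by applying $\Spc$ to the $\mathrm{Aut}_{\mathcal O_{\mathcal F}(G)}(G/H)=W_G(H)$-functoriality of $\Theta_0^R$. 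Since \cref{lem:two-functors} shows $\Theta_0^R$ factors through $\mathcal O^Q_{\mathcal F}(G)$, this functoriality—and hence the induced action on $\mathcal V(R_G,H)$ and on its $W_G(H)$-stable open subset $\mathcal V^+(R_G,H)$—factors through $\mathrm{Aut}_{\mathcal O^Q_{\mathcal F}(G)}(H)=W_G^{gl}(H)$. Therefore $\mathcal V^+(R_G,H)/W_G(H)=\mathcal V^+(R_G,H)/W_G^{gl}(H)$, and \cref{thm:quillen_decomposition}(c) for $R_G$ identifies this, via restriction, with $\mathcal V^+_G(R_G,H)$.

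\textbf{Expected main obstacle.} The genuinely delicate step is the bookkeeping that passes from the pseudofunctorial statement of \cref{lem:two-functors} to concrete statements about the $\mathrm{Top}$-valued functor $\mathcal V(R_G,-)$ and the group actions it carries—in particular, verifying that the abstract $W_G(H)$-action introduced in \cref{con-fund-action-rings} on $\Perf_G(\mathbb D(G/H_+)\otimes R_G)$ is precisely the automorphism action extracted from $\Theta_0^R$, and that it is compatible with the open inclusion $\mathcal V^+\hookrightarrow\mathcal V$. Once these identifications are in place, the finality of $J$ in (a) and the factorization of the action in (c) are formal, and (b) is immediate.
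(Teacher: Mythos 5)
Your proposal is correct and follows essentially the same route as the paper: reduce everything to \cref{thm:quillen_decomposition} for $R_G$, use \cref{lem:two-functors} to see that $\mathcal V(R_G,-)$ factors through $J\colon \mathcal O_{\mathcal F}(G)\to\mathcal O^Q_{\mathcal F}(G)$, deduce (a) from finality of $J$, take (b) verbatim, and get (c) because the factorization forces $H\cdot C_G(H)$ to act trivially on $\mathcal V^+(R_G,H)$. Your comma-category verification of finality and the explicit matching of the \cref{con-fund-action-rings} action with the automorphism action of $\Theta_0^R$ are just more detailed versions of the steps the paper asserts briefly.
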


\begin{proof}
With the work done so far, all the claims follow from \cref{thm:quillen_decomposition} as we now explain. Recall from \cref{lem:two-functors} that the functor $G/H \mapsto \mathcal{V}(R_G,H)$ factors through $J \colon\mathcal{O}_{\mathcal{F}}(G) \to \mathcal{O}_{\mathcal{F}}^Q(G)$. Note that $J$ is cofinal since it is bijective on objects and surjective on morphisms. Combining this with \cref{thm:quillen_decomposition}$(a)$ gives part $(a)$. Part $(b)$ is precisely \cref{thm:quillen_decomposition}$(b)$. Finally, part $(c)$ follows from \cref{thm:quillen_decomposition}$(c)$ and the observation that the centralizer $C_G(H)$ must act trivially on $\mathcal{V}^+(R_G,H)$ as $G/H \mapsto \mathcal{V}(R_G,H)$ factors through $J$.
\end{proof}

The globality assumption in \cref{thm:globalquillenstrat} implies that the action of $C_G(H)$ on $\Spc(\Perf \Phi^HR)$ is trivial. The next example shows that it can be non-trivial in general.

\begin{Exa}\label{ex:gl-hyp-needed}
    Let $G$ be the cyclic group of order $2$ and let $R=\mathbb{D}(G_+)\in \CAlg(\Sp_{G})$. We note that $R$ does not come from a global spectrum as the restriction map $\res^{G}_e \colon \pi_0^{G}(R)\to \pi_0(R)$ is not surjective, compare with \cite[Remark 4.1.2]{Schwede18_global}. We also note that $\Phi^{G}R\simeq 0$ and that $\Phi^e R \simeq S^0 \oplus S^0$. By \cref{thm:quillen_decomposition}, we have 
    \begin{equation}\label{eq:weyl}
     \Spc(\Perf_{G}(R))= \Spc(\Perf \Phi^eR)/G= (\Spc(\Sp^c) \sqcup \Spc(\Sp^c))/G.
    \end{equation}
    On the other hand, $ \Spc(\Perf_{G}(R))=\Spc(\Sp^c)$ by \cref{prop:base-change-naturality}. It follows that the Weyl group $W_G(e)=G$ does not act trivially in (\ref{eq:weyl}). In particular we cannot replace $W_G(e)$ with $W_G^{gl}(e)$ in (\ref{eq:weyl}).
\end{Exa}

\subsection{Quillen stratification, revisited}\label{ssec:ttmodpquillen}
The goal of this subsection is to show that \Cref{thm:globalquillenstrat} contains a weak version of Quillen's stratification theorem as a special case. The precise formulation of this stratification result appears in \cref{prop:ttvgeplus}.

\begin{Not}
Let $k$ be a field of characteristic $p$. For any finite group $G$, we set ${\mathcal V}^h_G=\Spec^h(H^*(G,k))$. If $E$ is an elementary abelian $p$-group, we also set ${\mathcal V}_E^{h,+}= {\mathcal V}_E^h \setminus \bigcup_{E'<E} {\mathcal V}^h_{E'}$. Finally, if $E\subseteq G$ we write $\mathcal{V}_{G,E}^{h,+}$ for the image of $\mathcal{V}_{E}^{h,+}$ under the restriction map $r_E^G\colon\mathcal{V}_{E}^h \to \mathcal{V}^h_{G}$.
\end{Not}

\begin{Rem}\label{rem:balmerquillen}
Recall that in this situation, Balmer's comparison map
\[ 
\xymatrix{\rho^G\colon {\mathcal V}(\underline{k},G) \coloneqq \Spc(\Perf_G(\underline{k})) \ar[r]^-{\cong} & {\mathcal V}_G^h}
\]
is a homeomorphism. To see this, note that by \Cref{rem:stmodbikstratification} we have $\Spc(\Perf_G(\underline k)) \simeq \Spc(\mathrm{D}^b(kG))$, while Balmer's elaboration \cite[Proposition 8.5]{Balmer10b} on work of Benson, Carlson, and Rickard \cite{BensonCarlsonRickard97} shows that the comparison map is a homeomorphism in this case. 
\end{Rem}

\begin{Lem}\label{lem:ttveplus}
If $E$ is an elementary abelian $p$-group and $k$ a field of characteristic $p$, then the homeomorphism 
$\rho^E \colon \Spc(\Perf_E(\underline{k}))\to \mathcal{V}^h_E$ restricts to a homeomorphism between open subsets
\[
\xymatrix{{\mathcal V}^+(\underline{k},E) \ar[r]^-{\cong} & {\mathcal V}_E^{h,+}.}
\]
\end{Lem}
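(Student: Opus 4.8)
The plan is to deduce the statement from the naturality of Balmer's comparison map (\cref{rem:comparison_map}) with respect to the restriction tt-functors $\res_{E'}\colon \Perff{E}(\underline{k}) \to \Perff{E'}(\underline{k})$ ranging over the proper subgroups $E' < E$, using that $\rho^E$ and each $\rho^{E'}$ are homeomorphisms by \cref{rem:balmerquillen}.

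First I would identify both open subsets as complements of corresponding closed subsets. On the tt-side, exactly as in the proof of \cref{cor:quillen_stratification}, the complement of ${\mathcal V}^+(\underline{k},E)$ in $\Spc(\Perff{E}(\underline{k}))$ equals $\supp\bigl(\underline{k}\otimes\prod_{E'<E}\mathbb{D}(E/E'_+)\bigr)=\bigcup_{E'<E}\im(\Spc(\res_{E'}))$, where each $\im(\Spc(\res_{E'}))$ is closed by \cite[Theorem 3.4(b)]{Balmer16b}; this is precisely the identification of ${\mathcal V}^+(\underline{k},E)$ with an open subset via the idempotent $\widetilde{E}[\subset E]$ (\cref{not:quillen_strata}, \cref{rem:explicitgeomfixedpoints}). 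On the cohomology side, ${\mathcal V}_E^{h,+}$ is by definition the complement in ${\mathcal V}_E^h$ of $\bigcup_{E'<E}\im(r_{E'}^E)$, and each $\im(r_{E'}^E)$ is closed since $H^*(E',k)$ is a finitely generated module over $H^*(E,k)$, so $r_{E'}^E$ is a finite, hence closed, morphism.

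Next, for each $E'<E$ I would apply naturality of $\rho$ to the tt-functor $\res_{E'}$, which on graded endomorphism rings of the unit induces the restriction map $H^*(E,k)\to H^*(E',k)$, yielding a commutative square
\[
\begin{tikzcd}[ampersand replacement=\&]
\Spc(\Perff{E'}(\underline{k})) \& {\mathcal V}^h_{E'} \\
\Spc(\Perff{E}(\underline{k})) \& {\mathcal V}^h_E
\arrow["\rho^{E'}", from=1-1, to=1-2]
\arrow["\Spc(\res_{E'})"', from=1-1, to=2-1]
\arrow["r_{E'}^E", from=1-2, to=2-2]
\arrow["\rho^E"', from=2-1, to=2-2]
\end{tikzcd}
\]
Since $\rho^{E'}$ is surjective, chasing the square gives $\rho^E\bigl(\im(\Spc(\res_{E'}))\bigr)=\im(r_{E'}^E)$; taking the union over all proper $E'<E$ shows that the homeomorphism $\rho^E$ carries the deleted closed set of ${\mathcal V}^+(\underline{k},E)$ bijectively onto that of ${\mathcal V}_E^{h,+}$. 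As $\rho^E$ is a bijection it thus maps the respective complements onto one another, and the restriction of a homeomorphism to a subset together with its image is again a homeomorphism, producing the desired ${\mathcal V}^+(\underline{k},E)\xrightarrow{\cong}{\mathcal V}_E^{h,+}$.

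The argument is essentially formal once these identifications are set up; the only point requiring a little care — and which is already carried out in the proof of \cref{cor:quillen_stratification} — is matching ${\mathcal V}^+(\underline{k},E)$, defined via the idempotent $\widetilde{E}[\subset E]$ (equivalently, the finite localization away from $\{E/E'_+\}_{E'<E}$), with the complement of $\bigcup_{E'<E}\im(\Spc(\res_{E'}))$. The rest is bookkeeping with naturality of $\rho$ and the fact that $\rho^E$ and the $\rho^{E'}$ are homeomorphisms.
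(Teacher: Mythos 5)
Your proposal is correct and follows essentially the same route as the paper: identify the complements of both open subsets as the unions of the images coming from proper subgroups, use naturality of Balmer's comparison map together with the fact that $\rho^{E}$ and the $\rho^{E'}$ are homeomorphisms (\cref{rem:balmerquillen}), and pass to complements. The only cosmetic difference is that the paper packages the subgroupwise naturality squares into one diagram over $\bigcup_{E'\subsetneq E}$ and invokes \cref{thm:quillen_decomposition}(b), whereas you chase the squares one proper subgroup at a time; the content is the same.
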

\begin{proof}
For any subgroup $E' \subseteq E$, we have $\supp(\mathbb D(E/E'_+)\otimes \underline{k}) \cong \Spc(\Perf_{E'}(\underline{k}))$, thus providing an inclusion $\Spc(\Perf_{E'}(\underline{k})) \hookrightarrow \Spc(\Perf_{E}(\underline{k}))$. Likewise, there are inclusions ${\mathcal{V}}^h_{E'} \hookrightarrow {\mathcal{V}}^h_E$. We obtain a commutative diagram of topological spaces
\[
\xymatrix{ \Spc(\Perf_E(\underline{k})) \ar[r]^{\rho^E}_{\cong} & {\mathcal{V}}^h_E \\
\bigcup_{E' \subsetneq E} \Spc(\Perf_{E'}(\underline{k})) \ar[r]^-{\cong} \ar@{^{(}->}[u] & \bigcup_{E'\subsetneq E}{\mathcal{V}}_{E'}^h, \ar@{^{(}->}[u]}
\]
where both the bottom and the top map are homeomorphisms, as observed in \cref{rem:balmerquillen}. The result then follows by passing to complements and using \cref{thm:quillen_decomposition}$(b)$. 
\end{proof}

We can then give a proof Quillen's stratification theorem in the form of~\cite[Theorem 9.1]{BensonIyengarKrause11a} from a tt-geometric point of view. 

\begin{Prop}\label{prop:ttvgeplus}
Let $E$ be an elementary abelian $p$-subgroup of a finite group $G$ and let $k$ be a field of characteristic $p$. Then, restriction induces a homeomorphism
\[
\xymatrix{\mathcal{V}_{E}^{h,+}/W^{Q}_G(E) \ar[r]^-{\cong}&\mathcal{V}_{G,E}^{h,+}.}
\]
Therefore, there is a decomposition of $\mathcal{V}_G^h$ into locally closed disjoint subsets
\[
\mathcal{V}_G^h= \bigsqcup_{(E)} \mathcal{V}_{E}^{h,+}/W^{Q}_G(E),
\]
where the index runs over $G$-conjugacy classes of elementary abelian $p$-subgroups of $G$.
\end{Prop}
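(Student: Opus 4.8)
The plan is to deduce this from \Cref{thm:globalquillenstrat} by specializing to the Borel-equivariant ring spectrum $\underline{k}$. It arises as the restriction of a commutative global ring spectrum by \Cref{ex:borel-are-global}, and since $k$ has characteristic $p$, the $G$-spectrum $\underline{k}$ is $\mathcal{E}$-nilpotent, where $\mathcal{E}$ denotes the family of elementary abelian $p$-subgroups of $G$; this is one of the incarnations of Quillen's theorem established in \cite{MathewNaumannNoel2019}. Thus \Cref{thm:globalquillenstrat} applies with $\mathcal{F}=\mathcal{E}$. Since every $E\in\mathcal{E}$ is abelian, the global Weyl group $W_G^{gl}(E)$ agrees with the Quillen--Weyl group $W_G^Q(E)=N_G(E)/C_G(E)$, so part $(c)$ of that theorem already yields a homeomorphism $\mathcal{V}^+(\underline{k},E)/W_G^Q(E)\xrightarrow{\cong}\mathcal{V}^+_G(\underline{k},E)$, while part $(b)$ gives a disjoint decomposition $\mathcal{V}(\underline{k},G)=\bigsqcup_{(E)}\mathcal{V}^+_G(\underline{k},E)$ into locally closed subsets, indexed on conjugacy classes of elementary abelian $p$-subgroups.

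It then remains to transport this along Balmer's comparison maps, which are homeomorphisms in the modular case: $\rho^G\colon\mathcal{V}(\underline{k},G)\xrightarrow{\cong}\mathcal{V}_G^h$ and $\rho^E\colon\mathcal{V}(\underline{k},E)\xrightarrow{\cong}\mathcal{V}_E^h$ (\Cref{rem:balmerquillen}). By \Cref{lem:ttveplus}, $\rho^E$ restricts to a homeomorphism $\mathcal{V}^+(\underline{k},E)\xrightarrow{\cong}\mathcal{V}_E^{h,+}$. Naturality of the comparison map (\Cref{rem:comparison_map}) applied to the tt-functor $\res_E\colon\Perf_G(\underline{k})\to\Perf_E(\underline{k})$ produces a commutative square identifying $\Spc(\res_E)$ with $\Spec^h$ of the cohomological restriction map $H^*(G,k)\to H^*(E,k)$; since $\mathcal{V}^+_G(\underline{k},E)$ is by definition the image of $\mathcal{V}^+(\underline{k},E)$ under $\Spc(\res_E)=\psi_E$ (\Cref{not:quillen_strata}), it follows that $\rho^G$ carries $\mathcal{V}^+_G(\underline{k},E)$ onto $r_E^G(\mathcal{V}_E^{h,+})=\mathcal{V}_{G,E}^{h,+}$. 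Assembling these identifications with the compatibility of the group actions discussed below, part $(c)$ becomes the first assertion $\mathcal{V}_E^{h,+}/W_G^Q(E)\cong\mathcal{V}_{G,E}^{h,+}$, and pushing the decomposition in part $(b)$ through $\rho^G$ gives $\mathcal{V}_G^h=\bigsqcup_{(E)}\mathcal{V}_{G,E}^{h,+}=\bigsqcup_{(E)}\mathcal{V}_E^{h,+}/W_G^Q(E)$.

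The main obstacle is matching the $W_G^Q(E)$-action on $\mathcal{V}^+(\underline{k},E)$ coming from \Cref{con-fund-action-rings} with the conjugation action of $W_G^Q(E)$ on $\Spec^h(H^*(E,k))$. To deal with this I would unwind \Cref{con-fund-action-rings}: the action is induced by the $W_G(E)$-action on $\mathbb{D}(G/E_+)$, and transporting it through the equivalences $L_E$ of \Cref{prop:base-change-naturality} and $\Psi_E$ of \Cref{lem:res_finiteetale-gfp} --- exactly as in the proof of \Cref{lem:two-functors} --- identifies it with the family of autoequivalences $c_w^*$ of $\Perf_E(\underline{k})$ induced by conjugation by representatives $w$ of $W_G(E)$. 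Since $c_w^*$ acts on $\End_{\Perf_E(\underline{k})}^*(\unit)=H^*(E,k)$ by the cohomological conjugation map, one further application of naturality of $\rho^E$ completes the identification of the two actions; everything else is routine diagram chasing with the homeomorphisms $\rho^{(-)}$.
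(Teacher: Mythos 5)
Your proposal is correct and follows essentially the same route as the paper's proof: apply \Cref{thm:globalquillenstrat} to $\underline{k}$ with the family of elementary abelian $p$-subgroups, then transport parts $(b)$ and $(c)$ through the comparison homeomorphisms $\rho^G$ and $\rho^E$ using \Cref{rem:balmerquillen}, \Cref{lem:ttveplus}, \Cref{prop:base-change-naturality} and the naturality of $\rho^{(-)}$. The action-compatibility you single out as the main remaining point is exactly what the paper dispatches with its brief ``unwinding the construction'' remark, so your more explicit verification via the argument of \Cref{lem:two-functors} is a fuller write-up of the same step rather than a different method.
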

\begin{proof}
There is a commutative diagram of spectra
\[
\begin{tikzcd}
\mathcal{V}(\underline{k}, E) \arrow[r,"\psi_E"] \arrow[d, ,"\cong"] & \mathcal{V}(\underline{k}, G)
 \arrow[dd,"\cong"',"\rho^G"] \\
 \Spc(\Perf_E(\underline{k})) \arrow[ur,"\Spc(\res_E)"'] \arrow[d,"\cong","\rho^E"']& \\
 \mathcal{V}_E^h \arrow[r,"r_E"] & \mathcal{V}_G^h.
\end{tikzcd}
 \]
 The commutativity of the upper triangle comes from \cref{prop:base-change-naturality} and the commutativity of the bottom part follows from naturality of $\rho^{(-)}$.
As shown in \cref{lem:ttveplus}, under the left vertical composite,  $\mathcal{V}_{E}^{h,+}$ identifies with ${\mathcal V}^+(\underline{k},E)$. We conclude that the right vertical homeomorphism identifies $\mathcal{V}_{G,E}^{h,+}$ with $\im \psi_E = {\mathcal V}_G^+(\underline{k},E)$. We may thus apply \cref{thm:globalquillenstrat} and \cref{lem:ttveplus} again to deduce 
\[
\mathcal{V}_{G,E}^{h,+} \cong {\mathcal V}_G^+(\underline{k},E) \cong {\mathcal V}^+(\underline{k},E)/W^{Q}_G(E) \cong \mathcal{V}_{E}^{h,+}/W^{Q}_G(E).
\]
Unwinding the construction, this homeomorphism is indeed induced by the restriction map. 
The final claim then follows from~\cref{thm:globalquillenstrat}.
\end{proof}

\begin{Rem}
 \Cref{prop:ttvgeplus} is close in spirit to Quillen's stratification result for the spectrum of an equivariant cohomology ring~\cite[Stratification Theorem 10.2]{Quillen71}; however there is a  difference. Our stratification result applies to the variety $\Spec^h(H^*(G,k))$ associated to the graded commutative ring $H^*(G,k)$, whereas Quillen's result applies to the varieties $\Spec(H^\bullet(G,k))$ and $\mathrm{MaxSpec}(H^\bullet(G,k))$ associated to the commutative ring
 \[
 H^\bullet(G,k)\coloneqq\begin{cases}
     H^*(G,k) & \mathrm{if}\; p=2\\
     H^{\mathrm{even}}(G,k) & \mathrm{if} \; p \; \mathrm{odd}.\\
 \end{cases}
 \]
 As any homogeneous prime ideal of $H^*(G,k)$ defines a prime ideal of $H^\bullet(G,k)$, one can deduce \cref{prop:ttvgeplus} from Quillen's work. In this sense our result is a weaker version of Quillen's stratification theorem. 
\end{Rem}

\begin{Rem}\label{rem:freeness}
    \cref{prop:ttvgeplus} above implies that the Quillen--Weyl group acts transitively on homogeneous prime ideals lying over a given homogeneous prime ideal of $\mathcal{V}_{G,E}^{h,+}$. As already noted by Quillen~\cite[Section 11]{Quillen71}, this action is in general not free, see~\cite[Warning 5.14]{BIK2012} for an explicit example. The action is however free if $k$ is a separably closed field and we replace the homogeneous spectrum with the variety of maximal ideals, see ~\cite[Proposition 9.6]{Quillen71} for the precise statement. 
\end{Rem}

\begin{Rem}\label{rem:res-not-injective}
\cref{prop:ttvgeplus} shows that $r_E^G \colon \mathcal{V}_E^{h,+} \to \mathcal{V}^{h,+}_{G,E}$ is the quotient map induced by the action of $W_G^{Q}(E)$. In particular, $r_E^G$ is not injective in general. For a specific example consider $G=(\Z/2 \times \Z/2)\rtimes \Z/2$ where $\Z/2$ acts on $\Z/2 \times \Z/2$ by swapping the two copies, and let $E\subseteq G$ be the Klein four-group. Let $k$ be a finite field of cardinality $4$ and let $\alpha$ be a root of the polynomial $x^2+x+1$ in $k$. Note that $W_G^{Q}(E)=\Z/2$ acts on $\mathcal{V}^{h}_E=\Spec^h(k[x,y])$ by swapping $x$ and $y$. One checks that the two distinct homogeneous prime ideals $(x+\alpha y)$ and $(x+(1+\alpha)y)$ belong to $\mathcal{V}^{h,+}_E$ and are identified under $r_{E}^G$.
\end{Rem}

\section{Stratification via a theorem of Drinfel'd--Strickland}\label{sec:cohomologicalstratification}

In this section we investigate the category of modules over the Borel-equivariant $G$-spectrum associated to a Lubin--Tate theory $E$, showing that it is cohomologically stratified. The key input is \Cref{thm:regular} which shows that the ring $\pi_0(\Phi^{A}\underline{E}_A)$ is regular Noetherian for every finite abelian $p$-group $A$. The proof of this relies on the theory of level $A$-structures, as introduced by Drinfel'd \cite{Drinfeld1974Elliptic} and studied further by Strickland \cite[Section 7]{Strickl1997Finite}.

\subsection{The $E$-cohomology of finite abelian groups}\label{ssec:ecohomabgps}

\begin{Not}
We let $E = E_n \in \CAlg(\Sp)$ denote a \emph{Lubin--Tate theory} (aka \emph{Morava $E$-theory}) associated with a connected $p$-divisible group of height $n\ge 1$ over a perfect field of characteristic $p>0$, see \cite[Section 5]{Lurie_elliptic} for an overview. We will denote the associated universal deformation by $\mathbb G_E$, usually viewed of as a $p$-divisible group. As previously, $\underline{E} \in \CAlg(\Sp_G)$ will denote the associated Borel-equivariant $G$-spectrum for a finite group $G$, see \Cref{ex:borel_g_spectra}.
\end{Not}

We recall the following fundamental description of the $E$-cohomology of finite abelian groups, see \cite[Corollary 5.10]{HopkinsKuhnRavenel2000Generalized}.

\begin{Lem}\label{lem:morava_abelian_p_group}
The $E$-cohomology $E^*(BA)$ of any finite abelian group $A$ is concentrated in even degrees, even periodic, and $E^0(BA)$ is a finitely generated free $E^0$-algebra.
\end{Lem}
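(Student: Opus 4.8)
The statement is the classical computation of Morava $E$-theory of abelian groups, going back to Hopkins--Kuhn--Ravenel; the plan is to reduce to the cyclic case and then assemble via a Künneth argument. First I would recall that for a Lubin--Tate theory $E$, the complex orientation gives $E^*(BS^1) \cong E^*\llbracket x \rrbracket$, and hence $E^*(B\bbZ/p^k) \cong E^*\llbracket x \rrbracket / [p^k](x)$, where $[p^k](x)$ is the $p^k$-series of the formal group law $\bbG_E$. Here the key input is that $\bbG_E$ has height $n < \infty$, so $[p](x)$ is (up to a unit) a Weierstrass polynomial of degree $p^n$ in $x$; iterating, $[p^k](x)$ is associated to a monic polynomial of degree $p^{kn}$. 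By the Weierstrass preparation theorem applied to the regular local ring $E^0$, the quotient $E^0\llbracket x\rrbracket/[p^k](x)$ is therefore a free $E^0$-module of rank $p^{kn}$, and in particular a finitely generated free $E^0$-algebra, concentrated in even degrees and even periodic since $E$ is.

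For the general case, write $A \cong \prod_{i=1}^r \bbZ/p^{k_i}$ (after first reducing to $p$-groups, since the $E$-cohomology of a finite group of order prime to $p$ is just $E^0$ by a transfer argument — or simply noting $BA$ is $E$-acyclic in positive degrees in that case, which is again HKR or even just the fact that $E^0$ is $\bbZ_p$-local). Then $BA \simeq \prod_i B\bbZ/p^{k_i}$, and I would invoke the Künneth isomorphism: since each $E^*(B\bbZ/p^{k_i})$ is a finitely generated free $E^*$-module (by the cyclic case just established), the Atiyah--Hirzebruch / Künneth spectral sequence collapses and gives
\[
E^*(BA) \cong E^*(B\bbZ/p^{k_1}) \otimes_{E^*} \cdots \otimes_{E^*} E^*(B\bbZ/p^{k_r}).
\]
A tensor product over $E^0$ of finitely generated free $E^0$-algebras concentrated in even degrees is again a finitely generated free $E^0$-algebra concentrated in even degrees, and even periodicity is inherited from $E$; this yields all the claimed properties.

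The main obstacle — or rather the point requiring the most care — is the input that $[p^k](x)$ is, up to a unit, a monic polynomial of degree $p^{kn}$, which is exactly where the finite height hypothesis enters and which underlies the freeness and finite generation. Once that is in hand via Weierstrass preparation over the complete regular local ring $E^0$, the rest is formal: orientability handles the cyclic building blocks, and the Künneth collapse (valid precisely because the factors are free) handles the passage to general finite abelian groups. Since all of this is standard and recorded in \cite[Corollary 5.10]{HopkinsKuhnRavenel2000Generalized}, in the paper it suffices to cite that reference, as the statement of \cref{lem:morava_abelian_p_group} already does.
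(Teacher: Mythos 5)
Your proposal is correct: the reduction to cyclic $p$-groups via the complex orientation and the Gysin sequence, the use of Weierstrass preparation over the complete local ring $E^0$ (where finite height enters), and the Künneth collapse for free factors is precisely the standard argument establishing the cited result. The paper itself gives no independent proof but simply invokes \cite[Corollary 5.10]{HopkinsKuhnRavenel2000Generalized}, exactly as you note at the end, so your sketch matches the approach underlying the reference.
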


\begin{Not}
    For a finite abelian $p$-group $A$, we write $A^*$ for the Pontryagin dual of $A$. We also view any $A$ as a group scheme via the constant functor. 
\end{Not}

\begin{Rec}\label{rec:ecohomofabgps}
For a finite abelian $p$-group $A$, recall that the $\pi_0E$-algebra
$\pi_0E^{BA}$ corepresents homomorphisms of group schemes $A^*\to \mathbb G_E$ into the universal deformation $\mathbb G_E$, see \cite[Proposition 5.12]{HopkinsKuhnRavenel2000Generalized}. The functoriality in $A$ of this identification is as follows:
for a map $f\colon A_1\to A_2$ we obtain a map of $\pi_0E$-algebras $\pi_0E^{BA_2}\to \pi_0E^{BA_1}$ which corepresents the composition of $f^*\colon A_2^*\to A_1^*$ with $A_1^*\to \mathbb G_E$. In particular, for every subgroup $A'\subseteq A$ we obtain a map of affine schemes
\[ 
\mathrm{Spec}(\pi_0 E^{BA'})\longrightarrow \mathrm{Spec}(\pi_0 E^{BA}).
\]
The universal property of $\pi_0E^{BA}$, applied to the identity map $\pi_0E^{BA} \to \pi_0E^{BA}$, provides a universal homomorphism $A^* \to \mathbb{G}_E(\pi_0E^{BA})$. Choosing a coordinate on $\mathbb G_E$, we thus obtain a map 
\begin{equation}\label{eq:universalmapeba}
    \alpha \colon A^*\to\mathbb G_E(\pi_0E^{BA})\subseteq \pi_0E^{BA}.
\end{equation} 
From now on, we will implicitly fix such a coordinate on $\mathbb G_E$.
\end{Rec}

\begin{Prop}\label{prop:closed_immersion}
The map $\mathrm{Spec}(\pi_0 E^{BA'}) \hookrightarrow \mathrm{Spec}(\pi_0 E^{BA})$ is a closed immersion, i.e., the map of $\pi_0E$-algebras $\pi_0E^{BA}\to \pi_0E^{BA'}$ given by restriction is surjective.
\end{Prop}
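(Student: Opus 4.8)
The plan is to reduce the claim to a statement about coordinate rings of deformations of $p$-divisible groups that can be verified directly from the universal properties recalled in \cref{rec:ecohomofabgps}. Let $A' \subseteq A$ be a subgroup, so that $f \colon A' \hookrightarrow A$ induces $f^* \colon A^* \twoheadrightarrow (A')^*$, a surjection of finite abelian $p$-groups. By \cref{rec:ecohomofabgps}, the map $\pi_0E^{BA} \to \pi_0E^{BA'}$ corepresents precomposition with $f^*$ along the functor sending a $\pi_0E$-algebra $S$ to the set $\Hom_{\mathrm{grp}}(A^*, \mathbb{G}_E(S))$, respectively $\Hom_{\mathrm{grp}}((A')^*, \mathbb{G}_E(S))$. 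Thus what must be shown is: the natural transformation
\[
\Hom_{\mathrm{grp}}((A')^*, \mathbb{G}_E(-)) \longrightarrow \Hom_{\mathrm{grp}}(A^*, \mathbb{G}_E(-))
\]
given by restriction along the surjection $f^* \colon A^* \twoheadrightarrow (A')^*$ is a closed immersion of affine schemes over $\Spec(\pi_0 E)$.

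\textbf{Key steps.} First I would dualize the bookkeeping: since $A^* \twoheadrightarrow (A')^*$ is surjective, a homomorphism $A^* \to \mathbb{G}_E(S)$ is in the image of restriction from $(A')^*$ precisely when it kills $\ker(f^*) = (A/A')^*$, viewed as a subgroup of $A^*$. So the subfunctor in question is cut out by the equations demanding that $\alpha(x) = 0$ in $\mathbb{G}_E(\pi_0 E^{BA})$ for each $x$ in the finite subgroup $(A/A')^* \subseteq A^*$, where $\alpha$ is the universal homomorphism of \eqref{eq:universalmapeba}. Concretely, choosing a finite set of generators $x_1, \dots, x_r$ for $(A/A')^*$, the ideal $I \subseteq \pi_0 E^{BA}$ generated by the coordinates $\alpha(x_1), \dots, \alpha(x_r) \in \pi_0 E^{BA}$ (using the fixed coordinate on $\mathbb{G}_E$) is the ideal defining the image of $\Spec(\pi_0 E^{BA'})$. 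Second, I would check that the quotient map $\pi_0 E^{BA} \to \pi_0 E^{BA}/I$ satisfies the universal property corepresenting $\Hom_{\mathrm{grp}}((A')^*, \mathbb{G}_E(-))$: a $\pi_0 E$-algebra map $\pi_0 E^{BA} \to S$ factors through $\pi_0 E^{BA}/I$ iff the corresponding homomorphism $A^* \to \mathbb{G}_E(S)$ annihilates the $x_i$, hence all of $(A/A')^*$, hence factors uniquely through $(A')^*$ — the last step using right-exactness of Pontryagin duality for finite abelian groups. By Yoneda this forces $\pi_0 E^{BA}/I \cong \pi_0 E^{BA'}$ compatibly with the restriction map, which is exactly the asserted surjectivity.

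\textbf{Alternative / simplification.} One could equivalently argue more topologically: the restriction map $E^0(BA) \to E^0(BA')$ is induced by $B(A' \hookrightarrow A)$, and the quotient $A \to A/A'$ exhibits $BA'$ as the fiber of $BA \to B(A/A')$; combined with \cref{lem:morava_abelian_p_group} (freeness and evenness, so that the relevant Künneth/base-change spectral sequences degenerate) one gets $E^0(BA) \otimes_{E^0(B(A/A'))} E^0 \xrightarrow{\sim} E^0(BA')$, and surjectivity follows since $E^0(B(A/A')) \to E^0$ (restriction to the trivial subgroup / augmentation) is split surjective by \cref{lem:morava_abelian_p_group}. I would probably present the group-scheme argument as the main proof since it is cleaner and self-contained given \cref{rec:ecohomofabgps}, and I expect the only genuinely delicate point to be the compatibility of the coordinate-dependent description of $I$ with the coordinate-free universal property — i.e., making sure that "$\alpha(x) = 0$ in $\mathbb{G}_E(S)$" is equivalent to the vanishing of the chosen coordinate $\alpha(x) \in S$, which holds because the zero section of $\mathbb{G}_E$ corresponds to the vanishing of the coordinate. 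This is routine but is the step I would write out with care.
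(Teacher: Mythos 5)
Your main argument is correct and is essentially identical to the paper's proof: both define the ideal $I$ generated by the image of $(A/A')^*$ under the universal homomorphism \eqref{eq:universalmapeba} and show that $\pi_0E^{BA}/I$ and $\pi_0E^{BA'}$ corepresent the same functor, since a homomorphism on $A^*$ factors through $(A')^*$ exactly when it annihilates $(A/A')^*$. The coordinate/zero-section point you flag is indeed the only bookkeeping issue, and it is handled implicitly in the paper via the embedding $\mathbb{G}_E(\pi_0E^{BA})\subseteq\pi_0E^{BA}$ fixed in \cref{rec:ecohomofabgps}.
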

\begin{proof}
Via the universal homomorphism \eqref{eq:universalmapeba}, $(A/A')^*\subseteq A^*$ defines a subset $T$ of $\pi_0E^{BA}$. We denote by $I$ the ideal generated by $T$ and claim that the $\pi_0E^{BA}$-algebras
$\pi_0E^{BA'}$ and $\pi_0E^{BA}/I$ are isomorphic. This will in particular verify the claim of the proposition. Now, the two algebras are isomorphic because they corepresent the same functors over the one corepresented by $\pi_0E^{BA}$: Indeed,
a homomorphism defined on $A^*$ will factor through $(A')^*$
if and only if it annihilates $(A/A')^*$.
\end{proof}

\begin{Not}\label{not:xa}
We define $X_{A} \coloneqq \mathrm{Spec}(\pi_0 E^{BA})$. For a subgroup $A' \subseteq A$, we view $X_{A'}$ as a closed subscheme in $X_A$ via \cref{prop:closed_immersion}, and write 
\[ 
X_{A'}^\circ\coloneqq X_{A'}\setminus\left(\bigcup_{A''\subsetneq A'} X_{A''}\right).
\]
Then $X_{A'}^\circ\subseteq X_{A'}$ is an open subscheme.
\end{Not}

Mapping $A'$ to $X_{A'}$ induces a map of lattices from the lattice of subgroups of $A$ to the lattice of closed subsets of $X_A$. This map preserves meets:

\begin{Prop}\label{prop:intersection}
Let $A',A'' \subseteq A$, then we have $X_{A'\cap A''}=X_{A'}\cap X_{A''}$.
\end{Prop}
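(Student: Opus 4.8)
The inclusion $X_{A'\cap A''}\subseteq X_{A'}\cap X_{A''}$ is immediate from functoriality: the quotient map $A\to A/(A'\cap A'')$ factors through both $A\to A/A'$ and $A\to A/A''$ (once we observe that $A'\cap A''$ is contained in neither complement), so dually $X_{A'\cap A''}$ sits inside each of $X_{A'}$ and $X_{A''}$. The content is the reverse inclusion, which I would prove by identifying the defining ideals. By \cref{prop:closed_immersion} and its proof, $X_{A'}$ is cut out in $X_A$ by the ideal $I'$ generated by the image of $(A/A')^*\subseteq A^*$ under the universal map $\alpha\colon A^*\to\pi_0E^{BA}$ of \eqref{eq:universalmapeba}, and similarly $X_{A''}$ by the ideal $I''$ generated by $\alpha((A/A'')^*)$. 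The intersection $X_{A'}\cap X_{A''}$ is then cut out by the ideal $I'+I''$. On the other hand $X_{A'\cap A''}$ is cut out by the ideal $J$ generated by $\alpha((A/(A'\cap A'))^*)$. So the proposition reduces to the purely algebraic claim that $I'+I'' = J$ (or at least that they have the same radical — but in fact I expect equality).

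The first step is the group-theoretic input: for finite abelian groups one has $(A/(A'\cap A''))^* \hookrightarrow (A/A')^* + (A/A'')^*$ inside $A^*$, where the sum is taken in $A^*$; indeed, dualizing the exact sequence relating $A'\cap A''$, $A'$, $A''$ and $A'+A''$ (equivalently, using that Pontryagin duality is exact and turns intersections into sums), we get that $(A/(A'\cap A''))^*$ is precisely the subgroup $(A/A')^* + (A/A'')^*$ of $A^*$. This gives $J\subseteq I'+I''$ on the nose, since $\alpha$ is additive: $\alpha(a+b)$ differs from $\alpha(a)+\alpha(b)$ by terms in the ideal generated by $\alpha(a)$ and $\alpha(b)$, using that $\alpha$ is a homomorphism into $\mathbb{G}_E(\pi_0E^{BA})$ and that the formal group law expresses $x+_F y$ as $x+y+(\text{higher order})$, so that the ideal generated by a set of $\alpha$-values of elements is unchanged under passing to the subgroup they generate. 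The reverse containment $I'+I''\subseteq J$ is the easy direction of the same observation, since $(A/A')^*\subseteq (A/(A'\cap A''))^*$ and likewise for $A''$.

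The one subtlety worth flagging is the interplay between the \emph{group} operation in $A^*$ and the \emph{formal group} operation used to form the ideal: one must check that for a subgroup $B\subseteq A^*$, the ideal generated by $\{\alpha(b): b\in B\}$ depends only on $B$ and not on a choice of generators, and that it equals the ideal generated by $\alpha$ of any generating set. This follows because $\alpha$ is a group homomorphism $A^*\to \mathbb{G}_E(\pi_0E^{BA})$ and the formal group addition $x+_F y = x + y + \sum_{i,j\ge 1}a_{ij}x^iy^j$ lies in the ideal $(x,y)$; hence $\alpha(b_1+b_2)\in(\alpha(b_1),\alpha(b_2))$ and conversely, by induction on group order, every $\alpha(b)$ for $b\in B$ lies in the ideal generated by $\alpha$ of a generating set. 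The main obstacle, such as it is, is simply being careful that this elementary formal-group bookkeeping is valid integrally over $\pi_0E^{BA}$ (not just modulo nilpotents), which it is because $\pi_0E^{BA}$ is a finitely generated free $\pi_0E$-algebra by \cref{lem:morava_abelian_p_group} and the coordinate was fixed once and for all in \cref{rec:ecohomofabgps}. With $I'+I''=J$ established, the proposition follows by taking $\operatorname{Spec}$.
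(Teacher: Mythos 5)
Your argument is correct, but it takes a genuinely different route from the paper. The paper proves the statement by a functor-of-points argument: using \cref{prop:closed_immersion} it identifies $X_{A'}\cap X_{A''}$ with the fiber product $X_{A'}\times_{X_A}X_{A''}$, checks on $R$-valued points that both sides corepresent the same data (a pair of homomorphisms out of $(A')^*$ and $(A'')^*$ agreeing on $A^*$), and concludes via the purely group-theoretic fact that Pontryagin duality sends the pullback $A'\cap A''$ to a pushout. You instead work with defining ideals: $X_{A'}$, $X_{A''}$, $X_{A'\cap A''}$ are cut out by the ideals generated by $\alpha$ of the annihilators $(A/A')^*$, $(A/A'')^*$, $(A/(A'\cap A''))^*$ (this is exactly the content of the proof of \cref{prop:closed_immersion}), you invoke the annihilator identity $\mathrm{Ann}(A'\cap A'')=\mathrm{Ann}(A')+\mathrm{Ann}(A'')$ in $A^*$ (which is the same duality input as the paper's pullback-to-pushout fact, just in annihilator form), and you supply the extra formal-group bookkeeping lemma that the ideal generated by $\{\alpha(b):b\in B\}$ depends only on the subgroup $B$, since $x+_F y$, $[-1]_F(x)$ and $[n]_F(x)$ lie in $(x,y)$ resp.\ $(x)$; the convergence/closedness of ideals needed for this is fine because $E^0(BA)$ is finite free over the complete Noetherian local ring $E^0$ (\cref{lem:morava_abelian_p_group}, \cref{rec:ecohomofabgps}). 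The paper's route buys coordinate-freeness and brevity; yours buys an explicit on-the-nose identification of the defining ideal of the scheme-theoretic intersection as $I'+I''=J$, which is marginally more information. One small slip worth fixing: in your opening sentence the factorization goes the wrong way (it is $A\to A/A'$ that factors through $A\to A/(A'\cap A'')$, not conversely, since $A'\cap A''\subseteq A'$), and the parenthetical about complements is off; this is only motivation for the easy inclusion and is in any case subsumed by your later ideal computation, which handles both inclusions correctly.
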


\begin{proof}
Using \Cref{prop:closed_immersion} we see that the scheme theoretic intersection $X_{A'}\cap X_{A''}$ is also the fiber product
$X_{A'}\times_{X_A} X_{A''}$. We claim that the given map 
\[ 
X_{A'\cap A''}\to X_{A'}\times_{X_A}X_{A''} 
\]
is an isomorphism of affine schemes over $X_A$. We check that the map induced on points
\[ 
X_{A'\cap A''}(R)\to X_{A'}(R)\times_{X_A(R)}X_{A''}(R) 
\]
is bijective for every $\pi_0E^{BA}$-algebra $R$. Indeed, an $R$-valued point on the left hand side 
corresponds to a homomorphism $(A'\cap A'')^*\to \mathbb G_E(R)$, one of the right hand side
to a pair of homomorphisms $((A')^*\to\mathbb G_E(R), (A'')^*\to\mathbb G_E(R))$
which agree when pulled back to $A^*$, and the comparison map restricts a given homomorphism to 
$(A')^*$ and $(A'')^*$. Observe that $A'\cap A''$ is the pull-back in abelian groups of $A'$ and 
$A''$ mapping to $A$. Then the claim follows from the fact that the dual of a pull-back of finite abelian groups is a push-out in the category of (not necessarily finite) abelian groups.
\end{proof}

\begin{Cor}\label{cor:etheoryzariskistratification}
For every finite abelian group $A$, we have a decomposition of $X_A$ into a disjoint union of locally-closed subsets:
\begin{equation}\label{eq:disjoint_union_set_spec}
X_A = \bigsqcup_{A'\subseteq A} X_{A'}^\circ.
\end{equation}
\end{Cor}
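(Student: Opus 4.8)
The plan is to deduce the decomposition purely formally from the lattice-theoretic input already assembled, namely \Cref{prop:closed_immersion} (which gives closed immersions $X_{A'}\hookrightarrow X_A$) and \Cref{prop:intersection} (which says $A'\mapsto X_{A'}$ preserves meets). The strategy is an inclusion–exclusion / Möbius-inversion argument on the poset of subgroups of $A$, entirely analogous to the classical combinatorial proof that a finite poset of subspaces stratifies the total space by the locally closed pieces obtained by removing smaller members.

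\textbf{Key steps.} First I would record that each $X_{A'}^\circ \subseteq X_{A'}$ is open (already noted in \cref{not:xa}), hence each $X_{A'}^\circ$ is locally closed in $X_A$, since $X_{A'}$ is closed in $X_A$ by \Cref{prop:closed_immersion}. Second, I would show the pieces are pairwise disjoint: if a point $x\in X_A$ lies in $X_{A'}^\circ\cap X_{A''}^\circ$ with $A'\neq A''$, then $x\in X_{A'}\cap X_{A''}=X_{A'\cap A''}$ by \Cref{prop:intersection}; but $A'\cap A''$ is a proper subgroup of at least one of $A',A''$ (as $A'\neq A''$), contradicting that $x$ avoids all $X_{A''}$ with $A''\subsetneq A'$ (respectively $A''\subsetneq A''$). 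Third, I would show the pieces cover $X_A$: given any point $x\in X_A$, let $A'$ be minimal among the subgroups $B\subseteq A$ with $x\in X_B$ — such a minimal $B$ exists since $A$ itself is one and the subgroup poset is finite, and it is unique because if $x\in X_{B_1}$ and $x\in X_{B_2}$ then $x\in X_{B_1\cap B_2}$ by \Cref{prop:intersection}, so the set of such $B$ is closed under intersection and has a least element. By minimality $x\notin X_{A''}$ for every $A''\subsetneq A'$, i.e.\ $x\in X_{A'}^\circ$. Together these three observations give the disjoint-union decomposition \eqref{eq:disjoint_union_set_spec}.

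\textbf{Main obstacle.} There is essentially no serious obstacle here: all the geometric content has been extracted into \Cref{prop:closed_immersion} and \Cref{prop:intersection}, and what remains is a routine finite-poset argument. The only point requiring a little care is the uniqueness of a minimal subgroup $A'$ with $x\in X_{A'}$, which is exactly where the meet-preservation of \Cref{prop:intersection} is used; without it the "pieces" would merely cover but not partition. One should also be mindful that the decomposition is being asserted as a set-theoretic (equivalently, topological-space-level) decomposition into locally closed subsets, not a scheme-theoretic one, so no compatibility of scheme structures needs to be checked.
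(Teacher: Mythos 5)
Your argument is correct and is essentially the paper's own proof: the paper likewise uses \Cref{prop:intersection} to see that, for each point $x$, the set of subgroups $A'$ with $x\in X_{A'}$ is nonempty and closed under intersection, hence has a unique minimum, which gives both the covering and the disjointness. Your write-up merely spells out the local closedness and disjointness steps that the paper leaves implicit.
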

\begin{proof}
Since for every $x\in X_A$, the set $\{ A'\subseteq A\, \mid \, x\in X_{A'}\}$ is non-empty and stable under intersection by \Cref{prop:intersection}, there
is a smallest $A'$ with $x\in X_{A'}$. This implies the claim, because the collection $(X_{A'}^\circ)_{A' \subseteq A}$ covers $X_A$.
\end{proof}

\begin{Rem}
This disjoint union is not topological if $A\neq 0$ because $X_A$ is connected, being the Zariski spectrum of a local ring.
\end{Rem}

\subsection{Regularity of geometric fixed point spectra}\label{sec:regularity}

In this subsection, we study the schemes $X_{A'}^\circ$ appearing in the decomposition \cref{cor:etheoryzariskistratification} in more detail; in particular, we will show that they are regular. We begin with a topological interpretation of $X_{A'}^{\circ}$ in terms of geometric fixed points.

For every $A'\subseteq A$, the canonical map $\underline{E} \to \widetilde{E}\mathcal{P}_{A'}\otimes \underline{E}$ induces a map of commutative ring spectra $E^{BA} \to E^{BA'}\to \Phi^{A'}\underline{E}$ to the $A'$-geometric fixed points of the genuine $A$-spectrum $\underline{E}$. Concretely, $\Phi^{A'}\underline{E}$ is given by inverting on $E^{BA'}$ all
Euler classes of all non-trivial characters of $A'$ (see \cite[Proposition II.9.13]{LewisMaySteinbergerMcClure86}). 
This implies that the first of the following maps on Zariski spectra
\[ \Spec(\pi_0\Phi^{A'}\underline{E})\to\Spec(\pi_0E^{BA'})=X_{A'}\hookrightarrow X_A \]
is an open immersion, and we determine the image of the composition in the next result.

\begin{Lem}\label{lem:geometric_fixed_points_as_open_subscheme} 
For every subgroup $A'\subseteq A$, we have an equality of open subsets
\[ 
\Spec(\pi_0\Phi^{A'}\underline{E}) =  X_{A'}^\circ\subseteq X_{A'}\left( \subseteq X_A\right).
\]
\end{Lem}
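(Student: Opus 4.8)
The plan is to identify the open subscheme $\Spec(\pi_0\Phi^{A'}\underline{E})\subseteq X_{A'}$ with the locus where the universal homomorphism $\alpha\colon (A')^*\to\mathbb G_E(\pi_0E^{BA'})$ has been made ``injective'' in the strongest possible scheme-theoretic sense, and then to match this locus with the complement $X_{A'}^\circ = X_{A'}\setminus\bigcup_{A''\subsetneq A'}X_{A''}$. Working inside $X_{A'}$ throughout (so the ambient inclusion into $X_A$ is irrelevant for the identification), both sides are open subschemes of $X_{A'}$, so it suffices to compare their complements, or equivalently to compare their $R$-points for all $\pi_0E^{BA'}$-algebras $R$, using that an open immersion into an affine scheme is determined by its image on points.

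First I would recall, from the cited computation \cite[Proposition II.9.13]{LewisMaySteinbergerMcClure86} quoted just before the statement, that $\pi_0\Phi^{A'}\underline{E}$ is obtained from $\pi_0E^{BA'} = \pi_0E^{BA'}$ by inverting the Euler classes $e(\chi)$ of all nontrivial characters $\chi$ of $A'$; since $A'$ is abelian, every such $\chi$ is a homomorphism $A'\to\mathbb{C}^\times$, i.e. an element of $(A')^{**}\cong A'$, and its Euler class is, up to a unit, the value $\alpha(\widehat\chi)\in\pi_0E^{BA'}$ of the universal map at the corresponding element of $(A')^*$ — more precisely, it is the image under $\alpha$ of a generator of the cyclic subgroup of $(A')^*$ dual to $\ker\chi$. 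Next I would translate the vanishing locus $V(e(\chi))\subseteq X_{A'}$ into the language of \Cref{rec:ecohomofabgps}: a point $x\in X_{A'}$, with residue field $\kappa$, lies on $V(e(\chi))$ precisely when the composite $(A')^*\xrightarrow{\alpha}\mathbb G_E(\pi_0E^{BA'})\to\mathbb G_E(\kappa)$ kills the element dual to $\ker\chi$, hence factors through $((A')/(\ker\chi))^* = (\mathrm{im}\,\chi)^*$, hence factors through $(A'')^*$ for the proper subgroup $A'' = \ker\chi\subsetneq A'$. By \Cref{prop:closed_immersion} this says exactly $x\in X_{A''}$. Conversely, every proper subgroup $A''\subsetneq A'$ is the kernel of some nontrivial character of $A'$ (choose a nonzero character of the nontrivial quotient $A'/A''$), so $X_{A''}\subseteq\bigcup_{\chi\neq 1}V(e(\chi))$. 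Therefore
\[
\bigcup_{\chi\neq 1}V(e(\chi)) \;=\; \bigcup_{A''\subsetneq A'}X_{A''} \;\subseteq\; X_{A'},
\]
and inverting all the $e(\chi)$ removes precisely this closed set, giving $\Spec(\pi_0\Phi^{A'}\underline{E}) = X_{A'}\setminus\bigcup_{A''\subsetneq A'}X_{A''} = X_{A'}^\circ$.

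The main obstacle I anticipate is the bookkeeping in the middle step: pinning down the precise relationship between the Euler class $e(\chi)$ of a character and the value $\alpha(\cdot)$ of the universal homomorphism, including that the two differ only by a unit (so that the vanishing loci genuinely agree), and handling the case $p$-groups where characters have $p$-power order — one must be careful that $V(e(\chi))$ depends only on $\ker\chi$, not on $\chi$ itself, and that ranging over all nontrivial $\chi$ produces exactly the poset of proper subgroups. This is where the even-periodicity and the explicit description of $E^0(BA')$ as a quotient of a power series ring by the relations defining $[\,p^k\,](x)$ (from \Cref{lem:morava_abelian_p_group} and \Cref{rec:ecohomofabgps}) are needed to see that $e(\chi)$ is, up to a unit in the local ring $\pi_0E^{BA'}$, the coordinate of the point $\alpha(\widehat\chi)$ on the formal group. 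Once this identification of $e(\chi)$ with $\alpha$ is in hand, the comparison of closed sets is the elementary lattice observation above, using that Pontryagin duality turns the surjection $A'\twoheadrightarrow A'/A''$ into the inclusion $(A'/A'')^*\hookrightarrow(A')^*$ and that $\ker\chi$ ranges over all proper subgroups as $\chi$ ranges over nontrivial characters.
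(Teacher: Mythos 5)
Your argument is essentially correct, but it takes a different route from the paper: the paper disposes of this lemma in one line, quoting \cite[Lemma 3.11]{BHNNNS19} (the special case of that result in which the family is that of all proper subgroups), whereas you reprove the statement directly from the Euler-class description of $\Phi^{A'}\underline{E}$ quoted just before the lemma and the moduli interpretation of $\pi_0E^{BA'}$ from \cref{rec:ecohomofabgps}. The heart of your argument --- that for a nontrivial character $\chi$ the vanishing locus $V(e(\chi))\subseteq X_{A'}$ coincides with $X_{\ker\chi}$, because a homomorphism $(A')^*\to\mathbb G_E(\kappa)$ kills $\chi$ if and only if it kills the cyclic subgroup $\langle\chi\rangle=(A'/\ker\chi)^*$, if and only if it factors through $(\ker\chi)^*$, which by \cref{prop:closed_immersion} is exactly membership in $X_{\ker\chi}$ --- is sound, and inverting the finitely many $e(\chi)$ indeed yields the complement of $\bigcup_{\chi\neq 1}V(e(\chi))$. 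What your route buys is a self-contained proof inside this paper; what the citation buys is a statement valid for an arbitrary family of subgroups, which is the level of generality \cite{BHNNNS19} works at.

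Two inaccuracies should be repaired, though neither is fatal. First, a character $\chi\colon A'\to\mathbb C^\times$ is an element of $(A')^*$ itself, not of $(A')^{**}\cong A'$; with the coordinate fixed in \cref{rec:ecohomofabgps}, the universal map satisfies $\alpha(\chi)=e(\chi)$ on the nose, so the ``up to a unit'' bookkeeping you worry about in the last paragraph is not needed (and the intermediate phrase ``factors through $((A')/(\ker\chi))^*$'' should read: kills the subgroup $(A'/\ker\chi)^*$, hence factors through its quotient $(\ker\chi)^*$). Second, it is not true that every proper subgroup $A''\subsetneq A'$ is the kernel of a nontrivial character: $\ker\chi$ always has cyclic quotient, so for instance $A''=0$ in $A'=(\mathbb Z/p)^2$ is not such a kernel. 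What your parenthetical construction actually produces is a nontrivial $\chi$ with $A''\subseteq\ker\chi$, and that is all you need: monotonicity of $A''\mapsto X_{A''}$ gives $X_{A''}\subseteq X_{\ker\chi}=V(e(\chi))$, so the equality $\bigcup_{\chi\neq 1}V(e(\chi))=\bigcup_{A''\subsetneq A'}X_{A''}$ and hence the identification $\Spec(\pi_0\Phi^{A'}\underline{E})=X_{A'}^\circ$ still follow.
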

\begin{proof}
This is the special case of \cite[Lemma 3.11]{BHNNNS19} in which the family consists of all proper subgroups.
\end{proof}

\begin{Prop}\label{prop:abecohomdecomposition}
For any finite abelian $p$-group $A$, there is a (set-theoretic) decomposition of the Zariski spectrum of its $E$-cohomology as
\[
\Spec(E^0(BA)) \simeq \bigsqcup_{A' \subseteq A} \Spec(\pi_0\Phi^{A'}\underline{E}).
\]
\end{Prop}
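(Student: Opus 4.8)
The plan is to simply combine the two results that immediately precede this statement. By \cref{cor:etheoryzariskistratification} applied to the finite abelian $p$-group $A$ (which is in particular a finite abelian group), we have a decomposition of $X_A = \Spec(\pi_0 E^{BA})$ into a disjoint union of locally-closed subsets
\[
X_A = \bigsqcup_{A' \subseteq A} X_{A'}^\circ,
\]
where the index runs over all subgroups $A'$ of $A$. First I would note that, since $A$ is a $p$-group, $E^0(BA) = \pi_0 E^{BA}$ by \cref{lem:morava_abelian_p_group} (the relevant cohomology is concentrated in even degrees and even periodic), so $X_A$ is precisely $\Spec(E^0(BA))$ as a set.

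Next I would rewrite each stratum $X_{A'}^\circ$ using \cref{lem:geometric_fixed_points_as_open_subscheme}, which identifies $X_{A'}^\circ \subseteq X_{A'} \subseteq X_A$ with $\Spec(\pi_0 \Phi^{A'}\underline{E})$. Substituting this identification into the decomposition above yields
\[
\Spec(E^0(BA)) \simeq \bigsqcup_{A' \subseteq A} \Spec(\pi_0 \Phi^{A'}\underline{E}),
\]
which is exactly the claimed statement. Here the symbol $\simeq$ is to be read as a set-theoretic decomposition into locally-closed subsets, not a homeomorphism (as \cref{cor:etheoryzariskistratification} and the accompanying remark make clear, $X_A$ is connected when $A \neq 0$).

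There is essentially no obstacle here: the statement is a packaging of \cref{cor:etheoryzariskistratification} and \cref{lem:geometric_fixed_points_as_open_subscheme}, with the only minor point being the bookkeeping identification of $E^0(BA)$ with $\pi_0 E^{BA}$ via the even-concentration statement of \cref{lem:morava_abelian_p_group}. One should also keep in mind, for the reader's orientation, that \cref{cor:etheoryzariskistratification} was proved for \emph{all} finite abelian groups, while the present statement restricts to abelian $p$-groups precisely so that the geometric fixed point interpretation of \cref{lem:geometric_fixed_points_as_open_subscheme} — which is phrased for $p$-groups via Euler classes of characters of $A'$ — applies directly to every stratum.
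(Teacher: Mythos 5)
Your proposal is correct and is essentially identical to the paper's own proof, which likewise deduces the statement by combining the disjoint decomposition \eqref{eq:disjoint_union_set_spec} of \cref{cor:etheoryzariskistratification} with the identification $X_{A'}^\circ = \Spec(\pi_0\Phi^{A'}\underline{E})$ of \cref{lem:geometric_fixed_points_as_open_subscheme}. The extra bookkeeping remarks (identifying $E^0(BA)$ with $\pi_0E^{BA}$ and noting the decomposition is only set-theoretic) are accurate but not needed beyond what the paper records.
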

\begin{proof}
This follows from \eqref{eq:disjoint_union_set_spec} combined with \cref{lem:geometric_fixed_points_as_open_subscheme}. 
\end{proof}

We next establish a regularity statement which is key to our stratification result for Lubin--Tate theory. 

\begin{Thm}\label{thm:regular} 
For any finite group $G$ and any Lubin--Tate theory $E$, the commutative ring $\pi_0(\Phi^{G}\underline{E})$ is regular Noetherian.
\end{Thm}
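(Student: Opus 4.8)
The strategy is to reduce from a general finite group $G$ to the case of a finite abelian $p$-group $A$, and then to analyze the ring $\pi_0(\Phi^A\underline{E})$ via the theory of level structures. First I would handle the reduction: by \cref{rec:gfp} and \cref{rec:infl-res}, $\Phi^G\underline{E}_G = \Phi^G\res_G\underline{E}_G$, and since $\res_H\underline{E}_G \simeq \underline{E}_H$ (\cref{ex:borel_g_spectra}), it suffices to treat $\Phi^G\underline{E}_G$ for $G$ running over all finite groups; but $\Phi^G\underline{E}$ is the $G$-fixed points of $\underline{E} \otimes \widetilde{E}\mathcal{P}_G$, which is a localization of $E^{BG}$ obtained by inverting the Euler classes of all nontrivial irreducible $G$-representations (as recalled just before \cref{lem:geometric_fixed_points_as_open_subscheme}, following \cite[Proposition II.9.13]{LewisMaySteinbergerMcClure86}). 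If $G$ is not a $p$-group this localization is zero, and if $G$ is a non-abelian $p$-group one expects $\Phi^G\underline{E} = 0$ as well since the relevant Euler classes become invertible only when... — actually the cleanest route is to note that $\underline{E}$ has derived defect base the family of abelian $p$-subgroups generated by $n$ elements (\cref{ex:E-nilpotent}), so by \cref{lem:gfp_vanishing}, $\Phi^G\underline{E} = 0$ unless $G$ is abelian $p$ of rank $\le n$; for such $G$ the ring is $0$, which is vacuously regular Noetherian. Hence we may assume $G = A$ is a finite abelian $p$-group.

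Next I would identify $\pi_0(\Phi^A\underline{E})$ with a ring of level structures. By \cref{lem:geometric_fixed_points_as_open_subscheme}, $\Spec(\pi_0\Phi^A\underline{E}) = X_A^\circ \subseteq X_A = \Spec(E^0(BA))$, the open locus where the universal homomorphism $\alpha\colon A^* \to \mathbb{G}_E$ does not factor through any proper quotient $(A/A')^*$ for $A' \ne 0$ — equivalently, the locus where $\alpha$ is injective on each cyclic subgroup, i.e. where $\alpha$ is a \emph{level $A^*$-structure} on $\mathbb{G}_E$ in the sense of Drinfel'd \cite{Drinfeld1974Elliptic} and Strickland \cite[Section 7]{Strickl1997Finite}. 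Concretely this means inverting, for each nontrivial character $\chi \in A^*$, the image $\alpha(\chi)$ (up to units, the Euler class of $\chi$) in $E^0(BA)$, which matches the description of $\Phi^A\underline{E}$ as inverting Euler classes of nontrivial characters. So $\pi_0\Phi^A\underline{E}$ is, up to the chosen coordinate, the ring $D_{\mathrm{level}}$ classifying level $A^*$-structures on the formal group $\mathbb{G}_E$.

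The core of the argument is then the regularity of this level-structure ring. Strickland's theorem \cite[Theorem 7.3]{Strickl1997Finite} (building on Drinfel'd) states precisely that for a deformation of a height-$n$ formal group the ring of level-$M$-structures (for $M$ a finite abelian $p$-group) is a regular complete local Noetherian ring when $M$ embeds into $(\mathbb{Q}_p/\mathbb{Z}_p)^n$; more generally — and this is the version needed here, since $\Spec$ of the level-structure ring may be empty or the formal-group-of-level-structures may have smaller height — one argues inductively on the order of $A$ using the tower relating level-$A^*$-structures to level structures for subquotients, together with the regularity of $E^0 = \mathbb{W}(k)[[u_1,\dots,u_{n-1}]]$. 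This is carried out for the equivariant setting exactly as in \cite{BHNNNS19} (indeed \cref{lem:geometric_fixed_points_as_open_subscheme} cites \cite[Lemma 3.11]{BHNNNS19}), so I would invoke that analysis: the scheme $X_A^\circ$ is smooth over $\Spec(E^0)$ — one exhibits local coordinates coming from a choice of generators of $A^*$ and the corresponding $\alpha$-values, after inverting the Euler classes these form a regular system of parameters — and since $E^0$ is regular Noetherian, so is $\mathcal{O}(X_A^\circ) = \pi_0\Phi^A\underline{E}$. Noetherianity is automatic: $E^0(BA)$ is a finitely generated free $E^0$-algebra by \cref{lem:morava_abelian_p_group}, hence Noetherian, and a localization of a Noetherian ring is Noetherian.

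\textbf{Main obstacle.} The genuinely substantive point is the regularity (smoothness over $E^0$) of the level-structure ring $\pi_0\Phi^A\underline{E}$; everything else is bookkeeping. This is where the Drinfel'd--Strickland theory does the real work, and the delicate part is handling the cases where $A$ has rank exceeding $n$ (so some Euler classes become zero-divisors or the localization degenerates) — but this is precisely absorbed by the inductive structure in \cite{BHNNNS19}, which I would cite rather than reprove, noting that the argument there establishes regularity of exactly these rings $\mathcal{O}(X_A^\circ)$.
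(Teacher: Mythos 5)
Your overall skeleton matches the paper's: the reduction to a finite abelian $p$-group $A$ generated by at most $n$ elements via $\mathcal F$-nilpotence (this is exactly \cref{rem:regularreduction}), the identification $\Spec(\pi_0\Phi^A\underline{E})=X_A^\circ\subseteq X_A$ from \cref{lem:geometric_fixed_points_as_open_subscheme}, the Noetherianity-by-localization observation, and the appeal to the Drinfel'd--Strickland regularity of the level-structure ring $D$ \cite[Theorem 7.3]{Strickl1997Finite}. However, the bridge between these two ends is missing, and it is precisely the technical heart of the paper's proof. You assert that $X_A^\circ$ is ``equivalently \dots the locus where $\alpha$ is a level $A^*$-structure,'' i.e.\ that inverting the Euler classes is the same as imposing the Drinfel'd condition. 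This is not a definitional equivalence and is in fact false as an identification: $\Spec(D)$ is a local scheme whose closed point has residue characteristic $p$ and kills all Euler classes, so for $A\neq 0$ it is never contained in $X_A^\circ$; conversely, the containment that is actually needed, $X_A^\circ\subseteq\Spec(D)$, says that the universal homomorphism becomes a level structure over the geometric fixed points, and this requires proof. The paper supplies it via \cref{prop:divide_over_artin} (a divisibility criterion: if $P$ is separable and divides $Q$ over the residue field of a local $\pi_0E^{BA}$-algebra, then $P\mid Q$ over the local ring itself, proved by inductively peeling off roots and using that the differences of roots are units) together with \cref{prop:factor_geometric_and_drinfeld}, where separability over each residue field of $\pi_0\Phi^A\underline{E}$ is deduced from the fact that geometric fixed points invert all nonzero Euler classes while restriction maps kill some. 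Regularity then follows because $X_A^\circ\to\Spec(D)$ is forced to be an open immersion.

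The two justifications you offer in place of this step do not hold up. First, $X_A^\circ$ is not smooth over $\Spec(E^0)$ in general: the map is quasi-finite (since $E^0(BA)$ is finite over $E^0$), so smoothness would force it to be \'etale, but the \'etale locus of $E^0\to E^0(BA)$ is exactly $E^0[p^{-1}]$, and for heights $n\ge 2$ the ring $\pi_0\Phi^A\underline{E}$ does have primes of residue characteristic $p$ (this is exactly the phenomenon discussed in the remark following the theorem, where the residue field extensions at such primes are inseparable); your proposed ``regular system of parameters from generators of $A^*$'' argument only works at height $1$. Second, \cite{BHNNNS19} does not establish regularity of $\mathcal{O}(X_A^\circ)$: what is imported from there is only the identification of $\Spec(\pi_0\Phi^A\underline{E})$ with $X_A^\circ$ (\cref{lem:geometric_fixed_points_as_open_subscheme} cites \cite[Lemma 3.11]{BHNNNS19}); the regularity statement is new to the present paper, so it cannot simply be cited. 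In short, the proposal correctly assembles the reduction and the target, but the key claim---that the Euler-class localization of $E^0(BA)$ carries a Drinfel'd level structure---is asserted rather than proved, and the suggested substitutes (smoothness over $E^0$, or quoting \cite{BHNNNS19}) are respectively false and unavailable.
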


The proof will be given after some preliminaries. 

\begin{Rem}\label{rem:regularreduction}
Using \cref{ex:E-nilpotent}, we can reduce the proof of \cref{thm:regular} to the case that $G=A$ is a finite abelian $p$-group which is generated by at most $n$ elements; otherwise, $\pi_0(\Phi^{G}\underline{E}) \simeq 0$ by \cref{prop:equivalent-R-nilpotent}. We will achieve this by comparing with level $A$-structures, as introduced by Strickland \cite{Strickl1997Finite}, generalizing work of Drinfel'd \cite{Drinfeld1974Elliptic}. 
\end{Rem}

\begin{Rec}
To review the construction of the quotient
$\pi_0E^{BA} \rightarrow D$ parametrizing level-$A$-structures on $\mathbb G_E$, we define two monic polynomials $P,Q\in \pi_0E^{BA}[X]$.
Choosing a coordinate on the universal deformation $\mathbb G_E$ as in \cref{rec:ecohomofabgps}, we identify the universal homomorphism $\alpha$ defined over $\pi_0E^{BA}$ with a map  $\alpha\colon A^*\to \mathbb G_E(\pi_0E^{BA})\subseteq \pi_0E^{BA}$. This lets us define our first polynomial as 
\begin{equation}\label{eq:fpolynomial}
P(X) \coloneqq \prod_{a\in A^*\colon pa =0 }(X-\alpha(a))\in \pi_0E^{BA}[X]. 
\end{equation}
Note that $P$ is monic and of degree $p^{\mathrm{rk}_p(A)}$, where $\rk_p(A):=\mathrm{dim}_{\Fp}(A\otimes_\mathbb Z \Fp)$ denotes the $p$-rank of $A$. We also define $Q\in \pi_0E[X]\subseteq \pi_0E^{BA}[X]$ to be the unique monic polynomial such that $Q$ generates the same ideal in $\pi_0E[\![X]\!]$ as does the $p$-series of $\mathbb G_E$ (with respect to the coordinate $X$ chosen above). Existence and uniqueness of $Q$ follow from the Weierstrass preparation theorem for $\pi_0E$. Note that $Q$ is monic of degree $p^n$, where $n$ is the height of $\mathbb G_E$.

Some elementary commutative algebra shows that, given the monic polynomials $P,Q\in\pi_0E^{BA}[X]$, there is an initial
ring map $\rho\colon\pi_0E^{BA}\to D$ such that the image of $P$ in $D[X]$ divides the image of $Q$. The ring $D$ is known as the \emph{Drinfel'd ring}. For more details on this construction of $D$, see \cite[Section 7]{Strickl1997Finite}.

An important result is that $D$ is a regular, local Noetherian ring, see \cite[Theorem 7.3]{Strickl1997Finite}. This implies that all localizations $D_{\mathfrak q}$ are regular for all prime ideals $\mathfrak q \subseteq D$, not just the maximal one, see  \cite[Theorem 19.3]{Matsumura1989}.
\end{Rec}

We will use the following criterion for divisibility. Recall that a polynomial over a field $k$ is said to be separable if all its roots $\alpha\in\overline{k}$ in an algebraic closure $\overline{k}$ of $k$ are simple, i.e., satisfy $f'(\alpha)\neq 0$.

\begin{Prop}\label{prop:divide_over_artin}
Assume $R=(R,\mathfrak m,K)$ is a local $\pi_0E^{BA}$-algebra such that over its residue field $K$, the image of $P$
is separable and divides the image of $Q$. Then the image of $P$ in $R[X]$ divides the image of $Q$.
\end{Prop}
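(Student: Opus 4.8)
The statement is a standard Hensel-type lifting of a divisibility relation from the residue field. The plan is to reduce to the complete local (or Artinian) case and then to apply Hensel's lemma in the form that lifts a coprime factorization. First I would observe that the divisibility $\bar P \mid \bar Q$ in $K[X]$, together with separability of $\bar P$, means that $\bar Q = \bar P \cdot \bar H$ in $K[X]$ for some monic $\bar H$, and that $\gcd(\bar P, \bar H) = 1$: indeed, any common root of $\bar P$ and $\bar H$ in $\overline K$ would be a multiple root of $\bar Q$ and hence a root of both $\bar P$ and $\bar P'$, contradicting separability of $\bar P$ (since a common root of $\bar P$ and $\bar H$ is a root of $\bar P$ at which $\bar P'$ vanishes only if $\bar P$ has a repeated root there — more carefully, write $\bar Q = \bar P\bar H$, then $\bar Q' = \bar P'\bar H + \bar P \bar H'$, so at a common root $\alpha$ of $\bar P$ and $\bar H$ we get... ) — the cleanest formulation is simply: $\bar P$ separable and $\bar P \mid \bar Q$ with $\bar Q$ of the given form forces the complementary factor to be coprime to $\bar P$ because $\bar P$ has no repeated factors and shares none of its irreducible factors with $\bar H$ would need to be checked, but in fact coprimality is exactly the statement that $\bar P$ and $\bar Q/\bar P$ have no common root, which holds iff $\bar Q$ has no root that is a double root coming from $\bar P$; since $\bar P$ is separable this is automatic. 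I would phrase this carefully in the writeup, but it is elementary.

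Second, I would reduce to the case where $R$ is complete: the divisibility $P \mid Q$ in $R[X]$ can be checked after passing to the $\mathfrak m$-adic completion $\widehat R$, because $R \to \widehat R$ is faithfully flat (for $R$ Noetherian local) — or, if one does not want to assume $R$ Noetherian, one checks divisibility of monic polynomials is detected after base change along $R \to \widehat R$ since $R \hookrightarrow \widehat R$ and $Q/P$, if it exists in $\widehat R[X]$, is monic hence has coefficients which are integral over $R$; more simply, I would note that it suffices to prove the result for $R$ Artinian local, since $\widehat R = \varprojlim R/\mathfrak m^k$ and a compatible system of divisions $P \mid Q$ in $(R/\mathfrak m^k)[X]$ assembles to a division in $\widehat R[X]$, and $R \to \widehat R$ is injective. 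Then for $R$ Artinian local I would induct on the length of $R$: the base case is $R = K$, which is the hypothesis, and for the inductive step choose a small extension $R \twoheadrightarrow R'$ with kernel $I$ a one-dimensional $K$-vector space annihilated by $\mathfrak m$; by induction $P \mid Q$ over $R'$, and one lifts the factorization using that the obstruction to lifting a coprime factorization lies in a group built from $I \otimes_K (\text{something})$ which vanishes — this is precisely the deformation-theoretic form of Hensel's lemma.

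Alternatively, and more cleanly, I would invoke Hensel's lemma directly for the complete local ring $\widehat R$: since $\bar Q = \bar P \cdot \bar H$ with $\bar P, \bar H$ monic and coprime in $K[X]$, Hensel's lemma (in the form valid for complete local rings, e.g. \cite[Theorem 7.3]{Matsumura1989}-type statements or the standard reference) produces a unique factorization $Q = \widetilde P \cdot \widetilde H$ in $\widehat R[X]$ with $\widetilde P, \widetilde H$ monic lifting $\bar P, \bar H$. It remains to identify $\widetilde P$ with the image of $P$: both are monic polynomials in $\widehat R[X]$ of the same degree $p^{\mathrm{rk}_p(A)}$ reducing to $\bar P$ mod $\mathfrak m$, and the uniqueness clause in Hensel's lemma pins down $\widetilde P$ once we know the image of $P$ admits a complementary monic factor — but that is what we want to prove, so instead I argue: the image of $P$ in $K[X]$ divides $\bar Q$ and is separable, so by the uniqueness part of Henselian factorization the factor of $Q$ congruent to $\bar P$ is unique, and I must show the image of $P$ in $\widehat R[X]$ equals it. For this I use that $\widetilde P$ divides $Q$, and consider $\widetilde P$ and (image of) $P$: their reductions agree, $\widetilde P$ is separable mod $\mathfrak m$, so by the standard rigidity of Henselian lifts any two monic lifts of a separable factor that both divide $Q$ coincide.

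\textbf{Main obstacle.} The only subtle point is making the last identification airtight: one must argue that the canonical monic polynomial $P$ (defined by the explicit product formula \eqref{eq:fpolynomial}) maps to exactly the Hensel factor of $Q$, not merely to \emph{some} polynomial dividing $Q$ up to a unit. Since everything is monic this rigidity is automatic once coprimality/separability is in hand, so the substantive content is entirely the elementary verification that $\bar P$ separable plus $\bar P \mid \bar Q$ yields a \emph{coprime} factorization of $\bar Q$; after that, Hensel's lemma for complete local rings and the reduction from $R$ to $\widehat R$ are routine. I expect the writeup to be short.
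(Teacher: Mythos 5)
Your plan has a genuine gap, and in fact the route you chose cannot work: the statement you are treating as a purely formal lifting problem is \emph{false} for general monic polynomials. Over $R=\mathbb{Z}_p$ take $P=X$ and $Q=X-p$: the reduction $\bar P=X$ is separable and divides $\bar Q=X$ in $\mathbb{F}_p[X]$, yet $P$ does not divide $Q$ in $\mathbb{Z}_p[X]$. So no argument using only the hypotheses ``$\bar P$ separable and $\bar P\mid\bar Q$ over $K$'' can prove the proposition; one must use the specific polynomials $P$ and $Q$ from the construction. Concretely, two steps of your plan break. First, the coprimality of $\bar P$ and $\bar H=\bar Q/\bar P$ does \emph{not} follow from separability of $\bar P$: it would require every root of $\bar P$ to be a \emph{simple} root of $\bar Q$, and this fails exactly in the interesting case of residue characteristic $p$, where $0$ is a root of $\bar P$ but a root of $\bar Q$ of multiplicity $p^{h}>1$ ($h$ the height of the connected part of the reduction of $\mathbb{G}_E$ over $K$), since $\bar Q$ is a unit multiple of the reduced $p$-series. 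Thus Hensel's factorization lemma is not even applicable. Second, your identification of the Hensel factor $\widetilde P$ with the image of $P$ is circular: the rigidity you invoke only distinguishes monic lifts of $\bar P$ that are already known to divide $Q$, which is precisely the conclusion sought; distinct monic lifts of $\bar P$ abound ($X$ versus $X-p$ above). (A smaller issue: your reduction to $\widehat R$ uses injectivity or faithful flatness of $R\to\widehat R$, which requires Noetherian hypotheses absent from the statement.)

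The missing ingredient, and the heart of the paper's proof, is geometric: $P$ splits into linear factors over $\pi_0E^{BA}$, namely $P_R=\prod_{i=1}^N(X-r_i)$ with $r_i=\alpha(a)$ the images of the universal $p$-torsion points, and each $r_i$ is a $p$-torsion point of $\mathbb{G}_E(R)$, hence an actual root of $Q_R$ \emph{in $R$} (not merely modulo $\mathfrak m$), because $Q$ generates the same ideal as the $p$-series. Given this, the paper argues by a short induction stripping off one linear factor at a time: if $Q_R=\prod_{i<l}(X-r_i)\cdot H$, evaluate at $r_l$ to get $0=\prod_{i<l}(r_l-r_i)\cdot H(r_l)$; separability of $\bar P$ says the reductions of the $r_i$ are pairwise distinct, so each $r_l-r_i$ lies outside $\mathfrak m$ and is a unit in the local ring $R$, whence $H(r_l)=0$ and $(X-r_l)\mid H$. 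This needs no completion, no Noetherian hypothesis, and no Hensel's lemma; separability enters only to make the differences of roots into units.
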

\begin{proof}
We denote by $P_R=\prod_{i=1}^N (X-r_i)\in R[X]$ the image of $P$ in $R$, i.e., the elements $r_i$ enumerate the roots $\alpha(a)$ from \eqref{eq:fpolynomial}, and likewise we denote by $Q_R\in R[X]$ the image of $Q$ in $R$. We now show by induction on $l\ge 1$ that the product $\prod_{i=1}^l (X-r_i)$ divides $Q_R$. The case $l=N$ is the desired claim. The base case $l=1$ claims that $X-r_1$ divides $Q_R$, i.e., that $Q_R(r_1)$ is zero. In other words, we have to show that $Q_R(\alpha(a)) = 0$ for any is a $p$-torsion point $a$ of $A^*$. This holds because all the $\alpha(a)$ are $p$-torsion points of $\mathbb G_E$ and $Q$ has the same zeroes as the $p$-series of $\mathbb G_E$. In the inductive step, we know that $Q_R=\prod_{i=1}^{l-1} (X-r_i)\cdot H$ in $R[X]$ for some $l\leq N$, and a suitable $H$. Since $Q_R(r_l)=0$, we obtain from this by evaluating at $r_l$ the relation $0=\prod_{i=1}^{l-1} (r_l-r_i)\cdot H(r_l)$ in $R$. Since the image of $P$ over the residue field $K$ is separable, none of the elements $r_l-r_i\in R$ reduces to zero in $K$, hence they do not lie in $\mathfrak m$, thus they are units in the local ring $R$. By cancelling these units we obtain $H(r_l)=0$, i.e., $H$ is divisible by $X-r_l$ in $R[X]$, and this completes the induction step.
\end{proof}

Now we obtain a relation between the ring $\pi_0\Phi^A\underline{E}$ which we want to show to be regular, and the ring $D$ which we know to be regular.

\begin{Prop}\label{prop:factor_geometric_and_drinfeld}
The canonical ring homomorphism $\pi_0E^{BA}\to\pi_0\Phi^A\underline{E}$ factors (uniquely) through the surjection $\rho\colon \pi_0E^{BA}\to D$, i.e., there is a commutative diagram
\[
\xymatrix{\pi_0E^{BA}\ar[r] \ar@{->>}[d]_{\rho} & \pi_0\Phi^A\underline{E} \\
D. \ar@{-->}[ru]}
\]
\end{Prop}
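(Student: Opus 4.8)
The plan is to reduce the statement to the universal property of the Drinfel'd ring recalled above: $\rho\colon\pi_0E^{BA}\to D$ is initial among ring maps out of $\pi_0E^{BA}$ along which the image of $P$ divides the image of $Q$. Hence, writing $c\colon\pi_0E^{BA}\to\pi_0\Phi^A\underline{E}$ for the canonical map, it suffices to prove that the image of $P$ divides the image of $Q$ in $(\pi_0\Phi^A\underline{E})[X]$; the resulting factorization of $c$ through $\rho$ is then automatically unique because $\rho$ is surjective.

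To check this divisibility I would first pass to a local statement. Since $P$ is monic, Euclidean division $Q=PH+r$ with $\deg r<\deg P$ exists in $(\pi_0\Phi^A\underline{E})[X]$, is compatible with ring maps, and $P\mid Q$ precisely when $r=0$; as an element of a commutative ring vanishes iff it vanishes in every localization at a maximal ideal, it is enough to show $P\mid Q$ in $(\pi_0\Phi^A\underline{E})_{\mathfrak m}[X]$ for each maximal ideal $\mathfrak m$. For each such $\mathfrak m$ I would apply \cref{prop:divide_over_artin} to the local ring $R=(\pi_0\Phi^A\underline{E})_{\mathfrak m}$ with residue field $K=R/\mathfrak m R$, which reduces matters to two claims over $K$: that $\overline P$ is separable, and that $\overline P$ divides $\overline Q$.

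Both claims rest on the fact that $\pi_0\Phi^A\underline{E}$ is obtained from $\pi_0E^{BA}$ by inverting the Euler classes of all nontrivial characters of $A$, which under the identification of \cref{rec:ecohomofabgps} are unit multiples of the values $\alpha(\chi)$ of the universal homomorphism $\alpha\colon A^*\to\mathbb G_E(\pi_0E^{BA})$. Because $\alpha$ is a group homomorphism and formal subtraction on $\mathbb G_E$ differs from ordinary subtraction by a unit power series, $\alpha(a)-\alpha(a')$ becomes a unit in $\pi_0\Phi^A\underline{E}$ whenever $a\neq a'$ in $A^*$; hence the roots $\overline{\alpha(a)}$ of $\overline P$, for $a$ ranging over the $p$-torsion of $A^*$, are pairwise distinct in $K$, so $\overline P$ is a product of distinct linear factors and therefore separable. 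For the divisibility, each such root satisfies $[p]_{\mathbb G_E}(\alpha(a))=\alpha(pa)=0$ already in $\pi_0E^{BA}$ (a legitimate evaluation there, since $\alpha(a)$ is topologically nilpotent), and since $Q$ is a unit multiple of the $p$-series of $\mathbb G_E$ in $\pi_0E[\![X]\!]$ this yields $Q(\alpha(a))=0$ in $\pi_0E^{BA}$, hence $\overline Q(\overline{\alpha(a)})=0$; thus every $X-\overline{\alpha(a)}$ divides $\overline Q$, and as these factors are coprime by the previous point, $\overline P\mid\overline Q$. I expect the only real obstacle to be bookkeeping: keeping straight the identification of Euler classes with coordinate values of $\alpha$, the comparison of formal and ordinary subtraction, and—most importantly—carrying out the evaluations $[p]_{\mathbb G_E}(\alpha(a))$ and $Q(\alpha(a))$ in $\pi_0E^{BA}$, where the points of $\mathbb G_E$ are topologically nilpotent, before transporting the resulting identities to $\pi_0\Phi^A\underline{E}$, where $\alpha(a)$ is in fact a unit.
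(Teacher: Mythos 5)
Your proposal is correct and follows essentially the same route as the paper: reduce via the universal property of $D$ to showing that the image of $P$ divides that of $Q$ over (localizations of) $\pi_0\Phi^A\underline{E}$, and then verify the two hypotheses of \cref{prop:divide_over_artin} over the residue field. The only point of divergence is the separability step, where the paper argues by contradiction that the composite $A^*\to\mathbb G_E(K)$ is injective (a non-injective map would classify through a restriction to a proper subgroup, which kills an Euler class that the geometric fixed points invert), while you argue directly that $\alpha(a)-\alpha(a')$ is a unit multiple of the Euler class $\alpha(a-a')$ and hence becomes invertible in $\pi_0\Phi^A\underline{E}$ --- the same underlying fact, phrased computationally via the comparison of formal and ordinary differences.
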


This says that the base change of the universal homomorphism $A^*\to\mathbb G_E$ to the geometric fixed points is a level-structure. We first check that \cref{prop:factor_geometric_and_drinfeld} implies our theorem.

\begin{proof}[Proof of \Cref{thm:regular} assuming \Cref{prop:factor_geometric_and_drinfeld}]
By \cref{rem:regularreduction}, we may assume that $G=A$ is an abelian $p$-group which is generated by (at most) $n$ elements. As a localization of the Noetherian ring $\pi_0E^{BA}$, the ring $\pi_0\Phi^{A}\underline{E}$ is also Noetherian. To see that it is regular, note that \cref{lem:geometric_fixed_points_as_open_subscheme} and \cref{prop:factor_geometric_and_drinfeld} say that the open immersion $X_A^\circ\hookrightarrow X_A$ factors through the closed immersion $\Spec(D)\subseteq X_A$. The resulting map $X_A^\circ\to \Spec(D)$ is necessarily an open immersion. Then, since $\Spec(D)$ is
regular, so is $X_A^\circ$ by \cite[Theorem 19.3]{Matsumura1989}, as claimed.
\end{proof}

\begin{proof}[Proof of \Cref{prop:factor_geometric_and_drinfeld}]
Set $I:=\ker(\rho)\subseteq \pi_0E^{BA}$; the claim is $I\cdot\pi_0\Phi^A\underline{E}=0$. It is enough to show this  after replacing $\pi_0\Phi^A\underline{E}$ with an arbitrary localization $R \coloneqq (\pi_0\Phi^A\underline{E})_{\mathfrak p}$ for a prime ideal $\mathfrak p\subseteq \pi_0\Phi^A\underline{E}$. To this end, we will verify that we are in the situation of \Cref{prop:divide_over_artin}, i.e., that the conditions of this proposition apply to the local ring $R=(R,\mathfrak pR,K:=R/\mathfrak p R)$:
    \begin{enumerate}
        \item We first prove that the image of $P$ is separable over $K$. Looking at the definition of $P$, it will be enough to show that the composite
        \[
        f\colon A^* \xrightarrow{\alpha} \mathbb{G}_E(\pi_0E^{BA}) \to  \mathbb{G}_E((\pi_0\Phi^A\underline{E})) \to  \mathbb{G}_E((\pi_0\Phi^A\underline{E})_{\mathfrak p}) \to  \mathbb G_E(K)
        \]
        is injective, where the last map is induced by the reduction $R =(\pi_0\Phi^A\underline{E})_{\mathfrak p} \to K$. To this end, assume otherwise. Then $f$ factors over some non-zero quotient of $A^*$, which implies that its classifying map $\widehat{f}\colon \pi_0E^{BA}\to K$ factors over some non-trivial restriction map $\pi_0E^{BA}\to \pi_0E^{BA'}$ with $A'\subseteq A$ a proper subgroup. This is a contradiction, because by construction $\widehat{f}$ factors over the geometric fixed points and hence inverts all non-zero Euler classes whereas the restriction map annihilates some non-zero Euler class. This contradiction shows that $f$ is indeed injective. 
        \item Secondly, we observe that the image of $P$ in $K$ divides the image of $Q$ because $Q$ annihilates all $p$-torsion points of $\mathbb{G}_E$. 
    \end{enumerate}
Therefore, \Cref{prop:divide_over_artin} implies that the image of $P$ in $R$ divides the image of $Q$ in $R$. Since $D$ was constructed to be initial among $\pi_0E^{BA}$-algebras with this property, this is equivalent to saying that $I\cdot R=0$, as claimed.
\end{proof}

We make the case of height one of the above results explicit and point out a possible sharpening of the regularity result \Cref{thm:regular}. 

\begin{Exa}\label{ex:X_A-cyclic-case}
Assume the height is one, then $E=E_1$ is (a form of) $p$-complete complex $K$-theory. Let $A=C_{p^n}$ be the cyclic group of order $p^n$ for some $n\ge 0$. Then $E^0=\mathbb Z_p$ and a familiar computation in complex oriented cohomology theories gives an isomorphism of $E^0$-algebras
\[ 
E^0(BC_{p^n})\simeq E^0 [\![ X ]\!] /\left( (X+1)^{p^n} -1 \right). 
\]
Here, $X$ corresponds to the Euler class of the defining representation $C_{p^n}\subseteq S^1$. It is convenient to set $Y\coloneqq X+1$. We can decompose 
\[ 
Y^{p^n}-1 =\prod\limits_{i=0}^n \Phi_{p^i}(Y) = (Y-1)\cdot\frac{Y^p-1}{Y-1}\cdot\ldots\cdot\Phi_{p^n}(Y)
\]
into irreducible polynomials over $\mathbb Q_p$. Then $\Phi_{p^i}$ is the $p^i$-th cyclotomic polynomial, the minimal polynomial of any primitive $p^i$-th root of unity $\zeta_{p^i}$. Each ring $\mathbb Z_p[Y]/(\Phi_{p^i})$ is a discrete valuation ring, the ring of integers in the local cyclotomic field $\mathbb Q_p(\zeta_{p^i})$, and has residue field $\Fp$. One can check that the obvious ring homomorphism
\[ 
E^0(BC_{p^n})=\mathbb Z_p[Y]/(Y^{p^n}-1)\to \prod\limits_{i=0}^n \mathbb Z_p[Y]/(\Phi_{p^i}(Y))
\]
glues the spectra of the rings on the right hand side along their common closed point and that one has  $X_{C_{p^i}}^\circ= \Spec(\mathbb Q_p(\zeta_{p^i}))$ for $i\ge 1$ and $X_{e}^\circ=\Spec(\mathbb Z_p)$. The Zariski spectrum of $E^0(BC_{p^n})$ is depicted in \cref{fig:spec(EBCpn)}.
\begin{figure}[!ht]
    \centering
    \begin{tikzcd}[column sep=small]
  &       &         & \arrow[dll,dash]\arrow[dlll,dash]\arrow[ld,dash]\arrow[dll,dash]\textcolor{blue}{\bullet}_{\Fp} \arrow[dr,dash]\arrow[drr,dash] &        &         & \\
 \textcolor{blue}{\bullet}_{\mathbb{Q}_p} & \textcolor{blue}{\bullet}_{\mathbb{Q}_p(\zeta_{p})} & \textcolor{blue}{\bullet}_{\mathbb{Q}_p(\zeta_{p^2})} & \ldots & \textcolor{blue}{\bullet}_{\mathbb{Q}_p(\zeta_{p^{n-1}})} & \textcolor{blue}{\bullet}_{\mathbb{Q}_p(\zeta_{p^n})}
\end{tikzcd}
\caption{An image of $\Spec(E^0(BC_{p^n}))$. Each prime is represented by its residue field, and the lines indicate specialization relations with closure going upwards.}
\label{fig:spec(EBCpn)}
\end{figure}
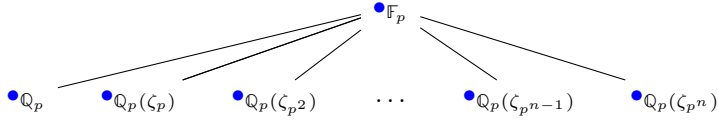
\end{Exa}

\begin{Rem}
We now turn to discuss a conceivable sharpening of \Cref{thm:regular}; this remark can be skipped without loss of continuity. Let again $E$ denote a Lubin--Tate theory of arbitrary height and $A$ a finite abelian $p$-group. We have a ring extension
\[ 
R\coloneqq E^0\to S\coloneqq \Phi^A(\underline{E}).
\]
Fix a prime ideal $\mathfrak q\subseteq S$, and denote $\mathfrak p\coloneqq \mathfrak q\cap R\subseteq R$. The resulting map
\[ 
R_{\mathfrak p}\to S_{\mathfrak q}
\] 
is a finite flat extension of local Noetherian rings, $R_{\mathfrak p}$ is regular, and we showed that $S_{\mathfrak{q}}$ is regular, too. We can consider the condition
\begin{equation}\label{eq:regularityquestion}
    \mathfrak q=\mathfrak p\cdot S_{\mathfrak q}.
\end{equation}
Note this condition implies the regularity of $S_{\mathfrak{q}}$ and more precisely it shows that any regular system of parameters in $R_{\mathfrak p}$ gives a regular system in $S_{\mathfrak{q}}$. The condition \eqref{eq:regularityquestion} is immediate to check in height one and can be checked in height two using computations about modular equations. We have not been able to decide whether \eqref{eq:regularityquestion} holds in a single case of height larger than or equal to three, so we will leave this as an open question. Observe that the condition in \eqref{eq:regularityquestion} does not claim that $R_{\mathfrak p}\to S_{\mathfrak q}$ is \'etale: Indeed, it is known that the \'etale locus of the extension $E^0\to E^0(BA)$ is exactly $E^0[p^{-1}]$, so if \eqref{eq:regularityquestion} holds at a prime ${\mathfrak p}$ of positive characteristic, as it does in height two, then necessarily the residue field extension will be inseparable.
We remind the reader of the reason for the claim about the \'etale locus of the finite flat algebra $E^0\subseteq E^0(BA)$: The height stratification of the corresponding finite flat group scheme is given by a regular sequence $v_0=p, v_1,\ldots, v_{n-1}\in\pi_0E$. In particular, the formal part is zero (equivalently, the group scheme is \'etale), exactly if $p$ is invertible.
\end{Rem}

\subsection{Stratification for \texorpdfstring{$\underline{E}$}{E}-modules}
In this section we show that for any finite group $G$ the category $\Modd{G}(\underline{E})$ is stratified. The key input is the stratification of $\Mod(\Phi^A(\underline{E}))$, which uses the regularity of the geometric fixed points established in \Cref{sec:regularity}. 
\begin{Lem}\label{lem:gfp_balmer}
Let $A$ be a finite abelian $p$-group, then $\Mod(\Phi^{A}\underline{E})$ is cohomological stratified and the ungraded comparison map
\[
\xymatrix{\rho_0 \colon \Spc(\Perf(\Phi^A\underline{E})) \ar[r]^-{\cong} &\Spec(\pi_0(\Phi^A\underline{E}))}
\]
is a homeomorphism.
\end{Lem}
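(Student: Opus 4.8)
The plan is to deduce the lemma from the regularity result \Cref{thm:regular} via the machinery of affine weakly regular tt-categories of Dell'Ambrogio and Stanley \cite{DellAmbrogioStanley16}.

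First I would record that $\Phi^{A}\underline{E}$ is an even periodic commutative ring spectrum with Noetherian homotopy ring. Indeed, by \Cref{lem:morava_abelian_p_group} the graded ring $\pi_\ast(E^{BA})$ is concentrated in even degrees, is $2$-periodic, and has $\pi_0(E^{BA}) = E^0(BA)$ a finitely generated free (hence Noetherian) $E^0$-algebra. As recalled in \Cref{sec:regularity}, $\Phi^{A}\underline{E}$ is obtained from $E^{BA}$ by inverting the Euler classes of the non-trivial characters of $A$ (see \cite[Proposition~II.9.13]{LewisMaySteinbergerMcClure86}); since these are homogeneous elements, $\pi_\ast(\Phi^{A}\underline{E})$ remains evenly graded, $2$-periodic and Noetherian, and $\pi_0(\Phi^{A}\underline{E})$ is regular Noetherian by \Cref{thm:regular}.

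Next I would invoke \cite[Theorem~1.1]{DellAmbrogioStanley16}: the properties above say that $\Mod(\Phi^{A}\underline{E})$ is affine weakly regular, hence it is Noetherian (\Cref{exa:weakly_affine}), it is stratified by the action of its graded endomorphism ring $R^{\ast} := \pi_\ast(\Phi^{A}\underline{E}) = \End^\ast_{\Mod(\Phi^A\underline{E})}(\unit)$, and the graded comparison map $\rho \colon \Spc(\Perf(\Phi^{A}\underline{E})) \to \Spec^h(R^{\ast})$ is a homeomorphism. By \Cref{rem:cohomstrat} and \Cref{not:cohomstrat} this is exactly the assertion that $\Mod(\Phi^{A}\underline{E})$ is cohomologically stratified. (Alternatively one may combine Neeman's theorem for the regular Noetherian ring $\pi_0(\Phi^{A}\underline{E})$, see \Cref{ex:comm_rings}, with the equivalence $\Mod(\Phi^{A}\underline{E}) \simeq \cat{D}(\pi_0(\Phi^{A}\underline{E}))$ coming from even periodicity and finiteness of the global dimension of $\pi_0$.)

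For the remaining claim, since $R^{\ast}$ is $2$-periodic and evenly graded, the map $\Spec^h(R^{\ast}) \to \Spec(R_0)$ sending $\mathfrak{q}$ to $\mathfrak{q} \cap R_0$ is a homeomorphism by \Cref{rem:comparison_map}; composing with $\rho$ yields that the ungraded comparison map $\rho_0$ is a homeomorphism. The only point that needs care is matching the hypotheses of \cite[Theorem~1.1]{DellAmbrogioStanley16} — concretely, that evenness and $2$-periodicity of $\pi_\ast(\Phi^{A}\underline{E})$ together with regular Noetherianity of $\pi_0$ place us in the affine weakly regular situation; beyond that, no input other than \Cref{thm:regular} is required.
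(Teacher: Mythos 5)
Your proof is correct and follows essentially the same route as the paper: it combines the regularity of $\pi_0(\Phi^A\underline{E})$ from \cref{thm:regular} with evenness and $2$-periodicity of $\pi_*(\Phi^A\underline{E})$ (inherited from $E^*(BA)$ under inverting Euler classes), applies the Dell'Ambrogio--Stanley theorem to get cohomological stratification and the graded comparison homeomorphism, and then passes to the ungraded statement via \cref{rem:comparison_map}. The only cosmetic difference is that the paper cites both Theorems~1.1 and~1.3 of \cite{DellAmbrogioStanley16} directly rather than routing through \cref{exa:weakly_affine} and \cref{rem:cohomstrat}, which changes nothing of substance.
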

\begin{proof}
By \Cref{thm:regular}, $\pi_0(\Phi^A\underline{E})$ is a regular Noetherian ring and the graded ring $\pi_*(\Phi^A(\underline{E}))$ is concentrated in even degrees. In fact, this ring is even periodic (as being obtained by inverting finitely many elements in the even periodic ring $E^*(BA)$), and hence  $\pi_*(\Phi^A\underline{E})$ is regular as a \emph{graded} commutative ring. Now we can apply \cite[Theorem 1.1 and Theorem 1.3]{DellAmbrogioStanley16}\footnote{Note that there is a gap in one of the lemmas of that paper, which however does not affect the main theorems, see \cite{DellAmbrogioStanley16erratum}.}, keeping in mind \Cref{rem:comparison_map}. 
\end{proof}

We now use \cref{lem:gfp_balmer} to prove that $\Modd{G}(\underline{E})$ is stratified for any finite group $G$. In the next section, we will then identify this Balmer spectrum with the Zariski spectrum of the cohomology ring and deduce that $\Mod_G(\underline{E})$ is in fact cohomologically stratified.

\begin{Thm}\label{thm:stratification_finite_g}
For any group $G$ and any Lubin--Tate theory $E$, the category $\Modd{G}(\underline{E})$ is stratified with Noetherian spectrum $\Spc(\Perff{G}(\underline{E}))$. 
\end{Thm}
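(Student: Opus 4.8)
The plan is to deduce this from the abstract descent result \cref{thm:eqstratdescent} applied to the commutative equivariant ring spectrum $R = \underline{E}$. First, recall from \cref{ex:E-nilpotent} that $\underline{E}$ has derived defect base $\mathcal{F}$ equal to the family of abelian $p$-subgroups of $G$ that can be generated by at most $n$ elements; in particular $\underline{E}$ is $\mathcal{F}$-nilpotent. By \cref{thm:eqstratdescent}, it therefore suffices to verify, for every $H \in \mathcal{F}$, that (a) $\Spc(\Perf(\Phi^H\underline{E}))$ is a Noetherian space, and (b) $\Mod(\Phi^H\underline{E})$ is stratified. Since $\res_H\underline{E}_G \simeq \underline{E}_H$ (see \cref{ex:borel_g_spectra}), and since every $H \in \mathcal{F}$ is a finite abelian $p$-group, we have $\Phi^H\underline{E} \simeq \Phi^H\underline{E}_H$, so both conditions are instances of the statement for a finite abelian $p$-group.

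Both (a) and (b) are now supplied by \cref{lem:gfp_balmer}: for any finite abelian $p$-group $A$, that lemma asserts that $\Mod(\Phi^A\underline{E})$ is cohomologically stratified — in particular stratified — and that the ungraded comparison map $\rho_0\colon \Spc(\Perf(\Phi^A\underline{E})) \xrightarrow{\cong} \Spec(\pi_0(\Phi^A\underline{E}))$ is a homeomorphism. The target $\Spec(\pi_0(\Phi^A\underline{E}))$ is the Zariski spectrum of a Noetherian ring by \cref{thm:regular}, hence a Noetherian space, and so $\Spc(\Perf(\Phi^A\underline{E}))$ is Noetherian as well. This checks hypotheses (a) and (b) of \cref{thm:eqstratdescent} for all $H \in \mathcal{F}$.

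Applying \cref{thm:eqstratdescent} with this family $\mathcal{F}$ then yields that $\Modd{G}(\underline{E})$ is stratified with Noetherian spectrum $\Spc(\Perff{G}(\underline{E}))$, which is the assertion of the theorem. (Alternatively, one may invoke \cref{cor:noetherian-spectrum} directly for the Noetherianity of $\Spc(\Perff{G}(\underline{E}))$, but this is already part of the conclusion of \cref{thm:eqstratdescent}.)

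\textbf{On the main obstacle.} With the machinery already in place, there is no genuine obstacle remaining in this particular proof: everything has been reduced to \cref{lem:gfp_balmer}, which in turn rests on the regularity statement \cref{thm:regular}. The real content — and the step I would expect to be hardest — lies upstream, in establishing \cref{thm:regular}, i.e.\ that $\pi_0(\Phi^A\underline{E})$ is regular Noetherian; this is handled via \cref{prop:factor_geometric_and_drinfeld} by comparing the base change of the universal homomorphism $A^* \to \mathbb{G}_E$ with Drinfel'd level structures and using Strickland's regularity theorem for the Drinfel'd ring. Given \cref{thm:regular} and \cref{lem:gfp_balmer}, the present theorem is a formal consequence of the descent result \cref{thm:eqstratdescent}.
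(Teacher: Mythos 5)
Your proposal is correct and follows essentially the same route as the paper: the paper's own proof likewise invokes \cref{ex:E-nilpotent} for the family of abelian $p$-subgroups generated by at most $n$ elements, reduces via \cref{thm:eqstratdescent} to checking Noetherianity of the spectrum and stratification of $\Mod(\Phi^A\underline{E})$ for such $A$, and supplies both from \cref{lem:gfp_balmer} (itself resting on \cref{thm:regular}). Your additional remarks on $\res_H\underline{E}_G\simeq\underline{E}_H$ and on Noetherianity via \cref{thm:regular} simply make explicit what the paper leaves implicit.
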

\begin{proof}
We recall that $\underline{E}$ is $\mathcal{F}$-nilpotent for the family of abelian $p$-groups which are generated by $n$ elements (\Cref{ex:E-nilpotent}). Let $A$ be such a group, then it suffices by \Cref{thm:eqstratdescent} to show that $\Spc(\Perf(\Phi^A\underline{E}))$ is Noetherian and $\Mod(\Phi^A\underline{E})$ is stratified. This follows from \Cref{lem:gfp_balmer}.
\end{proof}

\section{The Balmer spectrum for Borel-equivariant \texorpdfstring{$E$}{E}-theory}\label{sec:spectrum-borel-e}

In this section we compute the spectrum of $\Perf_G(\underline{E})$ for any finite group $G$ and any Lubin--Tate theory $E$ of height $n$ and prime $p$, by showing that the comparison map $\rho_0\colon\Spc(\Perf_G(\underline{E})) \to \Spec(E^0(BG))$ is a homeomorphism. Along with \Cref{thm:stratification_finite_g}, this will show that $\Modd{G}(\underline{E})$ is cohomologically stratified. Finally, we deduce that the category of modules over the cochain spectrum $E^{BG}$ is also cohomologically stratified. 

\subsection{The spectrum of Borel-equivariant $E$-theory}

In order to compute the spectrum of $\Perff{G}(\underline{E})$, we first show that this category is Noetherian (\Cref{def:noeth-tt-cat}). 

\begin{Lem}\label{lem:e-theory-end-finite}
For any finite group $G$, the rings $E^0(BG)$ and $E^*(BG)$  are Noetherian. Moreover for any subgroup $H \subseteq  G$, the ring maps $E^0(BG)\to E^0(BH)$ and $E^*(BG)\to E^*(BH)$ are finite. In particular, the category $\Perff{G}(\underline{E})$ is Noetherian.
\end{Lem}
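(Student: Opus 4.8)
The plan is to prove the Noetherianity of $E^*(BG)$ and the finiteness of restriction, then deduce that $\Perff{G}(\underline E)$ is Noetherian from \Cref{lem:abstract_end_finite}.

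\textbf{Step 1: Noetherianity of $E^0(BG)$ and $E^*(BG)$.} The key classical input is the fact that $E^*(BG)$ is a finitely generated $E^*$-module; this is a theorem of Greenlees--Strickland (building on Hopkins--Kuhn--Ravenel \cite{HopkinsKuhnRavenel2000Generalized}), and it relies on $\underline E$ being $\mathcal F$-nilpotent for the family of abelian $p$-subgroups of $p$-rank at most $n$, together with the description of $E^*(BA)$ for abelian $A$ from \Cref{lem:morava_abelian_p_group}. Since $E^*$ (equivalently $\pi_*E$) is Noetherian --- it is a power series ring $W(k)[\![u_1,\dots,u_{n-1}]\!][u^{\pm 1}]$ over a complete local ring --- any finitely generated $E^*$-algebra is Noetherian by Hilbert's basis theorem, and in particular $E^*(BG)$ is Noetherian. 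The ring $E^0(BG)$ is then Noetherian because $E^*(BG)$ is a free $E^0(BG)$-module on the generators $1, u, u^2, \ldots$ in the even-periodic direction (equivalently $E^*(BG) \cong E^0(BG)[u^{\pm 1}]$ as graded rings), so $E^0(BG) = E^0(BG)[u^{\pm 1}]_0$ is a retract, hence Noetherian; alternatively $E^0(BG)$ is a finitely generated $E^0$-algebra and $E^0 = \pi_0 E$ is Noetherian.

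\textbf{Step 2: Finiteness of restriction.} For $H \subseteq G$, the restriction map $E^*(BG) \to E^*(BH)$ is induced by $BH \to BG$. Both are finitely generated $E^*$-modules by Step 1, and the map is a map of $E^*$-algebras, hence a fortiori $E^*(BG)$-linear; so $E^*(BH)$ is a finitely generated $E^*(BG)$-module --- indeed it is already finitely generated over the smaller ring $E^*$. The ungraded statement that $E^0(BG)\to E^0(BH)$ is finite follows by taking degree-zero parts, using the even-periodic splitting $E^*(-) \cong E^0(-)[u^{\pm 1}]$ compatibly on $BG$ and $BH$: a finite generating set of $E^*(BH)$ over $E^*(BG)$ can be taken in even degrees, and multiplying by powers of $u$ reduces to finitely many generators in degree $0$.

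\textbf{Step 3: Noetherianity of $\Perff{G}(\underline E)$.} Now apply \Cref{lem:abstract_end_finite} to $R = \underline E \in \CAlg(\Sp_G)$. We have $\pi_*^G(\underline E) = E^*(BG)$ since $\underline E$ is Borel-complete (and similarly $\pi_*^H(\underline E) = E^*(BH)$ using $\res_H \underline E \simeq \underline E_H$ from \Cref{ex:borel_g_spectra}); Step 1 gives that $\pi_*^G(\underline E)$ is graded Noetherian, and Step 2 gives that $\pi_*^H(\underline E)$ is a finitely generated $\pi_*^G(\underline E)$-module via restriction for every $H \subseteq G$. Hence $\Mod_G(\underline E)$ is Noetherian in the sense of \Cref{def:noeth-tt-cat}, which by definition includes the statement that $\Perff{G}(\underline E) = \Mod_G(\underline E)^c$ has the required finiteness properties. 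I do not anticipate a serious obstacle here: the only non-formal ingredient is the Greenlees--Strickland finiteness theorem, which is a black box from the literature, and everything else is bookkeeping with the even-periodic structure and the Borel identification of homotopy groups.
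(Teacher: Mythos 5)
Your proposal follows essentially the same route as the paper: quote Greenlees--Strickland for the finiteness of $E^*\to E^*(BH)$ (hence Noetherianity of $E^*(BG)$ and finiteness of $E^*(BG)\to E^*(BH)$), pass to degree zero via even periodicity, and conclude with \cref{lem:abstract_end_finite}, so the argument is correct in substance. One phrasing needs repair: the claimed splitting $E^*(-)\cong E^0(-)[u^{\pm 1}]$ "compatibly on $BG$ and $BH$", and the assertion that generators of $E^*(BH)$ over $E^*(BG)$ may be taken in even degrees, are not valid for general finite groups, since $E^*(BG)$ can contain nonzero odd-degree classes (\cref{lem:morava_abelian_p_group} only covers abelian groups). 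The fix is the one you already mention in passing and which the paper uses: $E^*(BH)$ is finite over the coefficient ring $E^*$, which \emph{is} even and periodic, so $E^0(BH)$ is a finitely generated $E^0$-module, hence a fortiori finitely generated over $E^0(BG)$, and $E^0(BG)$ is Noetherian as a finite $E^0$-module.
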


\begin{proof}
The ring $E^*(BG)$ is graded Noetherian and the composite 
\[
E^* \to E^*(BG) \to E^*(BH),
\]
is finite by~\cite[Corollary 4.4]{greenleesstrickland_varieties}. This also implies that $E^*(BG) \to E^*(BH)$ is finite. Passing to degree zero elements and using that $E^*$ is concentrated in even degrees and even periodic, we deduce that the composite $E^0 \to E^0(BG)\to E^0(BH)$ is finite. It then follows that $E^0(BG)\to E^0(BH)$ is finite and that $E^0(BG)$ is Noetherian. The final claim then follows from \cref{lem:abstract_end_finite}. 
\end{proof}

\begin{Lem}\label{lem:Balmer-spectrum-E_A}
For any finite abelian $p$-group $A$ which can be generated by $n$ elements, the ungraded comparison map
\[
\xymatrix{\rho^A_0 \colon \Spc(\Perff{A}(\underline{E})) \ar[r]^-{\cong} & \Spec(E^0(BA))}
\]
is a homeomorphism.
\end{Lem}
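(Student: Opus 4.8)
The plan is to combine the strong Quillen decomposition for Borel-equivariant $E$-theory (\cref{thm:globalquillenstrat}, which applies since Borel $G$-spectra are global by \cref{ex:borel-are-global}) with Balmer's comparison map and the regularity input from \cref{sec:regularity}. First I would invoke \cref{cor:quillen_stratification}: by \cref{lem:e-theory-end-finite} the ring $\pi_0^A(\underline E) = E^0(BA)$ is Noetherian and each $E^0(BH)$ is a finitely generated $E^0(BA)$-module for $H \subseteq A$, so hypothesis $(2)$ holds. For hypothesis $(1)$, I need the comparison maps $\rho_0^H \colon \mathcal V(\underline E, H) \to \Spec(\pi_0^H(\underline E)) = \Spec(E^0(BH))$ to be homeomorphisms for all $H \subseteq A$. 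Since $A$ is generated by $n$ elements, every subgroup $H \subseteq A$ is also a finite abelian $p$-group generated by at most $n$ elements, so by induction on $|A|$ it suffices to establish the statement for $A$ itself, with the homeomorphism $\rho_0^H$ for all proper $H$ available as the inductive hypothesis.

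Next I would reduce the comparison-map statement for $A$ to a statement about geometric fixed points via \cref{thm:globalquillenstrat} and \cref{cor:etheoryzariskistratification}. On the tt-side, part $(b)$ of \cref{thm:globalquillenstrat} gives a decomposition $\mathcal V(\underline E, A) = \bigsqcup_{A' \subseteq A} \mathcal V^+_A(\underline E, A')$ (the index running over subgroups up to conjugacy, but $A$ abelian so this is all subgroups), and by \cref{lem:res_finiteetale-gfp} each $\mathcal V^+(\underline E, A')$ is $\Spc(\Perf(\Phi^{A'}\underline E))$, which by \cref{lem:gfp_balmer} is homeomorphic via $\rho_0$ to $\Spec(\pi_0\Phi^{A'}\underline E)$. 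On the algebraic side, \cref{cor:etheoryzariskistratification} together with \cref{lem:geometric_fixed_points_as_open_subscheme} gives $\Spec(E^0(BA)) = \bigsqcup_{A' \subseteq A} \Spec(\pi_0\Phi^{A'}\underline E)$, with each piece an open subscheme of $X_{A'}$. The naturality of Balmer's comparison map $\rho_0^{(-)}$ then assembles these piecewise homeomorphisms into a commutative diagram identifying the two decompositions; the content to check is that $\rho_0^A$ matches the strata correctly, i.e., that the open subset $\mathcal V^+(\underline E, A') \subseteq \mathcal V(\underline E, A')$ goes to $X^\circ_{A'} \subseteq X_{A'}$ under $\rho_0^{A'}$ — but this is exactly the identification of complements of supports with vanishing loci worked out in the proof of \cref{cor:quillen_stratification}$(b)$ combined with \cref{lem:geometric_fixed_points_as_open_subscheme}.

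To upgrade the resulting bijection to a homeomorphism I would use \cref{prop:bijectioniffhomeomorphism}: $\Perff{A}(\underline E)$ is Noetherian by \cref{lem:e-theory-end-finite}, and since $E^*(BA)$ is even periodic (\cref{lem:morava_abelian_p_group}), \cref{rem:comparison_map} lets me pass between the graded comparison map $\rho^A$ and the ungraded one $\rho_0^A$; so it suffices that $\rho^A$ be bijective, which follows from the bijectivity of $\rho_0^A$ just established. Alternatively, and perhaps more cleanly, one can note that \cref{cor:quillen_stratification}$(a)$ directly produces a commutative square of maps in which three sides are known homeomorphisms (the top by \cref{thm:globalquillenstrat}$(a)$, the bottom by \cref{thm:globalquillenstrat} applied to the orbit-category colimit description on the scheme side via \cite[Theorem 3.26]{MathewNaumannNoel17}, and the left vertical by the inductive hypothesis plus \cref{lem:gfp_balmer}), forcing $\rho_0^A = \rho_0^G$ to be a homeomorphism as well.

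The main obstacle I anticipate is bookkeeping rather than a genuine difficulty: carefully setting up the induction so that "all proper subgroups satisfy hypothesis $(1)$" is exactly what the inductive hypothesis provides, and verifying that the strata of the tt-decomposition and of the Zariski decomposition are identified compatibly under the (a priori only piecewise-defined) comparison maps — this requires chasing the naturality of $\rho_0$ through \cref{prop:base-change-naturality}, \cref{lem:res_finiteetale-gfp}, and the closed-immersion results \cref{prop:closed_immersion}, \cref{prop:intersection}. The regularity theorem \cref{thm:regular}, which is the deep input, enters only through \cref{lem:gfp_balmer} and so can be cited as a black box here.
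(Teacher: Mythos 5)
Your main argument (the stratum-matching in your second paragraph, followed by the Noetherian upgrade) is correct, but it is organized differently from the paper's proof, and it uses heavier machinery. The paper argues via a single commutative square: the product of geometric fixed point functors induces $\Spc(\Phi)\colon \bigsqcup_{A'\subseteq A}\Spc(\Perf(\Phi^{A'}\underline{E}))\to\Spc(\Perff{A}(\underline{E}))$, which is surjective by \cref{cor:eqspc_surjectivity}; the right-hand vertical map $\bigsqcup\rho_0$ is a homeomorphism by \cref{lem:gfp_balmer}; the bottom map $\bigsqcup\Spec(\pi_0\Phi^{A'}\underline{E})\cong\bigsqcup X_{A'}^{\circ}\to\Spec(E^0(BA))$ is a bijection by \cref{prop:abecohomdecomposition}; commutativity then forces $\rho_0^A$ to be a bijection, and \cref{lem:e-theory-end-finite} together with \cref{prop:bijectioniffhomeomorphism} upgrades this to a homeomorphism. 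Note that this argument needs no disjointness or injectivity on the tt-side and no Weyl-group analysis: neither \cref{thm:quillen_decomposition}(b),(c) nor \cref{thm:globalquillenstrat} is invoked. Your route instead matches the strong Quillen strata on both sides; it therefore needs \cref{thm:globalquillenstrat}(b),(c) \emph{and} the observation, which you leave implicit when you silently identify $\mathcal{V}^+_A(\underline{E},A')$ with $\mathcal{V}^+(\underline{E},A')$, that the global Weyl group $W^{gl}_A(A')=A/(A'\cdot C_A(A'))$ is trivial because $A$ is abelian. With that spelled out, the naturality of $\rho_0$ and \cref{lem:geometric_fixed_points_as_open_subscheme} identify the strata as you describe, and the same bijection-to-homeomorphism upgrade finishes the proof; so your version is sound but buys nothing over the lighter argument.

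By contrast, the framing in your first paragraph and the ``alternative, perhaps more cleanly'' route at the end are circular and should be dropped. Hypothesis $(1)$ of \cref{cor:quillen_stratification} must hold for every $H$ in a family $\mathcal{F}$ for which $\underline{E}$ is $\mathcal{F}$-nilpotent, and any such family necessarily contains $A$ itself, since $\Phi^A\underline{E}\neq 0$ when $A$ is generated by at most $n$ elements (\cref{lem:gfp_vanishing}, \cref{ex:E-nilpotent}); so hypothesis $(1)$ already contains the statement of the lemma, and an induction on $|A|$ only supplies the proper subgroups and cannot close this gap. This is precisely why the paper invokes \cref{cor:quillen_stratification} only afterwards, in \cref{lem:generalspc} for a general finite group $G$, where the relevant family consists of abelian $p$-subgroups for which the present lemma has already been established. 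Since your stratum-matching argument does not actually rely on this appeal, the misstep is a detour rather than a fatal gap.
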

\begin{proof}
Let $\Phi \colon \Perf_A(\underline{E}) \to \prod_{A' \subseteq A}\Perf(\Phi^{A'}\underline{E})$ denote the symmetric monoidal functor induced by the product of the geometric fixed point functors $\Phi^{A'}$. By naturality of the comparison map \cite[Corollary 5.6]{Balmer10b}, there is a commutative diagram
\[\begin{tikzcd}
	{\Spc(\Perf_A(\underline{E}))} & {\bigsqcup_{A' \subseteq A} \Spc(\Perf(\Phi^{A'}\underline{E}))} \\
	{\Spec(E^0(BA))} & {\bigsqcup_{A' \subseteq A}\Spec(\pi_0(\Phi^{A'}\underline{E})).}
	\arrow["{\rho_0^A}"', from=1-1, to=2-1]
	\arrow["\bigsqcup\rho_0", from=1-2, to=2-2]
	\arrow["{\Spc(\Phi)}"', from=1-2, to=1-1]
	\arrow["{\Spec(\Phi_0)}", from=2-2, to=2-1]
\end{tikzcd}\]
We claim that $\rho^A_0$ is a bijection. By \Cref{lem:gfp_balmer}, the right hand vertical arrow is a homeomorphism. Using \cref{lem:geometric_fixed_points_as_open_subscheme}, we see that the bottom map is equivalent to the map 
\[
\bigsqcup_{A' \subseteq A}X_{A'}^{\circ}\cong{\bigsqcup_{A' \subseteq A}\Spec(\pi_0(\Phi^{A'}\underline{E}))} \to \Spec(E^0(BA)) = X_A.
\]
By \Cref{prop:abecohomdecomposition} this map is a bijection. It follows from \cref{cor:eqspc_surjectivity} that $\Spc(\Phi)$ is surjective, hence the commutativity of the square implies that $\rho^A_0$ is a bijection. Finally, by \Cref{lem:e-theory-end-finite} and \Cref{prop:bijectioniffhomeomorphism}, this shows that $\rho_0^A$ is a homeomorphism. 
\end{proof}

\begin{Lem}\label{lem:generalspc}
For any finite group $G$, the comparison map
\[
\xymatrix{\rho_0^G\colon \Spc(\Perff{G}(\underline{E})) \ar[r]^-{\cong} & \Spec(E^0(BG))}
\]
is a homeomorphism. 
\end{Lem}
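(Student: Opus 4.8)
The plan is to deduce \cref{lem:generalspc} as a special case of the sharpened strong Quillen stratification \cref{cor:quillen_stratification}, applied to $R=\underline{E}$ and the family $\mathcal{F}$ of abelian $p$-subgroups of $G$ that can be generated by at most $n$ elements. This $\mathcal{F}$ is the derived defect base of $\underline{E}$, so $\underline{E}$ is $\mathcal{F}$-nilpotent by \cref{ex:E-nilpotent} and the standing hypothesis of \cref{thm:quillen_decomposition} holds; it then remains only to verify hypotheses $(1)$ and $(2)$ of \cref{cor:quillen_stratification}. Hypothesis $(2)$ is immediate: since $\underline{E}$ is Borel-complete we have $\pi_0^G(\underline{E})\cong E^0(BG)$ and $\pi_0^H(\underline{E})\cong E^0(BH)$, and \cref{lem:e-theory-end-finite} shows that $E^0(BG)$ is Noetherian and that each restriction $E^0(BG)\to E^0(BH)$ is finite.

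For hypothesis $(1)$, fix $H\in\mathcal{F}$. Via the identification $\mathcal{V}(\underline{E},H)\simeq\Spc(\Perff{H}(\underline{E}))$ of \cref{prop:base-change-naturality} and the identifications $\pi_0^G(\mathbb{D}(G/H_+)\otimes\underline{E})\cong\pi_0^H(\underline{E})\cong E^0(BH)$ recalled in \cref{not:quillen_strata}, naturality of Balmer's comparison map (\cref{rem:comparison_map}) shows that the map $\rho_0^H$ appearing in \cref{cor:quillen_stratification} is precisely the ungraded comparison map $\Spc(\Perff{H}(\underline{E}))\to\Spec(E^0(BH))$. Since $H$ is a finite abelian $p$-group generated by at most $n$ elements, this is a homeomorphism by \cref{lem:Balmer-spectrum-E_A}. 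Granting $(1)$ and $(2)$, \cref{cor:quillen_stratification}$(a)$ produces a commutative square of homeomorphisms whose right-hand vertical arrow is $\rho_0^G\colon\Spc(\Perff{G}(\underline{E}))\to\Spec(E^0(BG))$, which is the assertion. Alternatively, by \cref{lem:e-theory-end-finite} and \cref{prop:bijectioniffhomeomorphism} it suffices to prove that $\rho_0^G$ is bijective, and this can be read off from the commutative square relating $\rho_0^G$ to $\bigsqcup_{(A)}\rho_0^A$ through the geometric fixed point functors, argued as in the proof of \cref{lem:Balmer-spectrum-E_A} but with $A$ now ranging over conjugacy classes of subgroups in $\mathcal{F}$, using \cref{lem:gfp_balmer}, \cref{cor:eqspc_surjectivity}, and the decomposition of \cref{prop:abecohomdecomposition}.

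I do not anticipate a serious obstacle here: the substantive work has already been carried out in the abelian case --- the regularity theorem \cref{thm:regular} and its consequences \cref{lem:gfp_balmer} and \cref{lem:Balmer-spectrum-E_A} --- and \cref{lem:generalspc} is a formal corollary via the Quillen decomposition. The only point requiring care is the bookkeeping: one must check that the comparison maps $\rho_0^{(-)}$ are natural for restriction along morphisms of $\mathcal{O}_{\mathcal{F}}(G)$ and compatible with the identifications $\mathcal{V}(\underline{E},H)\simeq\Spc(\Perff{H}(\underline{E}))$ and $\pi_0^H(\underline{E})\cong E^0(BH)$, all of which is already subsumed in \cref{prop:base-change-naturality}, \cref{not:quillen_strata}, and \cref{rem:comparison_map}. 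Finally, combined with \cref{thm:stratification_finite_g}, the resulting homeomorphism upgrades the stratification of $\Modd{G}(\underline{E})$ to a cohomological one.
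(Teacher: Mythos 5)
Your proposal is correct and follows essentially the same route as the paper: the paper's proof is precisely an application of \cref{cor:quillen_stratification}$(a)$ to $\underline{E}$ with the family of abelian $p$-subgroups generated by at most $n$ elements, verifying hypothesis $(1)$ via \cref{lem:Balmer-spectrum-E_A} and hypothesis $(2)$ via \cref{lem:e-theory-end-finite}. Your extra bookkeeping about naturality of $\rho_0^{(-)}$, and the alternative bijectivity argument via \cref{prop:bijectioniffhomeomorphism}, are fine but not needed beyond what the paper records.
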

\begin{proof}
This follows from \Cref{cor:quillen_stratification}$(a)$. Indeed, $\underline{E}$ is $\mathcal{F}$-nilpotent for the family of abelian $p$-subgroups of $G$ which are generated by (at most) $n$ elements, see~\Cref{ex:E-nilpotent}. Then the assumptions of \Cref{cor:quillen_stratification}$(a)$ hold by \Cref{lem:Balmer-spectrum-E_A} and \cref{lem:e-theory-end-finite}.
\end{proof}

\begin{Thm}\label{thm:etheorycohomstratification}
For any finite group $G$ and any Lubin--Tate theory $E$, the category $\Modd{G}(\underline{E})$ is cohomologically stratified by $E^0(BG)$. 
\end{Thm}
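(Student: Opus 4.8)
The plan is to combine the two main achievements of the preceding sections: the stratification of $\Modd{G}(\underline{E})$ established in \Cref{thm:stratification_finite_g}, and the identification of the Balmer spectrum with the Zariski spectrum of the cohomology ring obtained in \Cref{lem:generalspc}. By \Cref{thm:stratification_finite_g}, the category $\Modd{G}(\underline{E})$ is stratified and its spectrum $\Spc(\Perff{G}(\underline{E}))$ is Noetherian. By \Cref{lem:e-theory-end-finite}, the ring $E^*(BG) = \End^*_{\Modd{G}(\underline{E})}(\underline{E})$ is graded Noetherian and $\End^*_{\Modd{G}(\underline{E})}(C)$ is finitely generated over it for every compact $C$, so $\Modd{G}(\underline{E})$ is a Noetherian tt-category in the sense of \Cref{def:noeth-tt-cat}. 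Finally, \Cref{lem:generalspc} shows that Balmer's comparison map $\rho_0^G\colon\Spc(\Perff{G}(\underline{E})) \to \Spec(E^0(BG))$ is a homeomorphism; since $E^*(BG)$ is $2$-periodic and evenly graded (\Cref{lem:morava_abelian_p_group} and \Cref{rem:comparison_map}), this is equivalent to the graded comparison map $\rho^G$ being a homeomorphism onto $\Spec^h(E^*(BG))$.

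With these ingredients in place, the conclusion follows directly from the characterization of cohomological stratification recorded in \Cref{rem:cohomstrat} and \Cref{not:cohomstrat}: a Noetherian tt-category is cohomologically stratified precisely when it is stratified \emph{and} Balmer's comparison map $\rho$ is a homeomorphism. We have verified all three conditions — Noetherianity, stratification, and the homeomorphism property of $\rho$ — so \Cref{rem:cohomstrat}(a) holds, hence by \Cref{not:cohomstrat} the category $\Modd{G}(\underline{E})$ is cohomologically stratified by $\End^*_{\Modd{G}(\underline{E})}(\underline{E}) = E^*(BG)$, equivalently (using $2$-periodicity) by $E^0(BG)$.

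There is essentially no obstacle at this stage: all the hard work has been done in \Cref{thm:regular} (regularity of the geometric fixed points), \Cref{thm:stratification_finite_g} (descent of stratification along geometric fixed points), and \Cref{lem:Balmer-spectrum-E_A} together with \Cref{cor:quillen_stratification} (the Quillen-type identification of the spectrum). The only point requiring a word of care is the passage between the graded and ungraded comparison maps: one must invoke the fact from \Cref{rem:comparison_map} that for a $2$-periodic evenly graded commutative ring $R$ the maps $\Spec^h(R) \rightleftarrows \Spec(R_0)$ are inverse homeomorphisms, so that $\rho_0^G$ being a homeomorphism forces $\rho^G$ to be one as well. Everything else is a direct citation of the results assembled above.
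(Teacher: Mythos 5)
Your proof is correct and is essentially the paper's own argument: the proof in the paper consists precisely of combining \Cref{thm:stratification_finite_g} with \Cref{lem:generalspc} (together with the characterization of cohomological stratification in \Cref{rem:cohomstrat}), exactly as you do. One caveat on a side remark: your appeal to \Cref{lem:morava_abelian_p_group} to claim $E^*(BG)$ is evenly graded only applies when $G$ is abelian — for a general finite group $E^*(BG)$ is $2$-periodic but need not be concentrated in even degrees — however this does not affect the argument, since the statement is formulated in terms of $E^0(BG)$ and \Cref{lem:generalspc} already gives the homeomorphism for the ungraded comparison map $\rho_0^G$.
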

\begin{proof}
Combine \Cref{thm:stratification_finite_g} and \Cref{lem:generalspc}. 
\end{proof}

\begin{Rem}\label{rem:proofcomparison}
\Cref{thm:etheorycohomstratification} provides a chromatic counterpart in intermediate characteristic to the celebrated stratification theorem of Benson--Iyengar--Krause \cite{BensonIyengarKrause11a}, extending their work from height $\infty$ to all finite heights. The strategy of proof, however, is fundamentally different: we first establish stratification relative to the Balmer spectrum of $\Modd{G}(\underline{E})$ and then lift our chromatic Quillen stratification to an identification of the Balmer spectrum. As mentioned, the fundamental input to our approach is a regularity result for the geometric fixed points of $\underline{E}$ via Drinfel'd level structures, while the proof of \cite{BensonIyengarKrause11a} ultimately relies on the Bernstein--Gelfand--Gelfand(BGG)-correspondence.
\end{Rem}

As a corollary, we obtain a generalization of \cref{prop:abecohomdecomposition}.

\begin{Cor}\label{cor:decomp-specEBG}
    Let $E$ be a Lubin--Tate theory at height $n$ and prime $p$. For any finite group $G$, we have a disjoint decomposition into locally closed subsets
    \[
     \Spec(E^0(BG))= \bigsqcup_{A} \Spec(\pi_0(\Phi^A \underline{E}))/W^{Q}_G(A),
    \]
    where the coproduct ranges over all $G$-conjugacy classes of abelian $p$-subgroups of $G$ which are generated by (at most) $n$ elements.
\end{Cor}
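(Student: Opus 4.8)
The plan is to deduce this from the machinery already established, combining the homeomorphism of \cref{lem:generalspc} with the Quillen-type decomposition of \cref{thm:globalquillenstrat}. First I would recall that $\underline{E}$ arises as the restriction of a global equivariant ring spectrum by \cref{ex:borel-are-global}, so that \cref{thm:globalquillenstrat} applies with $R = \underline{E}$; moreover $\underline{E}$ is $\mathcal{F}$-nilpotent for the family $\mathcal{F}$ of abelian $p$-subgroups of $G$ generated by at most $n$ elements, by \cref{ex:E-nilpotent}. Applying parts (b) and (c) of \cref{thm:globalquillenstrat} then gives a decomposition of $\mathcal{V}(\underline{E},G) = \Spc(\Perff{G}(\underline{E}))$ into locally closed disjoint subsets $\mathcal{V}_G^+(\underline{E},A)$, one for each conjugacy class $(A)$ in $\mathcal{F}$, together with homeomorphisms $\mathcal{V}^+(\underline{E},A)/W_G^{gl}(A) \cong \mathcal{V}_G^+(\underline{E},A)$. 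Since every $A \in \mathcal{F}$ is abelian, the global Weyl group $W_G^{gl}(A)$ coincides with the Quillen--Weyl group $W_G^Q(A)$ by the discussion preceding the statement of \cref{thm:globalquillenstrat}.

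Next I would identify the pieces in chromatic terms. By \cref{not:quillen_strata}(b) and \cref{lem:res_finiteetale-gfp}, $\mathcal{V}^+(\underline{E},A)$ is homeomorphic to $\Spc(\Perf(\Phi^A\underline{E}))$, which by \cref{lem:gfp_balmer} is in turn homeomorphic via the ungraded comparison map $\rho_0$ to $\Spec(\pi_0(\Phi^A\underline{E}))$. These identifications are compatible with the $W_G^Q(A)$-actions: the Weyl action on $\mathcal{V}(\underline{E},A)$ is induced by functoriality (\cref{con-fund-action-rings}), and naturality of the comparison map \cite[Corollary 5.6]{Balmer10b} transports it to the evident action on $\Spec(\pi_0(\Phi^A\underline{E}))$. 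Hence $\mathcal{V}_G^+(\underline{E},A) \cong \Spec(\pi_0(\Phi^A\underline{E}))/W_G^Q(A)$, and assembling over conjugacy classes yields the asserted disjoint decomposition of $\Spc(\Perff{G}(\underline{E}))$. Finally, transporting along the homeomorphism $\rho_0^G \colon \Spc(\Perff{G}(\underline{E})) \xrightarrow{\cong} \Spec(E^0(BG))$ of \cref{lem:generalspc} gives the claimed decomposition of $\Spec(E^0(BG))$ into locally closed subsets $\Spec(\pi_0(\Phi^A\underline{E}))/W_G^Q(A)$.

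The only genuinely delicate point is the bookkeeping of the Weyl group actions: one must check that the chain of identifications $\mathcal{V}^+(\underline{E},A) \cong \Spc(\Perf(\Phi^A\underline{E})) \cong \Spec(\pi_0(\Phi^A\underline{E}))$ intertwines the Weyl actions appearing in \cref{thm:globalquillenstrat}(c) with the naive functorial action on $\pi_0(\Phi^A\underline{E})$ coming from conjugation. This is essentially formal given naturality of all the comparison maps and of the equivalence $\Psi_A$ in \cref{lem:res_finiteetale-gfp}, but it requires tracing through \cref{con-fund-action-rings} carefully; everything else is a direct citation. I would also remark, as in \cref{cor:etheoryzariskistratification}, that this decomposition is set-theoretic rather than topological since $\Spec(E^0(BG))$ is connected when $G$ is a $p$-group (and more generally the Zariski spectrum of a local ring in the abelian case), so the locally closed pieces are genuinely glued together — consistent with the footnote that the second map in \cref{thmx:etheory} provides only a stratification.
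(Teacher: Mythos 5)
Your argument is correct and is essentially the paper's own proof, which likewise deduces the statement by combining \cref{ex:borel-are-global}, \cref{ex:E-nilpotent}, \cref{thm:globalquillenstrat}, \cref{lem:gfp_balmer}, and \cref{lem:generalspc}. The extra care you take with the equivariance of the comparison maps and the remark that the decomposition is only set-theoretic are consistent with (and slightly more explicit than) what the paper records.
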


\begin{proof}
    Recall from \cref{ex:borel-are-global} that $\underline{E}$ arises from a global spectrum, and that it is $\mathcal{F}$-nilpotent for the family of abelian $p$-subgroups of $G$ which are generated by $n$ elements by \cref{ex:E-nilpotent}. The result then follows from~\cref{thm:globalquillenstrat}, \cref{lem:gfp_balmer}, and \cref{lem:generalspc}.
\end{proof}

\begin{Rem}\label{rem:genpermmodules}
Let $R$ be a commutative ring spectrum and $G$ a finite group. By \cite[Corollary 6.21]{MathewNaumannNoel17}, Borel-completion is an exact symmetric monoidal and fully faithful functor
\[
\xymatrix{\psi(R)\colon\Perff{G}(\underline{R}) \ar[r] & \Fun(BG,\Perf(R)),}
\]
whose essential image is given by the thick subcategory generated by the permutation modules $\{R[G/H]\}_{H\subseteq G}$. We say that $\Fun(BG,\Perf(R))$ is \emph{generated by permutation modules} if $\psi(R)$ is an equivalence. For example, for a discrete commutative ring $A$, the tt-category $\Fun(BG,\Perf(A))$ is equivalent to the bounded derived category of $A[G]$-representations whose underlying $A$-module is perfect. A result of Rouquier, Mathew~\cite[Theorem A.4]{treumannmathew_reps}, and Balmer--Gallauer \cite[Theorem~1.4]{BalmerGallauer2022} then implies that $\psi(A)$ is an equivalence for any regular Noetherian $A$. It is an open question, first raised in \cite[Question A.2]{treumannmathew_reps}, whether $\Fun(BG,\Perf(E))$ is generated by permutation modules for any Lubin--Tate theory $E$.
This is answered affirmatively by Mathew in loc. cit. for the case of height $1$ and $G=C_p$.
\end{Rem}

\subsection{Stratification for cochains}
In this subsection, we study the category of modules over the cochain algebra $E^{BG}\in \CAlg(\Sp)$. We will deduce from \cref{thm:etheorycohomstratification} that $\Mod(E^{BG})$ is cohomologically stratified. We start off with the following observation.

\begin{Lem}\label{lem:endo}
The endomorphism ring of the unit $\underline{E}$ in $\Modd{G}(\underline{E})$ can be identified with $E^{BG}$ as a commutative ring spectrum. 
\end{Lem}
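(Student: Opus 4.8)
The plan is to identify $\End_{\Modd{G}(\underline{E})}(\unit)$ first with the categorical $G$-fixed point spectrum of $\underline{E}$, and then, exploiting that $\underline{E}$ is Borel-complete, with the $G$-homotopy fixed point spectrum $E^{hG}$, which is by definition the cochain algebra $E^{BG}$. The only genuine work is keeping track of $\mathbb{E}_{\infty}$-ring structures, and this is automatic because every functor appearing is lax symmetric monoidal.

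First I would use that the unit of $\Modd{G}(\underline{E})$ is $\underline{E} = F_{\underline{E}}(S^0_G)$, so the free--forgetful adjunction $F_{\underline{E}} \colon \Sp_G \rightleftarrows \Modd{G}(\underline{E}) \colon U_{\underline{E}}$ yields
\[
\End_{\Modd{G}(\underline{E})}(\unit) = \Map_{\Modd{G}(\underline{E})}(\underline{E},\underline{E}) \simeq \Map_{\Sp_G}(S^0_G, \underline{E}) =: \underline{E}^G,
\]
the categorical $G$-fixed point spectrum, refining on homotopy groups the equality $\End^*_{\Modd{G}(\underline{E})}(\underline{E}) = \pi^G_*(\underline{E})$ already used in \cref{lem:abstract_end_finite}. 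Since $S^0_G = \infl_e(S^0)$ and $\infl_e$ is symmetric monoidal (\cref{rec:infl-res}), the functor $(-)^G = \Map_{\Sp_G}(S^0_G,-)$ is lax symmetric monoidal; hence $\underline{E}^G$ is canonically a commutative ring spectrum and the displayed equivalence is one of $\mathbb{E}_{\infty}$-rings. (That the composition product on the endomorphism ring of the unit of $\Mod_{\cat C}(A)$ agrees with the $\mathbb{E}_{\infty}$-structure transported from $A \in \CAlg(\cat C)$ along $\Map_{\cat C}(\unit,-)$ is a standard compatibility, which I would simply cite.)

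It then remains to identify $\underline{E}^G$ with $E^{BG}$ as commutative ring spectra. Since $\underline{E} = \underline{E}_G = \iHom_{\Sp_G}(EG_+, \infl_e E)$ is Borel-complete by construction (\cref{ex:borel_g_spectra}) and $S^0_G \otimes EG_+ \simeq EG_+$, adjunction gives $\underline{E}^G \simeq \Map_{\Sp_G}(EG_+, \infl_e E)$. Now $EG_+$ lies in $(\Sp_G)_{\mathrm{Borel}}$, which is symmetric monoidally equivalent to $\Fun(BG, \Sp)$ (see \cite[Section 6]{MathewNaumannNoel17}); under this equivalence $EG_+$ corresponds to the unit $S^0$, since $EG$ is $G$-equivariantly contractible, and $\underline{E}$ corresponds to $E$ with its trivial $G$-action. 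Therefore $\underline{E}^G \simeq \Map_{\Fun(BG,\Sp)}(S^0, E) = E^{hG} = \lim_{BG} E = F(BG_+, E) = E^{BG}$, and since the Borel localization and $\lim_{BG}$ are lax symmetric monoidal, this chain refines to an equivalence of $\mathbb{E}_{\infty}$-rings. The main obstacle is thus purely the multiplicative bookkeeping; no new input is required beyond the cited symmetric monoidal equivalence for Borel $G$-spectra.
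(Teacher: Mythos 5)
Your overall strategy is the same as the paper's: identify $\End_{\Modd{G}(\underline{E})}(\unit)$ with the categorical fixed points $(\underline{E})^G$ via the free--forgetful adjunction (this first step matches the paper verbatim), and then compare categorical with homotopy fixed points through the symmetric monoidal equivalence $(\Sp_G)_{\mathrm{Borel}}\simeq \Fun(BG,\Sp)$, with the $\mathbb{E}_\infty$-structures tracked by lax monoidality. However, the way you execute the second step rests on a false assertion: $EG_+$ does \emph{not} lie in $(\Sp_G)_{\mathrm{Borel}}$, and it does not correspond to the unit under that equivalence. The Borel subcategory consists of the \emph{cofree} spectra, i.e.\ those $X$ for which $X\to \iHom(EG_+,X)$ is an equivalence, and its unit is the Borel-completed sphere $\iHom(EG_+,S^0_G)$, not $EG_+$. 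The spectrum $EG_+$ is \emph{free}, and free is not cofree: its genuine fixed points are $\Sigma^\infty BG_+$ (tom Dieck splitting), whereas the fixed points of $\iHom(EG_+,EG_+)$ compute the stable cohomotopy $(\Sigma^\infty BG_+)$-dual, so already on $\pi_0$ one gets $\mathbb{Z}$ versus the completed Burnside ring for any nontrivial $G$ (the discrepancy measured by the Segal conjecture). In other words, you have conflated the free and the cofree embeddings of $\Fun(BG,\Sp)$ into $\Sp_G$; it is only under the free embedding that $EG_+$ corresponds to $S^0$. (Note also that your target $\infl_e E$ is not Borel-complete either; only its completion $\underline{E}$ is.) So the step as justified does not go through.

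The statement you wanted from it, $(\underline{E})^G\simeq E^{hG}\simeq E^{BG}$, is nevertheless true, and the repair is short; in fact there is no need to rewrite $(\underline{E})^G$ as maps out of $EG_+$ at all. Either argue as the paper does: $\underline{E}$ itself lies in $(\Sp_G)_{\mathrm{Borel}}$, and by \cite[Propositions 6.17 and 6.19]{MathewNaumannNoel17} the square relating $(\Sp_G)_{\mathrm{Borel}}\hookrightarrow \Sp_G\xrightarrow{(-)^G}\Sp$ to $\Fun(BG,\Sp)\xrightarrow{(-)^{hG}}\Sp$ commutes, with all functors in sight lax symmetric monoidal; chasing $\underline{E}$ around it gives $(\underline{E})^G\simeq E^{hG}$ as commutative algebras. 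Alternatively, use the adjunction directly: $\underline{E}$ is the value on $E$ (with trivial action) of the lax symmetric monoidal right adjoint of the forgetful functor $\Sp_G\to\Fun(BG,\Sp)$, whence $\Map_{\Sp_G}(S^0_G,\underline{E})\simeq \Map_{\Fun(BG,\Sp)}(S^0,E)=E^{hG}=E^{BG}$ multiplicatively. With either fix, your argument coincides with the paper's proof.
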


\begin{proof}
By adjunction, we have equivalences of commutative algebras 
\[
\Map_{\Mod_G(\underline{E})}(\underline{E}, \underline{E})\simeq \Map_{\Sp_G}(S^0, \underline{E})\simeq (\underline{E})^G
\]
so we only need to identify the right hand side with the spectrum $E^{BG}$. To this end recall that 
$\underline{E}$ belongs to the full subcategory $(\Sp_G)_{\mathrm{Borel}}\subseteq\Sp_G$ of Borel-equivariant  
$G$-spectra as defined in~\cite[Definition 6.14]{MathewNaumannNoel17}. Moreover by~\cite[Proposition 6.19]{MathewNaumannNoel17}, there is a commutative square 
\[
\begin{tikzcd}
(\Sp_G)_{\mathrm{Borel}} \arrow[d,"\sim", "\otimes"'] \arrow[r, hook] & \Sp_G \arrow[d,"(-)^G"] \\
\Fun(BG, \Sp) \arrow[r,"(-)^{hG}"'] & \Sp,
\end{tikzcd}
\]
where the left vertical functor takes the associated underlying spectrum with $G$-action, and this is a symmetric monoidal equivalence by~\cite[Proposition 6.17]{MathewNaumannNoel17}. Note that all the other functors in the diagram are lax monoidal so they preserve commutative algebra objects. Chasing $\underline{E}$ around the diagram, we obtain the identification $(\underline{E})^G\simeq E^{hG}$ as commutative algebras. It is only left to note that $E^{hG} \simeq E^{BG}$, as $E$ has trivial $G$-action.
\end{proof}

\begin{Ter}
Given a rigidly-compactly generated tt-category $\cat T$ we let 
\[
\cat T_{\cell} \coloneqq \Loc^{\cat T}{\langle \unit \rangle} \subseteq \cat T
\]
denote the localizing subcategory of $\cat T$ generated by the unit, referred to as the full subcategory of \emph{cellular objects} in $\cat T$.
\end{Ter}

\begin{Rem}\label{rem:compact_objects}
Because the unit is compact in $\cat T$ by assumption, Neeman's theorem \cite[Theorem 2.1]{Neeman96} implies that $\left( \cat T_{\cell}\right)^c = \cat T_{\cell} \cap \cat T^c.$
\end{Rem}

The next result identifies the category of modules over the cochain algebra as the subcategory of cellular objects in $\underline{E}$-modules.

\begin{Lem}\label{lem:cell-objs-cochain}
There is a fully faithful tt-functor 
\[
\Mod(E^{BG})  \hookrightarrow \Modd{G}(\underline{E}) 
\]
with essential image given by $\Loc^{\Modd{G}(\underline{E})}{\langle\underline{E}\rangle}$.
\end{Lem}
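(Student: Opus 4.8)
The plan is to identify $\Mod(E^{BG})$ with modules over the endomorphism ring of the unit in $\Modd{G}(\underline{E})$ and then invoke the standard Schwede--Shipley-type recognition of cellular objects. First I would recall from \cref{lem:endo} that $E^{BG} \simeq \End_{\Modd{G}(\underline{E})}(\underline{E})$ as commutative ring spectra. The unit $\underline{E}$ is a compact object of the rigidly-compactly generated tt-category $\cat T \coloneqq \Modd{G}(\underline{E})$ (by \cref{lem:eqmodulegenerators}), and hence the localizing subcategory $\cat T_{\cell} = \Loc^{\cat T}\langle\underline{E}\rangle$ is generated by a single compact object whose endomorphism ring spectrum is $E^{BG}$.

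Next I would apply the $\infty$-categorical Morita theory of Lurie (\cite[Theorem 7.1.2.1]{HALurie}, as already cited in the paper, e.g.\ in the proof of \cref{lem:res_finiteetale-gfp}): a stable presentable $\infty$-category compactly generated by a single object $X$ is equivalent, as a module category, to $\Mod(\End(X))$ via the functor $\Map(X,-)$. Applied to $\cat T_{\cell}$ with $X = \underline{E}$, this yields an equivalence $\cat T_{\cell} \simeq \Mod(E^{BG})$ of stable $\infty$-categories. To promote this to a \emph{symmetric monoidal} equivalence, I would observe that $\cat T_{\cell} \subseteq \cat T$ is closed under the tensor product (the tensor of two objects built from $\unit = \underline{E}$ by colimits is again so built, since $\otimes$ preserves colimits in each variable and $\unit\otimes\unit\simeq\unit$), so $\cat T_{\cell}$ inherits a symmetric monoidal structure with unit $\underline{E}$; then the equivalence with $\Mod(E^{BG})$ is symmetric monoidal because both sides are symmetric monoidal localizations/subcategories generated by the unit, and one can invoke the uniqueness of symmetric monoidal structures with a given unit on a category generated by that unit (alternatively, \cite[Proposition 3.18]{Robalo} or the discussion in \cref{rec:basechange_functoriality} gives that the inclusion of modules over $E^{BG}$ internal to $\cat T$ is monoidal). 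The composite $\Mod(E^{BG}) \xrightarrow{\sim} \cat T_{\cell} \hookrightarrow \cat T$ is then the desired fully faithful tt-functor, and passing to homotopy categories gives the statement about tt-categories. Fully faithfulness on homotopy categories follows from fully faithfulness of the inclusion $\cat T_{\cell}\hookrightarrow\cat T$ of a localizing subcategory.

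The essential image is $\Loc^{\Modd{G}(\underline{E})}\langle\underline{E}\rangle$ by construction, since the Morita equivalence sends $E^{BG}$ to $\underline{E}$ and is colimit-preserving, hence carries $\Mod(E^{BG}) = \Loc\langle E^{BG}\rangle$ onto $\Loc^{\cat T}\langle\underline{E}\rangle = \cat T_{\cell}$. The main subtlety I anticipate is the bookkeeping needed to upgrade Lurie's Morita equivalence to a symmetric monoidal statement compatible with the monoidal structure on $\cat T$ restricted to $\cat T_{\cell}$; this is standard but requires care in matching the $E_\infty$-structures (one checks the equivalence intertwines the relevant algebra objects, using that $E^{BG}\in\CAlg(\Sp)$ corresponds to $\underline{E}\in\CAlg(\cat T)$). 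Everything else is a routine application of the cited results together with \cref{rem:compact_objects} to handle compact objects.
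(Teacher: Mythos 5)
Your argument is correct and follows essentially the same route as the paper: Morita theory applied to $\underline{E}$ as the compact generator of the cellular subcategory, combined with \cref{lem:endo} to identify its endomorphism ring with $E^{BG}$. The only difference is cosmetic: where you appeal to a somewhat informal ``uniqueness of symmetric monoidal structures with a given unit'' (which is not a theorem as stated, though your alternative references would do), the paper simply cites \cite[Corollary 4.8.1.14]{HALurie} for the symmetric monoidality of the Morita equivalence.
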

\begin{proof}
By Morita theory and \Cref{lem:endo}, there is a cocontinuous symmetric monoidal functor 
\[
\Mod(E^{BG})  \xrightarrow{\sim} \Loc^{\Modd{G}(\underline{E})}{\langle\underline{E}\rangle} \subseteq \Modd{G}(\underline{E}).
\]
The first functor is an equivalence, and it is symmetric monoidal by \cite[Corollary 4.8.1.14]{HALurie}.
\end{proof}

\begin{Thm}\sloppy
The category $\Mod(E^{BG})$ is cohomologically stratified. In particular, the comparison map 
\[
\xymatrix{\rho_0^G\colon\Spc(\Perf(E^{BG})) \ar[r]^-{\cong} & \Spec(E^0(BG))}
\]
is a homeomorphism.
\end{Thm}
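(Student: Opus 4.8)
The plan is to deduce this from the cohomological stratification of $\Modd{G}(\underline{E})$ established in \cref{thm:etheorycohomstratification}, using the identification of $\Mod(E^{BG})$ with the cellular subcategory of $\Modd{G}(\underline{E})$ from \cref{lem:cell-objs-cochain} together with \cref{lem:endo}. First I would recall that, by \cref{lem:cell-objs-cochain}, the functor $\Mod(E^{BG}) \to \Modd{G}(\underline{E})$ is a fully faithful tt-functor whose essential image is $\cat T_{\cell} = \Loc^{\Modd{G}(\underline{E})}\langle\underline{E}\rangle$, the localizing subcategory generated by the unit. So it suffices to show that $\cat T_{\cell}$ is cohomologically stratified, where $\cat T = \Modd{G}(\underline{E})$. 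Since the unit of $\cat T$ is compact, the inclusion $\cat T_{\cell} \hookrightarrow \cat T$ is a finite localization (a Bousfield colocalization onto a localizing subcategory generated by a compact object), and in particular $\cat T_{\cell}$ is itself rigidly-compactly generated with $(\cat T_{\cell})^c = \cat T_{\cell} \cap \cat T^c$ by \cref{rem:compact_objects}. Moreover $\End^*_{\cat T_{\cell}}(\unit) = \End^*_{\cat T}(\unit) = E^*(BG)$, which is graded Noetherian by \cref{lem:e-theory-end-finite}.

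The key structural input is that stratification (and Noetherianity, and the homeomorphism property of the comparison map) passes to the subcategory of cellular objects along a finite localization. Concretely: the inclusion $\cat T_{\cell} \subseteq \cat T$ exhibits $\cat T_{\cell}$ as a smashing colocalization, and $\Spc((\cat T_{\cell})^c)$ is homeomorphic to the subspace $\supp(\unit_{\cat T}) \subseteq \Spc(\cat T^c)$ corresponding to the thick ideal generated by the unit — but since the unit generates $\cat T^c$ as a thick \emph{ideal}, this is all of $\Spc(\cat T^c)$; in fact the restriction functor $\cat T^c \to (\cat T_{\cell})^c$ is just the inclusion and induces a homeomorphism $\Spc((\cat T_{\cell})^c) \cong \Spc(\cat T^c)$ since every compact object of $\cat T$ is a retract of something built from $\unit$ and its tensor-translates... more carefully, one uses that $\cat T_{\cell}$ and $\cat T$ have the same compact objects up to thick closure precisely because $\cat T^c = \thickt{\unit}$, which holds as $\unit$ is a compact generator \emph{and} tensor-unit. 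Then I would invoke the general principle from \cite{bhs1} (e.g. the behaviour of stratification under finite localization, as already used in the proof of \cref{thm:eqstratdescent} via \cite[Remark 5.4]{bhs1}) to conclude that $\cat T_{\cell}$ is stratified with Noetherian spectrum, and that the Balmer comparison map for $\cat T_{\cell}$ is a homeomorphism because it fits into a commuting triangle with the comparison map $\rho_0^G \colon \Spc(\Perff{G}(\underline{E})) \xrightarrow{\cong} \Spec(E^0(BG))$ of \cref{lem:generalspc} and the homeomorphism on spectra induced by the inclusion. Combining these, $\cat T_{\cell} \simeq \Mod(E^{BG})$ is cohomologically stratified in the sense of \cref{not:cohomstrat}, and $\rho_0^G \colon \Spc(\Perf(E^{BG})) \xrightarrow{\cong} \Spec(E^0(BG))$ is a homeomorphism.

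The main obstacle I anticipate is making precise — and citing correctly — the claim that $\Spc(\Perf(E^{BG})) \to \Spc(\Perff{G}(\underline{E}))$ is a homeomorphism and that cohomological stratification descends to the cellular subcategory. One has to be a little careful: the inclusion $\cat T_{\cell} \hookrightarrow \cat T$ is \emph{not} a quotient functor (finite localization) in the usual direction; rather $\cat T_{\cell}$ is the \emph{kernel} of a finite localization $\cat T \to \cat T/\cat T_{\cell}$, equivalently a smashing colocalization. The cleanest route is probably to observe that the pair $(\cat T_{\cell}, \cat T/\cat T_{\cell})$ gives a recollement, that $\Spc(\cat T^c)$ decomposes accordingly, and that here $\cat T/\cat T_{\cell}$ is trivial at the level of compact objects precisely when $\cat T^c = \thickt{\unit}$ — which is automatic since $\unit$ is a compact tensor-generator. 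Under that identification, stratification of $\cat T$ restricts to stratification of $\cat T_{\cell}$ because the Balmer--Favi idempotents $\Gamma_{\cat P}\unit$ all lie in $\cat T_{\cell}$ (being built from $\unit$). Once this bookkeeping is in place, the proof is short: cohomological stratification and the homeomorphism for $\Mod(E^{BG})$ follow formally from \cref{thm:etheorycohomstratification}, \cref{lem:cell-objs-cochain}, \cref{lem:endo}, and \cref{lem:generalspc}.
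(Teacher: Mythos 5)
Your overall strategy is the same as the paper's: identify $\Mod(E^{BG})$ with the cellular subcategory $\Loc\langle\underline{E}\rangle\subseteq\Modd{G}(\underline{E})$ via \cref{lem:endo} and \cref{lem:cell-objs-cochain}, and then descend cohomological stratification from \cref{thm:etheorycohomstratification}. However, the mechanism you propose for the descent has a genuine gap. The central problem is your claim that $(\cat T_{\cell})^c$ agrees with $\cat T^c$ (up to thick closure), justified by "$\cat T^c=\thickt{\unit}$, which holds as $\unit$ is a compact generator and tensor-unit". First, $\underline{E}$ is \emph{not} a compact generator of $\Modd{G}(\underline{E})$: the compact generators are the orbit modules $\underline{E}\otimes G/H_+$ (\cref{lem:eqmodulegenerators}). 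Second, $\cat T^c=\thickt{\unit}$ is true but vacuous (the thick $\otimes$-ideal generated by the unit is always everything), whereas by \cref{rem:compact_objects} the compacts of the cellular subcategory are $\thick\langle\unit\rangle$, the thick \emph{subcategory} generated by the unit, which is in general a proper subcategory of $\cat T^c$; compare the height-$\infty$ analogue, where $\Perf(C^*(BG;k))\simeq\thick\langle k\rangle\subsetneq \mathrm{D}^b(kG)$ for $G$ not a $p$-group. So the asserted homeomorphism $\Spc((\cat T_{\cell})^c)\cong\Spc(\Perff{G}(\underline{E}))$ is unfounded as stated (even though the two spaces do turn out to be homeomorphic, this is part of what must be proved, not an input). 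Relatedly, the inclusion $\cat T_{\cell}\hookrightarrow\cat T$ is a smashing \emph{co}localization, not a finite localization, so the descent-of-minimality statement you want to import from \cite[Remark 5.4]{bhs1} does not apply; and the claim that the Balmer--Favi idempotents $\Gamma_{\cat P}\unit$ lie in $\cat T_{\cell}$ is not justified, since they are built from general compact objects supported at $\cat P$, not from the unit alone.

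The paper closes exactly this gap with the ring-action formulation. Since $\Modd{G}(\underline{E})$ is cohomologically stratified, it is stratified in the sense of Benson--Iyengar--Krause by the action of $\End^*(\unit)=E^*(BG)$ (\cref{rem:cohomstrat}); the theorem of Benson--Iyengar--Krause \cite[Theorem 1.2]{BensonIyengarKrause2011Localising} then says that this stratification is inherited by the localizing subcategory generated by the unit, i.e., by $\Mod(E^{BG})$. (Morally, the relevant idempotents here are the BIK ones, built from $\unit$ via Koszul objects for elements of $E^*(BG)$, which are manifestly cellular --- this is precisely why the cohomological formulation, rather than the Balmer--Favi one, makes the descent to $\cat T_{\cell}$ formal.) The homeomorphism $\rho_0^G\colon\Spc(\Perf(E^{BG}))\xrightarrow{\cong}\Spec(E^0(BG))$ is then a consequence of cohomological stratification via \cref{rem:cohomstrat} and even periodicity, not of an identification of $\Spc(\Perf(E^{BG}))$ with $\Spc(\Perff{G}(\underline{E}))$ induced by the inclusion of compacts. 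To repair your argument, replace the "compacts agree / finite localization" step by this appeal to the BIK localizing-subcategory theorem (or give an independent argument that minimality holds for the cellular subcategory at every point of its spectrum and that its comparison map is bijective, using \cref{lem:e-theory-end-finite} and \cref{prop:bijectioniffhomeomorphism}), rather than trying to compare Balmer spectra through the embedding.
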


\begin{proof}
By \Cref{lem:cell-objs-cochain} we can identify $\Mod(E^{BG})$ with the subcategory of cellular objects in $\Modd{G}(\underline{E})$. Applying \cite[Theorem 1.2]{BensonIyengarKrause2011Localising} and noting that $\Modd{G}(\underline{E})$ is cohomologically stratified (\Cref{rem:cohomstrat}), we see that stratification for $\Modd{G}(\underline{E})$ implies stratification for $\Mod(E^{BG})$.
\end{proof}

\subsection{Some calculations of $\Spec(E^0(BG))$ at height one}\label{subsection-height-one}
Throughout this section we will assume that the height is one so that $E=E_1$ is (a form of) $p$-complete complex $K$-theory. We will describe $\Spec(E^0(BG))$ explicitly for a large class of finite groups $G$.  

Consider a finite group $G$ and let $\mathcal{C}$ denote the family of cyclic $p$-subgroups of $G$. The Borel-equivariant $G$-spectrum $\underline{E}$ is $\mathcal{C}$-nilpotent by \Cref{ex:E-nilpotent} and arises from a global homotopy type by \cref{ex:borel-are-global}. It follows from \Cref{thm:globalquillenstrat} that we have a homeomorphism
\begin{equation}\label{colim-spec-E}
\Spec(E^0(BG))\cong \mathop{\colim}_{A \in \mathcal{O}^Q_{\mathcal{C}}(G)} \Spec(E^0(BA)).
\end{equation}
The Zariski spectrum $\Spec(E^0(BA))$ has already been discussed in detail in \cref{ex:X_A-cyclic-case}.
We make the following additional observations:
\begin{itemize}
\item[(1)] For $n\geq m \geq 1$, the canonical inclusion $\Spec(E^0(BC_{p^{m}}))\subseteq \Spec(E^0(BC_{p^n}))$ hits the prime ideals corresponding to $\Fp$ and $\mathbb{Q}_p(\zeta_{p^i})$ for $0 \leq i\leq m$.
\item[(2)] The Quillen--Weyl group $W_G^Q(C_{p^n})$ acts trivially on $\Spec(E^0(BC_{p^n}))$. To see this one checks that any automorphism $\sigma$ of $C_{p^n}$ preserves the decomposition (\ref{eq:disjoint_union_set_spec}), namely satisfies $\sigma.X_{C_{p^i}}^\circ= X_{C_{p^i}}^\circ$ for all $i \leq n$. Since $X_{e}^\circ=\Spec(\Z_p)$ and $X_{C_{p^i}}^\circ=\Spec(\mathbb{Q}_p(\zeta_{p^i}))$ by \cref{ex:X_A-cyclic-case}, one immediately deduces that $\sigma$ must act trivially on each $X_{C_{p^i}}^\circ$ and so on all $\Spec(E^0(BC_{p^n}))$.
\end{itemize}
Using these observations together with \eqref{colim-spec-E} we can calculate $\Spec(E^0(BG))$ for many examples of $G$.

\begin{Exa}\label{ex:exp-p}
Let $G$ be a finite group with a $p$-Sylow subgroup $G_p$ of exponent $p$. This means that $p$ is the least common multiple of the orders of all elements of $G_p$. For instance, $G_p$ could be cyclic of order $p$ or an elementary abelian $p$-group. Our assumptions force any $A\in\mathcal{C}$ to be cyclic of order $p$. In this case the Zariski spectrum of $E^0(BG)$ is depicted in \cref{fig:spec(EBG)}.
\begin{figure}[!ht]
    \centering
    \begin{tikzcd}[column sep=small]
  &       &         & \arrow[dll,dash]\arrow[dlll,dash]\arrow[ld,dash]\arrow[dll,dash]\textcolor{blue}{\bullet} \arrow[dr,dash]\arrow[drr,dash] &        &         & \\
 \textcolor{blue}{\bullet} & \textcolor{blue}{\bullet} & \textcolor{blue}{\bullet} & \ldots & \textcolor{blue}{\bullet} & \textcolor{blue}{\bullet}
\end{tikzcd}
\caption{An image of $\Spec(E^0(BG))$ for a finite group $G$ with $p$-Sylow subgroup of exponent $p$. The lines indicate specialization relations with closure going upwards. The bottom row has $n+1$ prime ideals where $n$ is the maximal number of cyclic subgroups of $G$ of order $p$ which are pairwise non-conjugate.}
\label{fig:spec(EBG)}
\end{figure}
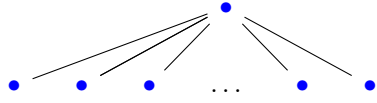
\end{Exa}

\begin{Exa}
Suppose that $p=2$ and $G=D_n$ a dihedral group of order $2n$ for some $n\geq 1$. One checks that there is a unique maximal cyclic subgroup $A\subseteq G$ which has order $n$. Therefore we immediately get that $\Spec(E^0(BG))\cong \Spec(E^0(BA))$. In this case the Zariski spectrum is depicted in \cref{fig:spec(EBCpn)}.
\end{Exa}

\begin{Rem}\label{rem:non-free-action}
  Inspired by Quillen's work~\cite{Quillen71}, one might wonder if the action of $W_G^{Q}(A)$ on $\Spec(\pi_0\Phi^A \underline E)$ is free on closed points, see discussion in~\cref{rem:freeness}. To see this is not the case take $p=3$, $G=\Sigma_3$ the symmetric group on three letters, and $A=\mathbb{Z}/3$ the Sylow $3$-subgroup generated by the cycle $(123)$. The Quillen--Weyl group $W_G^{Q}(A)$ is not trivial but acts trivially on all of $\Spec(E^0(BA))$ and so on $\Spec(\pi_0\Phi^A\underline{E})$ too, by observation (2) above. 
\end{Rem}

\section{Further examples}\label{sec:examples}

In this final section, we discuss some further examples for which we can prove stratification for equivariant module spectra as a consequence of our general techniques, namely for $H\underline{\bbZ}$-modules when $G = C_{p^n}$, equivariant complex $K$-theory $KU_G$ for all finite groups $G$, and the real $K$-theory spectrum $K\mathbb{R}$ when $G = C_2$. 

\subsection{Stratification for modules over the integral constant Mackey functor}
We recall that \Cref{thm:quillen_decomposition} determines the spectrum of $\Perf_G(R)$ in terms of the spectra of the non-equivariant tt-categories $\Perf(\Phi^H R)$, up to the question how the strata are glued together. In order to settle this question, different techniques are required, such as a study of the Tate squares governing the local-to-global principles. This ambiguity is illustrated by the next example.

\begin{Exa}\label{ex:constantgreen}
Let $G = C_{p^n}$ be a cyclic $p$-group and write $H\underline{\bbZ} \in \CAlg(\Sp_{C_{p^n}})$ for the Eilenberg--MacLane spectrum associated to the constant Green functor with value $\bbZ$. By \cite[Proposition 3.18]{HillHopkinsRavenel16} for $p = 2$ and \cite[p.~34]{zeng_comps} for $p>2$, its geometric fixed points have coefficients
\[
\pi_*\Phi^HH\underline{\bbZ} \cong
\begin{cases}
    \bbZ & \text{if } H=e, \\
    \bbZ/p[t] & \text{non-trivial } H \subseteq  C_{p^n},
\end{cases}
\]
as graded commutative rings, where $t$ is in degree 2. In particular, all geometric fixed points of $H\underline{\bbZ}$ are even and regular. It then follows from \cite{DellAmbrogioStanley16} that the tt-categories $\Mod(\Phi^HH\underline{\bbZ})$ are stratified with Balmer spectrum
\[
\Spc(\Perf(\Phi^HH\underline{\bbZ})) \cong
\begin{cases}
    \Spec(\bbZ) & \text{if } H=e, \\
    \Spec^h(\bbZ/p[t]) & \text{non-trivial } H \subseteq  C_{p^n}.
\end{cases}
\]
Therefore, \cref{thm:eqstratdescent} and \cref{thm:quillen_decomposition} imply that $\Modd{C_{p^n}}(H\underline{\bbZ})$ itself is stratified, and we depict its Balmer spectrum in \cref{spc:constantgreen}.
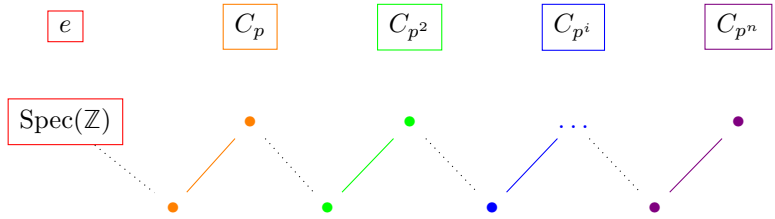
\begin{figure}[!ht]
    \centering
    \begin{tikzcd}[column sep=small]
  |[draw=red, rectangle]| e & & |[draw=orange, rectangle]| C_p & &  |[draw=green, rectangle]| C_{p^2}& & |[draw=blue, rectangle]| C_{p^i}& & |[draw=violet, rectangle]| C_{p^n}\\
 |[draw=red, rectangle]|  \Spec(\mathbb{Z}) \arrow[dr, dotted, dash] &    & \textcolor{orange}{\bullet} \arrow[dr, dotted, dash] & & \textcolor{green}{\bullet} \arrow[dr, dotted, dash] & & \textcolor{blue}{\ldots} \arrow[dr, dash, dotted] & & \textcolor{violet}{\bullet}   \\
                            & \textcolor{orange}{\bullet}  \arrow[ur, dash, orange] &                     & \textcolor{green}{\bullet}  \arrow[ur, dash, green]&         & \textcolor{blue}{\bullet} \arrow[ur, dash,  blue] & & \textcolor{violet}{\bullet}  \arrow[ur,dash, violet]& 
\end{tikzcd}
    \caption{An image of $\Spc(\Perff{C_{p^n}}(H\underline{\bbZ}))$. The lines indicate specialization relations, with closure going upwards. Contributions from the same subgroup are displayed in the same color.} \label{spc:constantgreen}
\end{figure}
The solid part of this figure is determined by \cref{thm:quillen_decomposition} together with the observation that the corresponding Weyl groups act trivially on the geometric fixed points in this case, since $H\underline{\mathbb{Z}}$ arises from a global homotopy type. Consequently, we obtain a parametrization of localizing ideals in this setting: 
\[
\begin{Bmatrix}
\text{Localizing $\otimes$-ideals} \\
\text{of $\Modd{C_{p^n}}(H\underline{\bbZ})$} 
\end{Bmatrix} 
\xymatrix@C=2pc{ \ar[r]^{\sim} &  }
\begin{Bmatrix}
\text{Subsets of $\Spec(\bbZ) \sqcup \bigsqcup_{i=1}^n \Spec^h(\bbZ/p[t])$}
\end{Bmatrix}.
\]
The dashed lines in the figure---gluing the strata of the spectrum together---are known to exist by work of Balmer and Gallauer, see \cite{BalmerGallauer_announcement}. In fact, they further determine the underlying set of $\Spc(\Perff{G}(H\underline{\bbZ}))$ for all finite groups $G$ and establish stratification of $\Modd{G}(H\underline{\bbZ})$ in this generality.
\end{Exa}

\subsection{Stratification for modules over equivariant $K$-theory}
The goal of this subsection is to use our methods to show that the category $\Modd{G}(KU_G)$ is cohomologically stratified, where $KU_G$ denotes \emph{$G$-equivariant complex $K$-theory}, \cite{Segal1968Equivariant}. Our argument is modelled on the case of Borel-equivariant Lubin--Tate theory treated in previous sections. We will also show cohomological stratification for the category of modules over the $C_2$-spectrum of real $K$-theory $K\mathbb{R}$. 

\begin{Rem}
The $G$-equivariant stable homotopy groups are
\[
\pi_*^G(KU_G) \cong R(G)[\beta^{\pm 1}], \quad |\beta| = -2,
\]
where $R(G)$ denotes the complex representation ring of $G$. In particular, $\pi_*^G(KU_G)$ is an even periodic Noetherian ring. Using the fact that these rings are even periodic, we deduce the following from Segal's seminal work on $\pi_0^G(KU_{G})$.
\end{Rem}

\begin{Lem}\label{lem:end-finiteness-kug}
For any finite group $G$, the category $\Perff{G}(KU_G)$ is Noetherian. 
\end{Lem}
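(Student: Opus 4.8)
The plan is to deduce this directly from the abstract finiteness criterion \cref{lem:abstract_end_finite}, applied to the commutative equivariant ring spectrum $R = KU_G \in \CAlg(\Sp_G)$. The first step is to check that $\pi_*^G(KU_G)$ is graded Noetherian. Using the identification $\pi_*^G(KU_G) \cong R(G)[\beta^{\pm 1}]$ with $|\beta| = -2$, I would note that $R(G)$ is a finitely generated free $\bbZ$-module --- a basis is given by the finite set of isomorphism classes of irreducible complex $G$-representations --- and is in particular Noetherian. Since $R(G)[\beta^{\pm 1}]$ is then a module-finite extension of the Noetherian ring $\bbZ[\beta^{\pm 1}]$, it is (graded) Noetherian, which settles the first hypothesis of \cref{lem:abstract_end_finite}.

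The second step is to verify, for every subgroup $H \subseteq G$, that $\pi_*^H(KU_G)$ is a finitely generated $\pi_*^G(KU_G)$-module via restriction. Here I would use the standard compatibility of equivariant complex $K$-theory with restriction, $\res_H KU_G \simeq KU_H$ (which is also a consequence of the global refinement of $K$-theory recorded in \cref{ex:borel-are-global}), to identify $\pi_*^H(KU_G) \cong R(H)[\beta^{\pm 1}]$, with the restriction map $\pi_*^G(KU_G) \to \pi_*^H(KU_G)$ corresponding to the restriction-of-representations map $R(G)[\beta^{\pm 1}] \to R(H)[\beta^{\pm 1}]$. Again $R(H)$ is a finitely generated $\bbZ$-module, so $R(H)[\beta^{\pm 1}]$ is finitely generated over $\bbZ[\beta^{\pm 1}]$; since this module structure factors through $\bbZ[\beta^{\pm 1}] \to R(G)[\beta^{\pm 1}] \to R(H)[\beta^{\pm 1}]$, the same finite generating set exhibits $\pi_*^H(KU_G)$ as finitely generated over $\pi_*^G(KU_G)$. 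Applying \cref{lem:abstract_end_finite} then yields that $\Modd{G}(KU_G)$ is Noetherian, i.e.\ that $\Perff{G}(KU_G)$ is Noetherian.

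There is no genuine obstacle to this argument: the only non-formal inputs are the elementary facts that $R(G)$ and $R(H)$ are finitely generated over $\bbZ$ and the classical identification $\res_H KU_G \simeq KU_H$ together with the fact that it induces restriction of representations on $\pi_0$; everything else is bookkeeping around \cref{lem:abstract_end_finite}. One could alternatively invoke Segal's theorems that $R(G)$ is Noetherian and that $R(H)$ is module-finite over $R(G)$ via restriction, but the $\bbZ$-module finiteness makes this superfluous. The only point I would want to state carefully is that the restriction map on equivariant homotopy groups really does agree with the restriction map on representation rings in degree zero, which is standard.
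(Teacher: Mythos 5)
Your argument is correct and follows essentially the same route as the paper: both reduce the statement to \cref{lem:abstract_end_finite} by checking that $\pi_*^G(KU_G)\cong R(G)[\beta^{\pm 1}]$ is graded Noetherian and that $\pi_*^H(KU_G)\cong R(H)[\beta^{\pm 1}]$ is a finitely generated $\pi_*^G(KU_G)$-module via restriction, for every subgroup $H\subseteq G$. The only difference is in how these inputs are justified: the paper cites Segal's theorems that $R(G)$ is Noetherian and $R(H)$ is module-finite over $R(G)$, whereas you deduce both from the elementary fact that $R(G)$ and $R(H)$ are finitely generated free $\bbZ$-modules, which indeed suffices since $G$ is finite.
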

\begin{proof}
For any finite group $G$, the ring $\pi_*^G(KU_G)$ is Noetherian. Moreover, if $H$ is a subgroup of $G$, then $\pi_*^H(KU_G) \cong \pi_*^H(KU_H)$ is a finitely generated $\pi_*^G(KU_G)$-module. Both of these statements were proven in \cite[Proposition 3.2 and Corollary 3.3]{Segal68a}. The result then follows from \cref{lem:abstract_end_finite}. 
\end{proof}

We will also need the following result, proved in \cite[Proposition 5.6]{MathewNaumannNoel2019}. 

\begin{Lem}\label{lem:nilpotence-kug}
The $G$-spectrum $KU_G$ has derived defect base equal to the family of cyclic subgroups of $G$.
\end{Lem}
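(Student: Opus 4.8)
The plan is to identify the derived defect base of $KU_G$ with the collection of subgroups $H \subseteq G$ for which the geometric fixed points $\Phi^H KU_G$ are nonzero, and then to show that this collection is precisely the family of cyclic subgroups. The first reduction is immediate from \cref{lem:gfp_vanishing}: a $G$-spectrum $R$ is $\mathcal F$-nilpotent exactly when $\Phi^H R = 0$ for all $H \notin \mathcal F$, so once we know that $\{H : \Phi^H KU_G \neq 0\}$ is a family, it is automatically the minimal one, i.e.\ the derived defect base. Since subgroups and conjugates of cyclic groups are again cyclic, it therefore suffices to prove $\Phi^H KU_G \neq 0 \iff H$ is cyclic.

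To make the computation intrinsic to $H$, I would use that equivariant $K$-theory is compatible with restriction, $\res_H KU_G \simeq KU_H$ (for instance because $KU$ refines to a global spectrum, see \cref{ex:borel-are-global}), together with the definition $\Phi^H_G = \Phi^H_H \circ \res_H$ from \cref{def:gfp_functor}; this gives $\Phi^H KU_G \simeq \Phi^H KU_H$. So the lemma reduces to the assertion that for every finite group $H$ one has $\Phi^H KU_H \neq 0$ if and only if $H$ is cyclic.

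Next, compute $\Phi^H KU_H$ using \cref{rem:explicitgeomfixedpoints}: $\Phi^H KU_H \simeq (KU_H \otimes \widetilde{E}\mathcal{P}_H)^H$, and $\widetilde{E}\mathcal{P}_H \simeq \colim_n S^{n\bar\rho}$ where $\bar\rho$ is the reduced regular representation of $H$ (which satisfies $\bar\rho^K = 0$ iff $K = H$). Since $KU_H$ is complex orientable in the equivariant sense, Thom isomorphisms identify each $(\Sigma^{n\bar\rho} KU_H)^H$ with $\pi^H_\ast KU_H = R(H)[\beta^{\pm 1}]$ and the transition maps in the colimit with multiplication by the $K$-theoretic Euler class $e \coloneqq e(\bar\rho) = \lambda_{-1}(\bar\rho) \in R(H)$ (the choice of dualizing $\bar\rho$ is irrelevant below); hence $\pi_\ast \Phi^H KU_H \cong R(H)[\beta^{\pm 1}][e^{-1}]$. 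As $R(H)$ embeds into the ring of complex-valued class functions on $H$, it is reduced, so $\Phi^H KU_H \neq 0$ if and only if $e \neq 0$ in $R(H)$.

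Finally, evaluate $e$ on conjugacy classes. By the identity $\sum_i (-1)^i \operatorname{tr}(g \mid \Lambda^i W) = \det(1 - g \mid W)$, the value of $e(\bar\rho)$ at $g \in H$ is $\det(1 - g \mid \bar\rho)$. Spelling out the eigenvalues of $g$ on the regular representation (the roots of unity of order dividing $\operatorname{ord}(g)$, each with multiplicity $|H|/\operatorname{ord}(g)$) and deleting one trivial eigenvalue, a one-line computation gives $\det(1 - g \mid \bar\rho) = 0$ unless $\langle g\rangle = H$, in which case it equals $\pm|H| \neq 0$. Thus if $H$ is not cyclic then $e$ vanishes at every conjugacy class, hence $e = 0$ in the reduced ring $R(H)$ and $\Phi^H KU_H = 0$; while if $H = C_n$ is cyclic then $e \neq 0$, so $\Phi^H KU_H \neq 0$ and $\pi_0 \Phi^H KU_H$ is a nontrivial localization of $R(C_n)$. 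Combined with the first paragraph, $\{H : \Phi^H KU_G \neq 0\}$ is exactly the family of cyclic subgroups of $G$, which is therefore the derived defect base. I do not expect a serious obstacle here; the only point requiring care is the bookkeeping in the third step — verifying that $KU_H$ is complex orientable in the form needed and that the Thom-twisted colimit really computes the localization of $R(H)[\beta^{\pm 1}]$ at the Euler class of $\bar\rho$, including matching up the $RO(H)$-grading — after which the representation-theoretic core is entirely elementary.
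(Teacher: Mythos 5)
Your argument is correct, but it takes a different route from the paper: the paper proves this lemma by citation, namely to \cite[Proposition 5.6]{MathewNaumannNoel2019}, whereas you give a self-contained proof. Your reduction is sound: by \cref{lem:gfp_vanishing} the derived defect base is the smallest family containing $\{H : \Phi^H KU_G \neq 0\}$, and since the cyclic subgroups do form a family it suffices to show $\Phi^H KU_H \neq 0$ exactly for $H$ cyclic (using $\res_H KU_G \simeq KU_H$). Your computation of $\Phi^H KU_H$ via $\widetilde{E}\mathcal{P}_H \simeq \colim_n S^{n\bar\rho}$, equivariant Bott periodicity/complex stability, and Thom isomorphisms, giving $\pi_*\Phi^H KU_H \cong R(H)[\beta^{\pm 1}][e(\bar\rho)^{-1}]$, is the classical localization theorem of tom Dieck \cite[Proposition 7.7.7]{tomDieck1979}, which the paper itself invokes later in \cref{lem:gfp_equivaiant_k-theory}; note $\bar\rho$ is self-dual, so your parenthetical about dualizing is moot, and $\pi_*^H$ commutes with the filtered colimit because the unit of $\Sp_H$ is compact. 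The character computation $\det(1-g\mid\bar\rho) = 0$ unless $\langle g\rangle = H$, in which case it equals $|H|$, together with reducedness of $R(H)$ (so nonzero implies non-nilpotent, hence the localization is nonzero), is correct and elementary. What your route buys is independence from \cite{MathewNaumannNoel2019} and it simultaneously reproves the computational input of \cref{lem:gfp_equivaiant_k-theory} (with no circularity, since you never use that lemma); what the paper's citation buys is brevity and offloading the Thom-isomorphism bookkeeping you rightly flag as the only delicate point.
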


\begin{Rem}
By equivariant Bott periodicity, $KU_G$ is \emph{complex stable} (\cite[Section 4]{Greenlees1999Equivariant}): for every complex representation $V$, there are compatible isomorphisms
\[
\pi^G_{k + |V|}(S^V \wedge X) \cong \pi^G_k(X).
\]
For such complex stable equivariant theories, geometric fixed points are given by inverting suitable classes in {\em integer} degrees, and this is the key ingredient in the computational part of the following result.
\end{Rem}

\begin{Lem}\label{lem:gfp_equivaiant_k-theory}
Let $G$ be a finite group. The homotopy of the $G$-geometric fixed points of $KU_G$ are given by
\[
\pi_*(\Phi^GKU_G) = \begin{cases}
    \pi_*(KU)[1/n,\zeta_n] & G \cong C_n\\
    0 &  \text{otherwise}, 
\end{cases}
\]
where $\zeta_n$ denotes a primitive $n$-th root of unity. In particular, the ring $\pi_*(\Phi^G(KU_G))$ is even periodic, regular and Noetherian.
\end{Lem}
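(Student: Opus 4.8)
The plan is to reduce the computation to the non-equivariant geometric fixed points of $KU_G$ and then identify those with a localization of the representation ring. First I would invoke the general formula of \cref{rem:explicitgeomfixedpoints}, which gives $\Phi^G KU_G \simeq (KU_G \otimes \widetilde{E}\mathcal{P}_G)^G$, together with the explicit description of geometric fixed points for complex stable equivariant theories: because $KU_G$ is complex stable (equivariant Bott periodicity), $\Phi^G$ is obtained from $\pi_*^G(KU_G) \cong R(G)[\beta^{\pm 1}]$ by inverting the Euler classes $e(V)$ of all complex representations $V$ with $V^G = 0$, in \emph{integer} degrees (so no graded subtleties arise). This reduces the statement to an algebraic computation inside $R(G)[\beta^{\pm 1}]$.

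Next I would run this computation. If $G$ is not cyclic, then the Euler classes of the representations with no trivial summand generate the unit ideal after inverting them: concretely, by the usual argument one can see that some product of such Euler classes becomes a unit forcing $\Phi^G KU_G \simeq 0$; alternatively this already follows from \cref{lem:nilpotence-kug}, since the derived defect base of $KU_G$ is the family of cyclic subgroups, so $\Phi^G KU_G = 0$ whenever $G$ is not cyclic by \cref{lem:gfp_vanishing}. For $G \cong C_n$, I would compute directly: $R(C_n) \cong \mathbb{Z}[x]/(x^n-1)$ with $x$ the defining one-dimensional representation, the relevant Euler classes are $1 - x^k$ for $1 \le k \le n-1$, and inverting all of them in $\mathbb{Z}[x]/(x^n-1)$ yields $\mathbb{Z}[1/n, \zeta_n]$, the ring obtained by inverting $n$ and adjoining a primitive $n$-th root of unity (one checks that $\prod_{k=1}^{n-1}(1-\zeta_n^k) = n$ up to units after passing to the cyclotomic quotient, and that the other idempotent summands of $\mathbb{Z}[x]/(x^n-1)$ are killed since they contain a factor $1-x^k$ mapping to zero). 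Carrying the Bott class along gives $\pi_*(\Phi^{C_n} KU_{C_n}) \cong \pi_*(KU)[1/n, \zeta_n] = \mathbb{Z}[1/n,\zeta_n][\beta^{\pm 1}]$.

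Finally, for the last sentence I would observe that $\mathbb{Z}[1/n,\zeta_n]$ is a localization of the ring of integers in the cyclotomic field $\mathbb{Q}(\zeta_n)$ in which the (finitely many) primes dividing $n$ have been inverted; such a ring is a regular Noetherian ring of Krull dimension at most one (indeed a Dedekind domain, hence regular), and adjoining an invertible degree-$2$ class preserves these properties and makes the graded ring even periodic. Thus $\pi_*(\Phi^G KU_G)$ is even periodic, regular, and Noetherian in all cases (the zero ring trivially so). The main obstacle I anticipate is the bookkeeping in the Euler-class computation for $C_n$ — keeping careful track of which classes must be inverted and verifying that the resulting localization of $\mathbb{Z}[x]/(x^n-1)$ is exactly $\mathbb{Z}[1/n,\zeta_n]$ rather than something larger — but this is a finite and standard piece of commutative algebra once the complex-stability input is in place.
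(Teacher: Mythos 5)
Your proposal is correct and follows the same skeleton as the paper's proof: vanishing for non-cyclic $G$ via the derived defect base (\cref{lem:nilpotence-kug} together with \cref{lem:gfp_vanishing}), and regularity via the fact that $\mathbb{Z}[\zeta_n]$ is the ring of integers in $\mathbb{Q}(\zeta_n)$, hence Dedekind, with localization at $1/n$ preserving regularity. The one genuine difference is in the cyclic case: where the paper simply cites tom Dieck (Proposition 7.7.7) and Greenlees for the identification $\pi_*(\Phi^{C_n}KU_{C_n}) \cong \pi_*(KU)[1/n,\zeta_n]$, you unpack that citation and carry out the Euler-class localization of $R(C_n)[\beta^{\pm 1}] \cong \mathbb{Z}[x]/(x^n-1)[\beta^{\pm1}]$ by hand, which makes the argument self-contained; your computation (killing the components $\mathbb{Z}[\zeta_d]$ for $d<n$ because $1-x^d$ maps to zero there, and using $\prod_{k=1}^{n-1}(1-\zeta_n^k)=n$ to see that inverting the remaining Euler classes is the same as inverting $n$) is exactly what the cited result encapsulates, and complex stability is precisely the input the paper flags as "the key ingredient in the computational part." One phrase in your non-cyclic alternative is off, though: saying that "some product of such Euler classes becomes a unit forcing $\Phi^G KU_G\simeq 0$" does not by itself force vanishing --- every inverted element becomes a unit. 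The correct version of that argument is that for non-cyclic $G$ one can find, for each $g\in G$, a nontrivial representation $V$ with $V^G=0$ but $V^{\langle g\rangle}\neq 0$, so the character of the corresponding product of Euler classes vanishes at every element and the product is already \emph{zero} in $R(G)$; inverting a zero element then collapses the ring. Since you anyway fall back on the nilpotence lemma, which is the paper's argument, this does not affect the correctness of the proposal.
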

\begin{proof}
The nilpotence result of \Cref{lem:nilpotence-kug} implies that $\Phi^GKU_G = 0$ unless $G \cong C_n$ (\Cref{lem:gfp_vanishing}). In the case $G$ is a cyclic group, the computation follows from \cite[Proposition 7.7.7]{tomDieck1979} (see also \cite[Lemma 3.1]{Greenlees1999Equivariant}). For the regularity claim, we use the explicit description observing that $\mathbb Z[\zeta_n]\subseteq\mathbb Q(\zeta_n)$ is the full ring of integers, in particular it is a Dedekind ring, and thus regular.
\end{proof}

\begin{Prop}\label{prop:stratification_ku_g_gfp}
For any finite group $G$, the category $\Modd{}(\Phi^GKU_G)$ is cohomologically stratified. In particular, the comparison map 
\[
\xymatrix{\rho_0^G \colon \Spc(\Perf(\Phi^GKU_G))\ar[r]^-{\cong} &\Spec(\pi_0(\Phi^GKU_G))}
\] 
is a homeomorphism. 
\end{Prop}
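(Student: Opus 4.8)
The plan is to reduce the statement to an application of the Dell'Ambrogio--Stanley criterion, exactly as in the proof of \cref{lem:gfp_balmer} for Borel-equivariant Lubin--Tate theory. By \cref{lem:gfp_equivaiant_k-theory} the graded ring $\pi_*(\Phi^GKU_G)$ is either zero or of the form $\pi_*(KU)[1/n,\zeta_n]$ for $G\cong C_n$; in the latter case it is even periodic, and the degree-zero part $\Z[1/n,\zeta_n]$ is a localization of the ring of integers in $\Q(\zeta_n)$, hence a Dedekind ring, so in particular a regular Noetherian ring. Being even periodic with regular Noetherian $\pi_0$, the graded ring $\pi_*(\Phi^GKU_G)$ is regular as a graded commutative ring. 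The zero case is trivial (both sides are empty), so we may assume $G\cong C_n$.

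First I would invoke \cite[Theorem 1.1 and Theorem 1.3]{DellAmbrogioStanley16} (with the correction in \cite{DellAmbrogioStanley16erratum}): these say that for an affine weakly regular tt-category the comparison map $\rho$ is a homeomorphism onto $\Spec^h$ of the graded endomorphism ring, the category is Noetherian, and it is stratified by this action. Here $\cat T=\Mod(\Phi^GKU_G)$ is affine (the unit generates) and weakly regular since $\pi_*(\Phi^GKU_G)$ is graded regular, so the hypotheses hold. This yields that $\cat T$ is cohomologically stratified in the sense of \cref{not:cohomstrat}. Finally, since $\pi_*(\Phi^GKU_G)$ is $2$-periodic and evenly graded, \cref{rem:comparison_map} upgrades the homeomorphism $\rho\colon\Spc(\Perf(\Phi^GKU_G))\xrightarrow{\cong}\Spec^h(\pi_*(\Phi^GKU_G))$ to the ungraded statement $\rho_0\colon\Spc(\Perf(\Phi^GKU_G))\xrightarrow{\cong}\Spec(\pi_0(\Phi^GKU_G))$, giving the "in particular" clause.

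There is no serious obstacle here: the argument is a verbatim transcription of \cref{lem:gfp_balmer}, with \cref{thm:regular} (regularity of $\pi_0\Phi^A\underline{E}$) replaced by the elementary observation that $\Z[1/n,\zeta_n]$ is Dedekind, which is already recorded in the proof of \cref{lem:gfp_equivaiant_k-theory}. The only points worth stating carefully are (i) the reduction to the case $G\cong C_n$ via \cref{lem:nilpotence-kug} and \cref{lem:gfp_vanishing}, and (ii) the passage from graded to ungraded regularity, which is immediate from even periodicity. If I wanted to be maximally efficient I would simply write: \emph{``This is proven exactly as \cref{lem:gfp_balmer}, using \cref{lem:gfp_equivaiant_k-theory} in place of \cref{thm:regular}.''}
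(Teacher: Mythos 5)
Your proposal is correct and coincides with the paper's own argument: the paper's proof of \cref{prop:stratification_ku_g_gfp} likewise cites \cref{lem:gfp_equivaiant_k-theory} for even periodicity, regularity, and Noetherianity of $\pi_*(\Phi^GKU_G)$, applies \cite[Theorems 1.1 and 1.3]{DellAmbrogioStanley16}, and invokes \cref{rem:comparison_map} to pass from the homogeneous to the ungraded spectrum. Your extra remarks (handling the zero case and the reduction to $G\cong C_n$) are harmless elaborations of what the cited lemma already covers.
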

\begin{proof}
By \Cref{lem:gfp_equivaiant_k-theory}, the ring $\pi_*(\Phi^GKU_G)$ is even periodic, regular and Noetherian.
 Now apply \cite[Theorem 1.1 and 1.3]{DellAmbrogioStanley16} noting that we can use the spectrum of degree zero elements instead of the graded homogeneous spectrum (see \Cref{rem:comparison_map}). 
\end{proof}

\begin{Thm}\label{thm:kug_strat}
For any finite group $G$, the $tt$-category $\Modd{G}(KU_G)$ is stratified. 
\end{Thm}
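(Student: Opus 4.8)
The plan is to apply the equivariant stratification descent result \cref{thm:eqstratdescent} with $R = KU_G$. For this I need to exhibit a family $\mathcal F$ of subgroups of $G$ for which $KU_G$ is $\mathcal F$-nilpotent, and then verify the two hypotheses of \cref{thm:eqstratdescent} for every $H \in \mathcal F$: namely that $\Spc(\Perf(\Phi^H KU_G))$ is a Noetherian space, and that $\Mod(\Phi^H KU_G)$ is stratified.

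First, by \cref{lem:nilpotence-kug}, the derived defect base of $KU_G$ is the family $\mathcal C$ of cyclic subgroups of $G$; in particular $KU_G$ is $\mathcal C$-nilpotent. So it suffices to check the two conditions for each cyclic subgroup $H \subseteq G$. But $\Phi^H KU_G = \Phi^H(\res_H KU_G) = \Phi^H(KU_H)$ (using $\res_H KU_G \simeq KU_H$, as in the proof of \cref{lem:end-finiteness-kug}), so the relevant non-equivariant ring spectrum is $\Phi^H KU_H$ for $H$ cyclic. By \cref{prop:stratification_ku_g_gfp}, the category $\Mod(\Phi^H KU_H)$ is cohomologically stratified, and in particular stratified; moreover its Balmer spectrum is homeomorphic to $\Spec(\pi_0(\Phi^H KU_H))$, which is Noetherian since $\pi_0(\Phi^H KU_H) \cong \mathbb{Z}[1/|H|,\zeta_{|H|}]$ is Noetherian (indeed a Dedekind domain, or $0$) by \cref{lem:gfp_equivaiant_k-theory}. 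Hence both hypotheses (a) and (b) of \cref{thm:eqstratdescent} hold for the family $\mathcal C$, and \cref{thm:eqstratdescent} yields that $\Modd{G}(KU_G)$ is stratified with Noetherian Balmer spectrum $\Spc(\Perff{G}(KU_G))$.

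This argument is essentially formal once the inputs are in place; there is no genuine obstacle, since the hard work has already been done — the nilpotence/defect-base computation is \cref{lem:nilpotence-kug}, the regularity and evenness of the geometric fixed point rings is \cref{lem:gfp_equivaiant_k-theory}, and the resulting stratification of the non-equivariant pieces is \cref{prop:stratification_ku_g_gfp}. The one point requiring a line of care is the identification $\Phi^H KU_G \simeq \Phi^H KU_H$ together with the observation that $\Phi^H$ of a cyclic group is what \cref{lem:gfp_equivaiant_k-theory} computes; this is immediate from \cref{def:gfp_functor} and the fact that $\res_H KU_G \simeq KU_H$. (The sharper statement $\Spc(\Perff{G}(KU_G)) \cong \Spec(\pi_0 KU_G) \cong \Spec(R(G))$ and cohomological stratification, as recorded in \cref{thmx:examples}(b), then follows by the same strategy as for Lubin–Tate theory: one checks the hypotheses of \cref{cor:quillen_stratification} using \cref{lem:end-finiteness-kug}, \cref{prop:stratification_ku_g_gfp}, and the classical computation of $R(G)$ as a ring of class functions, but this is beyond what the present statement asks for.)
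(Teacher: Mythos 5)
Your proposal is correct and follows essentially the same route as the paper: the paper's proof likewise invokes \cref{thm:eqstratdescent} together with $\Phi^H KU_G \simeq \Phi^H KU_H$ and \cref{prop:stratification_ku_g_gfp} (which, via \cref{lem:gfp_equivaiant_k-theory}, also gives the Noetherian spectra). Your explicit reduction to the family of cyclic subgroups via \cref{lem:nilpotence-kug} is a harmless variant, since the hypotheses of \cref{thm:eqstratdescent} hold trivially for subgroups with vanishing geometric fixed points anyway.
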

\begin{proof}
Note that the restriction of $KU_G$ to an $H$-spectrum is $KU_H$. By \Cref{thm:eqstratdescent}, the theorem is then a consequence of \Cref{prop:stratification_ku_g_gfp}, since $\Phi^HKU_G \simeq \Phi^HKU_H$ for any subgroup $H$ in $G$. 
\end{proof}

\begin{Rem}
It remains to show how the Balmer spectrum in \Cref{thm:kug_strat} is determined by the representation ring of $G$. The next result resolves this in the case of a cyclic group.
\end{Rem}

\begin{Lem}\label{lem:kug_cyclic_balmer}
For any cyclic group $C_n$, the comparison map
\[
\xymatrix{\rho_0^{C_n}\colon \Spc(\Perff{C_n}(KU_{C_n})) \ar[r]^-{\cong} & \Spec(R(C_n))}
\]
is a homeomorphism. 
\end{Lem}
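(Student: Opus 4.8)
The plan is to run the same argument already used for Borel-equivariant Lubin--Tate theory in \Cref{lem:Balmer-spectrum-E_A}, now for $R = KU_{C_n}$. First I would use \Cref{lem:nilpotence-kug}: the derived defect base of $KU_{C_n}$ is the family $\mathcal{C}$ of cyclic subgroups of $C_n$, which for $C_n$ is the family of all subgroups. By \Cref{cor:eqspc_surjectivity}, the geometric fixed point functor $\Phi \colon \Perff{C_n}(KU_{C_n}) \to \prod_{H \subseteq C_n}\Perf(\Phi^H KU_{C_n})$ induces a surjection on Balmer spectra.

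Next I would set up the naturality square for Balmer's comparison map \cite[Corollary 5.6]{Balmer10b}, exactly as in the proof of \Cref{lem:Balmer-spectrum-E_A}:
\[
\begin{tikzcd}[ampersand replacement=\&]
	{\Spc(\Perff{C_n}(KU_{C_n}))} \& {\bigsqcup_{H \subseteq C_n} \Spc(\Perf(\Phi^{H}KU_{C_n}))} \\
	{\Spec(R(C_n))} \& {\bigsqcup_{H \subseteq C_n}\Spec(\pi_0(\Phi^{H}KU_{C_n})).}
	\arrow["{\rho_0^{C_n}}"', from=1-1, to=2-1]
	\arrow["\bigsqcup\rho_0", from=1-2, to=2-2]
	\arrow["{\Spc(\Phi)}"', from=1-2, to=1-1]
	\arrow["{\Spec(\Phi_0)}", from=2-2, to=2-1]
\end{tikzcd}
\]
The right vertical map is a homeomorphism by \Cref{prop:stratification_ku_g_gfp}. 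The top horizontal map is surjective by the previous paragraph. So to conclude that $\rho_0^{C_n}$ is bijective, it suffices to show the bottom map $\Spec(\Phi_0)$ is a bijection. Concretely, by \Cref{lem:gfp_equivaiant_k-theory} the source is $\bigsqcup_{d \mid n}\Spec(\mathbb{Z}[1/d,\zeta_d])$ (with $\Spec(\mathbb{Z})$ in the $d=1$ summand), and the map to $\Spec(R(C_n))$ is induced by the geometric fixed point ring maps $R(C_n) = \pi_0 KU_{C_n} \to \pi_0\Phi^{C_d} KU_{C_n}$. I would verify bijectivity using Segal's description $R(C_n) \cong \mathbb{Z}[t]/(t^n-1)$: rationally this splits as $\prod_{d\mid n}\mathbb{Q}(\zeta_d)$, each factor $\mathbb{Z}[t]/(\Phi_d(t))$ is the ring of integers $\mathbb{Z}[\zeta_d]$ in $\mathbb{Q}(\zeta_d)$, and inverting the relevant Euler classes (the differences $\zeta_d^i - 1$ for $0 < i$, which are units away from the primes dividing $d$, cf.\ \Cref{ex:X_A-cyclic-case}) identifies $\Spec(\pi_0\Phi^{C_d}KU_{C_n})$ with the locally closed subset $X^\circ_{C_d}$ of $\Spec(R(C_n))$ consisting of primes not coming from any proper subgroup. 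These subsets partition $\Spec(R(C_n))$ by the same meet-semilattice argument as in \Cref{cor:etheoryzariskistratification} and \Cref{prop:abecohomdecomposition}; hence $\Spec(\Phi_0)$ is a bijection. Finally, $\Perff{C_n}(KU_{C_n})$ is Noetherian by \Cref{lem:end-finiteness-kug}, so \Cref{prop:bijectioniffhomeomorphism} upgrades the bijection $\rho_0^{C_n}$ to a homeomorphism.

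The main obstacle I anticipate is the explicit identification of the bottom map $\Spec(\Phi_0)$ as a bijection onto $\Spec(R(C_n))$: one has to track exactly which primes of $R(C_n) \cong \mathbb{Z}[t]/(t^n-1)$ survive to each geometric fixed point ring $\mathbb{Z}[1/d, \zeta_d]$ and check that these pieces cover $\Spec(R(C_n))$ without overlap. This is essentially the cyclotomic decomposition of $\mathbb{Z}[t]/(t^n-1)$, gluing $\operatorname{Spec}$ of the cyclotomic integer rings along their fibers over primes dividing $n$, and the relevant bookkeeping with Euler classes is already modeled in \Cref{ex:X_A-cyclic-case}; so while somewhat intricate, it should be entirely routine commutative algebra rather than a conceptual difficulty.
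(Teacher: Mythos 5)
Your proposal is correct and follows the paper's proof in its overall structure: the same naturality square for $\rho_0$, surjectivity of the top map via \cref{cor:eqspc_surjectivity}, the homeomorphism on the right from \cref{prop:stratification_ku_g_gfp}, and the upgrade from bijection to homeomorphism via \cref{lem:end-finiteness-kug} and \cref{prop:bijectioniffhomeomorphism}. The only place where you diverge is the one content-bearing step, bijectivity of the bottom map $\Spec(\Phi_0)$. The paper gives a very short direct argument: the map is induced by $\mathbb{Z}[X]/(X^n-1)\to \prod_{d\mid n}\mathbb{Z}[\tfrac1d,\zeta_d]$, $X\mapsto(\zeta_d)_d$, and bijectivity on spectra amounts to the observation that any homomorphism $\omega\colon \mathbb{Z}[X]/(X^n-1)\to\Omega$ into an algebraically closed field factors uniquely through exactly one component, namely $\pi_d$ for $d$ the multiplicative order of $\omega(X)$; the paper also remarks this follows for arbitrary finite groups from Segal and Bojanowska. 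You instead transplant the strategy of \cref{cor:etheoryzariskistratification}, \cref{prop:abecohomdecomposition}, and \cref{lem:geometric_fixed_points_as_open_subscheme}: identify each $\Spec(\pi_0\Phi^{C_d}KU_{C_n})$ with a locally closed stratum $X^\circ_{C_d}\subseteq\Spec(R(C_n))$ via Euler-class localization, and check the strata partition $\Spec(R(C_n))$ using the meet-semilattice property of the closed subsets $X_{C_d}$. This works (the needed analogue of \cref{prop:intersection}, namely $(X^{d_1}-1)+(X^{d_2}-1)=(X^{\gcd(d_1,d_2)}-1)$ in $\mathbb{Z}[X]/(X^n-1)$, and the identification of the vanishing loci of the Euler classes $1-X^i$ with the $X_{C_{\gcd(i,d)}}$, are routine), but note these verifications are not literally covered by the cited Lubin--Tate statements and would need to be written out; the paper's field-valued-point argument avoids this bookkeeping entirely, while your route has the mild advantage of exhibiting the strata decomposition of $\Spec(R(C_n))$ explicitly, in the spirit of \cref{ex:X_A-cyclic-case}.
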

\begin{proof}
Arguing as in \Cref{lem:Balmer-spectrum-E_A}, we consider the commutative diagram
\[\begin{tikzcd}
	{\Spc(\Perf_{C_n}(KU_{C_n}))} & {\bigsqcup_{A \subseteq C_n} \Spc(\Perf(\Phi^{A}(KU_{C_n})))} \\
	{\Spec(R(C_n))} & {\bigsqcup_{A \subseteq C_n}\Spec(\pi_0(\Phi^{A}(KU_{C_n}))).}
	\arrow["{\rho_0^{C_n}}"', from=1-1, to=2-1]
	\arrow["\bigsqcup\rho_0", from=1-2, to=2-2]
	\arrow["{\Spc(\Phi)}"', from=1-2, to=1-1]
	\arrow["{\Spec(\Phi_0)}", from=2-2, to=2-1]
\end{tikzcd}\]
We first prove that the bottom map is bijective. This can be deduced from work of Segal and Bojanowska for general finite groups, see  \cite[Proposition 3.7]{Segal68a} and \cite[Theorem 4.13]{Bojanowska1983spectrum}.
But the special case at hand of a cyclic group admits the following direct argument. Our map is induced by the ring homomorphism
\[ 
\pi \colon R(C_n)\cong \mathbb Z[X]/(X^n-1)\to \prod\limits_{1\leq d\mid n}\mathbb Z[\frac 1d,\zeta_d]\cong \prod\limits_{A\subseteq C_n}\pi_0(\Phi^A(KU_{C_n}))
\]
sending $X$ to the tuple $(\zeta_d)_{d\mid n}$ of $d$-th primitive roots of unity. To say this induces a bijection on Zariski spectra is equivalent to saying that 
every ring homomorphism $\omega\colon\mathbb Z[X]/(X^n-1)\to \Omega$
into an algebraically closed field $\Omega$ factors uniquely through exactly one of the component maps $\pi_d \colon \mathbb Z[X]/(X^n-1)\to \mathbb Z[\frac 1d,\zeta_d]$ of $\pi$. Indeed, denoting 
$x \coloneqq \omega(X)\in\Omega^*$, the map $\omega$ factors uniquely through $\pi_d$ for 
$d$ the order of $x\in\Omega^*$.

It follows that the bottom map in the commutative diagram is a bijection. Moreover, the right-hand vertical map is a homeomorphism by \Cref{prop:stratification_ku_g_gfp}, while the top horizontal map is a surjection by \Cref{cor:eqspc_surjectivity}. It follows that $\rho_0^{C_n}$ is a bijection, and hence a homeomorphism by \Cref{prop:bijectioniffhomeomorphism}, which is applicable by \Cref{lem:end-finiteness-kug}. 
\end{proof}

\begin{Lem}\label{lem:generalspc_ku_g}
For any finite group $G$, the comparison map
\[
\xymatrix{\rho_0^G\colon \Spc(\Perff{G}(KU_G)) \ar[r]^-{\cong} & \Spec(R(G))}
\]
is a homeomorphism. 
\end{Lem}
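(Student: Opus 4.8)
The plan is to bootstrap the general case from the cyclic case (\cref{lem:kug_cyclic_balmer}) exactly as \cref{lem:generalspc} bootstrapped from \cref{lem:Balmer-spectrum-E_A}, using the Quillen stratification package. First I would invoke \cref{lem:nilpotence-kug}, which says that $KU_G$ is $\mathcal{C}$-nilpotent for $\mathcal{C}$ the family of cyclic subgroups of $G$. This puts us in the setting of \cref{cor:quillen_stratification}, and the goal is to check its two hypotheses. Hypothesis (1) requires that for every cyclic subgroup $H$ of $G$ the comparison map $\rho_0^H\colon \mathcal{V}(KU_G,H)\to\Spec(\pi_0^H(KU_G))$ is a homeomorphism; via \cref{prop:base-change-naturality} this identifies with $\Spc(\Perf_H(KU_H))\to\Spec(R(H))$, which is precisely the content of \cref{lem:kug_cyclic_balmer} (using $\res_H KU_G\simeq KU_H$ and $\pi_0^H(KU_G)\cong\pi_0^H(KU_H)\cong R(H)$). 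Hypothesis (2) requires $\pi_0^G(KU_G)=R(G)$ to be Noetherian and $R(H)$ to be a finitely generated $R(G)$-module for all $H$; both are recorded in \cref{lem:end-finiteness-kug} (citing \cite[Proposition 3.2 and Corollary 3.3]{Segal68a}).

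With both hypotheses verified, \cref{cor:quillen_stratification}$(a)$ yields a commutative square of homeomorphisms whose right-hand vertical map is $\rho_0^G\colon \Spc(\Perf_G(KU_G))\to\Spec(R(G))$, which is the assertion of the lemma. I would write the proof in essentially three sentences: apply \cref{lem:nilpotence-kug} to identify the nilpotence family, cite \cref{lem:kug_cyclic_balmer} and \cref{lem:end-finiteness-kug} to check the hypotheses of \cref{cor:quillen_stratification}$(a)$, and conclude. Concretely:

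\begin{proof}
By \Cref{lem:nilpotence-kug}, the $G$-spectrum $KU_G$ is $\mathcal{C}$-nilpotent, where $\mathcal{C}$ is the family of cyclic subgroups of $G$. We verify the hypotheses of \Cref{cor:quillen_stratification}$(a)$ for this family. For any $H \in \mathcal{C}$, we have $\res_H KU_G \simeq KU_H$, so $\pi_0^H(KU_G) \cong \pi_0^H(KU_H) \cong R(H)$ and, by \Cref{prop:base-change-naturality}, the comparison map $\rho_0^H \colon \mathcal{V}(KU_G,H) \to \Spec(\pi_0^H(KU_G))$ is identified with $\rho_0^H\colon \Spc(\Perff{H}(KU_H)) \to \Spec(R(H))$, which is a homeomorphism by \Cref{lem:kug_cyclic_balmer}. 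Moreover, by \cite[Proposition 3.2 and Corollary 3.3]{Segal68a}, the ring $\pi_0^G(KU_G) \cong R(G)$ is Noetherian and $\pi_0^H(KU_G) \cong R(H)$ is a finitely generated $R(G)$-module for every $H \subseteq G$. Thus the hypotheses of \Cref{cor:quillen_stratification}$(a)$ are satisfied, and part $(a)$ of that corollary provides in particular that $\rho_0^G\colon \Spc(\Perff{G}(KU_G)) \to \Spec(R(G))$ is a homeomorphism.
\end{proof}

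The main obstacle, such as it is, lies entirely upstream in \cref{lem:kug_cyclic_balmer} and the finiteness input from Segal's work; the present lemma is a formal consequence of the Quillen descent machinery once those are in hand, so I anticipate no real difficulty here beyond making sure the identifications $\res_H KU_G \simeq KU_H$ and $\Phi^H KU_G \simeq \Phi^H KU_H$ are stated cleanly (the latter is needed implicitly through \cref{lem:kug_cyclic_balmer}, but is already used in the proof of \cref{thm:kug_strat}). One could alternatively route the argument through \cref{thm:globalquillenstrat} using that $KU_G$ is globally refined (\cref{ex:borel-are-global}), but the \cref{cor:quillen_stratification} route is the more direct one and mirrors the Lubin--Tate case verbatim.
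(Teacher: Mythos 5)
Your proposal is correct and is essentially the paper's own argument: the paper likewise cites Segal's results for the Noetherian and finite-generation hypotheses, combines them with \cref{lem:nilpotence-kug} and \cref{lem:kug_cyclic_balmer}, and concludes by applying \cref{cor:quillen_stratification}$(a)$. No gaps; the extra remarks about $\res_H KU_G\simeq KU_H$ are fine but not needed beyond what the paper already records.
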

\begin{proof}
  Recall from~\cite[Proposition 3.2, Corollary 3.3]{Segal68a} that $\pi_0^G(KU_G)\cong R(G)$ is a Noetherian ring and that for all subgroup $H\subseteq G$, the $R(G)$-module $\pi_0^H(KU_G)\cong R(H)$ is finitely generated. Combining this with \Cref{lem:nilpotence-kug} and \Cref{lem:kug_cyclic_balmer} we see that the assumptions of \Cref{cor:quillen_stratification}(a) are satisfied so the result applies to give the claim.
\end{proof}

Putting together \Cref{thm:kug_strat,lem:generalspc_ku_g} we deduce the desired stratification result for equivariant $K$-theory.

\begin{Thm}\label{thm:kug-stratification}
For any finite group $G$, the category $\Modd{G}(KU_G)$ is cohomologically stratified.
\end{Thm}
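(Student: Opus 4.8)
The plan is to mirror the proof strategy used for Borel-equivariant Lubin--Tate theory in the previous section, which by this point in the paper has been fully assembled. Concretely, \Cref{thm:kug-stratification} asserts that $\Modd{G}(KU_G)$ is cohomologically stratified, which by \Cref{not:cohomstrat} and \Cref{rem:cohomstrat} means two things: that $\Modd{G}(KU_G)$ is stratified in the sense of \Cref{def:stratification}, and that Balmer's comparison map $\rho$ (equivalently $\rho_0$, since everything in sight is even periodic, see \Cref{rem:comparison_map}) is a homeomorphism. Both ingredients have already been established: stratification is \Cref{thm:kug_strat}, and the fact that $\rho_0^G\colon\Spc(\Perff{G}(KU_G))\xrightarrow{\cong}\Spec(R(G))$ is a homeomorphism is \Cref{lem:generalspc_ku_g}.

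So the first step is simply to recall that $\Perff{G}(KU_G)$ is a Noetherian tt-category: this is \Cref{lem:end-finiteness-kug}, which follows from \Cref{lem:abstract_end_finite} together with Segal's finiteness results on $\pi_*^G(KU_G)\cong R(G)[\beta^{\pm1}]$. Being Noetherian is precisely the hypothesis under which \Cref{not:cohomstrat} is formulated. The second step is to invoke \Cref{thm:kug_strat}, giving that $\Modd{G}(KU_G)$ is stratified with Noetherian Balmer spectrum. The third step is to invoke \Cref{lem:generalspc_ku_g}, identifying that spectrum with $\Spec(R(G))$ via the comparison map. Combining the second and third steps with the equivalence of \Cref{rem:cohomstrat}, condition (a)—stratified plus $\rho$ a homeomorphism—holds, which is exactly the definition (\Cref{not:cohomstrat}) of $\Modd{G}(KU_G)$ being cohomologically stratified (by $\pi_0^G(KU_G)\cong R(G)$, or equivalently by the graded endomorphism ring $R(G)[\beta^{\pm1}]$, which have the same homogeneous spectrum up to the canonical identification of \Cref{rem:comparison_map}).

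There is essentially no obstacle left at this stage; the theorem is a formal consequence of results already proven. The only point requiring a word of care is that \Cref{rem:cohomstrat} presents two a priori distinct notions—stratification relative to the Balmer spectrum plus a homeomorphic comparison map, versus stratification in the sense of Benson--Iyengar--Krause by the action of $\End_{\cat T}^*(\unit)$—and one should make explicit that it is the former package that we have verified, so that the cited equivalence (a)$\Leftrightarrow$(b) delivers the latter. All the genuine mathematical content has been front-loaded into \Cref{thm:kug_strat} (which rests on \Cref{thm:eqstratdescent}, the étale-descent-of-stratification machinery, plus the regularity computation of the geometric fixed points $\Phi^HKU_H$ in \Cref{lem:gfp_equivaiant_k-theory} and \Cref{prop:stratification_ku_g_gfp}) and into \Cref{lem:generalspc_ku_g} (which rests on \Cref{cor:quillen_stratification}(a), i.e.\ the strong Quillen decomposition, together with Segal's finiteness input and the cyclic case \Cref{lem:kug_cyclic_balmer}). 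Thus the proof is a two-line assembly.

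\begin{proof}
By \Cref{lem:end-finiteness-kug}, the tt-category $\Modd{G}(KU_G)$ is Noetherian in the sense of \Cref{def:noeth-tt-cat}. By \Cref{thm:kug_strat} it is stratified with Noetherian Balmer spectrum $\Spc(\Perff{G}(KU_G))$, and by \Cref{lem:generalspc_ku_g} the comparison map $\rho_0^G\colon \Spc(\Perff{G}(KU_G))\to\Spec(R(G))$ is a homeomorphism; since $\pi_*^G(KU_G)\cong R(G)[\beta^{\pm 1}]$ is $2$-periodic and evenly graded, it follows from \Cref{rem:comparison_map} that the graded comparison map $\rho^G$ is a homeomorphism as well. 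Thus condition (a) of \Cref{rem:cohomstrat} holds, so by the equivalence recorded there $\Modd{G}(KU_G)$ is stratified by the action of $\End_{\Modd{G}(KU_G)}^*(\unit)\cong R(G)[\beta^{\pm 1}]$. By \Cref{not:cohomstrat}, this means $\Modd{G}(KU_G)$ is cohomologically stratified.
\end{proof}
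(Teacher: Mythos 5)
Your proposal is correct and follows essentially the same route as the paper, which proves the theorem by combining \cref{thm:kug_strat} and \cref{lem:generalspc_ku_g}; you simply spell out the bookkeeping (Noetherianity via \cref{lem:end-finiteness-kug} and the passage between $\rho_0$ and $\rho$ via the even $2$-periodicity of $R(G)[\beta^{\pm 1}]$ as in \cref{rem:comparison_map}) that the paper leaves implicit.
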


\begin{Exa}
We give an explicit description for the case $G = C_p$ following \cite[Remark 6.4]{DellAmbrogioMeyer2021spectrum}. In this case $R(C_p) \cong \bbZ[x]/(x^p-1)$. We have $(x^p-1) = (x-1)\Phi_p(x)$ where $\Phi_p(x) \coloneqq 1 + x + \cdots x^{p-1}$. The two quotient maps
\[
R(C_p) \to R(e) \cong \bbZ  \quad \text{ and } \quad R(C_p) \to \bbZ[x]/\Phi_p(x)
\]
give rise to jointly surjective embeddings
\[
\Spec(\bbZ) \xrightarrow{\psi_e} \Spec(R(C_p))  \xleftarrow{\psi_{C_p}} \Spec(\bbZ[x]/\Phi_p(x)).
\]
There is only one prime ideal in the intersection of their images, namely
\[
\psi_e((p)) = (p,x-1) = (p,\Phi_p(x)) = \psi_{C_p}((p)).
\]
Correspondingly, we have a decomposition
\begin{equation}\label{eq:rcp_decomposition}
\Spec(R(C_p)) \cong \Spec(\bbZ) \sqcup \Spec(\bbZ[x,p^{-1}]/\Phi_p(x)). 
\end{equation}
Write $\bbZ[\theta,p^{-1}] \coloneqq \bbZ[x,p^{-1}]/\Phi_p(x)$, where $\theta$ is the image of $x$ under the quotient map (so $\theta$ is a $p$-th root of unity). The proof of \Cref{lem:kug_cyclic_balmer} also shows that the following diagram commutes
\[\begin{tikzcd}
	{\Spec(\Perf(KU))} & {\Spc(\Perf_{C_p}(KU_{C_p}))} & {\Spc(\Perf(\Phi^{C_p}KU_{C_p}))} \\
	{\Spec(\bbZ)} & {\Spec(R(C_p))} & {\Spec(\bbZ[\theta,p^{-1}]).}
	\arrow["\varphi^e",from=1-1, to=1-2]
	\arrow["\varphi^{C_p}"',from=1-3, to=1-2]
	\arrow["{\rho_0^e}"', "\cong", from=1-1, to=2-1]
	\arrow["{\rho_0^{C_p}}", "\cong"',from=1-3, to=2-3]
	\arrow["{\psi_{e}}"', from=2-1, to=2-2]
	\arrow["{\psi_{C_p}}", from=2-3, to=2-2]
	\arrow["{\rho_0}"', "\cong", from=1-2, to=2-2]
\end{tikzcd}\]
When $p=2$, we depict $\Spec(R(C_2)) \cong \Spc(\Perf_{C_2}(KU_{C_2}))$ in \Cref{fig:spcku}; it consists of a copy of $\Spec(\bbZ[x]/(x-1))$ and a copy of $\Spec(\bbZ[x]/(x+1))$ (both of which are homeomorphic to $\Spec(\bbZ)$) glued together at the prime 2.
 \begin{figure}[!ht]
    \centering
\[\begin{tikzcd}[ampersand replacement=\&]
	{{\quad \quad \textcolor{blue}\bullet_{\bbF_{\hspace{-0.1em}3}} \textcolor{blue}\bullet_{\bbF_{\hspace{-0.1em}5}} \textcolor{blue}\bullet_{\bbF_{\hspace{-0.1em}7}}}\cdots} \& {\scalebox{0.65}{\color{blue}\LEFTCIRCLE}\kern-.5em{\color{red}\scalebox{0.65}\RIGHTCIRCLE_{\color{black}\bbF_{\hspace{-0.1em}2}}}} \& {{ \quad \quad  \textcolor{red}\bullet_{\bbF_{\hspace{-0.1em}3}} \textcolor{red}\bullet_{\bbF_{\hspace{-0.1em}5}} \textcolor{red}\bullet_{\bbF_{\hspace{-0.1em}7}}\cdots}} \\
	{\textcolor{blue} \bullet_{\bbQ}} \&\& {\textcolor{red} \bullet_{\bbQ}}
	\arrow[no head, from=2-1, to=1-2]
	\arrow[no head, from=1-2, to=2-3]
    \arrow[no head, from=2-1, to=1-1,end anchor={[xshift=2ex]}]
	\arrow[no head, from=2-1, to=1-1,end anchor={[xshift=-2ex]}]
	\arrow[no head, from=2-1, to=1-1]
	\arrow[no head, from=2-3, to=1-3,,end anchor={[xshift=2ex]}]
	\arrow[no head, from=2-3, to=1-3,,end anchor={[xshift=-2ex]}]
	\arrow[no head, from=2-3, to=1-3]
\end{tikzcd}\]\caption{An illustration of the spectrum $\Spc(\Perf_{C_2}(KU_{C_2})) \cong \Spec(R(C_2))$. Here closure goes upwards, and the primes are labeled by their residue fields. The blue points denote primes in $\Spec(\bbZ[x]/(x-1)) \cong \Spec(\bbZ)$, while the red ones denote primes in $\Spec(\bbZ[x]/(x+1)) \cong \Spec(\bbZ)$. The point denoted $\bbF_{\hspace{-0.1em}2}$ shown in blue and red denotes the gluing of the two copies of $\Spec(\bbZ)$ over the prime ideal (2).}\label{fig:spcku}
\end{figure}
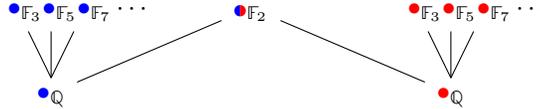
\end{Exa}

We refer the reader to
\cite[Section 11.4]{Serre78} for a discussion of $\Spec(R(G))$ for general finite groups $G$. For example, there it is shown that 
$\Spec(R(G))$ is connected.

\begin{Exa}\label{exa:kr-stratification}
 Let $K\mathbb{R}\in\CAlg(\Sp_{C_2})$ denote Atiyah's \emph{$K$-theory with reality}, introduced in \cite{Atiyah1966reality}. This is a $C_2$-ring spectrum with underlying spectrum given by $KU$, whose homotopy groups are regular Noetherian concentrated in even degrees. Its $C_2$-geometric fixed points are trivial by~\cite[Proposition 6.1]{HeardStojanoska}. It follows from \Cref{thm:eqstratdescent} that $\Modd{C_2}(K\mathbb{R})$ is stratified, and by \Cref{thm:quillen_decomposition} we have an identification 
\[
\Spc(\Perff{C_2}(K\mathbb{R})) \cong \Spec^h(KU_*)/C_2.
\]
Note that $C_2$ acts trivially on the affine scheme $\Spec^h(KU_*)\cong \Spec(\Z)$. It then follows that $\Spc(\Perff{C_2}(K\mathbb{R})) \cong \Spec(\Z)$, and that $\Spc(\Perf_{C_2}(K\mathbb{R}))$ is actually cohomologically stratified. In particular, the telescope conjecture holds in this case and the Bousfield lattice is isomorphic to the lattice of subsets of $\Spec(\Z)$, see \cref{rem:stratification-implies-telconj}.
\end{Exa}

\bibliographystyle{alpha}\bibliography{bibliography}
\vskip -0.3in 
\end{document}